\documentclass[12pt,a4paper]{article}
\usepackage{amsmath,amscd,amssymb,amsthm,mathrsfs,amsfonts,layout,indentfirst,graphicx,caption,mathabx,  tikz, stmaryrd,appendix}
\usepackage{palatino}  
\usepackage{slashed} 
\usepackage{mathrsfs} 
\usepackage[nottoc]{tocbibind}   

\usepackage{lipsum}
\let\OLDthebibliography\thebibliography
\renewcommand\thebibliography[1]{
	\OLDthebibliography{#1}
	\setlength{\parskip}{0pt}
	\setlength{\itemsep}{2pt} 
}

\allowdisplaybreaks  
\usepackage{latexsym}
\usepackage{chngcntr}
\usepackage[colorlinks,linkcolor=blue,anchorcolor=blue,linktocpage]{hyperref}
\hypersetup{citecolor=[rgb]{0,0.5,0}}

\usepackage{fullpage}
\counterwithin{figure}{section}

\pagestyle{plain}

\captionsetup[figure]
{
	labelsep=none	
}

\theoremstyle{definition}
\newtheorem{df}{Definition}[section]

\newtheorem{rem}[df]{Remark}

\theoremstyle{plain}
\newtheorem{thm}[df]{Theorem}
\newtheorem{thd}[df]{Theorem-Definition}
\newtheorem{pp}[df]{Proposition}
\newtheorem{co}[df]{Corollary}
\newtheorem{lm}[df]{Lemma}
\newtheorem{cond}{Condition}



\newcommand{\fk}{\mathfrak}
\newcommand{\mc}{\mathcal}
\newcommand{\wtd}{\widetilde}
\newcommand{\wht}{\widehat}
\newcommand{\wch}{\widecheck}
\newcommand{\ovl}{\overline}
\newcommand{\tr}{\mathrm{t}} 
\newcommand{\End}{\mathrm{End}} 
\newcommand{\id}{\mathrm{id}}
\newcommand{\Hom}{\mathrm{Hom}}
\newcommand{\Conf}{\mathrm{Conf}}

\newcommand{\ev}{\mathrm{ev}}

\newcommand{\Rep}{\mathrm{Rep}}
\newcommand{\diag}{\mathrm{diag}}
\newcommand{\Dom}{\scr D}
\newcommand{\loc}{\mathrm{loc}}

\newcommand{\uni}{\mathrm{u}}
\newcommand{\ssp}{\mathrm{ss}}

\newcommand{\Diffp}{\mathrm{Diff}^+}
\newcommand{\Diff}{\mathrm{Diff}}
\newcommand{\PSU}{\mathrm{PSU}}

\newcommand{\Witt}{\mathscr W}

\newcommand{\bk}[1]{\langle {#1}\rangle}
\newcommand{\GA}{\mathscr G_{\mathcal A}}
\newcommand{\vs}{\varsigma}
\newcommand{\Vect}{\mathrm{Vec}}
\newcommand{\Vectc}{\mathrm{Vec}^{\mathbb C}}
\newcommand{\scr}{\mathscr}
\newcommand{\sjs}{\subset\joinrel\subset}
\newcommand{\Jtd}{\widetilde{\mathcal J}}
\newcommand{\gk}{\mathfrak g}
\newcommand{\hk}{\mathfrak h}
\newcommand{\hr}{\mathfrak h_{\mathbb R}}
\newcommand{\Ad}{\mathrm{Ad}}
\newcommand{\DHR}{\mathrm{DHR}_{I_0}}
\newcommand{\Repi}{\mathrm{Rep}_{\wtd I_0}}
\newcommand{\mbb}{\mathbb}

\numberwithin{equation}{section}

\title{Categorical extensions of conformal nets}
\author{{\sc Bin Gui}
}
\date{}
\begin{document}\sloppy 
	\pagenumbering{arabic}
	\setcounter{section}{0}
	\setcounter{equation}{6}

	\maketitle

\tableofcontents

\newpage

\begin{abstract}
An important goal in studying the relations between unitary VOAs and conformal nets is to prove the equivalence of their ribbon categories. In this article, we prove this conjecture for many familiar examples. Our main idea is to construct new structures associated to conformal nets: the categorical extensions.


Let $V$ be a strongly-local unitary regular  VOA of CFT type, and assume that all $V$-modules are unitarizable. Then $V$ is associated with a conformal net $\mc A_V$ by \cite{CKLW18}. Let $\Rep^\uni(V)$  and $\Rep^\ssp(\mc A_V)$ be the braided tensor categories of  unitary $V$-modules and semisimple $\mc A_V$-modules respectively. We show that if one can find enough intertwining operators of $V$ satisfying the strong intertwining property and the strong braiding property, then any unitary $V$-module $W_i$ can be integrated to an $\mc A_V$-module $\mc H_i$, and the functor $\fk F:\Rep^\uni(V)\rightarrow\Rep^\ssp(\mc A_V),  W_i\mapsto \mc H_i$ induces an equivalence of the ribbon categories $\Rep^\uni(V)\xrightarrow\simeq \fk F(\Rep^\uni(V))$. This, in particular, shows that $\fk F(\Rep^\uni(V))$ is a modular tensor category.

We apply the above result to all unitary $c<1$ Virasoro VOAs (minimal models), many unitary affine VOAs (WZW models), and all even lattice VOAs. In the case of Virasoro VOAs and affine VOAs, one further knows that $\fk F(\Rep^\uni(V))=\Rep^\ssp(\mc A_V)$. So we've proved the equivalence of the unitary modular tensor categories $\Rep^\uni(V)\simeq\Rep^\ssp(\mc A_V)$. In the case of lattice VOAs, besides the equivalence of $\Rep^\uni(V)$ and $\fk F(\Rep^\uni(V))$, we also prove the strong locality of $V$ and the strong integrability of all (unitary) $V$-modules. This solves a conjecture in \cite{CKLW18}.




\end{abstract}

\section{Introduction}

\subsubsection*{Backgrounds}

A systematic study of the relations between vertex operator algebras (VOAs) and conformal nets, two major mathematical formulations  of chiral conformal field theories, was initiated by \cite{CKLW18}. A main point in \cite{CKLW18} is that, given a unitary CFT-type VOA $V$ satifying certain nice analytic properties (energy-bounds condition and strong locality), one defines $\mc A_V(I)$ to be the von Neumann algebra generated by all smeared vertex operators localized in the open interval $I\subset S^1$. Then the collection of all these $\mc A_V(I)$ form a conformal net $\mc A_V$. The energy-bounds condition is needed to show the preclosedness of smeared vertex operators and the weak commutativity (Wightmann-locality) of casually disjoint smeared vertex operators. To show that $\mc A_V$ satisfies the locality axiom of a conformal net, one  requires that causally disjoint smeared operators also commute strongly, in the sense that the von Neumann algebras generated by them commute. This is the meaning of strong locality. Energy bounds condition and strong locality are natural requirements on VOAs, which can be verified for many important examples. Indeed, it is believed that all unitary VOAs satisfy these two properties.

After building a bridge between VOAs and conformal nets, the next natural step is to understand the relations between their representations.  Let $V$ be a unitary (energy-bounded and) strongly local CFT-type VOA, and assume that all irreducible $V$-modules are unitarizable.\footnote{The unitarizability of all $V$-modules is known to be true for many well known rational CFT models. For some other examples, this problem is related to constructing a $C^*$-tensor structure on $\Rep^\uni(V)$. We will discuss this topic in future work.} Since our main interest is in rational CFTs, we assume that $V$ is regular \cite{DLM95}, so that there exists a modular tensor categorical structure on the category $\Rep^\uni(V)$ of unitary $V$-modules \cite{Hua08b}. We also have a braided $C^*$-tensor categorical structure on the representation\footnote{In this article, we assume that all conformal net modules are seperable and (hence) locally normal.} category $\Rep(\mc A_V)$ of $\mc A_V$ by Doplicher-Haag-Roberts (DHR) superselection theory \cite{DHR71,DHR74,FRS89,FRS92}. Now the whole project of relating the representation theories of ``rational" VOAs and conformal nets can be described by answering the following questions:
\begin{enumerate}
	\item Can we define a ``natural" fully faithful $*$-functor $\fk F:\Rep^\uni(V)\rightarrow\Rep(\mc A_V)$?
	\item Is $\fk F$ essentially surjective?
	\item Does $\fk F$ preserve the braided tensor categorical structures?
\end{enumerate}
Once these problems are solved, we can conclude that the category $\Rep^\ssp(\mc A_V)$ of semisimple $\mc A_V$-modules is a braided tensor subcategory of $\Rep(\mc A_V)$ admitting a ribbon fusion categorical structure, the modular tensor category $\Rep^\uni(V)$ admits a unitary (i.e, $C^*$-) structure, and $\fk F:\Rep^\uni(V)\rightarrow\Rep^\ssp(\mc A_V)$ is an equivalence of unitary modular tensor categories. As an important application, the Reshetikhin-Turaev 3d topological quantum field theory (cf. \cite{RT91,Tur94}) constructed from $\Rep^\uni(V)$ and from $\Rep^\ssp(\mc A_V)$ are the same.

Problem 1 is the main subject of \cite{CWX}. That paper shows that for many nice examples of $V$, any unitary $V$-module $(W_i,Y_i)$ is energy bounded, and can be ``integrated" to an $\mc A_V$-module $(\mc H_i,\pi_i)$, in the sense that $\pi_i(Y(v,f))=Y_i(v,f)$ for any smeared vertex operators $Y(v,f)$ and $Y_i(v,f)$. This condition is called \emph{strong integrability}.  One can thus define $\fk F(W_i)=\mc H_i$. By semisimpleness, any morphism $F$ of unitary $V$-modules is bounded. Thus  $\fk F(F)$ can be defined  to be the closure of $F$. Then \cite{CWX} shows that $\fk F$ is fully faithful. (See also \cite{Gui17b} chapter 4 for relevant results.)\footnote{Besides using smeared vertex operators, one can also use Segal CFTs and a geometric interpolation procedure to construct conformal nets from unitary VOAs, and to define the $*$-functor $\fk F$. See \cite{Ten16,Ten18} for more details.} A detailed study of problem 2 can be found in \cite{CW}. In the case of unitary affine VOAs, problem 2 was completely solved by \cite{Hen19}. For $c<1$ unitary Virasoro VOAs, problem 2 can be solved by combining the results of \cite{Xu00a} and \cite{KL04} (see section \ref{lb50}).

So far, the studies of problem 3 have been focusing mainly on comparing fusion rules. The following results are known: If $V$ is a type $A$ unitary affine VOA, or a $c<1$ unitary Virasoro VOA, then $\Rep^\uni(V)$ and $\Rep^\ssp(\mc A_V)$ have the same fusion rules \cite{Was98,Loke94}. When $V$ is of affine type $D$, the tensor subcategory $\mc C$ of unitary $V$-modules corresponding to the single-valued representations of  $SO(2n)$ has the same fusion rules as $\fk F(\mc C)$ \cite{TL04}. On the other hand, the  equivalence of the braided tensor categories is unknown except when $V$ is affine $\fk {sl}_2$, in which case the braided tensor categorical structures are determined by the fusion rules and the twist operators according to \cite{FK93} proposition 8.2.6.\footnote{This argument is due to  Marcel Bischoff. See \cite{Hen17} the paragraphs after conjecture 3.} Even for general affine $\fk{sl}_n$, proving the equivalence of the braided tensor categories has long been an open problem.

\subsubsection*{Categorical extensions of conformal nets}

One of the main goals in this paper is to give a systematic treatment of problem 3. We shall not only show the equivalence of fusion rules for more examples, but also provide a new perspective on conformal nets and VOAs from which the equivalence of the braided tensor categorical structures becomes quite natural: we shall define a new structure associated to conformal nets, called \emph{categorical extensions}. 

An ordinary extension of a conformal net $\mc A$ is just a conformal net  $\mc B$ containing $\mc A$ as a (finite-index) subnet. It is a fermionic extension when $\mc B$ is a super-conformal net, but it can also be anyonic in general. Full CFT and boundary CFT can also be regarded as extensions of conformal nets. A categorical extension $\scr E$ of $\mc A$, on the other hand, is a universal, free, categorical, and anyonic extension of $\mc A$.\footnote{Indeed,  both ``universal extensions" and ``anyonic conformal nets" were candidates for the name of this new structure.} By ``universal", we mean that $\scr E$ contains any sort of extensions of $\mc A$ as  sub-systems. Roughly speaking, $\scr E$ is defined to be the $*$-extension generated ``freely" by the intertwining operators of $\mc A$ (or its corresponding VOA). $\scr E$ is free of relations, but any extension of $\mc A$, which is a sub-system of $\scr E$, is described by a set of relations, i.e, by a \emph{Frobenius algebra}. As intertwining operators do not form an algebra  in general (except when the braidings are abelian \cite{DL93}), there seems to be no single Hilbert space $\mc H$ on which all intertwining operators could act freely. Therefore, we consider tensor categories of Hilbert spaces instead of single Hilbert spaces. As extensions of $\mc A$ are in general anyonic, $\scr E$ is anyonic.

Let us outline some key features of categorical extensions. Note that for a conformal net $\mc A$, given an open interval $I$, we have state-field correspondence between an operator $x\in\mc A(I)$ and a vector $x\Omega$. Vectors in $\mc H_0(I)=\mc A(I)\Omega$ are called $I$-bounded vectors. Then the actions of $\mc A(I)$ on $\mc H_0$ can be regarded as multiplications $\mc H_0(I)\otimes\mc H_0\rightarrow\mc H_0$. With over simplification, we regard the vacuum module $\mc H_0$  as both a vector space and  an algebra. Now for general $\mc A$-modules $\mc H_i,\mc H_j$, their multiplications are in neither $\mc H_i$ nor $\mc H_j$, but in a tensor (fusion) product $\mc H_i\boxdot\mc H_j$. More precisely, for any open interval $I$, we denote by $I^c$ the complement of its closure in $S^1$, and set $\mc H_i(I)$ to be the subspace of all $\xi\in\mc H_i$ satisfying that the linear map defined by $x\Omega\in\mc A(I^c)\Omega\mapsto x\xi\in\mc H_i$ is bounded. We call such $\xi$ an $I$-bounded vector. We then have a multiplication $\mc H_i(I)\otimes\mc H_j\rightarrow\mc H_i\boxdot\mc H_j,\xi\otimes\eta\mapsto\xi\cdot\eta$. Let $L(\xi,I)$ denote this left action of $\xi$ on $\mc H_j$. Then we require that $L(\xi,I)$ is a bounded operator intertwining the actions of $\mc A(I^c)$, i.e., $L(\xi,I)\in\Hom_{\mc A_V(I^c)}(\mc H_j,\mc H_i\boxdot\mc H_j)$. 

The above formulation is reminiscent of Connes fusion products (Connes relative tensor products) \cite{Con80}. Indeed, Connes fusion
is a major way to construct categorical extensions, in which case the tensor product $\mc H_i\boxdot\mc H_j$ is just the Connes fusion product $\mc H_i\boxtimes\mc H_j$, and the multiplication is the natural one. On the other hand, the standard Connes fusion theory for bimodules tells us nothing about how  the fusion products over different intervals could be related. If we want to consider a net of left actions $\{L(\cdot,I)\}$, we need to take into account the monodromy behaviors of them. So $L(\xi,I)$ should depend not only on $I$, but also on a preferred branch of $I$ in the universal covering space of $S^1$. Equivalently, we should equip $I$ with a (continuous) argument function $\arg_I$ on $I$, set $\wtd I=(I,\arg_I)$, and write $L(\xi,I)$ as $L(\xi,\wtd I)$ instead. Similarly, for any arg-valued interval $\wtd J=(J,\arg _J)$ and $\eta\in\mc H_j(J)$, we also have a right action $R(\eta,\wtd J)\in\Hom_{\mc A(J^c)}(\mc H_i,\mc H_i\boxdot\mc H_j)$. 

Locality is the most important axiom of categorical extensions, which we now state. Suppose that $I$ and $J$ are disjoint, and the arg function $\arg_I$ of $I$ is chosen to be anticlockwise to $\arg_J$, in the sense that $\arg_J(\zeta)<\arg_I(z)<\arg_J(\zeta)+2\pi$ for any $z\in I,\zeta\in J$. In this case we say that $\wtd I=(I,\arg_I)$ is anticlockwise to $\wtd J=(J,\arg_J)$. Now the locality axiom says that  for any $\mc A$-modules $\mc H_i,\mc H_j,\mc H_k$,  any arg-valued intervals $\wtd I,\wtd J$ with $\wtd I$ anticlockwise to $\wtd J$, and any $\xi\in\mc H_i(I),\eta\in\mc H_j(J)$, the  diagram   
\begin{align*}
\begin{CD}
\mc H_k @> \quad R(\eta,\wtd J)\quad   >> \mc H_k\boxdot\mc H_j\\
@V L(\xi,\wtd I)   V  V @V L(\xi,\wtd I) VV\\
\mc H_i\boxdot\mc H_k @> \quad R(\eta,\wtd J) \quad  >> \mc H_i\boxdot\mc H_k\boxdot\mc H_j
\end{CD}
\end{align*}
commutes \emph{adjointly}, in the sense that the following diagram also commutes.
\begin{align*}
\begin{CD}
\mc H_k @> \quad R(\eta,\wtd J)\quad   >> \mc H_k\boxdot\mc H_j\\
@A L(\xi,\wtd I)^*   AA @A L(\xi,\wtd I)^* AA\\
\mc H_i\boxdot\mc H_k @> \quad R(\eta,\wtd J) \quad  >> \mc H_i\boxdot\mc H_k\boxdot\mc H_j
\end{CD}
\end{align*}
Due to locality, the $C^*$-tensor categorical structure defined by $\boxdot$ is remembered by the categorical extension, and is naturally equivalent to the one defined by Connes fusion $\boxtimes$. Moreover, if the left and right actions are related by a braiding $\ss$, in the sense that  there always exists a functorial isomorphism $\ss_{i,j}:\mc H_i\boxdot\mc H_j\rightarrow\mc H_j\boxdot\mc H_i$ such that $R(\xi,\wtd I)\eta=\ss_{i,j}L(\xi,\wtd I)\eta$ for any arg-valued $I$, $\xi\in\mc H_i(I)$, and $\eta\in\mc H_j$ (the braiding axiom), then the braid structure is also remembered. Therefore, once we have shown that the braided tensor category $\Rep^\uni(V)$ is unitary (i.e., a braided $C^*$-tensor category),\footnote{Though solving problem 3 will prove the unitarity of $\Rep^\uni(V)$, in our theory we have to first prove the unitarity in order to construct vertex categorical extensions and show the equivalence of the braided tensor categories. This is one of our main motivations for studying the unitarity of $\Rep^\uni(V)$ in  \cite{Gui17a,Gui17b}.} and construct a categorical extension $\scr E$ of $\mc A_V$ using the intertwining operators of $V$  (the \emph{vertex categorical extension}), then $\Rep^\uni(V)$ will be automatically equivalent to a braided tensor subcategory of $\Rep(\mc A_V)$ under the $*$-functor $\fk F$. 

\subsubsection*{The strong intertwining and braiding properties}

To construct a vertex categorical extension, locality is also the most difficult to verify.\footnote{Our situation is similar to that of \cite{CKLW18}.} Our previous works \cite{Gui17a,Gui17b} show that the unitarity of the braided tensor category $\Rep^\uni(V)$ follows from the strong locality of $V$ and the \emph{strong intertwining property} for the  intertwining operators of $V$ (see remark \ref{lb45}).\footnote{In \cite{Gui17b} chapter 4 we (essentially) showed that the strong integrability of $V$ follows also from these two properties, hence providing an  answer to problem 1 alternative to the work of \cite{CWX}. See also theorem \ref{lb31}.} The strong intertwining property says that if $\mc Y_\alpha$ is a type ${k\choose i~j}={W_k\choose W_i W_j}$ intertwining operator, then for any homogeneous $v\in V,w^{(i)}\in W_i$, disjoint intervals $I,J$ with $I$ arg-valued, and smooth functions $\wtd f,g$ supported in $\wtd I,J$ respectively, the smeared intertwining operator $\mc Y_\alpha(w^{(i)},\wtd f)$ commutes strongly with the smeared vertex operator $Y_{j\oplus k}(v,g)$.\footnote{The strong intertwining property for intertwining operators is parallel to the notion of localized intertwining operators in \cite{Ten18}.} (See definition \ref{lb51} for more details.) Unfortunately, these two properties are not enough to verify the locality axiom of categorical extensions. One also requires that there exist enough \footnote{The meaning of ``enough" will be given in theorem \ref{lb36}.} intertwining operators satisfying the \emph{strong braiding property}, whose meaning is explained below.

Choose unitary $V$-modules $W_i,W_j$. For any $W_k\in\Rep^\uni(V)$, we have a distinguished intertwining operator $\mc L_i$ of type ${ik\choose i~j}={W_i\boxtimes W_k\choose W_i~W_k}$, such that any intertwining operator of type $l\choose i~j$ (where $W_l\in\Rep^\uni(V)$) factors through $\mc L_i$. $\mc L_i$ may act on different $W_k$ to denote intertwining operators of different types.  The type $jk\choose j~k$ intertwining operator $\mc L_j$ is defined in a similar way. Now we define a type $kj\choose j~k$ intertwining operator $\mc R_j$ acting on each $k$ to be $\mc R_j=\ss_{j,k}\mc L_j$, where $\ss$ denotes the braiding of $V$-modules. Assume that $\mc L_i$ and $\mc R_i$ are energy-bounded. Then one can show (see theorem \ref{lb38}) that  for any homogeneous $w^{(i)}\in W_i,w^{(j)}\in W_j$, arg-valued $\wtd I$ (disjoint and) anticlockwise to $\wtd J$, and smooth functions $\wtd f,\wtd g$ supported in $\wtd I,\wtd J$ respectively, the following two diagrams commute in the sense of braiding of smeared intertwining operators.
\begin{align*}
\begin{CD}
\mc H^\infty_k @> \quad\mc R_j(w^{(j)},\wtd g)\quad >> \mc H^\infty_{kj}\\
@V \mc L_i(w^{(i)},\wtd f) VV @V \mc L_i(w^{(i)},\wtd f) VV\\
\mc H^\infty_{ik} @> \quad\mc R_j(w^{(j)},\wtd g)\quad>> \mc H^\infty_{ikj}
\end{CD}\qquad\qquad\qquad\qquad
\begin{CD}
\mc H^\infty_k @> \quad\mc R_j(w^{(j)},\wtd g)\quad >> \mc H^\infty_{kj}\\
@A \mc L_i(w^{(i)},\wtd f)^\dagger AA @A \mc L_i(w^{(i)},\wtd f)^\dagger AA\\
\mc H^\infty_{ik} @> \quad\mc R_j(w^{(j)},\wtd g)\quad>> \mc H^\infty_{ikj}
\end{CD}
\end{align*}
(Here, for example, $\mc H_{ik}=\fk F(W_{ik})$ is the $\mc A_V$-module integrated from $W_{ik}=W_i\boxtimes W_k$, and $\mc H_{ik}^\infty$ is its subspace of smooth vectors.) One can roughly say that the preclosed operators $\mc L_i(w^{(i)},\wtd f)$ and $\mc R_i(w^{(j)},\wtd g)$ commute adjointly. Now, we say that the actions $w^{(i)},w^{(j)}\curvearrowright\Rep^\uni(V)$ satisfies the \emph{strong braiding property}, if for any $W_k\in\Rep^\uni(V)$ and $\wtd I,\wtd J,\wtd f,\wtd g$ as above, the preclosed operators $\mc L_i(w^{(i)},\wtd f)$ and $\mc R_i(w^{(j)},\wtd g)$ commute strongly, in the sense that the von Neumann algebras generated by (the closures of) them commute.

Thus strong braiding is the crucial condition for $\Rep^\uni(V)$ and $\fk F(\Rep^\uni(V))$ to have the same braided $C^*$-tensor categorical structure, just as strong locality is crucial for constructing conformal nets from VOAs, and the strong intertwining property is required to construct conformal net modules from VOA modules, and to show the unitarity of $\Rep^\uni(V)$. Indeed,  these three properties should be treated as a whole: together they guarantee the existence of vertex categorical extensions. We strongly believe that constructing vertex categorical extensions is a more fundamental question than proving the equivalence of the modular tensor categories, as the latter only reflect the topological data of CFTs, while categorical extensions contain both analytic and topological data. We summarize our philosophy: \emph{categorical extensions of conformal nets are analytic enrichments of braided $C^*$-tensor categories.}

\subsubsection*{Analytic properties for VOA extensions}

Another motivation for studying categorical extensions is to understand the relations between various types of ``rational"   VOA extensions  and  conformal net extensions (including full and boundary CFTs), as well as the relations between their tensor categories. A general theory on this topic will be left to future works. In this paper, we use even lattice VOAs as examples to demonstrate that categorical extensions are powerful tools for studying functional analytic properties of VOA extensions.

We first explain why strong locality is not easy to prove for lattice VOAs (and for many other VOAs). The starting point of proving the strong locality of an energy-bounded unitary VOA $V$ is the $1$-st order energy bounds (linear energy bounds) condition. If $f$ and $g$ are supported in disjoint open intervals, and one of $Y(u,x)$ and $Y(v,x)$ satisfies $1$-order energy bounds, then using results from \cite{TL99} (see also lemma \ref{lb39}), we know that $Y(u,f)$ and $Y(v,g)$ commute strongly. Unfortunately (or fortunately?), $1$-st order energy bounds are not necessary conditions for strong commutativity. \cite{CKLW18} theorem 8.1 tells us that if $V$ is generated (in the vertex-algebraic sense) by a set of quasi-primary vectors, among which the strong commutativity of causally disjoint smeared vertex operators holds, then $V$ is strong local. So for instance, if $V$ is a unitary affine VOA, then $V$ is generated by quasi-primary vertex operators satisfying $1$-st order energy bounds. Therefore $V$ is strongly local. But we can easily choose $u,v\in V$ whose vertex operators do not satisfy $1$-st order energy bounds. 

The above example suggests a useful way to prove the strong locality of a VOA $V$ which is not necessarily generated by vertex operators satisfying $1$-st order energy bounds. Suppose that we can embed $V$ into a larger unitary VOA $\fk V$ (conformal embedding is not necessarily required), and if $\fk V$ is generated by quasi-primary vertex operators satisfying $1$-st order energy bounds, then $\fk V$ is strongly local. This proves the strong locality of $V$. Indeed,  all  examples in \cite{CKLW18} (see chapter 8) were proved in this way.

Now the issue for a lattice VOA $V$ is the lack of such a larger VOA $\fk V$ containing $V$. Nor is the situation much better if we allow $\fk V$ to be a super VOA. In order to contain $V$, $\fk V$ has to be a highly anyonic vertex algebra, say, a generalized vertex algebra in the sense of \cite{DL93}. However, the problem with this approach is the difficulty of generalization to non-abelian intertwining operators. Therefore, to take general cases into consideration, one has to study categorical vertex algebras, whose corresponding categorical conformal nets are the  categorical extensions of ordinary (bosonic) conformal nets.

Let us explain the idea of the proof in more details. Let $U$ be a conformal unitary sub-VOA of $V$.\footnote{In principle $U$ is required to be regular, but we also allow $U$ to be a Heisenberg VOA.} Then the categorical vertex algebra $\fk V$ for $U$-intertwining operators contains $V$. Similar to \cite{CKLW18} theorem 8.1, one can show that if $\fk V$ is generated by $U$-intertwining operators satisfying $1$-st order energy bounds (and hence satisfying the strong braiding property), then all fields of $\fk V$, including those of $V$, satisfy the strong braiding property. (See theorems \ref{lb43} and \ref{lb40}.) This proves the strong locality of $V$. In the case that $V$ is an even lattice VOA, this method works by choosing  $U$ to be the corresponding Heisenberg sub-VOA.

\subsubsection*{Outline of the paper}

In chapter 2 we present a new approach to  Connes fusions of conformal net modules. The idea of using Connes fusion products to construct (braided) $C^*$-tensor categories for conformal nets is not new (see \cite{Was98,BDH15,BDH17}). Our approach differs from \cite{Was98} by emphasizing the global aspects of Connes fusions. On the other hand, unlike $\cite{BDH15,BDH17}$, many of our results  do not require conformal covariance. Thus they can be easily applied to M\"obius covariant nets. We also avoid the technical assumption of strong additivity.

In section 2.1 we review some of the basic facts about conformal nets and their representations. In section 2.2 we define the notion path continuations, which plays a  centrally important role in our theory. As we will see, the braid operator is a special path continuation. In section 2.3 we use path continuations to define the action of a conformal net $\mc A$ on the Connes fusion $\mc H_i\boxtimes\mc H_j$ of $\mc A$-modules $\mc H_i$ and $\mc H_j$. In section 2.4 we describe the conformal structure of $\mc H_i\boxtimes\mc H_j$ in terms of those of $\mc H_i$ and $\mc H_j$. Connes fusions of three (or more) representations are discussed in section 2.5. In section 2.6, we define the $C^*$-tensor categorical structure on $\Rep(\mc A)$ using our theory of Connes fusions. We will also define braiding in this section, which will be shown (proposition \ref{lb52}) to be the same as the one defined in \cite{Was98} section 33. However, a direct verification of the Hexagon axioms could be very complicated. We prove the Hexagon axioms in chapter 3 after categorical extensions are introduced.

Categorical extensions of conformal nets are defined in section 3.1. In section 3.2, we use Connes fusions to construct categorical extensions (called Connes categorical extensions). Then, in section 3.3, we use this machinery to prove the Hexagon axioms for $\Rep(\mc A)$. The next two sections are devoted to the uniqueness of categorical extensions. In section 3.4, we show that if $\scr E$ is a categorical extension of $\mc A$ over a braided $C^*$-tensor category $\scr C$, where $\scr C$ is also a full subcategory of $\Rep(\mc A)$, then $\scr C$ is equivalent to the corresponding braided $C^*$-tensor category defined by Connes fusions. In section 3.5, we show that $\scr E$ can be extended to a unique maximal categorical extension $\ovl{\scr E}$ defined also over $\scr C$. This maximal categorical extension $\ovl{\scr E}$ is naturally equivalent to a Connes categorical extension. We say that $\ovl{\scr E}$ is the closure of $\scr E$. The relation between $\ovl{\scr E}$ and $\scr E$ is similar to that between a von Neumann algebra $\mc M$ and a  subset $E\subset \mc M$ which densely spans $\mc M$. However, in applications one quite often starts with a subset $E$ which $*$-algebraically (but not just linearly) generates a dense subspace of $\mc M$. The situation is similar in the construction of vertex categorical extensions (as we will see in chapter 4): if a categorical extension $\scr E$ over $\scr C$ is regarded as a $\scr C$-$\scr C$ bimodule ${}_{\scr C}\scr C_{\scr C}$, then, more often, one begins with an $\mc F$-$\mc F$ bimodule ${}_{\mc F}\scr C_{\mc F}$, where $\mc F$ is a set of objects in $\scr C$ which tensor-generates $\scr C$. Then one can use ${}_{\mc F}\scr C_{\mc F}$ to generate ${}_{\scr C}\scr C_{\scr C}$. Such ${}_{\mc F}\scr C_{\mc F}$ is called a categorical local extension. In section 3.6 we show that a categorical local extension $\scr E^\loc$ generates a categorical extension $\scr E$. Moreover, we show that if $A$ (resp. $B$) commutes with the right (resp. left) action of $\mc F$ on $\scr C$, (In this case $A$ (resp. $B$) is called a left (resp. right) operator of $\scr E^\loc$.)  then $A$ and $B$ commute adjointly (see theorem \ref{lb43}). This  theorem is crucial for proving the strong braiding property of certain intertwining operators not satisfying $1$-st order energy bounds.

The goal of chapter 4 is to construct vertex categorical extensions using smeared intertwining operators. Most of the material in sections 4.1 and 4.3-4.5 is not new. In section 4.1 we review  Huang-Lepowsky's construction of ribbon categories for VOA-modules. Unitary structures on these tensor categories, which were introduced in \cite{Gui17a,Gui17b}, are reviewed in section 4.3. In section 4.4 we review the energy bounds conditions and smeared intertwining operators. Constructions of conformal nets and their representations from VOAs their modules are discussed in section 4.5. What's new in this chapter is the construction of the intertwining operators $\mc L_i,\mc R_i$ (for any VOA module $W_i$), which are closely related to the left and right actions $L,R$ in categorical extensions. The adjoint commutativity of $\mc L$ and $\mc R$ (in the sense of braiding) is proved in sections 4.2 and 4.3. These braid relations are crucial for verifying the locality axiom of categorical extensions. In section 4.4 we prove the adjoint commutativity of the smeared $\mc L$ and $\mc R$. Finally, in section 4.6 we use these smeared intertwining operators to construct vertex categorical extensions. 

Applications to various examples are given in chapter 5. In section 5.1, we show that if $V$ is a $c<1$ unitary Virasoro VOA, or a unitary affine VOA of type $A,C,G_2$, then problem 3 is completely solved: $\Rep^\ssp(\mc A_V)$ is equivalent to $\Rep^\uni(V)$ as unitary modular tensor categories. If $V$ is an affine VOA of type $B$ or $D$, then a partial result exists: Let $\mc C$ be the monoidal subcategory of $\Rep^\uni(V)$ tensor-generated by the smallest non-vacuum irreducible $V$-module. Then $\mc C$ is equivalent to $\fk F(\mc C)$ as unitary ribbon fusion categories. (The braided tensor categorical structure on $\fk F(\mc C)$ is defined using Connes fusions.) In section 5.2, we prove the equivalence of the ribbon fusion categories $\Rep^\uni(V)$ and $\fk F(\Rep^\uni(V))$ when $V$ is a unitary Heisenberg VOA. (In this case $\Rep^\uni(V)$ is defined to be the tensor category of semisimple unitary $V$-modules.) More importantly, we prove  the strong intertwining and braiding properties for all intertwining operators of unitary Heisenberg VOAs. This result is used in section 5.3 to prove the strong intertwining and braiding properties for all intertwining operators of an even lattice VOA $V$. The strong localilty of $V$, the strong integrability of all $V$-modules, and the equivalence of the unitary modular tensor categories $\Rep^\uni(V)\simeq\fk F(\Rep^\uni(V))$ thus follow. 

In the literature of conformal nets, the braided tensor categories are more often defined using DHR superselection theory. It is well known (at least when the conformal nets are strongly additive) that Connes fusions and  DHR theory define the same monoidal structures. However, it is not clear why these two theories give the same braidings.  In chapter 6, we clarify the relation between these two theories, and show that the braided $C^*$-tensor categories defined by them are equivalent.\\

\noindent
\emph{Note.} When $V$ is a unitary affine VOA of type $A$, the equivalence of $\Rep^\uni(V)$ and $\Rep^\ssp(\mc A_V)$ was also proved in a  recent work \cite{CCP} using completely different methods. For  affine VOAs and lattice VOAs, \cite{CCP}  proved the unitarity of $\Rep^\uni(V)$ using methods different from those in \cite{Gui17a,Gui17b}.

\subsubsection*{Acknowledgment}
The author would like to express his gratitude to Vaughan Jones for the constant support and encouragement during his research. The author would also like to thank Sebastiano Carpi and Andre Henriques for many helpful  discussions.

\section{Connes fusion products}

\subsection{Conformal nets and their representations}\label{lb57}
We first recall some basic facts about $\Diffp(S^1)$,  the group of orientation-preserving diffeomorphisms of $S^1$. Convergence in $\Diffp(S^1)$ means uniform convergence of all derivatives.  $\Diffp(S^1)$ contains the subgroup $\PSU(1,1)$ of M\"obius transformations of $S^1$. For any Hilbert space $\mathcal H$, we let $\mathcal U(\mathcal H)$ be the group of unitary operators on $\mathcal H$, equipped with the strong operator topology. We let $P\mathcal U(\mathcal H)$ be the projective group of $\mathcal U(\mathcal H)$. Then a projective representation of $\Diffp(S^1)$ on $\mathcal H$ is a continuous homomorphism $U:\Diffp(S^1)\rightarrow P\mathcal U(\mathcal H)$.

Let $\mathcal J$ be the set of open intervals in $S^1$, i.e., the set of non-empty non-dense connected open subset of $S^1$. If $I\in\mathcal J$, we let $I^c$ be the interior of the complement of $I$ in $S^1$, which is again an open interval. For any $I$, we let $\Diff_I(S^1)$ be the subgroup of all $g\in\Diffp(S^1)$ which fixes points in the closure of $I^c$.

Let $\Vect(S^1)=C^\infty(S^1,\mathbb R)$ be the Lie algebra of real vector fields on $S^1$, where, for any $X,Y\in\Vect(S^1)$, $[X,Y]$ is the negative of the usual bracket for vector fields. Then $\Vect(S^1)$ is the Lie algebra of $\Diffp(S^1)$. We let $\Vectc(S^1)$ be the complexification of $\Vect(S^1)$. For each $n\in\mathbb Z$, we the $L_n$ be the complex vector field on $S^1$ defined by
\begin{align*}
L_n(e^{i\theta})=-ie^{in\theta}\frac d{d\theta}.
\end{align*}
Then these $L_n$ form the Witt algebra $\Witt$, which is a dense Lie subalgebra of $\Vectc(S^1)$. We define a $*$ structure on $\Witt$ by setting $L_n^*=L_{-n}$. An element $X=\sum_{n} a_nL_n$ in $\Vectc(S^1)$ is self-adjoint (i.e., fixed by $*$), if and only if $\overline{a_n}=a_{-n}$ for all $a$, if and only if $iX\in\Vect(S^1)$. For such $X$, we can therefore consider the one parameter group $\exp_{iX}:t\in\mathbb R\mapsto \exp(itX)$ in $\Diffp(S^1)$. In particular, $\exp_{iL_0}$ is the rotation subgroup. In general, for any $X\in\Vect(S^1)$, we let $\exp_X$ be the one parameter subgroup of $\Diffp(S^1)$ generated by $X$.

It will be convenient to consider another type of localized diffeomorphism groups. We let $\Diff^0_I(S^1)$ be the subgroup of $\Diffp(S^1)$ (algebraically) generated by $\exp(\Vect_I(S^1))$, where $\Vect_I(S^1)$ is the subspace of  vector fields supported in $I$. Then by the proof of \cite{Loke94} proposition V.2.1, for any $J\sjs I$ (i.e., $J\in\mathcal J$, and  $\overline J\subset I$) we have
\begin{align}
\Diff^0_J(S^1)\subset\Diff_I(S^1).\label{eq92}
\end{align}
So $\Diff^0_I(S^1)$ is large enough.\\

A \textbf{conformal net} $\mathcal A$ associates to each $I\in\mathcal J$ a von Neumann algebra $\mathcal A(I)$ acting on a fixed separable Hilbert space $\mathcal H_0$, such that the following conditions hold:\\
(a) (Isotony) If $I_1\subset I_2\in\mathcal J$, then $\mathcal A(I_1)$ is a von Neumann subalgebra of $\mathcal A(I_2)$.\\
(b) (Locality) If $I_1,I_2\in\mathcal J$ are disjoint, then $\mathcal A(I_1)$ and $\mathcal A(I_2)$ commute.\\
(c) (Conformal covariance) We have a strongly continuous projective unitary representation $U$ of $\Diffp(S^1)$ on $\mathcal H_0$, such that for any $g\in \Diffp(S^1),I\in\mathcal J$, and any representing element $V\in\mathcal U(\mathcal H_0)$ of $U(g)$,
\begin{align*}
V\mathcal A(I)V^*=\mathcal A(gI).
\end{align*}
Moreover, if $g\in\Diff_I(S^1)$ and $x\in\mathcal A(I^c)$, then
\begin{align*}
VxV^*=x.
\end{align*}
(d) (Positivity of energy) The generator of the restriction of $U$ to $S^1$ is positive.\\
(e) There exists a  unique (up to scalar) unit vector $\Omega\in\mathcal H_0$ (the vacuum vector), such that $U(g)\Omega\in\mathbb C\Omega$ for any $g\in\PSU(1,1)$. Moreover, $\Omega$ is  cyclic under the action of $\bigvee_{I\in\mathcal J}\mathcal M(I)$ (the von Neumann algebra generated by all $\mathcal M(I)$).

Note that by the up to phase invariance of $\Omega$ under the projective action of $\PSU(1,1)$, one may fix an actual representation of $\PSU(1,1)$ on $\mc H_0$ such that $g\Omega=\Omega$ for any $g\in\PSU(1,1)$. It is also well known that a conformal net $\mathcal A$ satisfies the following properties (cf. for example, \cite{GL96} and the reference therein):\\
(1) (Additivity) $\mathcal A(I)=\bigvee_\alpha\mathcal A(I_\alpha)$ if $\{I_\alpha\}$ is a set of open intervals whose union is $I$.\\
(2) (Haag duality) $\mathcal A(I)'=\mathcal A(I^c)$. As a consequence, any representation element $V$ of $U(g)$ is in $\mathcal A(I)$ if $g\in\Diff_I(S^1)$.\\ 
(3) (Reeh-Schlieder theorem) $\mathcal A(I)\Omega$ is dense in $\mathcal H_0$ for any $I\in\mathcal J$\\
(4) For each $I\in\mathcal J$, $\mathcal A(I)$ is a type III$_1$ factor.

Let $\mathcal H_i$ be a separable Hilbert space. We say that $(\mathcal H_i,\pi_i)$ (or simply $\mathcal H_i$) is  a  \textbf{represention} of the  $\mathcal A$ (or a \textbf{$\mathcal A$-module}), if for any $I\in\mathcal J$, we have a normal unital *-representation $\pi_{i,I}:\mathcal A(I)\rightarrow B(\mathcal H_i)$, such that for any $I_1,I_2\in\mathcal J$ satisfying $I_1\subset I_2$, and any $x\in\mathcal A(I_1)$, we have $\pi_{i,I_1}(x)=\pi_{i,I_2}(x)$, which will be written as $\pi_i(x)$ when no confusion arises. Given a vector $\xi^{(i)}\in\mathcal H_i$, we often write $\pi_i(x)\xi^{(i)}$
as $x\xi^{(i)}$. Note that $\mathcal H_0$ itself is an $\mathcal A$-module, called the \textbf{vacuum module}.\\

Next, we discuss conformal structures on $\mc A$-modules. Let $\mathscr G=\widetilde{\Diffp(S^1)}$ be the simply connected covering group of $\Diffp(S^1)$, and consider the projective representation  $\mathscr G\curvearrowright \mathcal H_0$ lifted from $U:\Diffp(S^1)\curvearrowright\mathcal H_0$. This projective representation is also denoted by $U$. Define a  topological group
\begin{align}
\GA=\{(g,V)\in\mathscr G\times \mathcal U(\mathcal H_0)| V \textrm{ is a representing element of } U(g) \},\label{eq98}
\end{align}
called the \textbf{central extension of $\mathscr G$ associated to $\mathcal A$}. The topology of $\scr G_{\mc A}$ inherits from those of $\scr G$ and $\mc U(\mc H_0)$. Then we have a representation $U$ of $\GA$ on $\mathcal H_0$ defined by $U(g,V)=V$. We have an exact sequence
\begin{align*}
1\rightarrow U(1)\rightarrow \GA\rightarrow\scr G\rightarrow 1
\end{align*}
where $U(1)=\{ (1,V)\in\mathscr G\times \mathcal U(\mathcal H_0)| V \textrm{ is a representing element of } U(1)\}$. Clearly $U(1)$ is acting as scalars on $\mc H_0$. If $\mathcal H$ is a Hilbert space, then  a \textbf{(unitary) representation} of $\GA$ on $\mathcal H$ is, by definition, a homomorphism $\GA\rightarrow\mc U(\mathcal H)$ which restricts to a  homomorphism $\mathscr G\rightarrow P\mathcal U(\mathcal H)$ (i.e., a projective representation of $\mathscr G$). The standard action $\GA\curvearrowright\mc H_0$ is a unitary representation which is clearly continuous. 

\begin{rem}
The (equivalence class of the) central extension $\scr G_{\mc A}$  depends only on the central charge $c$ of the conformal net $\mc A$. In fact, by \cite{TL99} proposition 5.3.1, the Lie algebra of $\scr G_A$ is equivalent to the Virasoro algebra. The universal cover of $\GA$ can be identified with the group $\Diff_+^{\mbb R\times\mbb Z}(S^1)$ considered in \cite{Hen19}. Moreover, $\GA$ can indeed be recovered from the central extension $1\rightarrow\mbb R\rightarrow\Diff_+^{\mbb R\times\mbb Z}(S^1)\rightarrow\scr G\rightarrow 1$ by taking the quotient of $\Diff_+^{\mbb R\times\mbb Z}(S^1)$ by the central subgroup $c\mbb Z$ of $\mbb R$. We will not use these facts in the present article, and content ourselves with the explicit construction \eqref{eq98} of $\GA$.
\end{rem}

It is important to consider local diffeomorphism subgroups of $\mathscr G$ and $\GA$. For each $X\in\Vect(S^1)$, we define  $\wtd\exp_X:\mathbb R\rightarrow\mathscr G$ to be the one parameter subgroup of $\mathscr G$ lifted from $\exp_X$. We then set $\wtd\exp(X)=\wtd\exp_X(1)$. Define  $\mathscr G^0(I)$ to be the  subgroup of $\mathscr G$ algebraically generated by $\wtd\exp(\Vect_I(S^1))$, which can be identified with the connected branch of the inverse image of $\Diff_I^0(S^1)$ in $\scr G$ containing the identity. Similarly,  we let $\scr G(I)$ be the branch of the inverse image of $\Diff_I(S^1)$ in $\scr G$ containing the identity. Since $\Diff_I(S^1)$ is contractible, $\scr G(I)$ is homeomorphic to $\Diff_I(S^1)$ under the covering map $\scr G\rightarrow\Diffp(S^1)$.  From \eqref{eq92}, we know that if $J\sjs I$ then $\scr G(J)\subset\scr G^0(I)\subset\scr G(I)$.
Finally, we let  $\GA^0(I)$ and $\GA(I)$ be the respectively the inverse images of $\scr G^0(I)$ and $\scr G(I)$  in $\GA$. Then, we also have
\begin{align}
\GA(J)\subset\GA^0(I)\subset\GA(I)\label{eq94}
\end{align}
when $J\sjs I$. In this article, we will be mainly interested in $\GA(I)$ instead of $\GA^0(I)$. The only place we use $\GA^0(I)$ is in the proof of proposition \ref{lb34}.

Note that by conformal covariance of $\mathcal A$, $U(g)\in\mathcal A(I)$ for any $g\in\GA(I)$.

\begin{thm}\label{lb59}
Any representation $\mathcal H_i$ of $\mathcal A$ is  \textbf{conformal (covariant)} in the sense that there is a unique (unitary) representation $U_i$ of $\GA$ on $\mathcal H_i$ satisfying for any $I\in\mathcal J,g\in\GA(I)$ that
\begin{align}
U_i(g)=\pi_{i,I}(U(g)).\label{eq15}
\end{align}
Moreover, this representation is continuous.
\end{thm}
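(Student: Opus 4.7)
The plan is to define $U_i$ locally on each $\GA(I)$ by transporting $U$ through the normal representation $\pi_{i,I}$, verify compatibility of these local definitions via isotony, and extend to all of $\GA$ using that the local subgroups generate the whole group. Continuity and uniqueness will then be near-automatic, so the substantive work is the well-definedness of the extension.

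\textbf{Step 1 (local definition).} For $g=(h,V)\in\GA(I)$, conformal covariance applied to $h$ (which projects into $\Diff_I(S^1)$) gives $VxV^*=x$ for every $x\in\mc A(I^c)$, and Haag duality $\mc A(I^c)'=\mc A(I)$ then forces $V\in\mc A(I)$. I set $U_i(g):=\pi_{i,I}(V)$. By isotony, this definition is consistent whenever $g$ lies in $\GA(I_1)\cap\GA(I_2)$ with $I_1,I_2$ both contained in a larger third interval. Each restriction $U_i|_{\GA(I)}\colon\GA(I)\to\mc U(\mc H_i)$ is a continuous unitary homomorphism since $\pi_{i,I}$ is normal and $V$ depends continuously on $g$.

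\textbf{Step 2 (generation).} The group $\GA$ is algebraically generated by $\bigcup_{I\in\mc J}\GA(I)$: a partition-of-unity argument shows $\Diffp(S^1)$ is generated by its localized subgroups $\Diff_I^0(S^1)$, and this lifts to $\scr G$ and thence to $\GA$ (using $U(1)\subset\GA(I)$ for every $I$). Hence any $g\in\GA$ admits a factorization $g=g_1\cdots g_n$ with $g_k=(h_k,V_k)\in\GA(I_k)$, and I define
\begin{align*}
U_i(g):=\pi_{i,I_1}(V_1)\cdots\pi_{i,I_n}(V_n).
\end{align*}

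\textbf{Step 3 (well-definedness).} I must show the right-hand side depends only on $g$, i.e.\ that any relation $g_1\cdots g_n=1$ in $\GA$ with $g_k\in\GA(I_k)$ forces $\prod_k\pi_{i,I_k}(V_k)=1$. My strategy is refinement-plus-merging: first refine each factor into pieces over arbitrarily small subintervals of $I_k$ (possible by Step 2 applied inside the connected $\GA(I_k)$, which leaves the product unchanged by multiplicativity of $\pi_{i,I_k}$); then iteratively merge adjacent factors, since when $I_k\cup I_{k+1}\subset\widetilde I\in\mc J$ isotony gives
\begin{align*}
\pi_{i,I_k}(V_k)\pi_{i,I_{k+1}}(V_{k+1})=\pi_{i,\widetilde I}(V_k V_{k+1}),
\end{align*}
while $g_k g_{k+1}=(h_k h_{k+1},V_k V_{k+1})\in\GA(\widetilde I)$ (because $\scr G(I_k),\scr G(I_{k+1})\subset\scr G(\widetilde I)$). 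With fine enough initial refinement the merging eventually collapses the product to a single $\pi_{i,\widetilde I_\infty}(V_1\cdots V_n)$, and if $g_1\cdots g_n=1$ then $V_1\cdots V_n=1$, so the result is $1$.

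\textbf{Step 4 (uniqueness and continuity).} Uniqueness is immediate since $U_i$ is forced on the generating set $\bigcup_I\GA(I)$. For continuity, a neighborhood of $1\in\GA$ sits inside some $\GA(I)$, on which $U_i=\pi_{i,I}\circ U$ is continuous; the group-homomorphism property extends continuity to all of $\GA$.

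The main obstacle is Step 3: the merging must be ordered so that the envelopes $\widetilde I$ never need to cover $S^1$ (at which point no common envelope exists in $\mc J$). One handles this by refining the initial factorization so that the total ``length'' of the pieces is bounded well below $2\pi$, keeping all merges safely within proper subintervals; alternatively, one recasts well-definedness as the vanishing of an obstruction on null-homotopic loops in $\GA$, exploiting the simply-connectedness of $\scr G$ together with the observation that the central $U(1)$ acts harmlessly as scalars (consistently with its action on $\mc H_0$). Either way, the rest of the argument is routine bookkeeping.
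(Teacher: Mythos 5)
Your overall architecture (define $U_i$ locally as $\pi_{i,I}\circ U$ on $\GA(I)$, extend by generation, then argue well-definedness) is the natural one, but Step 3 is a genuine gap, and it is exactly where the real mathematical content lies. The claim you need is that every relation $g_1\cdots g_n=1$ among localized elements is a consequence of the ``obvious'' relations holding inside a single $\GA(\widetilde I)$; this is precisely the statement that the canonical map $\mathrm{colim}_{I\in\mc J}\GA(I)\rightarrow\GA$ is an isomorphism, which is the nontrivial theorem of Henriques \cite{Hen19} that the paper invokes (via his Propositions 2 and Theorem 11/12). Your refine-and-merge scheme does not establish it: refining each $g_k$ only replaces a letter by a subword supported in subintervals of the \emph{same} $I_k$, so the supports occurring in the word are unchanged, and if $I_1,\dots,I_n$ cover $S^1$ (e.g.\ $n=3$ intervals covering the circle) the successive merges inevitably call for an envelope $\widetilde I$ containing a dense union, which does not exist in $\mc J$; bounding the ``total length'' of the pieces does not help because the intermediate partial products $g_1\cdots g_k$ need not be localized at all. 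Your alternative suggestion (an obstruction on null-homotopic loops using simple connectedness of $\scr G$) is only a gesture: $\GA$ is not simply connected, the relation $g_1\cdots g_n=1$ is an algebraic identity rather than a loop, and no argument is given that the obstruction vanishes. So as written, well-definedness is not proved.

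Step 4 also contains an error: it is not true that a neighborhood of $1$ in $\GA$ lies inside some $\GA(I)$. In the $C^\infty$ topology, every neighborhood of the identity of $\Diffp(S^1)$ contains diffeomorphisms whose support is not contained in any single interval, so continuity cannot be localized this way. In the paper both issues are settled at once by the colimit statement being an isomorphism of \emph{topological} groups: the compatible continuous maps $\pi_{i,I}\circ U:\GA(I)\rightarrow\mc U(\mc H_i)$ then glue to a continuous homomorphism on $\GA$, and surjectivity of the colimit map gives uniqueness (your uniqueness argument, by contrast, is fine). To repair your proof you would either have to cite the colimit theorem, at which point your Steps 2--4 collapse into the paper's argument, or reprove a substantial portion of \cite{Hen19}.
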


\begin{proof}
In \cite{Hen19},   it was shown in the proof of theorem 12 that the collection of inclusions $\{\scr G(I)\subset \scr G \}_{I\in\mc J}$ (which by theorem 11 is equivalent to $\{\scr G(I)\subset \mathrm{colim}_{J\in\mc J}\scr G(J) \}_{I\in\mc J}$) satisfies the assumption in the first paragraph of proposition 2 of that article.\footnote{Our $\scr G(I)$ is the same as $\Diff_0(I)$ in \cite{Hen19}.} Thus, by the second paragraph of that proposition, the canonical map
\begin{align}
\mathrm{colim}_{I\in\mc J}\GA(I)\rightarrow\GA\label{eq93}
\end{align}
induced by inclusions is an isomorphism of topological groups.  Thus, the collection of continuous representations $\{\pi_{i,I}\circ U:\GA(I)\curvearrowright\mc H_i\}_{I\in\mc J}$  gives rise to a continuous representation $\GA(I)\curvearrowright\mc H_i$ satisfying \eqref{eq15}. As \eqref{eq93} is surjective, the representation satisfying \eqref{eq15} is unique. 
\end{proof}

\begin{rem}
Our notion of a conformal covariant representation $\mc H_i$ is stronger than the usual one in the literature, which requires that $\mc H_i$ admits a projective representation of $\Diffp(S^1)$ on $\mc H_i$ satisfying \eqref{eq16}, and that the generator $L_0$ of the rotation subgroup is positive when acting on $\mc H_i$. Indeed, the positivity of $L_0$ is redundant by \cite{Wei06}; condition \eqref{eq16} follows from \eqref{eq15} by corollary \ref{lb7}.
\end{rem}


We rephrase the surjectivity of  \eqref{eq93} as follows, which also follows directly from \cite{Hen19} lemma 17-(ii).

\begin{pp}\label{lb6}
$\GA$ is (algebraically) generated by $\{\GA(I)\}_{I\in\mathcal J}$.  
\end{pp}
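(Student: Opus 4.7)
The plan is to derive the proposition as an immediate corollary of the surjectivity of the canonical morphism
\[
\mathrm{colim}_{I\in\mc J}\GA(I)\longrightarrow\GA
\]
that was already established inside the proof of Theorem \ref{lb59} (via the Henriques results cited there). Once this surjectivity is in hand, no further analysis of the central $U(1)$ or of $\scr G$ is needed: unwinding the construction of the colimit of groups along the inclusions $\GA(I_1)\subset\GA(I_2)$ (coming from isotony and from $J\sjs I\Rightarrow\GA(J)\subset\GA(I)$ as in \eqref{eq94}), every element of the colimit is represented by a finite word in elements drawn from the various $\GA(I)$. Surjectivity therefore says exactly that every $g\in\GA$ can be written as a finite product $g=g_1g_2\cdots g_n$ with each $g_k\in\GA(I_k)$ for some $I_k\in\mc J$, which is the defining property of algebraic generation.

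Thus the only substantive content of the proof is the surjectivity onto $\GA$, and that content is already part of the infrastructure imported in the proof of Theorem \ref{lb59}. In writing up the proof I would simply reference the surjectivity of \eqref{eq93} and conclude in one sentence. I would also mention that an alternative justification is \cite{Hen19} lemma 17-(ii), which states the generation property directly.

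If, instead, one wished a self-contained argument that did not cite Henriques, the natural two-step strategy would be: first verify that $\scr G=\wtd{\Diffp(S^1)}$ is generated by $\{\scr G(I)\}_{I\in\mc J}$, which follows from the standard fact that $\Diffp(S^1)$ is generated by $\{\Diff_I(S^1)\}_{I\in\mc J}$ (write a diffeomorphism as a product of two pieces, each supported in a proper subinterval) together with path-lifting in the covering $\scr G\to\Diffp(S^1)$; and then check that the central $U(1)\subset\GA$ lies in the subgroup generated by $\bigcup_I\GA(I)$, which one can see by realizing a full rotation as a product of localized lifts and observing that the resulting central phase sweeps out all of $U(1)$ by continuity of the relevant one-parameter family. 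The main technical annoyance in this alternative route is handling the central fiber, but the colimit argument sidesteps it entirely, which is why I would adopt the first approach.
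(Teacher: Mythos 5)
Your proposal is correct and follows essentially the same route as the paper: the paper presents Proposition \ref{lb6} precisely as a restatement of the surjectivity of the colimit map \eqref{eq93} established in the proof of Theorem \ref{lb59}, noting also that it follows directly from \cite{Hen19} lemma 17-(ii). Your unwinding of the colimit into finite words from the $\GA(I)$ is exactly the intended content, so no further comment is needed.
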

\begin{rem}\label{lb58}
By \eqref{eq94} and the above proposition, $\GA$ is  generated by $\{\GA^0(I)\}_{I\in\mathcal J}$. Thus, similarly, $\scr G$ is also generated by $\{\scr G^0(I)\}_{I\in\mathcal J}$.
\end{rem}

\begin{co}\label{lb7}
For any $g\in\GA$, $U_i(g)\in\bigvee_{I\in\mc J}\pi_{i,I}(\mc A(I))$, and 
\begin{align}
U_i(g)\pi_{i,I}(x)U_i(g)^*=\pi_{i,gI}(U(g)xU(g)^* )\label{eq16}
\end{align}
for any $I\in\mc J,x\in\mc A(I)$.
\end{co}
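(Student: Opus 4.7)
The plan is to deduce both assertions from the generation of $\GA$ by the subgroups $\{\GA(I)\}_{I\in\mc J}$ (Proposition \ref{lb6}), combined with additivity of $\mc A$.

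For the first assertion, I will write any $g\in\GA$ as a finite product $g=g_1\cdots g_n$ with each $g_k\in\GA(I_k)$ using Proposition \ref{lb6}. Conformal covariance (noted just before Theorem \ref{lb59}) gives $U(g_k)\in\mc A(I_k)$, and Theorem \ref{lb59} yields $U_i(g_k)=\pi_{i,I_k}(U(g_k))\in\pi_{i,I_k}(\mc A(I_k))$. Multiplying, $U_i(g)\in\bigvee_{I\in\mc J}\pi_{i,I}(\mc A(I))$.

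For \eqref{eq16}, I will first verify that the set $S\subset\GA$ of elements for which \eqref{eq16} holds (for every $I\in\mc J$ and $x\in\mc A(I)$) is a subgroup. Closure under products is immediate: given $g_1,g_2\in S$, applying \eqref{eq16} for $g_2$ at interval $I$ and then for $g_1$ at interval $g_2 I$ yields \eqref{eq16} for $g_1g_2$ at interval $I$. Closure under inversion follows by the substitution $y=U(g)xU(g)^*\in\mc A(gI)$ in the identity for $g$. By Proposition \ref{lb6}, it then remains to show $\GA(K)\subset S$ for each $K\in\mc J$.

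Fix $K\in\mc J$, $g\in\GA(K)$, and $I\in\mc J$. Both $x\mapsto U_i(g)\pi_{i,I}(x)U_i(g)^*$ and $x\mapsto\pi_{i,gI}(U(g)xU(g)^*)$ are normal $*$-homomorphisms $\mc A(I)\to B(\mc H_i)$, so by additivity of $\mc A$ it suffices to check equality on $\mc A(I_\alpha)$ for each member of some open cover $\{I_\alpha\}$ of $I$ by sub-intervals $I_\alpha\in\mc J$. I will choose the $I_\alpha$ short enough (e.g.\ $|I_\alpha|<|K^c|$) that $I_\alpha\cup K\neq S^1$, so that there exists $L_\alpha\in\mc J$ containing $I_\alpha\cup K$. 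Because $g$ fixes $\overline{K^c}$ pointwise, $gI_\alpha\subset I_\alpha\cup K\subset L_\alpha$ as well. Applying the $*$-homomorphism $\pi_{i,L_\alpha}$ to the identity $\pi_{i,L_\alpha}(U(g)xU(g)^*)=\pi_{i,L_\alpha}(U(g))\pi_{i,L_\alpha}(x)\pi_{i,L_\alpha}(U(g))^*$, then restricting each factor by isotony to $\pi_{i,I_\alpha}$, $\pi_{i,K}$, and $\pi_{i,gI_\alpha}$, and invoking Theorem \ref{lb59} to identify $\pi_{i,K}(U(g))=U_i(g)$, delivers the required equality on $\mc A(I_\alpha)$.

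The only real obstacle is the case $I\cup K=S^1$, where no single $L\in\mc J$ contains $K$, $I$ and $gI$ simultaneously. The additivity-plus-covering step is designed precisely to sidestep this: one reduces the problem to each $\mc A(I_\alpha)$ with $I_\alpha$ chosen small enough to rule out $I_\alpha\cup K=S^1$, thereby avoiding any need to fragment $g$ inside $\GA(K)$.
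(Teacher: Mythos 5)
Your proposal is correct and follows essentially the same route as the paper: establish both claims for $g$ in a local subgroup $\GA(K)$ by working inside a single interval containing both $K$ and a small piece of $I$, extend to all of $\mc A(I)$ by additivity, and extend to all $g\in\GA$ via the generation statement of Proposition \ref{lb6}. The only difference is that you make explicit (via the subgroup $S$ and its closure under products and inverses) what the paper leaves implicit when invoking Proposition \ref{lb6} at the last step.
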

\begin{proof}
Clearly $U_i(g)\in\bigvee_{I\in\mc J}\pi_i(\mc A(I))$ when $g\in\GA(J)$ for some $J\in\mathcal J$. Thus it holds in general by proposition \ref{lb6}. On the other hand, if we fix $J\in\mathcal J$ and $g\in\GA(J)$, then \eqref{eq16} holds whenever $x\in\mc A(I)$ and $I$ is small enough such that $I$ and $J$ can be covered by an open interval in $S^1$. Thus, by the additivity of $\mc A$, \eqref{eq16} holds for any $I\in\mc J$ and $x\in\mc A(I)$  and the given $g\in\GA(J)$. Again, by proposition \ref{lb6}, equation \eqref{eq16} holds for any $g\in\GA$.
\end{proof}

Note that $\mathscr G$ restricts to the universal covering space $\wtd\PSU(1,1)$ of $\PSU(1,1)$, which is generated by $\wtd\exp(iX)$ where $X=a_1L_1+a_0L_0+a_{-1}L_{-1}$ is a self adjoint . By \cite{Bar54}, if we restrict the projective representation of $\mathscr G$ on $\mc H_i$ to a projective representation of $\wtd\PSU(1,1)$, then the latter can be lifted uniquely to a (continuous) unitary representation of $\wtd\PSU(1,1)$, also denoted by $U_i$. This shows that any conformal $\mathcal A$-module $\mathcal H_i$ is M\"obius covariant, in the sense that besides the positivity of $L_0$, there exists a (continuous) unitary representation of $\wtd\PSU(1,1)$ on $\mathcal H_i$ such that \eqref{eq16} holds for any $g\in\wtd\PSU(1,1)$.

\subsection{Connes fusion $\mathcal H_i(I)\boxtimes\mathcal H_j(J)$}

Starting from this section, we use Connes fusion to study the tensor category of the representations of conformal nets. Except in section 2.4, most of the discussions in this  and the following chapters  do not rely on the conformal structures of conformal net modules. Thus the results are also true for M\"obius covariant nets and their (normal) representations.

For any $\mathcal A$-modules $\mathcal H_i$, $\mathcal H_j$, we let $\Hom_{\mathcal A}(\mathcal H_i,\mathcal H_j)$ be the vector space of  bounded linear operators $T:\mathcal H_i\rightarrow\mathcal H_j$, such that $T\pi_i(x)=\pi_j(x)T$ for any $I\in\mathcal J$ and $x\in\mc A(I)$. Similarly, given $I\in\mathcal J$, we let $\Hom_{\mathcal A(I)}(\mathcal H_i,\mathcal H_j)$ be the vector space of  bounded linear operators $\mathcal H_i\rightarrow\mathcal H_j$ intertwining only the actions of $\mathcal A(I)$. Since $\mathcal A(I)$ is a type III factor,  $\mathcal H_i$ and $\mathcal H_j$  are equivalent as $\mathcal A(I)$-modules if they are both non-trivial. Therefore $\Hom_{\mathcal A(I)}(\mathcal H_i,\mathcal H_j)$ has unitary operators.

\begin{df}
Let $\mathcal H_i$ be an $\mathcal A$-module.  Given $I\in\mathcal J$, we say that a vector $\xi\in\mathcal H_i$ is \textbf{$I$-bounded}, if there exists $A\in\Hom_{\mathcal A(I^c)}(\mathcal H_0,\mathcal H_i)$, such that $A\Omega=\xi$.
\end{df}
Since $\mathcal A(I^c)\Omega$ is dense in $\mathcal H_0$ by Reeh-Schlieder theorem, such $A$, if exists, must be unique, and we will denote this operator by $Z(\xi,I)$. Let $\mathcal H_i(I)$ be the set of $I$-bounded vectors in $\mathcal H_i$. In other words $\mathcal H_i(I)=\Hom_{\mathcal A(I^c)}(\mathcal H_0,\mathcal H_i)\Omega$. Then clearly $\mathcal H_0(I)=\mathcal A(I)\Omega$ by Haag duality. In particular $\mathcal H_0(I)$ is dense. Since there exist unitary operators in  $\Hom_{\mathcal A(I^c)}(\mathcal H_0,\mathcal H_i)$, $\mathcal H_i(I)$ is also dense in $\mathcal H_i$.\footnote{One  can indeed prove the density without appeal to the type III property. See \cite{Tak02} chapter IX lemma 3.3 (iii).}

We now define the Connes fusion product(s) of two $\mathcal A$-modules $\mathcal H_i,\mathcal H_j$. Choose disjoint $I,J\in\mathcal J$. We define a positive sesquilinear form $\bk{\cdot|\cdot}$ (antilinear on the second variable) on $\mathcal H_i(I)\otimes\mathcal H_j(J)$  by setting, for any $\xi_1,\xi_2\in\mathcal H_i(I),\eta_1,\eta_2\in\mathcal H_j(J)$,
\begin{align}
\bk{\xi_1\otimes\eta_1|\xi_2\otimes\eta_2}=\bk{Z(\eta_2,J)^*Z(\eta_1,J)Z(\xi_2,I)^*Z(\xi_1,I)\Omega|\Omega}.
\end{align}
The positivity of $\bk{\cdot|\cdot}$ is easy to show (see for example \cite{Tak02} proposition IX.3.15). Since $Z(\xi_2,I)^*Z(\xi_1,I)\in\Hom_{\mathcal A(I^c)}(\mathcal H_0,\mathcal H_0)=\mathcal A(I^c)'=\mathcal A(I)$ and, similarly, $Z(\eta_2,J)^*Z(\eta_1,J)\in\mathcal A(J)$, we also have
\begin{align}
\bk{\xi_1\otimes\eta_1|\xi_2\otimes\eta_2}=\bk{Z(\xi_2,I)^*Z(\xi_1,I)Z(\eta_2,J)^*Z(\eta_1,J)\Omega|\Omega}.
\end{align}

\begin{df}
Define a Hilbert space $\mathcal H_i(I)\boxtimes\mathcal H_j(J)$ to be the completion of $\mathcal H_i(I)\otimes\mathcal H_j(J)$ under  $\bk{\cdot|\cdot}$. This Hilbert space is called the \textbf{Connes fusion (product) of $\mathcal H_i,\mathcal H_j$ over the intervals $I,J$.}
\end{df}
For simplicity, we  let $\xi\otimes \eta\in\mc H_i(I)\otimes\mc H_j(J)$ also denote the corresponding vector in $\mc H_i(I)\otimes\mc H_j(J)$.

Note that the order of Connes fusion doesn't matter: we can identify $\mathcal H_i(I)\boxtimes\mathcal H_j(J)$ with $\mathcal H_j(J)\boxtimes\mathcal H_i(I)$ by the canonical map $\mathcal H_i(I)\boxtimes\mathcal H_j(J)\ni\xi\otimes\eta\mapsto\eta\otimes\xi$.\\

We now relate Connes fusions over different intervals. Note that by the intertwining properties of these $Z$'s, we clearly have
\begin{align}\label{eq1}
\bk{\xi_1\otimes\eta_1|\xi_2\otimes\eta_2}=\bk{\pi_j(Z(\xi_2,I)^*Z(\xi_1,I))\eta_1|\eta_2}=\bk{\pi_i(Z(\eta_2,J)^*Z(\eta_1,J))\xi_1|\xi_2}.
\end{align}
Here $Z(\xi_2,I)^*Z(\xi_1,I)$ and $Z(\eta_2,J)^*Z(\eta_1,J)$ are regarded respectively as elements in $\mc A(I)$ and $\mc A(J)$. From these relations, one easily sees that $H\otimes K$ is dense in $\mathcal H_i(I)\boxtimes\mathcal H_j(J)$ under the inner product $\bk{\cdot|\cdot}$ if $H$ and $K$ are dense subspaces of $\mathcal H_i(I),\mathcal H_j(J)$ respectively. In particular, if $I_1\subset I$, $J_1\subset J$ are open intervals, then, as $\mathcal H_i(I_1)$ is dense in $\mathcal H_i$ (and therefore in $\mathcal H_i(I)$) and $\mathcal H_j(J_1)$ is dense in $\mathcal H_j(J)$, $\mathcal H_i(I_1)\boxtimes\mathcal H_j(J_1)$ is the same as $\mathcal H_i(I)\boxtimes\mathcal H_j(J)$.

\begin{df}
Let $I_1\subset I$ and $J_1\subset J$ be open intervals. By  \textbf{canonical equivalence} (or  \textbf{canonical map}) $\mathcal H_i(I_1)\boxtimes\mathcal H_j(J_1)\overset{\simeq}{\rightarrow }\mathcal H_i(I)\boxtimes\mathcal H_j(J)$ we mean the unitary map defined by $\xi\otimes\eta\mapsto\xi\otimes\eta$, where $\xi\in\mathcal H_i(I_1),\eta\in\mathcal H_j(J_1)$. Its inverse map is called the \text{canonical equivalence} $\mathcal H_i(I)\boxtimes\mathcal H_j(J)\overset{\simeq}{\rightarrow}\mathcal H_i(I_1)\boxtimes\mathcal H_j(J_1)$.
\end{df}

Next, we shall relate $\mathcal H_i(I_1)\boxtimes\mathcal H_j(J_1)$ and $\mathcal H_i(I_2)\boxtimes\mathcal H_j(J_2)$ when $I_1,I_2$ and $J_1,J_2$ are in general positions. In this case the equivalence maps will depend on the homotopy classes of paths relating these two pairs of intervals. This is where braid groups enter our story. To this end, we first define, for any distinct points $z,\zeta\in S^1$, the \textbf{Connes fusion of $\mathcal H_i,\mathcal H_j$ over $z,\zeta$}, to be a Hilbert space
\begin{align*}
\mathcal H_i(z)\boxtimes\mathcal H_j(\zeta)=\varinjlim_{(I,J)\ni(z,\zeta)}\mathcal H_i(I)\boxtimes\mathcal H_j(J)=\Bigg(\coprod_{(I,J)\ni(z,\zeta)}\mathcal H_i(I)\boxtimes\mathcal H_j(J)\Bigg)\Bigg/\simeq,
\end{align*}
where the subscript open intervals  $I,J\in\mathcal J$ are disjoint, and the equivalence relation is given by the canonical equivalence.\footnote{It will be interesting to compare our definition with the $P(z)$-tensor products in \cite{HL95a}.} Then for any fixed disjoint open intervals $I,J\in\mathcal J$ containing $z,\zeta$ respectively, we have an obvious canonical map
\begin{align*}
\mathcal H_i(I)\boxtimes\mathcal H_j(J)\overset{\simeq}{\rightarrow}\mathcal H_i(z)\boxtimes\mathcal H_j(\zeta)
\end{align*}
as well as its inverse map.

Now let $\Conf_2(S^1)=\{(z,\zeta\in S^1):z\neq \zeta\}$.  Let $\gamma(t)=(\alpha(t),\beta(t))$ ($0\leq t\leq 1$) be a path in $\Conf_2(S^1)$ with initial point $(z_1,\zeta_1)=\gamma(0)$ and end point $(z_2,\zeta_2)=\gamma(1)$. We shall use this path to define a unitary map $\mathcal H_i(z_1)\boxtimes\mathcal H_j(\zeta_1){\rightarrow}\mathcal H_i(z_2)\boxtimes\mathcal H_j(\zeta_2)$. First, we say that $\gamma$ is small if there exist disjoint open intervals $I,J\in\mathcal J$ such that the image of $\gamma$ is included in $I\times J$. Then the map $\gamma^\bullet:\mathcal H_i(z_1)\boxtimes\mathcal H_j(\zeta_1){\rightarrow}\mathcal H_i(z_2)\boxtimes\mathcal H_j(\zeta_2)$ is defined using the canonical equivalences
\begin{align*}
\mathcal H_i(z_1)\boxtimes\mathcal H_j(\zeta_1)\overset{\simeq}{\rightarrow}\mathcal H_i(I)\boxtimes\mathcal H_j(J)  \overset{\simeq}{\rightarrow}\mathcal H_i(z_2)\boxtimes\mathcal H_j(\zeta_2).
\end{align*}
For a general path $\gamma$, we choose $0=t_0<t_1<t_2<\cdots<t_n=1$, such that $\gamma|_{[t_{k-1},t_k]}$ is small for any $k=1,2,\dots,n$. This is called a partition of $\gamma$. Let $\gamma_k$ be a reparametrization of $\gamma|_{[t_{k-1},t_k]}$ such that the variable $t$ is again defined on $[0,1]$. We then define a unitary map
\begin{gather*}
\gamma^\bullet:\mathcal H_i(z_1)\boxtimes\mathcal H_j(\zeta_1){\rightarrow}\mathcal H_i(z_2)\boxtimes\mathcal H_j(\zeta_2),\\
\gamma^\bullet=\gamma^\bullet_n\gamma^\bullet_{n-1}\cdots \gamma^\bullet_1.
\end{gather*}
Obviously, finer partitions give the same result. Therefore the map $\gamma^\bullet$ is independent of the partitions. We call it the \textbf{path continuation $\mathcal H_i(z_1)\boxtimes\mathcal H_j(\zeta_1)\overset{\simeq}{\rightarrow}\mathcal H_i(z_2)\boxtimes\mathcal H_j(\zeta_2)$ induced by $\gamma$}. 

Now we return to the Connes fusions over open intervals. Suppose we have two pairs of mutually disjoint open intervals $I_1,J_1$ and $I_2,J_2$ in $ S^1$. Choose a path $\gamma$  in $\Conf_2(S^1)$ such that $\gamma(0)\in I_1\times J_1$, $\gamma(1)\in I_2\times J_2$. Let $(z_1,\zeta_1)=\gamma(0)$ and $(z_2,\zeta_2)=\gamma(1)$. We then define the \textbf{path continuation $\mathcal H_i(I_1)\boxtimes\mathcal H_j(J_1)\overset{\simeq}{\rightarrow }\mathcal H_i(I_2)\boxtimes\mathcal H_j(J_2)$ induced by $\gamma$} to be the map $\gamma^\bullet$ defined by
\begin{align*}
\mathcal H_i(I_1)\boxtimes\mathcal H_j(J_1)\overset{\simeq}{\rightarrow}\mathcal H_i(z_1)\boxtimes\mathcal H_j(\zeta_1) \overset{\gamma^\bullet}{\rightarrow} \mathcal H_i(z_2)\boxtimes\mathcal H_j(\zeta_2)\overset{\simeq}{\rightarrow}\mathcal H_i(I_2)\boxtimes\mathcal H_j(J_2).
\end{align*}
The following obvious lemma provides a practical way of calculating $\gamma^\bullet$.

\begin{lm}\label{lb1}
Choose disjoint $I_1,J_1\in\mathcal J$, and $I_2,J_2\in\mathcal J$. Let $\gamma$ be a path in $\Conf_2(S^1)$ from $I_1\times J_1$ to $I_2\times J_2$. If $\gamma([0,1])\subset I_1\times J_1$ or $\gamma([0,1])\subset I_2\times J_2$, and $I_1\cap I_2,J_1\cap J_2\in\mathcal J$ (see figure \ref{fig1}), then $\gamma^\bullet:\mathcal H_i(I_1)\boxtimes\mathcal H_j(J_1)\rightarrow\mathcal H_i(I_2)\boxtimes\mathcal H_j(J_2)$ equals the map
\begin{align*}
\mathcal H_i(I_1)\boxtimes\mathcal H_j(J_1)\overset{\simeq}{\rightarrow}\mathcal H_i(I_1\cap I_2)\boxtimes\mathcal H_j(J_1\cap J_2)\overset{\simeq}{\rightarrow}\mathcal H_i(I_2)\boxtimes\mathcal H_j(J_2).
\end{align*}
\begin{figure}[h]
	\centering
	\includegraphics[width=0.25\linewidth]{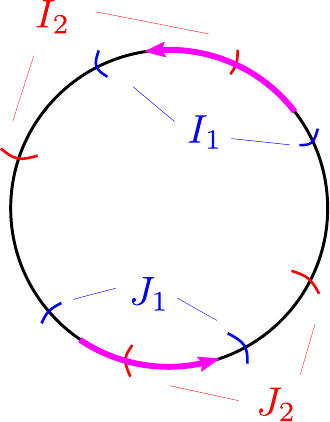}
	\caption{}
	\label{fig1}
\end{figure}
\end{lm}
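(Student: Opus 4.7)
The plan is to peel off the definition of $\gamma^\bullet$ and exploit the fact that, under the stated hypothesis, $\gamma$ is already a \emph{small} path, so no genuine partition is needed and the path continuation reduces to a chain of canonical equivalences.

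First I would handle the case $\gamma([0,1])\subset I_1\times J_1$. By definition, smallness means there exist disjoint $I,J\in\mc J$ with $\gamma([0,1])\subset I\times J$, and here $(I,J)=(I_1,J_1)$ works. Writing $(z_1,\zeta_1)=\gamma(0)\in I_1\times J_1$ and $(z_2,\zeta_2)=\gamma(1)$, the hypothesis forces $(z_2,\zeta_2)\in (I_1\times J_1)\cap(I_2\times J_2)=(I_1\cap I_2)\times(J_1\cap J_2)$. By the definition of $\gamma^\bullet$ for small paths,
\begin{align*}
\gamma^\bullet:\mc H_i(z_1)\boxtimes\mc H_j(\zeta_1)\xrightarrow{\simeq}\mc H_i(I_1)\boxtimes\mc H_j(J_1)\xrightarrow{\simeq}\mc H_i(z_2)\boxtimes\mc H_j(\zeta_2),
\end{align*}
both arrows being canonical equivalences.

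Next I would splice this into the full definition of the path continuation between the interval-indexed Connes fusions. It reads
\begin{align*}
\mc H_i(I_1)\boxtimes\mc H_j(J_1)\xrightarrow{\simeq}\mc H_i(z_1)\boxtimes\mc H_j(\zeta_1)\xrightarrow{\gamma^\bullet}\mc H_i(z_2)\boxtimes\mc H_j(\zeta_2)\xrightarrow{\simeq}\mc H_i(I_2)\boxtimes\mc H_j(J_2).
\end{align*}
The first canonical map and the first factor inside $\gamma^\bullet$ are mutually inverse canonical equivalences, so they cancel. What remains is
\begin{align*}
\mc H_i(I_1)\boxtimes\mc H_j(J_1)\xrightarrow{\simeq}\mc H_i(z_2)\boxtimes\mc H_j(\zeta_2)\xrightarrow{\simeq}\mc H_i(I_2)\boxtimes\mc H_j(J_2),
\end{align*}
and since $(z_2,\zeta_2)\in(I_1\cap I_2)\times(J_1\cap J_2)$, this composition factors through $\mc H_i(I_1\cap I_2)\boxtimes\mc H_j(J_1\cap J_2)$ via canonical equivalences, which is exactly the asserted formula.

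Finally I would observe that the case $\gamma([0,1])\subset I_2\times J_2$ is entirely symmetric: take $(I,J)=(I_2,J_2)$ as the witness for smallness, and the analogous cancellation at the right-hand end yields the same factorisation through $\mc H_i(I_1\cap I_2)\boxtimes\mc H_j(J_1\cap J_2)$. There is no serious obstacle here: the only thing to be careful about is that the canonical maps really do satisfy the transitivity $(\mc H_i(K_1)\boxtimes\mc H_j(L_1))\to(\mc H_i(K_2)\boxtimes\mc H_j(L_2))\to(\mc H_i(K_3)\boxtimes\mc H_j(L_3))$ whenever $K_1\subset K_2\subset K_3$ and $L_1\subset L_2\subset L_3$, which is immediate from the fact that each is given by $\xi\otimes\eta\mapsto\xi\otimes\eta$ on dense subspaces.
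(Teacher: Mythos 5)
Your argument is correct and is exactly the intended one: the paper states lemma \ref{lb1} without proof (calling it obvious), and the content of that "obvious" is precisely your unwinding — under the hypothesis the path is small with witness $(I_1,J_1)$ (or $(I_2,J_2)$), the canonical equivalence into $\mathcal H_i(z_1)\boxtimes\mathcal H_j(\zeta_1)$ cancels against the first leg of the small-path continuation, and the remaining composition agrees on the dense subspace $\mathcal H_i(I_1\cap I_2)\otimes\mathcal H_j(J_1\cap J_2)$ with the factorisation through the intersection, since all maps involved are $\xi\otimes\eta\mapsto\xi\otimes\eta$ there. Your closing remark on transitivity of the canonical maps, together with noting that $\gamma(1)$ (resp. $\gamma(0)$ in the symmetric case) lies in $(I_1\cap I_2)\times(J_1\cap J_2)$, is all that is needed.
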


We now show that homotopic paths induce the same map.
\begin{pp}\label{lb4}
Let $\gamma,\widetilde\gamma$ be two paths in $\Conf_2(S^1)$ with $\gamma(0),\widetilde\gamma(0)\in I_1\times J_1$ and  $\gamma(1),\widetilde\gamma(1)\in I_2\times J_2$. Suppose that there exists a homotopy map $\Gamma:[0,1]\times[0,1]\rightarrow\Conf_2(S^1)$  connecting the two paths $\gamma=\Gamma(\cdot,0)$ and $\widetilde\gamma=\Gamma(\cdot,1)$. Assume moreover that $\Gamma(0,[0,1])\subset I_1\times J_1,\Gamma(1,[0,1])\subset I_2\times J_2$. Then $\gamma^\bullet=\widetilde\gamma^\bullet$.
\end{pp}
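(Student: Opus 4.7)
The plan is to reduce the proposition to a local ``triviality'' lemma for paths that stay inside a single product $I\times J$, and then to run a standard subdivision-and-ladder argument on $[0,1]\times[0,1]$ using the compactness of $\Gamma$. The first step is to establish the local lemma: if $\alpha,\beta$ are two paths in $\Conf_2(S^1)$ with the same endpoints $(z_1,\zeta_1),(z_2,\zeta_2)$ and both contained in $I\times J$ for a fixed pair of disjoint open intervals $I,J\in\mc J$, then $\alpha^\bullet=\beta^\bullet$ as maps $\mc H_i(z_1)\boxtimes\mc H_j(\zeta_1)\to\mc H_i(z_2)\boxtimes\mc H_j(\zeta_2)$. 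This is essentially immediate from Lemma \ref{lb1}: by subdividing each path into small sub-paths whose images stay inside $I\times J$, each sub-path's contribution factors through $\mc H_i(I)\boxtimes\mc H_j(J)$, and successive such factorizations telescope to a single canonical equivalence that is manifestly independent of the chosen path.

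For the main argument I would use uniform continuity of $\Gamma$ to select partitions $0=s_0<\cdots<s_m=1$ and $0=t_0<\cdots<t_n=1$ fine enough that for each closed sub-rectangle $R_{k,l}=[s_{k-1},s_k]\times[t_{l-1},t_l]$ one has $\Gamma(R_{k,l})\subset\widehat I_{k,l}\times\widehat J_{k,l}$ for some disjoint $\widehat I_{k,l},\widehat J_{k,l}\in\mc J$. Setting $\gamma_l:=\Gamma(\cdot,t_l)$ so that $\gamma_0=\gamma$ and $\gamma_n=\widetilde\gamma$, it suffices by induction on $l$ to prove $\gamma_{l-1}^\bullet=\gamma_l^\bullet$ for a single value of $l$. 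Introduce the ``vertical rungs'' $v_k:=\Gamma(s_k,\cdot)|_{[t_{l-1},t_l]}$, each a path from $\gamma_{l-1}(s_k)$ to $\gamma_l(s_k)$ contained in $\widehat I_{k,l}\times\widehat J_{k,l}$ (and also in $\widehat I_{k+1,l}\times\widehat J_{k+1,l}$ when $k<m$). On each column $k$ the two paths $\gamma_{l-1}|_{[s_{k-1},s_k]}$ and $v_{k-1}\ast(\gamma_l|_{[s_{k-1},s_k]})\ast v_k^{-1}$ share endpoints and both lie in $\widehat I_{k,l}\times\widehat J_{k,l}$, so by the local lemma they induce the same pointwise path continuation. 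Composing these column-wise equalities across $k=1,\dots,m$ causes the intermediate rungs $v_k^\bullet$ (for $1\le k\le m-1$) to telescope, leaving the pointwise identity $\gamma_{l-1}^\bullet|_{\mathrm{pt}}=(v_m^\bullet)^{-1}\circ\gamma_l^\bullet|_{\mathrm{pt}}\circ v_0^\bullet$.

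Finally, the boundary hypothesis $\Gamma(\{0\}\times[0,1])\subset I_1\times J_1$ places $v_0$ inside $I_1\times J_1$, so one more application of the local lemma identifies $v_0^\bullet$ with the canonical equivalence between the two pointwise fusions at $\gamma_{l-1}(0)$ and $\gamma_l(0)$ factoring through $\mc H_i(I_1)\boxtimes\mc H_j(J_1)$; similarly for $v_m$ inside $\mc H_i(I_2)\boxtimes\mc H_j(J_2)$. These canonical equivalences are precisely the ones used in the definitions of $\gamma_{l-1}^\bullet$ and $\gamma_l^\bullet$ as maps $\mc H_i(I_1)\boxtimes\mc H_j(J_1)\to\mc H_i(I_2)\boxtimes\mc H_j(J_2)$, so when those definitions are unrolled the rungs $v_0^\bullet,v_m^\bullet$ are absorbed and the pointwise identity upgrades to $\gamma_{l-1}^\bullet=\gamma_l^\bullet$ between the fixed interval-fusions. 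The main obstacle will be the bookkeeping: carefully tracking which canonical equivalences sit where so that the rung cancellations are legitimate. Conceptually the whole proposition is a direct analogue of the classical fact that a locally trivial parallel transport depends only on the homotopy class of the path.
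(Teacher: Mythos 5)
Your proof is correct, and while it shares the paper's overall strategy — subdivide the homotopy square into rectangles, each mapped by $\Gamma$ into a product of disjoint intervals, and compare consecutive horizontal slices — the comparison mechanism is genuinely different. The paper chooses the grid so that adjacent interval pairs within each row intersect in intervals (its condition (3)), applies Lemma \ref{lb1} to identify the continuation of each slice with an explicit composite $R_b$ of canonical equivalences along its row, and then exploits that the slice at height $s_b$ is an edge of both the row-$b$ and the row-$(b+1)$ rectangles, so that $\Gamma(\cdot,s_b)^\bullet=R_b$ and $\Gamma(\cdot,s_b)^\bullet=R_{b+1}$; no vertical paths ever appear. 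You instead run the classical ladder argument: a local lemma asserting that a path confined to a single product $I\times J$ of disjoint intervals has pointwise continuation depending only on its endpoints, vertical rungs $v_k$ whose contributions telescope across the columns, and a final absorption of the boundary rungs $v_0,v_m$ (which lie in $I_1\times J_1$ and $I_2\times J_2$ by the hypotheses on $\Gamma(0,\cdot)$ and $\Gamma(1,\cdot)$) into the canonical identifications that define $\gamma^\bullet$ at the interval level. Your route buys independence from Lemma \ref{lb1} and from the overlap condition on adjacent interval pairs — you only ever need each rectangle's image to sit in one disjoint product — at the cost of the extra bookkeeping you correctly flag: functoriality of continuation under concatenation, $(v^{-1})^\bullet=(v^\bullet)^{-1}$ for small paths, and the boundary absorption step, all of which go through as you describe since the intermediate slices $\Gamma(\cdot,t_l)$ do start in $I_1\times J_1$ and end in $I_2\times J_2$, so their interval-level continuations are defined.
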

\begin{proof}
Choose $0=t_0<t_1<\cdots<t_m=1, 0=s_0<s_1<\cdots<s_n=1$ such that for any $a=0,1,\cdots,m,b=0,1\cdots,n$, there exists a pair of disjoint open intervals $I_{a,b},J_{a,b}$ in $S^1$ satisfying the following conditions:\\
(1)  $\Gamma([t_{a-1},t_a]\times[s_{b-1},s_b])\subset I_{a,b}\times J_{a,b}$ when $a,b>0$.\\
(2) $I_{0,b}=I_1, J_{0,b}=J_1,I_{m,b}=I_2,J_{m,b}=J_2$.\\
(3) When $a>0$,  $I_{a-1,b}\cap I_{a,b}$ and $J_{a-1,b}\cap J_{a,b}$ are open intervals in $S^1$.\\
Then  for any $b$, the map $\Gamma(\cdot,s_b)^\bullet:\mathcal H_i(I_1)\boxtimes\mathcal H_j(J_1){\rightarrow }\mathcal H_i(I_2)\boxtimes\mathcal H_j(J_2)$ induced by the path $\Gamma(\cdot,s_b)$  is, by lemma \ref{lb1}, equal to the map $R_b$ defined by
\begin{align*}
&\mathcal H_i(I_1)\boxtimes\mathcal H_j(J_1)=\mathcal H_i(I_{0,b})\boxtimes\mathcal H_j(J_{0,b})\overset{\simeq}{\rightarrow}\mathcal H_i(I_{0,b}\cap I_{1,b})\boxtimes\mathcal H_j(J_{0,b}\cap J_{1,b})\\
\overset{\simeq}{\rightarrow}&\mathcal H_i(I_{1,b})\boxtimes\mathcal H_j(J_{1,b})
\overset{\simeq}{\rightarrow}\mathcal H_i(I_{1,b}\cap I_{2,b})\boxtimes\mathcal H_j(J_{1,b}\cap J_{2,b})\overset{\simeq}{\rightarrow}\mathcal H_i(I_{2,b})\boxtimes\mathcal H_j(J_{2,b})\\
\overset{\simeq}{\rightarrow}&\cdots\overset{\simeq}{\rightarrow}\mathcal H_i(I_{m,b})\boxtimes\mathcal H_j(J_{m,b})=\mathcal H_i(I_2)\boxtimes\mathcal H_j(J_2).
\end{align*}
Similarly $\Gamma(\cdot,s_b)^\bullet$ also equals $R_{b+1}$. Therefore
\begin{align*}
\gamma^\bullet=\Gamma(\cdot,s_0)^\bullet=R_1=\Gamma(\cdot,s_1)^\bullet=R_2=\Gamma(\cdot,s_2)^\bullet=\cdots=\Gamma(\cdot,s_n)^\bullet=\widetilde\gamma^\bullet.
\end{align*} 
\end{proof}

We close this section with a brief discussion of Connes fusions defined on a single interval. For any $I\in\mathcal J$ we define the Hilbert space $\mathcal H_i(I)\boxtimes\mathcal H_j$ to be the closure of $\mathcal H_i(I)\otimes\mathcal H_j$ under the positive sesquilinear form $\bk{\cdot|\cdot}$  defined by
\begin{align}
\bk{\xi_1\otimes\eta_1|\xi_2\otimes\eta_2}=\bk{\pi_j(Z(\xi_2,I)^*Z(\xi_1,I))\eta_1|\eta_2}
\end{align}
for any $\xi_1,\xi_2\in\mathcal H_i(I),\eta_1,\eta_2\in\mathcal H_j$. Then clearly $H\otimes K$ is dense in $\mathcal H_i(I)\boxtimes\mathcal H_j$ when $H$ is dense in $\mathcal H_i(I)$ and $K$ is dense in $\mathcal H_j$. In particular we can take $H=\mathcal H_i(I)$ and $K=\mathcal H_j(J)$ where $J\in\mathcal J$ is disjoint from $I$. Therefore, by \eqref{eq1}, we have a \textbf{canonical equivalence} $\mathcal H_i(I)\boxtimes\mathcal H_j(J)\overset{\simeq}{\rightarrow}\mathcal H_i(I)\boxtimes\mathcal H_j$ defined by $\xi\otimes\eta\mapsto\xi\otimes\eta$ ($\xi\in\mathcal H_i(I),\eta\in\mathcal H_j(J)$). Its inverse is also called the \textbf{canonical equivalence} $\mathcal H_i(I)\boxtimes\mathcal H_j\overset{\simeq}{\rightarrow}\mathcal H_i(I)\boxtimes\mathcal H_j(J)$.

Now for any $z\in S^1$, one can define $\mathcal H_i(z)\boxtimes\mathcal H_j$, in a similar to way, to be  $\varinjlim_{I\ni z}\mathcal H_i(I)\boxtimes\mathcal H_j$. One therefore has a canonical equivalence between $\mathcal H_i(z)\boxtimes\mathcal H_j$ and $\mathcal H_i(I)\boxtimes\mathcal H_j$ for any $I\in\mathcal J$ containing $z$. If $\alpha$ is a path in $S^1$ from $z_1$ to $z_2$, one can define the \textbf{path continuation $\alpha^\bullet:\mathcal H_i(z_1)\boxtimes\mathcal H_j{\rightarrow}\mathcal H_i(z_2)\boxtimes\mathcal H_j$ induced by $\alpha$} in a similar way. One can furthermore use this map to define, for any path $\alpha$  in $S^1$ from $I_1\in\mathcal J$ to $I_2\in\mathcal J$, a map $\alpha^\bullet:\mathcal H_i(I_1)\boxtimes\mathcal H_j{\rightarrow}\mathcal H_i(I_2)\boxtimes\mathcal H_j$, also called the \textbf{equivalence induced by $\alpha$}. Homotopic paths induce the same map. 

Path continuations of Connes fusions over single intervals can be related to those over two intervals by the following proposition.
\begin{pp}
Let $\gamma=(\alpha,\beta)$ be a path in $\Conf_2(S^1)$ from $I_1\times J_1$ to $I_2\times J_2$, where the open intervals $I_1,J_1$ are disjoint, and $I_2,J_2$ are disjoint. Then the equivalence $\alpha^\bullet:\mathcal H_i(I_1)\boxtimes\mathcal H_j{\rightarrow}\mathcal H_i(I_2)\boxtimes\mathcal H_j$ equals the map
\begin{align}
\mathcal H_i(I_1)\boxtimes\mathcal H_j\overset{\simeq}{\rightarrow}\mathcal H_i(I_1)\boxtimes\mathcal H_j(J_1)\overset{\gamma^\bullet}{\rightarrow}\mathcal H_i(I_2)\boxtimes\mathcal H_j(J_2)\overset{\simeq}{\rightarrow}\mathcal H_i(I_2)\boxtimes\mathcal H_j.\label{eq2}
\end{align} 
\end{pp}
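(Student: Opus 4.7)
My plan is to reduce to the case of a small path $\gamma$ and then verify the identity directly by tracking elementary tensors $\xi\otimes\eta$ through a diagram. Recall both $\alpha^\bullet$ and $\gamma^\bullet$ are defined, for a general path, by composing small-piece continuations over a sufficiently fine partition $0=t_0<t_1<\cdots<t_n=1$ with the property that $\gamma([t_{k-1},t_k])\subset I_{(k)}\times J_{(k)}$ for some disjoint $I_{(k)},J_{(k)}\in\mathcal J$. By inserting at each intermediate node $(z_k,\zeta_k)=\gamma(t_k)$ the canonical equivalences $\mathcal H_i(z_k)\boxtimes\mathcal H_j(\zeta_k)\simeq\mathcal H_i(z_k)\boxtimes\mathcal H_j$ (factored through $\mathcal H_i(I_{(k)})\boxtimes\mathcal H_j(J_{(k)})\simeq\mathcal H_i(I_{(k)})\boxtimes\mathcal H_j$), the proposition reduces to the analogous single-step statement at each piece $\gamma|_{[t_{k-1},t_k]}$. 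So it is enough to prove the identity when $\gamma$ is itself small.

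Thus assume $\gamma([0,1])\subset I\times J$, with $(z_1,\zeta_1)=\gamma(0)$ and $(z_2,\zeta_2)=\gamma(1)$. By the definition of small-path continuation, $\gamma^\bullet$ factors as the composition of canonical equivalences
\[
\mathcal H_i(z_1)\boxtimes\mathcal H_j(\zeta_1)\overset{\simeq}{\to}\mathcal H_i(I)\boxtimes\mathcal H_j(J)\overset{\simeq}{\to}\mathcal H_i(z_2)\boxtimes\mathcal H_j(\zeta_2),
\]
each of which acts as $\xi\otimes\eta\mapsto\xi\otimes\eta$ on vectors with $\xi\in\mathcal H_i(I)$, $\eta\in\mathcal H_j(J)$. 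Similarly, $\alpha^\bullet$ factors through $\mathcal H_i(I)\boxtimes\mathcal H_j$ as three canonical equivalences, each also given by $\xi\otimes\eta\mapsto\xi\otimes\eta$. Finally, the vertical canonical equivalences in the square
\[
\begin{CD}
\mathcal H_i(z_1)\boxtimes\mathcal H_j(\zeta_1) @>\gamma^\bullet>> \mathcal H_i(z_2)\boxtimes\mathcal H_j(\zeta_2)\\
@V\simeq VV @V\simeq VV\\
\mathcal H_i(z_1)\boxtimes\mathcal H_j @>\alpha^\bullet>> \mathcal H_i(z_2)\boxtimes\mathcal H_j
\end{CD}
\]
also send $\xi\otimes\eta\mapsto\xi\otimes\eta$ for $\xi\in\mathcal H_i(I),\eta\in\mathcal H_j(J)$. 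Since the density relation \eqref{eq1} ensures that elementary tensors $\xi\otimes\eta$ of this form span a dense subspace in each corner, a routine diagram chase on such $\xi\otimes\eta$ shows that the two compositions $\mathrm{can}\circ\gamma^\bullet$ and $\alpha^\bullet\circ\mathrm{can}$ agree on a dense subspace, hence everywhere by continuity.

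The main obstacle is not analytic but purely bookkeeping: one must choose the partition and the intermediate open intervals carefully so that at every node of the refinement the four objects in the square above share a common dense subspace of elementary tensors through which all the canonical equivalences factor. Once this is set up consistently (e.g.\ by shrinking the $I_{(k)},J_{(k)}$ so that successive intersections are again open intervals, as in Lemma \ref{lb1} and Proposition \ref{lb4}), no further difficulty arises, and the argument closes by identifying $\xi\otimes\eta$ with $\xi\otimes\eta$ in the appropriate target space.
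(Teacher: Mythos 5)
Your proposal is correct and follows essentially the same route as the paper: reduce to a small path via a partition (with the telescoping cancellation of the inserted canonical equivalences), then check the small case on a dense subspace of elementary tensors $\xi\otimes\eta$ with $\xi\in\mathcal H_i(I),\eta\in\mathcal H_j(J)$ on which every canonical equivalence acts as the identity. The only cosmetic difference is that you run the small-path check at the level of fusions over points, whereas the paper phrases it via lemma \ref{lb1} using the overlap intervals $I_1\cap I_2$, $J_1\cap J_2$; the underlying mechanism is the same.
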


\begin{proof}
Let $R$ denote the unitary map described by \eqref{eq2}. We first assume that $\gamma$ is small enough, such that $\alpha([0,1])\subset I_1,\beta([0,1])\subset J_1$, and $I_1\cap I_2,J_1\cap J_2\in\mathcal J$.  Then by lemma \ref{lb1} (and its variant for single interval fusions), $\alpha^\bullet$ and $R$ coincide when acting on the dense subspace $\mathcal H_i(I_1\cap I_2)\otimes\mathcal H_j(J_1\cap J_2)$ of  $\mathcal H_i(I)\boxtimes\mathcal H_j$. Therefore $\alpha^\bullet=R$.

For a general $\gamma$, we consider a partition $0=t_0<t_1<\cdots<t_n=1$, such that for each $a=0,1,2,\dots,n$, there exists a pair of disjoint open intervals $I'_a,J'_a$ in $S^1$ satisfying the following conditions:\\
(1) $\gamma([t_{a-1},t_a])\subset I'_a$ when $a>0$.\\
(2) $I'_0=I_1,J'_0=J_1,I'_n=I_1,J'_n=J_1$.\\
(3) When $a>0$, $I'_{a-1}\cap I'_a$ and $J'_{a-1}\cap J'_a$ are open intervals in $S^1$.\\
For each $a>0$, choose a path $\gamma_a=(\alpha_a,\beta_a)$ defined on $[0,1]$ to be a reparametrization of $\gamma|_{[t_{a-1},t_a]}$. Then $\gamma_a$ is small and, from the last paragraph, the map $\alpha_a^\bullet:\mathcal H_i(I_{a-1}')\boxtimes\mathcal H_j\rightarrow\mathcal H_i(I_a')\boxtimes\mathcal\mathcal H_j$ equals the map $R_a$ defined by
\begin{align}
\mathcal H_i(I_{a-1}')\boxtimes\mathcal H_j\overset{\simeq}{\rightarrow}\mathcal H_i(I_{a-1}')\boxtimes\mathcal H_j(J_{a-1}')\overset{\gamma_a^\bullet}{\rightarrow}\mathcal H_i(I_a')\boxtimes\mathcal H_j(J_a')\overset{\simeq}{\rightarrow}\mathcal H_i(I_a')\boxtimes\mathcal H_j.
\end{align} 
Now $\alpha^\bullet=R$ follows from the fact that $\alpha^\bullet=\alpha_n^\bullet\alpha_{n-1}^\bullet\cdots\alpha_1^\bullet$ and $R=R_nR_{n-1}\cdots R_1$.
\end{proof}

Similar properties also hold for $\mathcal H_i\boxtimes\mathcal H_j(J)$.

\subsection{Actions of conformal nets}

Assume as usual that $I,J\in\mathcal J$ are disjoint. In this section, we equip $\mathcal H_i(I)\boxtimes\mathcal H_j(J)$ with an $\mathcal A$-module structure, and show that the action of $\mc A$ commutes with path continuations. First, note that we have  natural representations of $\mathcal A(I)$ and $\mathcal A(J)$ on $\mathcal H_i(I)\boxtimes\mathcal H_j(J)$ defined by
\begin{gather*}
x(\xi\otimes\eta)=x\xi\otimes\eta,\qquad y(\xi\otimes\eta)=\xi\otimes y\eta
\end{gather*}
for any $\xi\in\mathcal H_i(I),\eta\in\mathcal H_j(J),x\in\mathcal A(I),y\in\mathcal A(J)$. If $K\subset I$ (resp. $K\subset J$) then the above representation of $\mathcal A(I)$ (resp. $\mathcal A(J)$) restricts to one of $\mathcal A(K)$. We would like to define a natural action of $\mathcal A(K)$ on $\mathcal H_i(I)\boxtimes\mathcal H_j(J)$ even when $K$ is not contained inside $I$ or $J$. The following proposition gives us a clue on how to define it.

\begin{pp}\label{lb2}
Let $\gamma$ be a path in $\Conf_2(S^1)$ from $I\times J$ to $J\times I$. Then for the path continuation $\gamma^\bullet:\mathcal H_i(I)\boxtimes\mathcal H_j(J)\rightarrow\mathcal H_i(J)\boxtimes\mathcal H_j(I)$, we have
\begin{align}
x=(\gamma^\bullet)^{-1}x\gamma^\bullet \label{eq3}
\end{align}
for any $x\in\mathcal A(I)$. A similar result holds for any $y\in\mathcal A(J)$.
\end{pp}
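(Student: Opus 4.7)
My plan is to reduce the statement to an analogous claim about the single-interval continuation $\alpha^\bullet$, and then to verify that version via a careful partition of $\alpha$.

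Write $\gamma=(\alpha,\beta)$, so $\alpha$ is a path in $S^1$ from some $z_1\in I$ to some $z_2\in J$. By the last (unnumbered) proposition of Section 2.2, $\alpha^\bullet:\mc H_i(I)\boxtimes\mc H_j\to\mc H_i(J)\boxtimes\mc H_j$ equals the composition
\[
\mc H_i(I)\boxtimes\mc H_j \xrightarrow{\simeq} \mc H_i(I)\boxtimes\mc H_j(J) \xrightarrow{\gamma^\bullet} \mc H_i(J)\boxtimes\mc H_j(I) \xrightarrow{\simeq} \mc H_i(J)\boxtimes\mc H_j.
\]
Both canonical equivalences are manifestly $\mc A(I)$-equivariant: in the first, the $\mc A(I)$-action sits on the first factor on both sides; in the second, $\mc H_j(I)\subset\mc H_j$ is $\mc A(I)$-invariant and the natural action is $\pi_j(x)$ in both pictures, extending continuously. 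Consequently, proving \eqref{eq3} for $\gamma^\bullet$ is equivalent to proving the analogous identity for $\alpha^\bullet$, where on $\mc H_i(J)\boxtimes\mc H_j$ the element $x\in\mc A(I)$ is taken to act as $\pi_j(x)$ on the second factor.

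For the single-interval version, partition $\alpha=\alpha_n\cdots\alpha_1$ so that each sub-arc $\alpha_s$ is contained in some open interval $K_s\subset S^1$, further arranging (since $I$ and $I^c$ are both open and $\alpha$ is continuous) that each $K_s$ lies entirely in $I$ or entirely in $I^c$. Each sub-continuation $\alpha_s^\bullet$ is then a composition of canonical equivalences through $\mc H_i(K_s)\boxtimes\mc H_j$. When $K_s\subset I$, realize the $\mc A(I)$-action via the canonical equivalence $\mc H_i(K_s)\boxtimes\mc H_j\simeq\mc H_i(I)\boxtimes\mc H_j$ (first factor); when $K_s\subset I^c$, realize it via $\mc H_i(K_s)\boxtimes\mc H_j\simeq\mc H_i(K_s)\boxtimes\mc H_j(I)$ (second factor). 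In each case the canonical equivalences at the two ends of $\alpha_s^\bullet$ preserve the relevant action in the obvious way.

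The main obstacle will be the transition steps, where a sub-arc with $K_s\subset I$ is immediately followed by one with $K_{s+1}\subset I^c$ (or vice versa) and the mode of the $\mc A(I)$-action switches from first factor to second factor. Near such a transition point $z_*\in\partial I$, I expect compatibility to require a direct calculation using the inner product formula defining the Connes fusion, together with the observation that $Z(\xi,K)$ is independent of the choice of localization interval $K$ containing $z_*$ (by Reeh-Schlieder together with the uniqueness characterization of $Z$). These two inputs should let me identify the operators coming from the two prescriptions on a dense set of vectors in the intermediate fusion $\mc H_i(z_*)\boxtimes\mc H_j$, and the identification then extends to the whole space by boundedness. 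Once this transition compatibility is in place, concatenating the intertwining relations across all sub-arcs and transitions yields the claim for $\alpha^\bullet$, and hence for $\gamma^\bullet$; the analogous statement for $y\in\mc A(J)$ follows by the symmetry $\mc H_i\leftrightarrow\mc H_j$, $I\leftrightarrow J$.
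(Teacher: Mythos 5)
Your reduction of \eqref{eq3} to the corresponding statement for the single-interval continuation $\alpha^\bullet$ is legitimate (the two canonical equivalences you use are indeed equivariant in the senses you describe), but the core of your argument has a genuine gap, and the setup of your partition is self-contradictory. Since $\alpha$ runs from a point of $I$ to a point of $J\subset I^c$, it must pass through a boundary point $z_*$ of $I$, and $z_*$ lies in neither $I$ nor $I^c$; hence no partition exists in which every covering interval $K_s$ lies entirely in $I$ or entirely in $I^c$ (equivalently, consecutive sub-arcs share an endpoint, so a $K_s\subset I$ followed by a $K_{s+1}\subset I^c$ would force $K_s\cap K_{s+1}\neq\emptyset$, which is impossible). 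So the ``transition step'' you defer is not one step of your scheme: it is the entire content of the proposition, and the continuation necessarily passes through an intermediate fusion $\mc H_i(K)\boxtimes\mc H_j$ with $K$ straddling the boundary of $I$, where neither of your two realizations of the $\mc A(I)$-action is available for a general $x\in\mc A(I)$.

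Moreover, the two inputs you cite for the transition (the inner-product formula and the independence of $Z(\xi,K)$ of the localizing interval) are not sufficient. The computation needs an identity of the form $Z(\xi,I)\,x=Z(x\xi,I)$, which requires $x$ to be localized in a region disjoint from the interval in which $\xi$ is localized; for an arbitrary element $x$ of the von Neumann algebra $\mc A(I)$ and for $\xi$ localized near $z_*$ this is simply unavailable. This is precisely why the paper first reduces, using additivity $\mc A(I)=\bigvee_{K\sjs I}\mc A(K)$, to $x\in\mc A(K)$ with $\ovl K\subset I$, then chooses auxiliary straddling intervals $I_1,J_1$ with $K\subset J_1$ and homotopes $\gamma$ so that $\gamma^\bullet=SR$: the factor $S$ is handled by the easy lemma (same-factor action localized in an intersection), and the first-factor-to-second-factor switch is carried out only for $R$, on vectors $\xi,\xi'\in\mc H_i(I\cap I_1)$ whose localization is disjoint from $K$, via locality and the Reeh--Schlieder uniqueness of $Z$; general paths are then treated by homotoping to $\gamma_1*(\gamma_2*\gamma_1)^{(*n)}$. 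Your sketch omits the localization of $x$ away from the crossing point, and without that reduction the ``identification on a dense set'' you appeal to cannot be carried out.
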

Note that the $x$ on the left hand side of \eqref{eq3} is acting on vectors in $\mathcal H_i$, whereas on the right hand side, $x$ is acting on vectors in $\mathcal H_j$. To prove this proposition we first need a lemma.

\begin{lm}\label{lb3}
Choose disjoint $I_1,J_1\in\mathcal J$, and $I_2,J_2\in\mathcal J$. If $I_1\cap I_2,J_1\cap J_2\in\mathcal J$, then the map
\begin{align*}
\mathcal H_i(I_1)\boxtimes\mathcal H_j(J_1)\overset{\simeq}{\rightarrow}\mathcal H_i(I_1\cap I_2)\boxtimes\mathcal H_j(J_1\cap J_2)\overset{\simeq}{\rightarrow}\mathcal H_i(I_2)\boxtimes\mathcal H_j(J_2)
\end{align*}
intertwines the actions of $\mathcal A(I_1\cap I_2)$.
\end{lm}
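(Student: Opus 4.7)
The plan is to unwind the definitions and verify, on a dense subspace of elementary tensors, that the action of $\mathcal{A}(I_1 \cap I_2)$ is given by one and the same formula on the three Hilbert spaces in question; intertwining will then extend by continuity of the unitary canonical maps.

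First I would pin down the three actions. On $\mathcal{H}_i(I_1) \boxtimes \mathcal{H}_j(J_1)$, an element $x \in \mathcal{A}(I_1 \cap I_2)$ is regarded as lying in $\mathcal{A}(I_1)$ and acts by $x(\xi \otimes \eta) = \pi_i(x)\xi \otimes \eta$. On $\mathcal{H}_i(I_2) \boxtimes \mathcal{H}_j(J_2)$, the same $x$ is regarded as lying in $\mathcal{A}(I_2)$ and acts by the same formula. On the intermediate space $\mathcal{H}_i(I_1 \cap I_2) \boxtimes \mathcal{H}_j(J_1 \cap J_2)$, $x$ acts tautologically by the same formula. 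The compatibility axiom $\pi_{i,I_1}|_{\mathcal{A}(I_1 \cap I_2)} = \pi_{i,I_1 \cap I_2} = \pi_{i,I_2}|_{\mathcal{A}(I_1 \cap I_2)}$ for representations of a conformal net guarantees that $\pi_i(x)$ denotes \emph{the same} bounded operator on $\mathcal{H}_i$ in all three cases. A quick side check using Haag duality shows that $\pi_i(x)$ preserves each of the subspaces $\mathcal{H}_i(I_1)$, $\mathcal{H}_i(I_1 \cap I_2)$, $\mathcal{H}_i(I_2)$: if $\xi = Z(\xi, K)\Omega$ for one of these intervals $K$, then $\pi_i(x) Z(\xi, K)$ still belongs to $\Hom_{\mathcal{A}(K^c)}(\mathcal{H}_0, \mathcal{H}_i)$ since $\pi_i(x) \in \pi_i(\mathcal{A}(K))$ commutes with $\pi_i(\mathcal{A}(K^c))$ by Haag duality, and it sends $\Omega$ to $\pi_i(x)\xi$; so $\pi_i(x)\xi \in \mathcal{H}_i(K)$.

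Next I would combine this with the observation that both canonical equivalences in the statement act as the identity map $\xi \otimes \eta \mapsto \xi \otimes \eta$ on elementary tensors. Thus on the dense subspace $\mathcal{H}_i(I_1 \cap I_2) \otimes \mathcal{H}_j(J_1 \cap J_2)$, which sits inside each of the three Hilbert spaces, applying $x$ and then transporting yields the same vector as transporting first and then applying $x$. Since each action is by a bounded operator and each canonical map is unitary, the intertwining relation extends from this dense subspace to the full Hilbert spaces by continuity.

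I do not anticipate any serious obstacle: the lemma is essentially a bookkeeping statement that relies only on the compatibility axiom for $\mathcal{A}$-modules and on Haag duality. The only nontrivial content is the invariance of the bounded-vector subspaces under $\pi_i(\mathcal{A}(I_1 \cap I_2))$, handled in the paragraph above. Conceptually, the lemma records the principle that the $\mathcal{A}(K)$-action on $\mathcal{H}_i(I) \boxtimes \mathcal{H}_j(J)$ reduces to the intrinsic left $\mathcal{A}$-action on $\mathcal{H}_i$ whenever $K \subset I$; this is exactly what is needed so that the path-continuation definition of $\mathcal{A}(K)$-actions used in Proposition \ref{lb2} is well-posed.
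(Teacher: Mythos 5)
Your proof is correct and follows essentially the same route as the paper: check the intertwining relation on elementary tensors of the dense subspace $\mathcal H_i(I_1\cap I_2)\otimes\mathcal H_j(J_1\cap J_2)$, where the canonical maps act as the identity and $x\in\mathcal A(I_1\cap I_2)$ acts by the same formula on all three spaces, then extend by density and boundedness. Your added check that $\pi_i(x)$ preserves the $K$-bounded subspaces (via $Z(x\xi,K)=\pi_i(x)Z(\xi,K)$) is a useful detail the paper leaves implicit, but it does not change the argument.
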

\begin{proof}
Denote the above map by $R$. Choose an arbitrary $x\in\mathcal A(I_1\cap I_2)$. Then for any $\xi\in\mathcal H_i(I_1\cap I_2)$ and $\eta\in \mathcal H_j(J_1\cap J_2)$, $xR(\xi\otimes\eta)$ clearly equals $Rx(\xi\otimes\eta)$, which must be $x\xi\otimes\eta\in\mathcal H_i(I_2)\boxtimes\mathcal H_j(J_2)$. Therefore, by density, $xR=Rx$.
\end{proof}

\begin{proof}[Proof of proposition \ref{lb2}]
Since by additivity we have $\mathcal A(I)=\bigvee_{K\subset\joinrel\subset I} \mathcal A(K)$  ($K\subset\joinrel\subset I$ means $K\in\mathcal J$ and $I$ contains the closure of $K$), it suffices to verify $x=(\gamma^\bullet)^{-1}x\gamma^\bullet$ for any fixed $K\subset\joinrel\subset I$ and $x\in\mathcal A(K)$. Another way to achieve this is to replace, by density, $J$ by a smaller $J_1\subset\joinrel\subset J$, and let $J_1^c,J_1,I$ be the new $I,J,K$.

Let $\gamma=(\alpha,\beta)$. We first assume that $\gamma$ is small enough, such that $\alpha([0,1])$ and $\beta([0,1])$ can both be covered by open intervals in $S^1$. For example, $\gamma$ can be a clockwise or 
anticlockwise rotation not exceeding $2\pi$. We now choose a pair of disjoint open intervals $I_1,J_1$ in $S^1$ satisfying\\
(a) $I_1\cap I,I_1\cap J,J_1\cap I, J_1\cap J\in\mathcal J$,\\
(b) $K\subset J_1$,\\
(c) $\gamma$ is homotopic (in the sense of proposition \ref{lb4}) to a path $\widetilde\gamma=(\widetilde\alpha,\widetilde\beta)$ from $I\times J$ to $J\times I$, such that $\widetilde\alpha([0,1])$ is covered by $I\cup I_1\cup J$ and $\widetilde\beta([0,1])$ is covered by $I\cup J_1\cup J$.\\
(See figure \ref{fig2}.) 
\begin{figure}[h]
	\centering
	\includegraphics[width=0.6\linewidth]{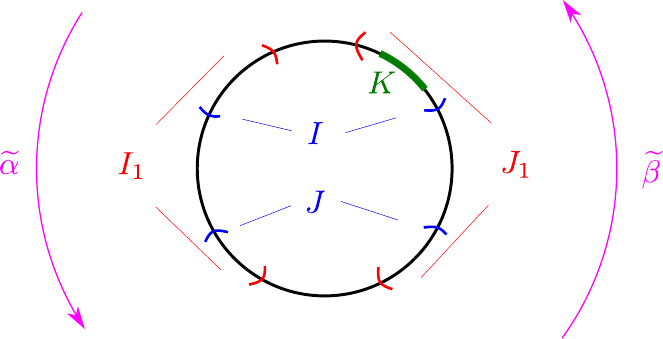}
	\caption{}
	\label{fig2}
\end{figure}
Now consider  two unitary maps
\begin{gather*}
R:\mathcal H_i(I)\boxtimes\mathcal H_j(J)\overset\simeq\rightarrow\mathcal H_i(I\cap I_1)\boxtimes\mathcal H_j(J\cap J_1)\overset\simeq\rightarrow\mathcal H_i(I_1)\boxtimes\mathcal H_j(J_1),\\
S:\mathcal H_i(I_1)\boxtimes\mathcal H_j(J_1)\overset\simeq\rightarrow\mathcal H_i(I_1\cap J)\boxtimes\mathcal H_j(J_1\cap I)\overset\simeq\rightarrow\mathcal H_i(J)\boxtimes\mathcal H_j(I).
\end{gather*}
Then from lemma \ref{lb1} it is easy to see that $\gamma^\bullet=SR$. According to lemma \ref{lb3}, $Sx=xS$. Therefore it suffices to show $x=R^*xR$. 

Choose any $\xi\in\mathcal H_i(I\cap I_1),\eta\in\mathcal H_j(J\cap J_1)$. Then such $\xi\otimes\eta$ span a dense subspace of $\mathcal H_i(I)\boxtimes\mathcal H_j(J)$. Clearly $R(\xi\otimes\eta)=\xi\otimes \eta\in\mathcal H_i(I_1)\boxtimes\mathcal H_j(J_1)$. Now since $x\in\mathcal A(K)$  and $K\subset J_1$, we have $xR(\xi\otimes\eta)=\xi\otimes x\eta\in\mathcal H_i(I_1)\boxtimes\mathcal H_j(J_1)$. Choose arbitrary $\xi'\in\mathcal H_i(I\cap I_1),\eta'\in\mathcal H_j(J\cap J_1)$. We also have $R(\xi'\otimes\eta')=\xi'\otimes\eta'$. Therefore
\begin{align}
&\bk{R^*xR(\xi\otimes\eta)|\xi'\otimes\eta'}=\bk{xR(\xi\otimes\eta)|R(\xi'\otimes\eta')}=\bk{\xi\otimes x\eta|\xi'\otimes\eta'}\nonumber\\
=&\bk{\pi_j(Z(\xi',I_1)^*Z(\xi,I_1))x\eta|\eta'}=\bk{\pi_j(Z(\xi',I_1)^*Z(\xi,I_1)x)\eta|\eta'}.\label{eq4}
\end{align}
Since $\xi,\xi'\in\mathcal H_i(I\cap I_1)$, we actually have $Z(\xi,I_1)=Z(\xi,I\cap I_1)=Z(\xi,I)$ and, similarly, $Z(\xi',I_1)=Z(\xi',I)$. Note also that $x\in\mathcal A(K)$ and $K\subset I$. So $Z(\xi,I)x\in\Hom_{\mathcal A(I^c)}(\mathcal H_0,\mathcal H_i)$. As $K$ is disjoint from $I\cap I_1$, by locality, $Z(\xi,I)x\Omega=xZ(\xi,I)\Omega=x\xi$. Therefore $Z(\xi,I)x=Z(x\xi,I)$. So \eqref{eq4} equals
\begin{align*}
&\bk{\pi_j(Z(\xi',I)^*Z(\xi,I)x)\eta|\eta'}=\bk{\pi_j(Z(\xi',I)^*Z(x\xi,I))\eta|\eta'}\\
=&\bk{x\xi\otimes\eta|\xi'\otimes\eta'}=\bk{x(\xi\otimes\eta)|\xi'\otimes\eta'}.
\end{align*}
This proves $R^*xR=x$, and hence $x=(\gamma^\bullet)^{-1}x\gamma^\bullet$.

We now prove \eqref{eq3} for more general $\gamma$. Let $\gamma_1$ be small path (in the same sense as above) from $I\times J$ to $J\times I$, and $\gamma_2$ another small path from $J\times I$ to $I\times J$, such that $\gamma_2*\gamma_1$ is homotopic to an anticlockwise rotation by $2\pi$. Then there exists $n\in\mathbb Z$ such that $\gamma$ is homotopic to $\gamma_1*(\gamma_2*\gamma_1)^{(*n)}$. So $\gamma^\bullet=\gamma_1^\bullet(\gamma_2^\bullet\gamma_1^\bullet)^n$. From what we've shown, both $\gamma_1^\bullet$ and $\gamma_2^\bullet$ intertwines $x$. Therefore $\gamma^\bullet x=x\gamma^\bullet$.
\end{proof}

\begin{thd}
Let $\mathcal H_i,\mathcal H_j$ be $\mathcal A$-modules, and choose disjoint $I,J\in\mathcal J$. \\
(a) There exists a (unique) representation $\pi_{i\boxtimes j}^l$ of $\mathcal A$ on $\mathcal H_i(I)\boxtimes\mathcal H_j(J)$ satisfying the following condition: If $K,L\in\mathcal J$ are disjoint, $\gamma$ is a path in $\Conf_2(S^1)$ from $I\times J\rightarrow K\times L$, and $x\in\mathcal A(K)$, then 
\begin{align}
\pi_{i\boxtimes j}^l(x)=(\gamma^\bullet)^{-1}x\gamma^\bullet,\label{eq5}
\end{align}
where $\gamma^\bullet:\mathcal H_i(I)\boxtimes\mathcal H_j(J)\rightarrow\mathcal H_i(K)\boxtimes\mathcal H_j(L)$ is the equivalence induced by $\gamma$.\\
(b) There exists a (unique) representation $\pi_{i\boxtimes j}^r$ of $\mathcal A$ on $\mathcal H_i(I)\boxtimes\mathcal H_j(J)$ satisfying the following condition: If $K,L\in\mathcal J$ are disjoint, $\varsigma$ is a path in $\Conf_2(S^1)$ from $I\times J\rightarrow L\times K$, and $x\in\mathcal A(K)$, then
\begin{align}
\pi_{i\boxtimes j}^r(x)=(\varsigma^\bullet)^{-1}x\varsigma^\bullet,
\end{align}
where $\varsigma^\bullet:\mathcal H_i(I)\boxtimes\mathcal H_j(J)\rightarrow\mathcal H_i(L)\boxtimes\mathcal H_j(K)$ is the equivalence induced by $\vs$.\\
(c) $\pi_{i\boxtimes j}^l$ and $\pi_{i\boxtimes j}^r$ are equal. Therefore $\pi_{i\boxtimes j}:=\pi_{i\boxtimes j}^l=\pi_{i\boxtimes j}^r$ gives $\mathcal A$ a natural representation on $\mathcal H_i(I)\boxtimes\mathcal H_j(J)$.\\
(d) The unitary maps induced by inclusions of intervals, restrictions of intervals, and path continuations are equivalences of $\mathcal A$-modules, i.e., they intertwine the actions of $\mathcal A$.
\end{thd}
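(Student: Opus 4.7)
The plan is to define $\pi_{i\boxtimes j}^l(x)$, for $x \in \mc A(K)$, by choosing any $L \in \mc J$ disjoint from $K$ and any path $\gamma$ in $\Conf_2(S^1)$ from $I \times J$ to $K \times L$, and setting $\pi_{i\boxtimes j}^l(x) := (\gamma^\bullet)^{-1} x \gamma^\bullet$, where in the middle $x$ acts by its natural first-factor action on $\mc H_i(K) \boxtimes \mc H_j(L)$ (well-defined because $\mc A(K) \cdot \mc H_i(K) \subset \mc H_i(K)$). Assuming this is independent of the choices of $L$ and $\gamma$, the restriction of $\pi_{i\boxtimes j}^l$ to each $\mc A(K)$ is automatically a normal unital $*$-representation, since conjugation by the unitary $\gamma^\bullet$ preserves these properties, and the family is consistent under isotony $K_1 \subset K_2$ (the same $\gamma$ may be reused for both). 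Uniqueness follows from the additivity of $\mc A$ together with the explicit formula on each local algebra.

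The substantive task is well-definedness, which I would carry out in two steps. First, for fixed $K$ and two target intervals $L_1, L_2$, I would connect $K \times L_1$ to $K \times L_2$ by a path $\mu$ whose first coordinate stays inside $K$, and apply Lemma \ref{lb3} piecewise along a sufficiently fine partition of $\mu$ to conclude that $\mu^\bullet$ intertwines the first-factor action of $\mc A(K)$. This reduces the problem to comparing two paths $\gamma_1, \gamma_2$ with identical endpoints. Second, since $\Conf_2(S^1)$ is homotopy-equivalent to $S^1$, the loop $\gamma_1 \gamma_2^{-1}$ at $K \times L$ is homotopic to a power of a full rotation, which in turn may be written as $\beta * \alpha$ with $\alpha : K \times L \to L \times K$ and $\beta : L \times K \to K \times L$. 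Proposition \ref{lb2} applied to $\alpha$ (with $I = K$, $J = L$) shows that $\alpha^\bullet$ conjugates the first-factor action of $x \in \mc A(K)$ on $\mc H_i(K) \boxtimes \mc H_j(L)$ to the second-factor action on $\mc H_i(L) \boxtimes \mc H_j(K)$, while the symmetric clause of Proposition \ref{lb2} applied to $\beta$ (with $I = L$, $J = K$, treating $x$ as a $\mc A(J)$-element) conjugates this second-factor action back to the first-factor one. Composing, $(\beta * \alpha)^\bullet$ commutes with the first-factor action of $x$, giving well-definedness.

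Part (b) is the mirror argument with the roles of the two factors swapped. For part (c), given $x \in \mc A(K)$, I would choose $\gamma : I \times J \to K \times L$ and a path $\delta : K \times L \to L \times K$, and set $\vs := \delta * \gamma$ so that $\vs^\bullet = \delta^\bullet \gamma^\bullet$; Proposition \ref{lb2} applied to $\delta$ identifies $\delta^\bullet \cdot (\text{first-factor-}x) \cdot (\delta^\bullet)^{-1}$ on $\mc H_i(L) \boxtimes \mc H_j(K)$ with the natural second-factor action of $x$, and unwinding yields $\pi_{i\boxtimes j}^l(x) = \pi_{i\boxtimes j}^r(x)$. Part (d) then follows directly from well-definedness: any inclusion, restriction, or path-continuation equivalence $\varphi$ between fusion spaces for the same pair $(\mc H_i, \mc H_j)$ may be composed with the auxiliary path $\gamma$ to produce a second valid choice in the definition of $\pi$, and invariance of the formula under this replacement is precisely the intertwining property $\varphi \pi(x) = \pi(x) \varphi$. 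The main obstacle I anticipate is the topological step in the well-definedness argument — realizing an arbitrary loop in $\Conf_2(S^1)$ as a concatenation to which Proposition \ref{lb2} applies; once that reduction is in place, everything else amounts to routine bookkeeping with path continuations.
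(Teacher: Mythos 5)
Your proposal is correct and takes essentially the same route as the paper: in both arguments the heart is Proposition \ref{lb2} applied to paths carrying $K\times L$ to $L\times K$, which at once gives path-independence of the conjugation formula and the equality $\pi^l_{i\boxtimes j}=\pi^r_{i\boxtimes j}$, with (d) then a routine consequence. Your loop decomposition $\beta*\alpha$ is just the paper's device of comparing both $\gamma$ and $\widetilde\gamma$ to a single auxiliary path $\varsigma:I\times J\rightarrow L\times K$ (via $\varsigma*\gamma^{-1}$) written multiplicatively, while your explicit treatment of the independence of the auxiliary interval $L$ via Lemma \ref{lb3} supplies a detail the paper leaves implicit.
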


\begin{proof}
Choose disjoint $K,L\in\mathcal J$, and choose $\gamma,\varsigma$ as in (a) and (b). Then $\varsigma*\gamma^{-1}$ is a path from $K\times L$ to $L\times K$. Apply proposition \ref{lb2} to $\varsigma*\gamma^{-1}$ which induces $\vs^\bullet(\gamma^\bullet)^{-1}$, we obtain
\begin{align}
(\gamma^\bullet)^{-1}x\gamma^\bullet=(\vs^\bullet)^{-1}x\vs^\bullet\label{eq6}
\end{align}
for any $x\in\mathcal A(K)$. Now we define $\pi^l_{i\boxtimes j}$ using relation \eqref{eq5}. We need to show that this definition is independent of $\gamma$. If $\widetilde\gamma$ is another path in $\Conf_2(S^1)$ from $I\times J\rightarrow K\times L$, then by \eqref{eq6}, $(\gamma^\bullet)^{-1}x\gamma^\bullet=(\vs^\bullet)^{-1}x\vs^\bullet=(\widetilde\gamma^\bullet)^{-1}x\widetilde\gamma^\bullet$. This proves the well-definedness of $\pi^l_{i\boxtimes j}$. Thus (a) is proved. (b) can be proved in a similar way. (c) follows directly from \eqref{eq6}. (d) is obvious. 
\end{proof}

\begin{df}
Let $\mathcal H_i,\mathcal H_{i'},\mathcal H_j,\mathcal H_{j'}$ be $\mathcal A$-modules, $I,J\in\mathcal J$ are disjoint, $F\in\Hom_{\mathcal A}(\mathcal H_i,\mathcal H_{i'})$, $G\in\Hom_{\mathcal A}(\mathcal H_j,\mathcal H_{j'})$. We define a (clearly bounded) map $F\otimes G:\mathcal H_i(I)\boxtimes\mathcal H_j(J)\rightarrow\mathcal H_{i'}(I)\boxtimes\mathcal H_{j'}(J)$ such that for any $\xi\in\mathcal H_i(I),\eta\in\mathcal H_j(J)$,
\begin{align}
(F\otimes G)(\xi\otimes\eta)=F\xi\otimes G\eta.\label{eq19}
\end{align}
\end{df}
In the future, when  several different fusion products are considered simultaneously, we will write $F\otimes G$ as $F\boxtimes G$ to avoid ambiguity.

The following properties are easy to show.
\begin{pp}\label{lb8}
$F\otimes G$ commutes with the canonical equivalences induced by inclusions and restrictions of intervals, and hence commutes with path continuations. Moreover, $F\otimes G$ is an $\mathcal A$-module homomorphism, i.e., $F\otimes G\in\Hom_{\mathcal A}(\mathcal H_i(I)\boxtimes\mathcal H_j(J),\mathcal H_{i'}(I)\boxtimes\mathcal H_{j'}(J))$.
\end{pp}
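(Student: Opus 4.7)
The plan is to build everything from the single observation that for any $\xi\in\mc H_i(I)$ one has
\begin{align*}
Z(F\xi,I)=FZ(\xi,I),\qquad Z(G\eta,J)=GZ(\eta,J),
\end{align*}
which holds because $FZ(\xi,I)\in\Hom_{\mc A(I^c)}(\mc H_0,\mc H_{i'})$ sends $\Omega$ to $F\xi$, and such operators are unique by the Reeh--Schlieder theorem; in particular $F\xi\in\mc H_{i'}(I)$ and $G\eta\in\mc H_{j'}(J)$, so $F\otimes G$ indeed maps into $\mc H_{i'}(I)\otimes\mc H_{j'}(J)$. Using this identity, the inner product formula in $\mc H_{i'}(I)\boxtimes\mc H_{j'}(J)$ becomes
\begin{align*}
\bk{F\xi\otimes G\eta\,|\,F\xi\otimes G\eta}=\bk{\pi_{j'}\big(Z(\xi,I)^*F^*FZ(\xi,I)\big)G\eta\,|\,G\eta},
\end{align*}
and a standard computation (bounding $F^*F\leq\|F\|^2$ and $G^*G\leq\|G\|^2$ as operators, together with positivity of $\pi_{j'}$ on $\mc A(I)$) gives $\|(F\otimes G)(\xi\otimes\eta)\|\leq\|F\|\|G\|\cdot\|\xi\otimes\eta\|$. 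Thus $F\otimes G$ extends by continuity to a bounded map of the completions with norm at most $\|F\|\|G\|$.

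Next, the canonical equivalence $\mc H_i(I_1)\boxtimes\mc H_j(J_1)\xrightarrow\simeq\mc H_i(I)\boxtimes\mc H_j(J)$ (for $I_1\subset I,J_1\subset J$) is defined on elementary tensors by $\xi\otimes\eta\mapsto\xi\otimes\eta$. Since $F$ and $G$ preserve $I_1$-bounded and $J_1$-bounded vectors by the observation above, applying $F\otimes G$ before or after this map produces $F\xi\otimes G\eta$ on both sides. Hence $F\otimes G$ commutes with every canonical equivalence induced by inclusion and restriction of intervals, and by taking compositions it commutes with every path continuation $\gamma^\bullet$ (and in particular descends compatibly to the limits $\mc H_i(z)\boxtimes\mc H_j(\zeta)$).

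Finally, to see that $F\otimes G$ intertwines the $\mc A$-action, fix $x\in\mc A(K)$ for some $K\in\mc J$ and choose a path $\gamma$ in $\Conf_2(S^1)$ from $I\times J$ to $K'\times L'$ with $K\subset K'$ and $L'\in\mc J$ disjoint from $K'$. By the theorem--definition preceding this proposition, $\pi_{i\boxtimes j}(x)=(\gamma^\bullet)^{-1}\,x\,\gamma^\bullet$, where $x$ acts on $\mc H_i(K')\boxtimes\mc H_j(L')$ by $x(\xi\otimes\eta)=x\xi\otimes\eta$; the analogous formula holds for $\pi_{i'\boxtimes j'}(x)$. Using the commutativity with $\gamma^\bullet$ established in the previous paragraph, and the fact that $F$ intertwines the $\mc A$-action (so $Fx\xi=xF\xi$), we get
\begin{align*}
(F\otimes G)\pi_{i\boxtimes j}(x)=(\gamma^\bullet)^{-1}(F\otimes G)\,x\,\gamma^\bullet=(\gamma^\bullet)^{-1}x(F\otimes G)\gamma^\bullet=\pi_{i'\boxtimes j'}(x)(F\otimes G)
\end{align*}
on the dense subspace $\mc H_i(K')\otimes\mc H_j(L')$, hence everywhere. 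Ranging over all $K\in\mc J$ this shows $F\otimes G\in\Hom_{\mc A}(\mc H_i(I)\boxtimes\mc H_j(J),\mc H_{i'}(I)\boxtimes\mc H_{j'}(J))$. The main (minor) obstacle is just the boundedness estimate in the first paragraph; once that is in hand, the rest is formal manipulation with the machinery already built.
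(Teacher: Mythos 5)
Your proof is correct and follows essentially the same route as the paper: the paper's (very terse) argument is precisely that $F\otimes G$ fixes elementary tensors under the canonical equivalences (via $Z(F\xi,I)=FZ(\xi,I)$), hence commutes with path continuations, and then the general $\mc A$-action is handled by moving $x\in\mc A(K)$ onto the first tensor factor through a path continuation as in your last paragraph. One minor caveat: your norm estimate is stated only on an elementary tensor, which by itself does not give boundedness of the linear extension on all of $\mc H_i(I)\otimes\mc H_j(J)$ (that needs a matrix-positivity argument, e.g.\ writing $\|F\|^2-F^*F=D^*D$ and using $Z(D\xi_k,I)=DZ(\xi_k,I)$, or factoring $F\otimes G=(F\otimes\id)(\id\otimes G)$); however, boundedness is already built into the paper's definition of $F\otimes G$ and is not part of the proposition, so this does not affect the proof.
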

\begin{proof}
The first statement is easy. The second one follows from the first one and the easy fact that $F\otimes G$ intertwines the actions of $\mathcal A(I)$ or $\mathcal A(J)$.
\end{proof}

Actions of $\mathcal A$ on single interval fusions can be defined in a similar way. Let $I\in\mathcal J$. If $K\in\mathcal J$ is a sub-interval of $I$, we let $\mathcal A(K)$ act on $\mathcal H_i(I)\boxtimes \mathcal H_j$ by setting $x(\xi\otimes\eta)=x\xi\otimes\eta$ for any $x\in\mathcal A(K)$, $\xi\in\mathcal H_i(I),\eta\in\mathcal H_j$. For general $K$, we choose a path $\alpha$ in $S^1$ from $I$ to $K$, and let $\mathcal A(K)$ act on $\mathcal H_i(I)\boxtimes \mathcal H_j$ by setting $\pi_{i\boxtimes j}(x)=(\alpha^\bullet)^{-1}x\alpha^\bullet$ for any $x\in\mathcal A(K)$. This action is independent of the path chosen, and hence makes $\mathcal H_i(I)\boxtimes\mathcal H_j$ a natural $\mathcal A$-module. $\mathcal H_i\boxtimes\mathcal H_j(J)$ can be treated in a similar way. Alternatively, one can use the action of $\mathcal A$ on $\mathcal H_i(I)\boxtimes\mathcal H_j(J)$, together with the canonical equivalence $\mathcal H_i(I)\boxtimes\mathcal H_j{\rightarrow}\mathcal H_i(I)\boxtimes\mathcal H_j(J)$, to define the action of $\mathcal A$ on $\mathcal H_i(I)\boxtimes\mathcal H_j$. These two ways give the same definitions. In particular,  $x(\xi\otimes\eta)=\xi\otimes x\eta$ for any $\xi\in\mc H_i(I),\eta\in\mc H_j,x\in\mathcal A(I^c)$. Tensor products of homomorphisms can also be defined using \eqref{eq19}.\\

We now show that a given $\mathcal A$-module $\mathcal H_i$ can be identified with its fusion with the vacuum module $\mathcal H_0$. Define a linear map $\natural_i:\mathcal H_i(I)\otimes\mathcal H_0\rightarrow\mathcal H_i$ satisfying $\natural_i(\xi\otimes y\Omega)=y\xi$ for any $\xi\in\mathcal H_i(I),y\in\mathcal A(I^c)$. It is easy to check that $\natural_i$ is an isometry with dense range. Therefore $\natural_i$ extends to a unitary map $\natural_i:\mathcal H_i(I)\boxtimes\mathcal H_0\rightarrow\mathcal H_i$. Clearly $\natural_i$ preserves  the canonical equivalences induced by restrictions and inclusions of intervals, and hence preserves path continuations.  $\natural_i$ also commutes with the action of $\mathcal A(I)$. Therefore $\natural_i$ intertwines the actions of $\mathcal A$. We thus conclude:

\begin{thm}\label{lb5}
There exists a (unique) unitary $\mathcal A$-module isomorphism $\natural_i:\mathcal H_i(I)\boxtimes\mathcal H_0\rightarrow \mathcal H_i$ satisfying
\begin{align*}
\natural_i(\xi\otimes y\Omega)=y\xi
\end{align*}
for any $\xi\in\mathcal H_i(I),y\in\mathcal A(I^c)$. Moreover $\natural_i$ preserves path continuations, i.e., $\natural_i\alpha^\bullet=\natural_i$ for any path $\alpha:[0,1]\rightarrow S^1$ from $I$ to another open interval in $S^1$.
\end{thm}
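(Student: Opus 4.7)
Proof plan for Theorem \ref{lb5}.

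The plan is to define $\natural_i$ first as a linear map on the algebraic tensor product $\mathcal H_i(I)\otimes\mathcal A(I^c)\Omega$, prove it is isometric with dense range, and then extend by continuity. Uniqueness is automatic because $\mathcal A(I^c)\Omega=\mathcal H_0(I^c)$ is (Reeh--Schlieder) dense in $\mathcal H_0$, so $\mathcal H_i(I)\otimes\mathcal A(I^c)\Omega$ is dense in $\mathcal H_i(I)\boxtimes\mathcal H_0$, and any continuous map satisfying the prescribed formula is determined on this dense subspace.

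The key computational input is the identity
\begin{align*}
\natural_i(\xi\otimes y\Omega)=y\xi=Z(\xi,I)\,y\Omega,
\end{align*}
which uses that $Z(\xi,I)\in\Hom_{\mathcal A(I^c)}(\mathcal H_0,\mathcal H_i)$ and $Z(\xi,I)\Omega=\xi$. From this, the isometry property follows at once: for any $\xi_k\in\mathcal H_i(I)$ and $y_k\in\mathcal A(I^c)$,
\begin{align*}
\bigl\langle\,\natural_i(\xi_1\otimes y_1\Omega)\,\bigl|\,\natural_i(\xi_2\otimes y_2\Omega)\,\bigr\rangle
=\bk{Z(\xi_1,I)y_1\Omega|Z(\xi_2,I)y_2\Omega}
=\bk{Z(\xi_2,I)^*Z(\xi_1,I)y_1\Omega|y_2\Omega},
\end{align*}
which is exactly the inner product of $\xi_1\otimes y_1\Omega$ and $\xi_2\otimes y_2\Omega$ in $\mathcal H_i(I)\boxtimes\mathcal H_0$. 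Density of the range is immediate since $\natural_i(\xi\otimes\Omega)=\xi$ and $\mathcal H_i(I)$ is dense in $\mathcal H_i$. Thus $\natural_i$ extends to a unitary $\mathcal H_i(I)\boxtimes\mathcal H_0\to\mathcal H_i$.

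For the $\mathcal A$-module intertwining property, it suffices by the construction of $\pi_{i\boxtimes 0}$ via path continuations to check this for elements $x\in\mathcal A(I)$ and $x\in\mathcal A(I^c)$ (and then deduce the general case from preservation of path continuations). For $x\in\mathcal A(I)$, $x\natural_i(\xi\otimes y\Omega)=xy\xi=yx\xi=\natural_i(x\xi\otimes y\Omega)$, using $[x,y]=0$ by locality. For $z\in\mathcal A(I^c)$, $z\natural_i(\xi\otimes y\Omega)=zy\xi=\natural_i(\xi\otimes zy\Omega)$, which matches the left action of $\mathcal A(I^c)$ on the second factor.

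The final step is that $\natural_i$ preserves canonical equivalences induced by interval inclusions/restrictions: if $I_1\subset I$, then on the dense subspace $\mathcal H_i(I_1)\otimes\mathcal A(I^c)\Omega\subset \mathcal H_i(I)\otimes\mathcal A(I^c)\Omega$ (recall $\mathcal A(I^c)\subset\mathcal A(I_1^c)$) the two candidate maps $\natural_i^{I_1}$ and $\natural_i^I$ agree by the defining formula $\xi\otimes y\Omega\mapsto y\xi$. Path continuations in $S^1$ are built from these canonical equivalences by definition, so they are automatically preserved. Combined with the module property, this also completes the verification that $\natural_i$ intertwines $\mathcal A(K)$ for arbitrary $K\in\mathcal J$. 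I do not expect any serious obstacle; the whole argument is essentially a bookkeeping exercise around the identity $y\xi=Z(\xi,I)y\Omega$.
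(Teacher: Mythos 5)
Your proposal is correct and follows essentially the same route as the paper: define $\natural_i(\xi\otimes y\Omega)=y\xi=Z(\xi,I)y\Omega$, check it is an isometry with dense range using the defining inner product of the fusion, note it commutes with the local actions and with the canonical equivalences induced by inclusions/restrictions of intervals (hence with path continuations), and conclude it intertwines the full $\mathcal A$-action. The paper's proof is just a more compressed version of this argument, so nothing further is needed.
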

Similar results hold for $\mathcal H_i(I)\boxtimes\mathcal H_0(J)$.

\subsection{Conformal structures}

Let $\mathcal H_i,\mathcal H_j$ be $\mc A$-modules, and choose $I\in\mc J$. Then we know that $\mathcal H_i,\mathcal H_j,\mc H_i(I)\boxtimes\mc H_j$ are all conformal $\mc A$-modules. In this section we describe the action $U_{i\boxtimes j}$ of $\GA$ on $\mc H_i(I)\boxtimes\mc H_j$ in terms of those on $\mathcal H_i$ and $\mathcal H_j$. This result  will be used in the next chapter to study the conformal structures of categorical extensions. (See theorems \ref{lb16} and \ref{lb35}.)

Choose $g\in\GA$. Choose a path $\lambda:[0,1]\rightarrow\GA$ from $1$ to $g$ (i.e. $\lambda(0)=1$ and $\lambda(1)=g$). We require only that $\lambda$  descends to a continuous path $[\lambda]$ in $\scr G$; the continuity of $\lambda$ in $\GA$ is not necessary.  Note that the homotopy class of $[\lambda]$ is uniquely determined by $g$. Consider the action of $\scr G$ on $S^1$ lifted from $\mathscr G$ (and hence from $\Diffp(S^1)$). Choose arbitrary $z\in I$. Then the map $\lambda_z$ defined by
\begin{gather*}
\lambda_z:[0,1]\rightarrow S^1,\qquad t\mapsto \lambda(t)z
\end{gather*}
is a path from $I$ to $gI$ with initial point $z$ and end point $gz$. The homotopy class of $\lambda_z$ is clearly determined by that of $[\lambda]$ and hence by $g$. Thus,   by proposition \ref{lb4}, $\lambda_z^\bullet$ depends only on $g$ but not on the choice of $\lambda$. For instance, if $g\in\GA$ is a lift of $\wtd\exp(2i\pi L_0)\in\scr G$, then $\gamma_z^\bullet$ is the path continuation induced by an anticlockwise rotation by $2\pi$.

We now let $g$ act on $\mathcal H_i(I)\boxtimes\mathcal H_j$ by setting
\begin{align}
U_{i\boxtimes j}'(g)(\xi\otimes\eta)=(\lambda_z^\bullet)^{-1} \big(gZ(\xi,I)g^{-1}\Omega\otimes g\eta\big)\label{eq7}
\end{align}
for any $\xi\in\mathcal H_i(I),\eta\in\mathcal H_j$. (Here the actions of $g$ on $\mc H_i(I),\mc H_j$ are the standard ones.) Note that since $Z(\xi,I)$ intertwines the actions of $\mathcal A(I^c)$, $gZ(\xi,I)g^{-1}$ intertwines the actions of $g\mathcal A(I^c)g^{-1}=\mathcal A(gI^c)$. So $gZ(\xi,I)g^{-1}\in\Hom_{\mathcal A(gI^c)}(\mathcal H_0,\mathcal H_i)$. Accordingly, $(\lambda_z^\bullet)^{-1}:\mc H_i(gI)\boxtimes\mc H_j\xrightarrow{\simeq}\mc H_i(I)\boxtimes\mc H_j$ is the path continuation induced by the path $\lambda_z^{-1}$ from $gI$ to $I$. Set
\begin{align}
g\xi g^{-1}=gZ(\xi,I)g^{-1}\Omega,
\end{align}
which is a vector in $\mathcal H_i(gI)$. Equivalently,
\begin{align}
Z(g\xi g^{-1},gI)=gZ(\xi,I)g^{-1}.
\end{align}
Then \eqref{eq7} can be simplified as
\begin{align}
U_{i\boxtimes j}'(g)(\xi\otimes\eta)=(\lambda_z^\bullet)^{-1} \big(g\xi g^{-1}\otimes g\eta\big).\label{eq8}
\end{align}

Note that although \eqref{eq8} only gives a linear map $U_{i\boxtimes j}'(g):\mathcal H_i(I)\otimes\mathcal H_j\rightarrow\mathcal H_i(I)\boxtimes\mathcal H_j$, where $\mathcal H_i(I)\otimes\mathcal H_j$ (or more precisely, the quotient of $\mathcal H_i(I)\otimes\mathcal H_j$ over its subspace annihilated by $\bk{\cdot|\cdot}$) is regarded as a dense subspace of $\mathcal H_i(I)\boxtimes\mathcal H_j$, one computes, for any $\xi'\in\mathcal H_i(I),\eta'\in\mathcal H_j$, that
\begin{align*}
&\bk{U_{i\boxtimes j}'(g)(\xi\otimes\eta)|U_{i\boxtimes j}'(g)(\xi'\otimes\eta')}=\bk{g\xi g^{-1}\otimes g\eta|g\xi' g^{-1}\otimes g\eta'}\\
=&\bk{Z(g\xi'g^{-1},gI)^*Z(g\xi g^{-1},gI)g\eta|g\eta'}=\bk{gZ(\xi',I)^*Z(\xi,I)g^{-1}\cdot g\eta|g\eta'}\\
=&\bk{Z(\xi',I)^*Z(\xi,I)\eta|\eta'}=\bk{\xi\otimes\eta|\xi'\otimes\eta'}.
\end{align*}
Therefore $U_{i\boxtimes j}'(g)$ is an isometry. As the image of $\lambda_z^\bullet U_{i\boxtimes j}'(g)$ is clearly $\mathcal H_i(gI)\otimes\mathcal H_j$, which is dense in $\mathcal H_i(gI)\boxtimes\mathcal H_j$, $U_{i\boxtimes j}'(g)$ extends to a unitary map on  $\mathcal H_i(I)\boxtimes\mathcal H_j$.

\begin{lm}
If \eqref{eq8} defines a unitary representation $U_{i\boxtimes j}'$ of $\GA$ on $\mc H_i(I)\boxtimes\mc H_j$, then $U_{i\boxtimes j}'$ equals the standard one $U_{i\boxtimes j}$.
\end{lm}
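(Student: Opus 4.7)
The plan is to apply the uniqueness clause of Theorem \ref{lb59} to the $\mc A$-module $\mc H_i(I)\boxtimes\mc H_j$: if we verify $U'_{i\boxtimes j}(g)=\pi_{i\boxtimes j,K}(U(g))$ for every $K\in\mc J$ and every $g\in\GA(K)$, then $U'_{i\boxtimes j}=U_{i\boxtimes j}$. By Proposition \ref{lb6}, $\GA$ is algebraically generated by $\bigcup_K\GA(K)$; since both maps are group homomorphisms, it suffices to establish this agreement on each $\GA(K)$.

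I first handle two base cases directly from \eqref{eq8}. When $K\cap I=\emptyset$, I take $\lambda$ with values in $\GA(K)\subset\GA(I^c)$, so $[\lambda](t)$ fixes $I$ pointwise; the path $\lambda_z$ is constant at $z$, whence $\lambda_z^\bullet=\id$. Moreover $g\in\mc A(K)\subset\mc A(I^c)$ intertwines $Z(\xi,I)$, which gives $g\xi g^{-1}=Z(\xi,I)gg^{-1}\Omega=\xi$. Formula \eqref{eq8} therefore reduces to $\xi\otimes g\eta$, matching the standard action of $\mc A(K)$ on $\mc H_i(I)\boxtimes\mc H_j$ seen through the canonical equivalence with $\mc H_i(I)\boxtimes\mc H_j(J)$ for any $J\supset K$ disjoint from $I$. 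When $K\subset I$, I take $\lambda$ in $\GA(K)\subset\GA(I)$, so $[\lambda](t)$ preserves $I$; the path $\lambda_z$ stays inside $I$, hence $\lambda_z^\bullet=\id$. Now \eqref{eq8} gives $U'(g)(\xi\otimes\eta)=g\xi g^{-1}\otimes g\eta$, while the standard action of $U(g)\in\mc A(K)\subset\mc A(I)$ yields $g\xi\otimes\eta$. To identify these I compute $\bk{g\xi g^{-1}\otimes g\eta|\xi'\otimes\eta'}$ using the inner-product formula for $\mc H_i(I)\boxtimes\mc H_j$ together with the operator identity $Z(g\xi g^{-1},I)=\pi_i(g)Z(\xi,I)\pi_0(g^{-1})$; cancelling $\pi_j(g^{-1})\pi_j(g)=1$ yields $\bk{\pi_j(Z(\xi',I)^*Z(g\xi,I))\eta|\eta'}$, which is the inner product for $g\xi\otimes\eta$ since $Z(g\xi,I)=\pi_i(g)Z(\xi,I)$.

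For a general $K$ I reduce to the second case by changing the starting interval. Pick $I_1\in\mc J$ containing $K$; the same formula \eqref{eq8} with $I_1$ in place of $I$ produces a candidate $U'_{I_1}$ on $\mc H_i(I_1)\boxtimes\mc H_j$, and the $K\subset I_1$ case gives $U'_{I_1}(g)=\pi_{i\boxtimes j,K}(U(g))$ there. It remains to show that $U'_I$ and $U'_{I_1}$ correspond under the canonical $\mc A$-module equivalence $\mc H_i(I)\boxtimes\mc H_j\xrightarrow{\simeq}\mc H_i(I_1)\boxtimes\mc H_j$. On the dense subspace spanned by $\xi\otimes\eta$ with $\xi\in\mc H_i(I\cap I_1)$, both versions of \eqref{eq8} factor through the same pointwise path continuation $\mc H_i(gz)\boxtimes\mc H_j\to\mc H_i(z)\boxtimes\mc H_j$ (for $z\in I\cap I_1$) composed with canonical equivalences through the pointwise fusions, so they agree on this dense subspace and hence everywhere; the general pair $(I,I_1)$ is handled by a chain of overlapping intermediate intervals.

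The main obstacle will be this independence-of-$I$ step: although the pointwise path continuation is intrinsic and determined by $\lambda_z$ as a path in $S^1$, one must carefully bookkeep the canonical equivalences at both endpoints $(I,I_1)$ and $(gI,gI_1)$ to ensure that the relevant diagrams commute, especially when $gI\neq I$. Once this compatibility is in hand, the lemma follows from the two base cases together with Theorem \ref{lb59}.
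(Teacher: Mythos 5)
Your proposal is correct and follows essentially the same route as the paper: invoke the uniqueness clause of Theorem \ref{lb59}, verify $U_{i\boxtimes j}'(g)=\pi_{i\boxtimes j,K}(U(g))$ for $g\in\GA(K)$, and handle a general $K$ by using that formula \eqref{eq8} is compatible with the canonical equivalences and path continuations so that one may move the reference interval. The only difference is cosmetic: the paper reduces everything to the single case $K\subset I^c$, where both sides are visibly $\xi\otimes g\eta$, and treats the compatibility step (your ``main obstacle'') as immediate from the definition, whereas you additionally carry the base case $K\subset I$ via a correct inner-product computation.
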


\begin{proof}
Suppose that $U_{i\boxtimes j}'$ is a representation. By the uniqueness statement in theorem \ref{lb59}, it remains to check \eqref{eq15} for $U_{i\boxtimes j}'$, i.e., to check that for any $K\in\mathcal J$ and $g\in\GA(K)$,
\begin{align}
U_{i\boxtimes j}'(g)=\pi_{i\boxtimes j,K}(U(g))\label{eq17}
\end{align}
when acting on $\mathcal H_i(I)\boxtimes \mathcal H_j$.

Note first of all that from the definition \eqref{eq8}, it is clear that the action $U_{i\boxtimes j}'$ of $\GA$ on Connes fusions commutes with the canonical maps induced by restrictions and inclusions  of intervals. So it also commutes with path continuations. Therefore, by adjusting $I$, it suffices to prove \eqref{eq17} when $K\subset I^c$. In this case $g\in\GA(I^c)$, which implies that $g$ commutes with $Z(\xi,I)$ (i.e. $g\xi g^{-1}=\xi$), and that the path $\lambda_z$ considered in \eqref{eq8} is a constant. Hence, for any $\xi\in\mathcal H_i(I),\eta\in\mathcal H_j$,
\begin{align}
U_{i\boxtimes j,K}'(g)(\xi\otimes\eta)=\xi\otimes g\eta.\label{eq18}
\end{align}
One the other hand, from the canonical equivalence $\mathcal H_i(I)\boxtimes\mathcal H_j\xrightarrow\simeq\mathcal H_i(I)\boxtimes\mathcal H_j(I^c)$ and the way we define the action of $\mathcal A(K)$ on $\mathcal H_i(I)\boxtimes\mathcal H_j(I^c)$, one easily sees that $\pi_{i\boxtimes j}(U(g))(\xi\otimes\eta)$ also equals $\xi\otimes g\eta$. Hence \eqref{eq17} is proved.
\end{proof}

\begin{lm}\label{lb60}
\eqref{eq8} defines a unitary representation $U_{i\boxtimes j}'$ of $\GA$ on $\mc H_i(I)\boxtimes\mc H_j$. Namely, for any $\xi\in\mathcal H_i(I),\eta\in\mathcal H_j,g,h\in\GA$,
\begin{align}
U_{i\boxtimes j}'(g)U_{i\boxtimes j}'(h)(\xi\otimes\eta)=U_{i\boxtimes j}'(gh)(\xi\otimes\eta).\label{eq9}
\end{align}
\end{lm}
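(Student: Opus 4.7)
The idea is to unwind both sides using the defining formula \eqref{eq8} and reduce \eqref{eq9} to a purely topological statement: the homotopy of two paths in $S^1$, which will be exhibited via a two-parameter ``rectangle'' coming from the group multiplication in $\scr G$.

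First, set up the paths. Since $\scr G$ is simply connected, choose any continuous paths $\mu,\nu:[0,1]\to\scr G$ with $\mu(0)=\nu(0)=1$, $\mu(1)=h$, $\nu(1)=g$. Fix $z\in I$ and form the $S^1$-paths
\begin{align*}
\mu_z(t)=\mu(t)z,\qquad \nu_z(t)=\nu(t)z,\qquad \widetilde\nu_{hz}(t)=\nu(t)\cdot hz,\qquad (g\cdot\mu)_z(t)=g\mu(t)z,
\end{align*}
running respectively $z\to hz$, $z\to gz$, $hz\to ghz$, and $gz\to ghz$. Write $\xi_h=h\xi h^{-1}\in\mc H_i(hI)$ and $\xi_{gh}=(gh)\xi(gh)^{-1}\in\mc H_i(ghI)$.

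Next, compute the left-hand side. By \eqref{eq8}, $U'(h)(\xi\otimes\eta)=(\mu_z^\bullet)^{-1}(\xi_h\otimes h\eta)$. As noted in the proof of the previous lemma (directly from the form of \eqref{eq8}), $U'$ commutes with canonical equivalences induced by restrictions and inclusions of intervals, hence with all path continuations; therefore applying $U'(g)$ commutes past $(\mu_z^\bullet)^{-1}$, reducing us to computing $U'(g)$ acting on $\mc H_i(hI)\boxtimes\mc H_j$ at the simple tensor $\xi_h\otimes h\eta$. Applying \eqref{eq8} a second time, now with base interval $hI$ and base point $hz$ and path $\widetilde\nu_{hz}$, gives $(\widetilde\nu_{hz}^\bullet)^{-1}(\xi_{gh}\otimes gh\,\eta)$. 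Composing path continuations via the concatenation rule $(\widetilde\nu_{hz}*\mu_z)^\bullet=\widetilde\nu_{hz}^\bullet\mu_z^\bullet$ yields
\begin{align*}
U'(g)U'(h)(\xi\otimes\eta)=\bigl((\widetilde\nu_{hz}*\mu_z)^\bullet\bigr)^{-1}(\xi_{gh}\otimes gh\,\eta).
\end{align*}
For the right-hand side, the specific choice of path $\lambda=\nu*(g\cdot\mu)$ from $1$ to $gh$ in $\scr G$ gives $\lambda_z=(g\cdot\mu)_z*\nu_z$, and \eqref{eq8} yields $U'(gh)(\xi\otimes\eta)=(\lambda_z^\bullet)^{-1}(\xi_{gh}\otimes gh\,\eta)$. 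Hence the identity \eqref{eq9} reduces to the statement that $\widetilde\nu_{hz}*\mu_z$ and $(g\cdot\mu)_z*\nu_z$ induce the same path continuation $\mc H_i(I)\boxtimes\mc H_j\to\mc H_i(ghI)\boxtimes\mc H_j$.

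By the single-interval analog of proposition \ref{lb4} (homotopy invariance of path continuation), it suffices to homotope the two concatenated paths in $S^1$ with endpoints fixed at $z$ and $ghz$. The essential geometric input is the rectangle
\begin{align*}
H:[0,1]^2\to S^1,\qquad H(s,t)=\nu(s)\mu(t)z,
\end{align*}
whose four boundary edges, restricted appropriately, equal $\mu_z$ (at $s=0$), $\widetilde\nu_{hz}$ (at $t=1$), $\nu_z$ (at $t=0$), and $(g\cdot\mu)_z$ (at $s=1$). Traversing the boundary of the square one way yields $\widetilde\nu_{hz}*\mu_z$, traversing it the other way yields $(g\cdot\mu)_z*\nu_z$; $H$ is thus the required homotopy. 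The only non-routine step in the argument is recognizing this rectangle homotopy coming from the noncommutativity identity $\nu(s)\cdot\mu(t)=\nu(s)\mu(t)$ inside $\scr G$; everything else is bookkeeping with the definitions together with the commutation of $U'$ with path continuations already noted.
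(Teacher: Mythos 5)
Your proof is correct, but it follows a genuinely different route from the paper's. The paper does not prove the cocycle identity for arbitrary $g,h$ directly: it invokes the fact (from the colimit description of $\GA$, \cite{Hen19} lemma 17-(ii)) that $\GA$ is generated by the subgroups $\GA(J)$ with $|J|<|I|$, reduces to $h\in\GA(J)$, and then uses density of $\mc H_i(I_0)\otimes\mc H_j$ with $I_0\subset I$ disjoint from $J$ so that $h\xi h^{-1}=\xi$, $U'(h)(\xi\otimes\eta)=\xi\otimes h\eta$, and the path $\mu$ can be taken inside $\GA(J)$, whence $\mu(t)z=z$ and $(\lambda\mu)_z=\lambda_z$; the identity then drops out with no nontrivial comparison of path continuations. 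You instead establish the identity for all $g,h$ at once by unwinding \eqref{eq8} twice, using the commutation of $U'$ with canonical maps and hence with path continuations (which the paper itself records in the preceding lemma, so this is fair to cite), and then reducing everything to the homotopy rel endpoints between $\widetilde\nu_{hz}*\mu_z$ and $(g\cdot\mu)_z*\nu_z$ furnished by the rectangle $H(s,t)=\nu(s)\mu(t)z$ together with the single-interval version of proposition \ref{lb4}. This buys a self-contained argument that does not need the generation-by-short-intervals input or any density/localization trick for this lemma, at the cost of having to manipulate compositions of path continuations explicitly; the paper's reduction is shorter given the colimit fact it already uses elsewhere. Two cosmetic points: with the paper's concatenation convention ($\gamma_2*\gamma_1$ means first $\gamma_1$, then $\gamma_2$) your path should be written $\lambda=(g\cdot\mu)*\nu$, consistent with your correct formula $\lambda_z=(g\cdot\mu)_z*\nu_z$; and strictly speaking the required homotopy rel endpoints is $H$ composed with a homotopy between the two boundary traversals inside the convex square $[0,1]^2$, which is standard.
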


\begin{proof}
We write $U_{i\boxtimes j}'$ as $U'$ for brevity. By \cite{Hen19} lemma 17-(ii), $\scr G$ is (algebraically) 	generated by $\scr G(J)$ for all $J\in\mc J$ whose  length $|J|$  is less than $|I|$. Thus, $\GA$ is generated by  $\GA(J)$ for all $J$ satisfying $|J|<|I|$. Therefore, it suffices to verify \eqref{eq9} when $h$ belongs to $\GA(J)$ satisfying $|J|<|I|$. Choose $I_0\in\mc J$ to be a sub-interval of $I$ disjoint from $J$. Since $\mc H_i(I_0)\otimes\mc H_j$ is dense in $\mc H_i(I)\boxtimes\mc H_j$, it suffices to assume that $\xi\in\mc H_i(I_0)$. In that case, as argued near \eqref{eq18}, we have $h\xi h^{-1}=\xi$ and hence $U'(h)(\xi\otimes\eta)=\xi\otimes h\eta$. Therefore, we need to check 
\begin{align}
U'(g)(\xi\otimes h\eta)=U'(gh)(\xi\otimes\eta).\label{eq99}
\end{align}

Choose a path $\lambda$ in $\GA$ from $1$ to $g$. Again, we assume the continuity only for the projection $[\lambda]$ of $\lambda$ in $\scr G$. Choose $z\in I_0$. Then the left hand side of \eqref{eq99} is
\begin{align*}
U'(g)(\xi\otimes h\eta)=(\lambda_z^\bullet)^{-1}(g\xi g^{-1}\otimes gh\eta).
\end{align*}
Choose a path $\mu$ in $\GA(J)$ from $1$ to $h$. (This is possible since $\scr G(J)$ is clearly contractible.)  Then $\lambda\mu=\lambda(t)\mu(t)$ is a path in $\GA$ from $1$ to $gh$. Since $z\in I_0$ is outside $J$, $\mu(t)z=z$ for any $t\in [0,1]$. It follows that $(\lambda\mu)_z=\lambda_z$. Using $h\xi h^{-1}=\xi$, we compute the right hand side of \eqref{eq99}:
\begin{align*}
U'(gh)(\xi\otimes\eta)=((\lambda\mu)_z^\bullet)^{-1}(gh\xi h^{-1}g^{-1}\otimes gh\eta)=(\lambda_z^\bullet)^{-1}(g\xi g^{-1}\otimes gh\eta).
\end{align*}
This proves \eqref{eq99}.
\end{proof}

The above two lemmas imply the following main result of this section.

\begin{thm}
Let $\mathcal H_i,\mathcal H_j$ be  $\mathcal A$-modules. Then for any $I\in\mathcal J$,  the unitary representation $U_{i\boxtimes j}$ of $\GA$ defining the conformal structure of $\mc H_i(I)\boxtimes\mc H_j$ can be described as follows. For any $g\in\GA$, we choose a map $\lambda:[0,1]\rightarrow\GA$ satisfying $\lambda(0)=1,\lambda(1)=g$ such that $\lambda$ descends to a (continuous) path in $\scr G$. Choose any $z\in I$, and let $\lambda_z$ be the path $t\in[0,1]\mapsto \lambda(t)z$ in $S^1$. Then for any $\xi\in\mathcal H_i(I),\eta\in\mathcal H_j$,
\begin{align}
g(\xi\otimes\eta)=(\lambda_z^\bullet)^{-1} \big(g\xi g^{-1}\otimes g\eta\big)\label{eq25}
\end{align}
where $g\xi g^{-1}=gZ(\xi,I)g^{-1}\Omega$, and $\lambda_z^\bullet:\mc H_i(I)\boxtimes\mc H_j\rightarrow \mc H_i(gI)\boxtimes\mc H_j$ is the path continuation induced by $\lambda_z$.
\end{thm}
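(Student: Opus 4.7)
The plan is to derive the theorem as an essentially immediate consequence of the two preceding lemmas, which together handle existence and uniqueness of the representation described by \eqref{eq25}. First I would observe that \eqref{eq25} is just \eqref{eq8} rewritten using the shorthand $g\xi g^{-1}=gZ(\xi,I)g^{-1}\Omega$. Then Lemma \ref{lb60} supplies the existence half: the formula really does define a unitary representation $U'_{i\boxtimes j}$ of $\GA$ on $\mc H_i(I)\boxtimes\mc H_j$. The lemma preceding it supplies the uniqueness half: any unitary representation of $\GA$ on $\mc H_i(I)\boxtimes\mc H_j$ that agrees with \eqref{eq8} must coincide with the canonical representation $U_{i\boxtimes j}$ guaranteed by theorem \ref{lb59}. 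Combining these two statements gives the theorem.

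One bookkeeping point that should be mentioned is that the homotopy class of $\lambda_z$ is determined by $g$ alone, so the right-hand side of \eqref{eq25} is well-defined even though $\lambda$ is required to be continuous only after descent to $\scr G$; this was already justified just before \eqref{eq8} via proposition \ref{lb4}. Once this is noted, the theorem is nothing more than a repackaging of the two lemmas.

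If one wished to reprove the result from scratch, the main obstacle would be multiplicativity $U'(g)U'(h)=U'(gh)$ in Lemma \ref{lb60}: the formula mixes a group element $g$, a base point $z\in I$, and a path lift $\lambda$, and these ingredients do not obviously compose well. The trick in that proof is to invoke proposition \ref{lb6} to reduce to the case $h\in\GA(J)$ with $|J|<|I|$, then restrict $\xi$ to lie in $\mc H_i(I_0)$ for a sub-interval $I_0\sjs I$ disjoint from $J$. In that localized situation $h$ fixes $z$ and commutes with $Z(\xi,I)$ (so that $h\xi h^{-1}=\xi$), whence the path from $1$ to $h$ contributes trivially to $(\lambda\mu)_z=\lambda_z$, and multiplicativity reduces to a transparent identity. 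Unitarity, by contrast, is the straightforward Reeh--Schlieder--style computation using the covariance $Z(g\xi g^{-1},gI)=gZ(\xi,I)g^{-1}$, so the genuine technical content lies entirely in this localization argument.
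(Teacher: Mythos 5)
Your proposal is correct and matches the paper exactly: the paper states the theorem as an immediate consequence of the two preceding lemmas, with the existence half (Lemma \ref{lb60}) and the uniqueness half (the lemma before it, resting on theorem \ref{lb59}) carrying all the content, and your summary of the localization trick in the multiplicativity proof is faithful to the paper's argument (the paper cites the refinement of proposition \ref{lb6} from \cite{Hen19} to get generators $\GA(J)$ with $|J|<|I|$). Nothing further is needed.
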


Using \eqref{eq25}, one can easily describe the M\"obius structure on $\mathcal H_i(I)\boxtimes\mathcal H_j$. For any $\mc A$-module $\mathcal H_k$ and a self-adjoint vector field $X=\sum_{s=1,0,-1}a_sL_s$, we let the operator  $e^{iX}$ denote the action of $\wtd\exp(iX)\in\wtd\PSU(1,1)$ on $\mathcal H_k$. Now define the path $\lambda_X:[0,1]\rightarrow\wtd\PSU(1,1)$ to be $\lambda_X(t)=\wtd\exp(itX)$. For an arbitrary $z\in I$, let $\lambda_{X,z}=(\lambda_X)_z$ be the path $t\in[0,1]\mapsto \lambda_X(t)z$. Then for any $\xi\in\mathcal H_i(I),\eta\in\mathcal H_j$, we have the formula
\begin{align}
e^{iX}(\xi\otimes\eta)=(\lambda_{X,z}^\bullet)^{-1}(e^{iX}\xi\otimes e^{iX}\eta).\label{eq26}
\end{align}
Here we use the fact that $e^{iX}\xi=e^{iX}\xi e^{-iX}$ since $\Omega$ is fixed by $\PSU(1,1)$.

The action of $\GA$ on $\mathcal H_i\boxtimes\mathcal H_j(J)$ can be described in a similar way.

\subsection{Associativity}

In this section we study Connes fusion of more than two $\mathcal A$-modules. For simplicity, our discussion is restricted to the case of 3 modules. The general cases can be treated in a similar way.

We first discuss Connes fusions over three disjoint intervals. There are two equivalent ways to define them. Let $\mathcal H_i,\mathcal H_j,\mathcal H_k$ be $\mathcal A$-modules, and $I,J,K$ be disjoint open intervals in $S^1$.  Let $\bk{\cdot|\cdot}$ be a positive sesquilinear form  on $\mathcal H_i(I)\otimes\mathcal H_j(J)\otimes\mathcal H_j(K)$ satisfying
\begin{align}
&\bk{\xi_1\otimes\eta_1\otimes\chi_1|\xi_2\otimes\eta_2\otimes\chi_2}\nonumber\\
=&\bk{Z(\xi_2,I)^*Z(\xi_1,I)Z(\eta_2,J)^*Z(\eta_1,J)Z(\chi_2,K)^*Z(\chi_1,K)\Omega|\Omega}\label{eq20}
\end{align}
for any $\xi_1,\xi_2\in\mathcal H_i(I),\eta_1,\eta_2\in\mathcal H_j(J),\chi_1,\chi_2\in\mathcal H_j(K)$. The \textbf{Connes fusion} $\mathcal H_i(I)\boxtimes\mathcal H_j(J)\boxtimes\mathcal H_k(K)$ is defined to be the Hilbert space completion of $\mathcal H_i(I)\otimes\mathcal H_j(J)\otimes\mathcal H_k(K)$ under $\bk{\cdot|\cdot}$. Canonical equivalences induced by restrictions and inclusions of open intervals, and path continuations can be defined in a similar way. We have  natural actions of $\mathcal A(I),\mathcal A(J),\mathcal A(K)$ on $\mathcal H_i(I)\boxtimes\mathcal H_j(J)\boxtimes\mathcal H_k(K)$. These actions can be extended to a representation of $\mathcal A$ using path continuations.

One can also defined fusions of three modules as iterated fusions of two modules. For example,  consider $\mathcal H_i(I)\boxtimes(\mathcal H_j(J)\boxtimes\mathcal H_k(K))$. This expression is a combination of a fusion over two intervals  with a fusion over one interval: we first take a fusion of $\mathcal H_j,\mathcal H_k$ over $J,K$, and treat this fusion as a single $\mathcal A$-module $\mathcal H_1=\mathcal H_j(J)\boxtimes\mathcal H_k(K)$; then we take $\mathcal H_i(I)\boxtimes\mathcal H_1$ as a fusion over $I$. It is easy to check that the right hand side of \eqref{eq20} also gives the formula for the positive sesquilinear form of the iterated Connes fusion. Therefore we have a unitary map
\begin{gather}
\mathcal H_i(I)\boxtimes(\mathcal H_j(J)\boxtimes\mathcal H_k(K))\rightarrow\mathcal H_i(I)\boxtimes\mathcal H_j(J)\boxtimes\mathcal H_k(K),\nonumber\\
\xi\otimes(\eta\otimes\chi)\mapsto \xi\otimes\eta\otimes\chi\qquad(\xi\in\mathcal H_i(I),\eta\in\mathcal H_j(J),\eta\in\mathcal H_k(K)).\label{eq21}
\end{gather}
Similarly one can define $(\mathcal H_i(I)\boxtimes\mathcal H_j(J))\boxtimes\mc H_k$, and  an equivalence $(\mathcal H_i(I)\boxtimes\mathcal H_j(J))\boxtimes\mathcal H_k(K)\xrightarrow{\simeq}\mathcal H_i(I)\boxtimes\mathcal H_j(J)\boxtimes\mathcal H_k(K)$ mapping $(\xi\otimes\eta)\otimes\chi$ to $\xi\otimes\eta\otimes\chi$. Therefore we have a natural unitary associativity map $(\mc H_i(I)\boxtimes\mc H_j(J))\boxtimes\mc H_k(K)\xrightarrow\simeq \mc H_i(I)\boxtimes(\mc H_j(J)\boxtimes\mc H_k(K))$.

For Connes fusions of three modules over two intervals, one also has similar isomorphisms between $\mathcal H_i(I)\boxtimes\mathcal H_j\boxtimes\mathcal H_k(K)$, $(\mathcal H_i(I)\boxtimes\mathcal H_j)\boxtimes\mathcal H_k(K)$, and $\mathcal H_i(I)\boxtimes(\mathcal H_j\boxtimes\mathcal H_k(K))$. Here, the second and the third fusions are iterations of two fusions over single intervals.

We now show that associativity maps are $\mc A$-module isomorphisms. We only prove this for fusions over two intervals. The three-interval cases can be proved in a similar way.  To show this, note that the above isomorphisms clearly commute with the actions of $\mc A(I)$. Hence it suffices to prove that they also commute with path continuations, as indicated by the following proposition.

\begin{pp}\label{lb53}
Let $\mc H_i,\mc H_j,\mc H_k$ be $\mc A$-modules. Choose two pairs of disjoint intervals $I,J\in\mc J,I',J'\in\mc J$. Let $\gamma=(\alpha,\beta):[0,1]\mapsto\Conf_2(S^1)$ be a path from $I\times J$ to $I'\times J'$. Then the following diagrams commute.
\begin{align}
\begin{CD}
(\mc H_i(I)\boxtimes\mc H_k)\boxtimes\mc H_j(J) @>\quad \beta^\bullet(\alpha^\bullet\otimes\id_j) \quad>> (\mc H_i(I')\boxtimes\mc H_k)\boxtimes\mc H_j(J')\\
@V\simeq VV @V\simeq VV\\
\mc H_i(I)\boxtimes\mc H_k\boxtimes\mc H_j(J) @>~~\qquad\gamma^\bullet~~\qquad>> \mc H_i(I')\boxtimes\mc H_k\boxtimes\mc H_j(J')\\
@V\simeq VV @V\simeq VV\\
\mc H_i(I)\boxtimes(\mc H_k\boxtimes\mc H_j(J)) @>\quad \alpha^\bullet(\id_i\otimes\beta^\bullet) \quad>> \mc H_i(I')\boxtimes(\mc H_k\boxtimes\mc H_j(J'))
\end{CD}\label{eq37}
\end{align}
\end{pp}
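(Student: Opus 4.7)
The plan is to reduce to the case of a small path $\gamma$ via partition, and then verify the two diagrams on a dense subspace of simple tensors by exploiting the fact that small-path continuations coincide with canonical restriction/inclusion equivalences, and that associativity maps act trivially on simple tensors.

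First, mimicking the partition argument in the proof of proposition \ref{lb4}, I would choose $0 = t_0 < t_1 < \cdots < t_n = 1$ together with pairs of disjoint open intervals $(I_s, J_s)$ for $s = 0, \ldots, n$ satisfying $(I_0, J_0) = (I, J)$ and $(I_n, J_n) = (I', J')$, such that each restriction $\gamma|_{[t_{s-1}, t_s]}$ lies in a common pair of disjoint intervals $I_s'' \times J_s''$, and the consecutive intersections $I_{s-1} \cap I_s$ and $J_{s-1} \cap J_s$ are both in $\mc J$. Since $\gamma^\bullet$, $\alpha^\bullet$, $\beta^\bullet$, and the vertical associativity maps all decompose into compositions with respect to this partition, the problem reduces to the case of a single small path, i.e., one whose image lies in a pair of disjoint open intervals $I'' \times J''$ such that $I \cap I''$, $I' \cap I''$, $J \cap J''$, $J' \cap J''$ all lie in $\mc J$.

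Next, for such a small $\gamma = (\alpha, \beta)$, lemma \ref{lb1} (and the single-interval analogue proved after it) identifies $\gamma^\bullet$, $\alpha^\bullet$, and $\beta^\bullet$ with compositions of canonical restriction/inclusion equivalences. Concretely, each path continuation takes a simple tensor $\xi \otimes \chi \otimes \eta$ with $\xi \in \mc H_i(I \cap I'')$, $\chi \in \mc H_k$, $\eta \in \mc H_j(J \cap J'')$ to the same simple tensor viewed in the appropriate target space. The vertical associativity maps, by their definition through $(\xi \otimes \chi) \otimes \eta \mapsto \xi \otimes \chi \otimes \eta$ and $\xi \otimes (\chi \otimes \eta) \mapsto \xi \otimes \chi \otimes \eta$, also fix simple tensors. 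Since $\mc H_i(I \cap I'') \otimes \mc H_k \otimes \mc H_j(J \cap J'')$ is dense in every Hilbert space appearing in \eqref{eq37}, tracing $(\xi \otimes \chi) \otimes \eta$ along either route of either square produces the common image $\xi \otimes \chi \otimes \eta \in \mc H_i(I') \boxtimes \mc H_k \boxtimes \mc H_j(J')$. By boundedness this gives commutativity of both squares.

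The only substantive work is the combinatorial task of choosing the partition and auxiliary intervals $(I_s, J_s)$, $(I_s'', J_s'')$ compatibly, and I expect this bookkeeping to be the main obstacle: one must ensure that the single-interval continuations $\alpha^\bullet$ and $\beta^\bullet$, interpreted now as acting in fusions where one factor is itself a two-module fusion like $\mc H_i(I') \boxtimes \mc H_k$, can be simultaneously realized through the same chain of intermediate intervals as the two-interval $\gamma^\bullet$ on $\mc H_i \boxtimes \mc H_k \boxtimes \mc H_j$. Once this compatibility is arranged, the proposition follows from the purely formal observation that canonical equivalences and associativity isomorphisms are both the identity map on the underlying simple-tensor data.
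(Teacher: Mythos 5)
Your proposal is correct and follows essentially the same route as the paper: reduce to a small path by partitioning $\gamma$ with compatible pairs of disjoint intervals, then verify both squares on simple tensors $\xi\otimes\chi\otimes\eta$ with $\xi\in\mc H_i(I\cap I'')$, $\chi\in\mc H_k$, $\eta\in\mc H_j(J\cap J'')$, where the path continuations (via lemma \ref{lb1}) and the associativity maps all fix the tensor data, and conclude by density. The one step to make explicit in your reduction is that reassembling $\beta^\bullet(\alpha^\bullet\otimes\id_j)$ from the segment maps $\beta_s^\bullet(\alpha_s^\bullet\otimes\id_j)$ requires interchanging $\beta_t^\bullet$ with $\alpha_s^\bullet\otimes\id_j$ for $s\neq t$, which is exactly the functoriality of proposition \ref{lb8} that the paper invokes for this purpose.
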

We remark that $\beta^\bullet$ commutes with $\alpha^\bullet\otimes\id_j$ and $\alpha^\bullet$ commutes with $\id_i\otimes\beta^\bullet$ by the functoriality of path continuations (see proposition \ref{lb8}).
\begin{proof}
We only prove the commutativity of the first diagram, as  the second one can be proved similarly. Let us first assume that $\gamma=(\alpha,\beta)$ is small in the sense that $\gamma([0,1])\subset I'\times J'$, and $I\cap I',J\cap J'\in\mc J$. Then it is easy to verify the  commutativity of the first diagram by considering the actions of these maps on any $(\xi\otimes\chi)\otimes\eta$, where $\xi\in\mc H_i(I\cap I'),\chi\in\mc H_k,\eta\in\mc H_j(J\cap J')$.

In the general case, we can divide $\gamma=(\alpha,\beta)$ into small paths in $\Conf_2(S^1)$: $\gamma=\gamma_n*\gamma_{n-1}*\cdots*\gamma_1$, and choose pairs of disjoint open intervals $I_0,J_0\in\mc J,I_1,J_1\in\mc J,\cdots, I_n,J_n\in\mc J$, such that $I_0=I,J_0=J,I_n=I',J_n=J'$, and that  for any $s=1,2,\dots,n$, we have $I_{s-1}\cap I_s,J_{s-1}\cap J_s\in\mc J$, and $\gamma_s([0,1])\subset I_s\times J_s$. Write $\gamma_s=(\alpha_s,\beta_s)$. Then by the first paragraph, for each $s$ the diagram
\begin{align*}
\begin{CD}
(\mc H_i(I_{s-1})\boxtimes\mc H_k)\boxtimes\mc H_j(J_{s-1}) @>\quad \beta_s^\bullet(\alpha_s^\bullet\otimes\id_j) \quad>> (\mc H_i(I_s)\boxtimes\mc H_k)\boxtimes\mc H_j(J_s)\\
@V\simeq VV @V\simeq VV\\
\mc H_i(I_{s-1})\boxtimes\mc H_k\boxtimes\mc H_j(J_{s-1}) @>~~\qquad\gamma^\bullet_s~~\qquad>> \mc H_i(I_s)\boxtimes\mc H_k\boxtimes\mc H_j(J_s)
\end{CD}
\end{align*}
commutes. Since $\beta_t^\bullet$ commutes  with $\alpha_s^\bullet\otimes\id_j$ for any $1\leq s,t\leq n$, and since $\alpha^\bullet=\alpha_n^\bullet\cdots\alpha_1^\bullet,\beta^\bullet=\beta_n^\bullet\cdots\beta_1^\bullet,\gamma^\bullet=\gamma_n^\bullet\cdots\gamma_1^\bullet$,  the  commutativity of the first diagram of \eqref{eq37} follows.
\end{proof}

\subsection{$C^*$-tensor categories}\label{lb10}

Let $\Rep(\mathcal A)$ be the $C^*$-category of $\mathcal A$-modules. In this section, we equip $\Rep(\mathcal A)$ with a unitary monoidal structure. More precisely, we want to define a tensor (fusion) $*$-bifunctor $\boxtimes:\Rep(\mathcal A)\times\Rep(\mathcal A)\rightarrow\Rep(\mathcal A)$, define unitary associativity isomorphisms which are functorial with respect to the tensor bifunctor, define a unit object, identify (unitarily) a module with the tensor (fusion) product of this module with the unit, and verify the triangle and pentagon axioms. (See \cite{Tur94,BK01,EGNO} for the general theory of tensor categories.)

Let $S^1_+=\{z\in S^1:\mathrm{Im} z>0 \}$, $S^1_-=\{z\in S^1:\mathrm{Im} z<0 \}$. For any $\mathcal H_i,\mathcal H_j\in\Rep(\mathcal A)$, we define their tensor product $\mathcal H_i\boxtimes\mathcal H_j$ to be $\mathcal H_i(S^1_+)\boxtimes\mathcal H_j(S^1_-)$. We also identify $\mathcal H_i\boxtimes\mathcal H_j$ with $\mathcal H_i(S^1_+)\boxtimes\mathcal H_j$ and $\mathcal H_i\boxtimes\mathcal H_j(S^1_-)$ through the canonical equivalences. Let us simplify our notations by writing $S^1_+$ and $S^1_-$ as $+$ and $-$ in Connes fusions. Then by our definition,

\begin{align}
\mathcal H_i\boxtimes\mathcal H_j:=\mathcal H_i(+)\boxtimes\mathcal H_j(-)=\mathcal H_i(+)\boxtimes\mathcal H_j=\mathcal H_i\boxtimes\mathcal H_j(-).
\end{align}
Since this definition of tensor bifunctor relies on two fixed open intervals, we do not have a natural identification of $\mathcal H_i\boxtimes\mathcal H_j$ and $\mathcal H_j\boxtimes\mathcal H_i$. If $F\in\Hom_{\mathcal A}(\mathcal H_i,\mathcal H_j),G\in\Hom(\mathcal H_{i'},\mathcal H_{j'})$, then the tensor product $F\otimes G:\mathcal H_i\boxtimes\mathcal H_j\rightarrow \mathcal H_{i'}\boxtimes\mathcal H_{j'}$ is defined using \eqref{eq19}. That $(F\otimes G)^*=F^*\otimes G^*$ is easy to verify, which shows that the bifunctor $\boxtimes$ preserves the $*$-structures.

For $\mathcal H_i,\mathcal H_j,\mathcal H_k\in\Rep(\mathcal A)$,\footnote {Although $\Rep(\mc A)$ is not a set, we still write $\mc H_i\in\Rep(\mc A)$ to mean that $\mc H_i$ is an object in $\Rep(\mc A)$. We will use similar notations for other categories.} we define the associativity isomorphism $(\mathcal H_i\boxtimes\mathcal H_j)\boxtimes\mathcal H_k\xrightarrow\simeq\mathcal H_i\boxtimes(\mathcal H_j\boxtimes\mathcal H_k)$ to be the one for
\begin{align}
(\mathcal H_i(+)\boxtimes\mathcal H_j)\boxtimes\mathcal H_k(-)\xrightarrow\simeq\mathcal H_i(+)\boxtimes(\mathcal H_j\boxtimes\mathcal H_k(-)),\label{eq22}
\end{align}
which is clearly functorial. The pentagon axiom (see figure \ref{fig3}) holds , since it can easily be verified for all $\xi^{(i)}\otimes\xi^{(j)}\otimes\xi^{(k)}\otimes\xi^{(l)}$, where  $\xi^{(i)}\in\mathcal H_i(S^1_+),\xi^{(j)}\in\mathcal H_j(S^1_+),\xi^{(k)}\in\mathcal H_k(S^1_-),\xi^{(l)}\in\mathcal H_l(S^1_-)$. (Note that $\xi^{(i)}\otimes\xi^{(j)}\in(\mc H_i\boxtimes\mc H_j)(S^1_+),\xi^{(k)}\otimes\xi^{(l)}\in(\mc H_k\boxtimes\mc H_l)(S^1_-)$.) (cf. \cite{Loke94} lemma VI.5.5.1.)
\begin{figure}[h]
	\centering
	\includegraphics[width=1\linewidth]{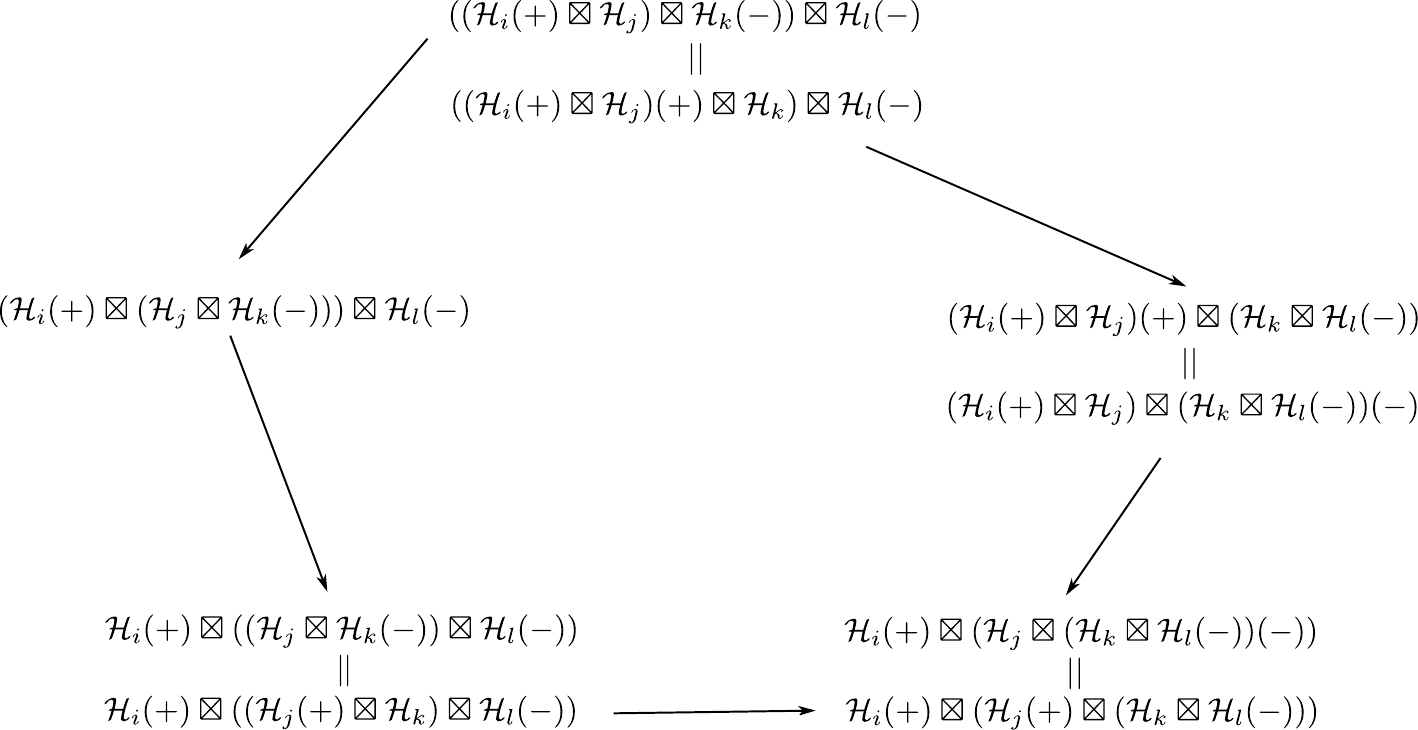}
	\caption{}
	\label{fig3}
\end{figure}
One can thus remove all the brackets in an iterated fusion. For example, all the iterated fusions in figure \ref{fig3} can be identified as $\mathcal H_i\boxtimes\mathcal H_j\boxtimes\mathcal H_k\boxtimes\mathcal H_l$.

Let $\mc H_0$ be the unit object.  By theorem \ref{lb5}, we have unitary isomorphisms
\begin{gather*}
\sharp_i:\mathcal H_i\boxtimes\mathcal H_0=\mathcal H_i(S^1_+)\boxtimes\mathcal H_0\xrightarrow{\natural_i}\mc H_i,\\
\flat_j:\mc H_0\boxtimes\mc H_j=\mc H_0\boxtimes\mc H_j(S^1_-)=\mc H_j(S^1_-)\boxtimes\mc H_0\xrightarrow{\natural_j}\mc H_j.
\end{gather*}
The triangle axiom says that
\begin{align}
\sharp_i\otimes \id_{j}=\id_{i}\otimes \flat_j, \label{eq23}
\end{align}
where $\id_i$ (resp. $\id_j$) is the identity operator on $\mc H_i$ (resp. $\mc H_j$). To see this, note that both sides of \eqref{eq23} act on $\mc H_i\boxtimes \mc H_0\boxtimes \mc H_j=\mc H_i(+)\boxtimes \mc H_0\boxtimes \mc H_j(-)$. Let us choose $I\sjs S^1_+,J\sjs S^1_-$, and $K\in\mc J$ disjoint from $I,J$. Then for any $\xi\in\mc H_i(I),\eta\in\mc H_j(J),x\in\mc A(K)$, one computes that
\begin{gather*}
(\sharp_i\otimes\id_j)(\xi\otimes x\Omega\otimes\eta)=x\xi\otimes\eta\in\mc H_i\boxtimes\mc H_j(-),\\
(\id_i\otimes\flat_j)(\xi\otimes x\Omega\otimes\eta)=\xi\otimes x\eta\in\mc H_i(+)\boxtimes\mc H_j.
\end{gather*} 
One can easily construct path continuations to show that both $x\xi\otimes\eta$ and $\xi\otimes x\eta$ equal $x(\xi\otimes\eta)$. Therefore \eqref{eq23} holds.  A construction of  $C^*$-tensor categorical structure on $\Rep(\mc A)$ is now finished.\\

We close this section with a brief discussion of braiding in $\Rep(\mathcal A)$. Let $\varrho$ be a path of $180^\degree$ clockwise rotation from $S^1_+$ to $S^1_-$, e.g.,
\begin{gather}
\varrho:[0,1]\rightarrow S^1,\qquad t\mapsto e^{i\pi(\frac 12-t)}. \label{eq24}
\end{gather}
The braid operator $\mathbb B_{i,j}:\mc H_i\boxtimes\mc H_j\rightarrow\mc H_j\boxtimes\mc H_i$ is defined to be
\begin{align}
\mathbb B_{i,j}: \mc H_i\boxtimes\mc H_j= \mc H_i(S^1_+)\boxtimes\mc H_j\xrightarrow{\varrho^\bullet}\mc H_i(S^1_-)\boxtimes\mc H_j=\mc H_j(S^1_+)\boxtimes\mc H_i(S^1_-)=\mc H_j\boxtimes\mc H_i,
\end{align}
or written more simply, $\mathbb B=\varrho^\bullet$. This unitary map is clearly functorial. So $\Rep(\mc A)$  becomes a braided $C^*$-tensor category once we've proved the hexagon axioms for $\mathbb B_{i,j}$. The proof of hexagon axioms will be much easier after we introduce categorical extensions. So we leave the proof to the next chapter (see section \ref{lb11}). 

Note that our description of the $C^*$-tensor category $\Rep(\mc A)$ and the braiding  $\mathbb B$ (as well as the Hexagon axioms to be proved in the next chapter) do not rely on the conformal structures of $\mc A$ and $\mc A$-modules. Thus the above results hold when $\mc A$ is only a M\"obius covariant net. In \cite{Was98} section 33, A.Wassermann defines braiding in a different way (see equation \eqref{eq89}), which relies on the conformal structures. In the following we show that our definition of $\mathbb B$ agrees with that of Wassermann. Although this result is interesting in its own right, we will not use it in the rest of this paper. The following discussion can be skipped safely.

Choose  $\mc H_i,\mc H_j\in\Rep(\mc A)$.  Consider a path $\lambda$ in $\wtd\PSU(1,1)$ defined by $t\in[0,1]\mapsto\wtd\exp(-i\pi tL_0)$. Set $z=i$, and $\lambda_z:t\in [0,1]\mapsto \lambda(t)z\in S^1$. Then $\lambda_z=\varrho$ where $\varrho$ is defined by \eqref{eq24}. By \eqref{eq26}, for any $\xi\in\mathcal H_i(S^1_+),\eta\in\mathcal H_j(S^1_-)$, 
\begin{align}
(\lambda_z^\bullet)^{-1}(\eta\otimes\xi)=e^{-i\pi L_0}(e^{i\pi L_0}\eta\otimes e^{i\pi L_0}\xi)\label{eq27}
\end{align}
where we regard $\eta\otimes\xi\in\mc H_j(S^1_-)\boxtimes\mathcal H_i(S^1_+)$ and  $e^{i\pi L_0}\eta\otimes e^{i\pi L_0}\xi\in\mc H_j(S^1_+)\boxtimes\mc H_i(S^1_-)$. Under the identification $\mc H_j(S^1_-)\boxtimes\mc H_i(S^1_+)=\mc H_i(S^1_+)\boxtimes\mc H_j(S^1_-)=\mc H_i\boxtimes\mc H_j$ and $\mc H_j(S^1_+)\boxtimes\mc H_i(S^1_-)=\mc H_j\boxtimes\mc H_i$,  \eqref{eq27} becomes
\begin{align}
(\lambda_z^\bullet)^{-1}(\xi\otimes\eta)=e^{-i\pi L_0}(e^{i\pi L_0}\eta\otimes e^{i\pi L_0}\xi)
\end{align}
where we regard $\xi\otimes\eta\in\mc H_i\boxtimes\mc H_j,e^{i\pi L_0}\eta\otimes e^{i\pi L_0}\xi\in\mc H_j\boxtimes\mc H_i$. As $\lambda_z=\varrho$, we have $\mathbb B=\lambda_z^\bullet$. We thus conclude:

\begin{pp}\label{lb52}
If $\mc H_i,\mathcal H_j$ are conformal $\mc A$-modules, then the inverse of $\mathbb B_{j,i}:\mc H_j\boxtimes\mc H_i\rightarrow\mc H_i\boxtimes\mc H_j$ can be described by
\begin{align}\label{eq89}
\mathbb B_{j,i}^{-1}(\xi\otimes\eta)=e^{-i\pi L_0}(e^{i\pi L_0}\eta\otimes e^{i\pi L_0}\xi)\qquad(\forall \xi\in\mc H_i(S^1_+),\eta\in\mc H_j(S^1_-)).
\end{align}
\end{pp}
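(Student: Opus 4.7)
The plan is to derive the formula directly from the conformal-structure description of Connes fusion established in equation \eqref{eq26}, by specializing the self-adjoint vector field to $X = -\pi L_0$. The point is that the braid path $\varrho$ defined in \eqref{eq24} is exactly the orbit of the basepoint $i \in S^1_+$ under the one-parameter subgroup $\wtd\exp(-i\pi t L_0)$, so the path continuation $\varrho^\bullet$ defining $\mathbb B$ coincides with the path continuation implicit in \eqref{eq26}.

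More concretely, I would first choose the basepoint $z = i \in S^1_+$ and the path $\lambda(t) = \wtd\exp(-i\pi t L_0)$ in $\wtd{\PSU}(1,1)$, so that $\lambda_z(t) = e^{i\pi(1/2 - t)}$ coincides with $\varrho$. Then by \eqref{eq26} applied with $X = -\pi L_0$, for $\eta \in \mc H_j(S^1_-)$ and $\xi \in \mc H_i(S^1_+)$ (which in the fusion $\mc H_j(S^1_-)\boxtimes \mc H_i(S^1_+)$ corresponds to the element $\eta \otimes \xi$), we have
\begin{align*}
e^{-i\pi L_0}(\eta\otimes\xi) = (\lambda_z^\bullet)^{-1}\bigl(e^{-i\pi L_0}\eta \otimes e^{-i\pi L_0}\xi\bigr),
\end{align*}
where $(\lambda_z^\bullet)^{-1}: \mc H_j(\lambda(1)S^1_-)\boxtimes\mc H_i(\lambda(1)S^1_+) \to \mc H_j(S^1_-)\boxtimes\mc H_i(S^1_+)$. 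Since $\lambda(1) S^1_\pm = S^1_\mp$, the target of $(\lambda_z^\bullet)^{-1}$ naturally pairs $e^{-i\pi L_0}\eta \in \mc H_j(S^1_+)$ with $e^{-i\pi L_0}\xi \in \mc H_i(S^1_-)$.

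Replacing $\eta,\xi$ by $e^{i\pi L_0}\eta, e^{i\pi L_0}\xi$ and rearranging, I obtain
\begin{align*}
(\lambda_z^\bullet)^{-1}(\eta\otimes\xi) = e^{-i\pi L_0}\bigl(e^{i\pi L_0}\eta \otimes e^{i\pi L_0}\xi\bigr).
\end{align*}
Now translate through the identifications $\mc H_j(S^1_-)\boxtimes\mc H_i(S^1_+) = \mc H_i(S^1_+)\boxtimes\mc H_j(S^1_-) = \mc H_i\boxtimes\mc H_j$ and $\mc H_j(S^1_+)\boxtimes\mc H_i(S^1_-) = \mc H_j\boxtimes\mc H_i$, under which the swap $\eta\otimes\xi \leftrightarrow \xi\otimes\eta$ is tautological since Connes fusion is order-independent. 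Because $\lambda_z = \varrho$ and $\mathbb B_{j,i}^{-1}$ is by definition the path continuation induced by $\varrho$ (read in the direction $\mc H_j\boxtimes\mc H_i \to \mc H_i\boxtimes\mc H_j$), the above identity becomes exactly \eqref{eq89}.

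The main subtlety — and the only real point requiring care — is bookkeeping of the identifications between the various Connes fusions: $\mc H_i(S^1_+)\boxtimes\mc H_j(S^1_-)$, $\mc H_j(S^1_+)\boxtimes\mc H_i(S^1_-)$, and their reorderings, and to verify that the path continuation appearing in \eqref{eq26} with the chosen $X$ really is $\mathbb B_{j,i}^{-1}$ rather than $\mathbb B_{i,j}$ (which would use a counterclockwise rotation by the same angle). Once the orientation and the choice of basepoint $z=i \in S^1_+$ are tracked carefully, the result is a direct application of \eqref{eq26}.
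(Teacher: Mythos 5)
Your strategy is the same as the paper's --- specialize \eqref{eq26} to $X=-\pi L_0$ with basepoint $z=i$ and recognize $\lambda_z=\varrho$ --- but the step you yourself single out as the only real point requiring care is exactly where the write-up goes wrong, and it is not a harmless convention. First, your parenthetical claim is false: the path continuation induced by $\varrho$ read in the direction $\mc H_j\boxtimes\mc H_i\to\mc H_i\boxtimes\mc H_j$ is $\mathbb B_{j,i}$ itself (it is $\varrho^\bullet$ on $\mc H_j(S^1_+)\boxtimes\mc H_i$), not $\mathbb B_{j,i}^{-1}$; the inverse is induced by the reversed path. Second, because you base the rotation at $z=i$, which lies in the interval of the $\mc H_i$-factor of $\mc H_j(S^1_-)\boxtimes\mc H_i(S^1_+)$, the continuation in your first display is, under the reordering identifications, $\mathbb B_{i,j}$ (its $\Conf_2(S^1)$-path has the $\mc H_i$-point travelling along $\varrho$), so your $(\lambda_z^\bullet)^{-1}$ is $\mathbb B_{i,j}^{-1}$; this differs from $\mathbb B_{j,i}^{-1}$ by the monodromy $\mathbb B_{j,i}\mathbb B_{i,j}$, which is not the identity in general. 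Third, the substitution $\eta\mapsto e^{i\pi L_0}\eta$, $\xi\mapsto e^{i\pi L_0}\xi$ silently flips the boundedness hypotheses: your first display requires the inserted vectors to lie in $\mc H_j(S^1_-)$ and $\mc H_i(S^1_+)$, so the substituted identity is valid only for $\eta\in\mc H_j(S^1_+)$, $\xi\in\mc H_i(S^1_-)$, while for the vectors you actually declared the element $\eta\otimes\xi$ lies in the codomain, not the domain, of your $(\lambda_z^\bullet)^{-1}$. What your computation proves, once the bookkeeping is repaired, is $\mathbb B_{i,j}^{-1}(\eta\otimes\xi)=e^{-i\pi L_0}\bigl(e^{i\pi L_0}\xi\otimes e^{i\pi L_0}\eta\bigr)$ for $\eta\in\mc H_j(S^1_+)$, $\xi\in\mc H_i(S^1_-)$, i.e.\ \eqref{eq89} with $i$ and $j$ interchanged; that is logically equivalent to the proposition since $i,j$ are arbitrary, but only after you say so --- as written, the identification of your map with $\mathbb B_{j,i}^{-1}$ for the declared $\xi,\eta$ is incorrect.

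The paper avoids all of this by applying \eqref{eq26} at the rotated configuration: with $g=\wtd\exp(-i\pi L_0)$ it applies the formula to $e^{i\pi L_0}\eta\otimes e^{i\pi L_0}\xi$, regarded in $\mc H_j(S^1_+)\boxtimes\mc H_i(S^1_-)$, so that the charged interval is $S^1_+\ni z=i$ and belongs to the $\mc H_j$-factor; the continuation that then appears is literally $\varrho^\bullet$ on $\mc H_j(S^1_+)\boxtimes\mc H_i$, i.e.\ $\mathbb B_{j,i}$ by definition, and \eqref{eq89} drops out with the stated hypotheses and no index swap. Either redo your argument in that form, or keep your version but record the changed hypotheses, identify your continuation as $\mathbb B_{i,j}^{-1}$, and conclude by relabelling $i\leftrightarrow j$.
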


\section{Connes fusions and categorical extensions}

\subsection{Categorical extensions}\label{lb29}

Let $\mc A$ be a conformal net as usual. Let $\scr C$ be a full abelian ($C^*$-)subcategory of $\Rep(\mc A)$ containing $\mc H_0$. In other words, $\scr C$ is a class of $\mc A$-modules which, up to unitary equivalences, is closed under taking $\mc A$-submodules and finite direct sums.\footnote{Indeed it is not necessary to require $\scr C$ to be a full subcategory. We add this condition only to simplify discussions.}  Equip $\scr C$  with a tensor bifunctor $\boxdot$ (not necessarily the Connes fusion bifunctor $\boxtimes$), functorial unitary associativity isomorphisms $(\mc H_i\boxdot\mc H_j)\boxdot H_k\xrightarrow\simeq\mc H_i\boxdot(\mc H_j\boxdot\mc H_k)$, and unitary isomorphisms $\mc H_i\boxdot\mc H_0\xrightarrow\simeq\mc H_i,\mc H_0\boxdot\mc H_i\xrightarrow\simeq \mc H_i$, so that $\scr C$ becomes a $C^*$-tensor category with unit $\mc H_0$. We identify $(\mc H_i\boxdot\mc H_j)\boxdot H_k$, $\mc H_i\boxdot(\mc H_j\boxdot\mc H_k)$ as $\mc H_i\boxdot\mc H_j\boxdot \mc H_k$, and $\mc H_i\boxdot \mc H_0$, $\mc H_0\boxdot\mc H_i$ as $\mc H_i$.

An \textbf{arg function} defined on $I\in\mc J$ is a continuous function $\arg_I:I\rightarrow \mathbb R$ such that $z=e^{\log|z|+i\arg_I(z)}$ for any $z\in I$. We say that an open interval $I\in\mc J$ is \textbf{arg-valued}, if $I$ is equipped with an arg function $\arg_I$. An arg-valued $I$ is often denoted by $\wtd I$ or $(I,\arg_I)$. Two identical open intervals are regarded as different arg-valued intervals if they have different arg functions. We let $\Jtd$ be the set of all arg-valued open intervals in $S^1$. If $\wtd I=(I,\arg_I),\wtd J=(J,\arg_J)\in\Jtd$, we say that $\wtd I$ is an \textbf{(arg-valued) open subinterval} of $\wtd J$ if $I\subset J$ and $\arg_J|_I=\arg_I$. In this case we write  $\wtd I\subset\wtd J$. Given $\wtd J\in\Jtd$, we say $\wtd I\sjs \wtd J$ if $\wtd I\in\Jtd$, $\wtd I\subset\wtd J$, and $I\sjs J$. We say that $\wtd I$ and $\wtd J$ are \textbf{disjoint} if $I$ and $J$ are disjoint.  We say  $\wtd I$ is \textbf{anticlockwise} to $\wtd J$ (or $\wtd J$ is clockwise to $\wtd I$), if $\wtd I$ and $\wtd J$ are disjoint, and $\arg_J(\zeta)<\arg_I(z)<\arg_J(\zeta)+2\pi$ for any $z\in I,\zeta\in J$.

The group $\scr G$ acts on $\Jtd$ in a natural way: If $g\in\scr G$, $\wtd I=(I,\arg_I)\in\Jtd$, we choose a path $\lambda$ in $\scr G$ from $1$ to $g$. Then for any $z\in I$, we have a path $\lambda_z$ in $S^1$ defined by $\lambda_z(t):t\in[0,1]\mapsto\lambda(t)z$. Let us now extend $\arg_I(z)$ continuously to an argument of $gz$ along the path $\lambda_z$. More precisely, we define a continuous function $\arg_{\lambda_z}:[0,1]\rightarrow\mathbb R$, such that for any $t\in[0,1]$, $\arg_{\lambda_z}(t)$ is an argument of $\lambda_z(t)=\lambda(t)z$. Then we take $\arg_{\lambda_z}(1)$ to be the argument of $gz$. Now we let $\arg_{gI}$  be the arg function on $gI$ satisfying $\arg_{gI}(gz)=\arg_{\lambda_z}(1)$. We define $g\wtd I=(gI,\arg_{gI})$. It is easy to check that this definition is well defined, and that the action of $\scr G\curvearrowright\Jtd$ is a group action. We can easily lift this action to $\GA\curvearrowright \Jtd$.

If $\mc P,\mc Q, \mc R,\mc S$ are Hilbert spaces, and we have bounded linear operators $A:\mc P\rightarrow\mc R,B:\mc Q\rightarrow\mc S,C:\mc P\rightarrow\mc Q,D:\mc R\rightarrow\mc S$, we say that the  diagram
\begin{align}
\begin{CD}
\mc P @>C>> \mc Q\\
@V A VV @V B VV\\
\mc R @>D>> \mc S
\end{CD}\label{eq28}
\end{align}
\textbf{commutes adjointly}, if both this diagram and the diagram
\begin{align}
\begin{CD}
\mc P @>C>> \mc Q\\
@A {A^*} AA @A {B^*} AA\\
\mc R @>D>> \mc S
\end{CD}\label{eq29}
\end{align}
commute. Note that \eqref{eq29} commutes if and only if the following diagram \eqref{eq30} commutes.
\begin{align}
\begin{CD}
\mc P @<C^*<< \mc Q\\
@V A VV @V B VV\\
\mc R @<D^*<< \mc S
\end{CD}\label{eq30}
\end{align}
Note also that if either $A,B$  or $C,D$ are unitary, then the commutativity implies the adjoint commutativity of \eqref{eq28}.

\begin{df}\label{lb9}
Let $\fk H$ assign, to each $\wtd I\in\Jtd$ and $\mc H_i\in\scr C$, a set $\fk H_i(\wtd I)$  such that $\fk H_i(\wtd I_1)\subset\fk H_i(\wtd I_2)$  whenever $\wtd I_1\subset\wtd I_2$. A \textbf{categorical extension} $\scr E=(\mc A,\scr C,\boxdot,\fk H)$ of $\mc A$ associates, to any $\mc H_i,\mc H_k\in\scr C,\wtd I\in\Jtd,\fk a\in\fk H_i(\wtd I)$, bounded linear operators
\begin{gather*}
L(\fk a,\wtd I)\in\Hom_{\mc A(I^c)}(\mc H_k,\mc H_i\boxdot\mc H_k),\\
R(\fk a,\wtd I)\in\Hom_{\mc A(I^c)}(\mc H_k,\mc H_k\boxdot\mc H_i),
\end{gather*}
such that the following conditions are satisfied:\\
(a) (Isotony) If $\wtd I_1\subset\wtd I_2\in\Jtd$, and $\fk a\in\fk H_i(\wtd I_1)$, then $L(\fk a,\wtd I_1)=L(\fk a,\wtd I_2)$, $R(\fk a,\wtd I_1)=R(\fk a,\wtd I_2)$ when acting on any $\mc H_k\in\scr C$.\\
(b) (Functoriality) If $\mc H_{i},\mc H_k,\mc H_{k'}\in\scr C$, $F\in\Hom_{\mc A}(\mc H_k,\mc H_{k'})$,  the following diagrams commute for any $\wtd I\in\Jtd,\fk a\in\fk H_i(\wtd I)$.
\begin{gather}
\begin{CD}
\mc H_k @>F>> \mc H_{k'}\\
@V L(\fk a,\wtd I)  VV @V L(\fk a,\wtd I)  VV\\
\mc H_i\boxdot\mc H_k @> \id_i\otimes F>> \mc H_i\boxdot\mc H_{k'}
\end{CD}\qquad\qquad
\begin{CD}
\mc H_k @> R(\fk a,\wtd I)  >> \mc H_k\boxdot\mc H_i\\
@V F VV @V F\otimes\id_i  VV\\
\mc H_{k'} @>R(\fk a,\wtd I) >> \mc H_{k'}\boxdot\mc H_i
\end{CD}.
\end{gather}
(c) (Neutrality) For any $\mc H_i\in\scr C$, under the identifications $\mc H_i=\mc H_i\boxdot\mc H_0=\mc H_0\boxdot\mc H_i$, the relation
\begin{align}
L(\fk a,\wtd I)\Omega=R(\fk a,\wtd I)\Omega
\end{align}
holds for any $\wtd I\in\Jtd,\fk a\in\fk H_i(\wtd I)$.\\
(d) (Reeh-Schlieder property) If $\mc H_i\in\scr C,\wtd I\in \Jtd$, then under the identification $\mc H_i=\mc H_i\boxdot\mc H_0$, the set $L(\fk H_i(\wtd I),\wtd I)\Omega$ spans a dense subspace of $\mc H_i$.\\
(e) (Density of fusion products) If $\mc H_i,\mc H_k\in\scr C,\wtd I\in\Jtd$, then the set $L(\fk H_i(\wtd I),\wtd I)\mc H_k$ spans a dense subspace of $\mc H_i\boxdot\mc H_k$, and $R(\fk H_i(\wtd I),\wtd I)\mc H_k$ spans a dense subspace of $\mc H_k\boxdot\mc H_i$.\\
(f) (Locality) For any $\mc H_k\in\scr C$, disjoint $\wtd I,\wtd J\in\Jtd$ with $\wtd I$ anticlockwise to $\wtd J$, and any $\fk a\in\fk H_i(\wtd I),\fk b\in\fk H_j(\wtd J)$, the following diagram \eqref{eq31}  commutes \emph{adjointly}.
\begin{align}
\begin{CD}
\mc H_k @> \quad R(\fk b,\wtd J)\quad   >> \mc H_k\boxdot\mc H_j\\
@V L(\fk a,\wtd I)   V  V @V L(\fk a,\wtd I) VV\\
\mc H_i\boxdot\mc H_k @> \quad R(\fk b,\wtd J) \quad  >> \mc H_i\boxdot\mc H_k\boxdot\mc H_j
\end{CD}\label{eq31}
\end{align}
(g) (Braiding) There is a unitary linear map $\ss_{i,j}:\mc H_i\boxdot\mc H_j\rightarrow\mc H_j\boxdot \mc H_i$  for any $\mc H_i,\mc H_j\in\scr C$, such that  
\begin{align}
\ss_{i,j} L(\fk a,\wtd I)\eta=R(\fk a,\wtd I)\eta\label{eq33}
\end{align}
whenever $\wtd I\in\Jtd,\fk a\in\fk H_i(\wtd I)$, $\eta\in\mc H_j$.
\end{df}

\begin{df}
A categorical extension $\scr E$ is called \textbf{conformal (covariant)}, if for any $g\in\GA,\wtd I\in\Jtd,\mc H_i\in\scr C,\fk a\in\fk H_i(\wtd I)$, there exits an element $g\fk ag^{-1}\in \fk H_i(g\wtd I)$, such that
\begin{gather}
L(g\fk a g^{-1},g\wtd I)=gL(\fk a,\wtd I)g^{-1}\label{eq32}
\end{gather}
when acting on any $\mc H_j\in\scr C$.
\end{df}
Note that \eqref{eq32} is equivalent to
\begin{align}
R(g\fk a g^{-1},g\wtd I)=gR(\fk a,\wtd I)g^{-1}
\end{align}
by relation \eqref{eq33} and corollary \ref{lb7}.\\

We now derive some immediate consequences from the definition of a categorical extension $\scr E$. The first thing to notice is that for any $\mc H_i\in\scr C,\wtd I\in\Jtd$, and $\fk a\in\fk H_i(\wtd I)$, the operator $L(\fk a,\wtd I)$ (acting on all possible $\mc A$-modules in $\scr C$) is uniquely determined by the vector $L(\fk a,\wtd I)\Omega$. To see this, we choose an arbitrary $\mc H_j\in\scr C$, and choose $\wtd J\in\Jtd$ clockwise to $\wtd I$.
Then by locality, $L(\fk a,\wtd I)R(\fk b,\wtd J)\Omega=R(\fk b,\wtd J)L(\fk a,\wtd I)\Omega$ for any $\fk b\in\fk H_j(\wtd J)$. So the action of $L(\fk a,\wtd I)$ on $R(\fk H_j(\wtd J),\wtd J)\Omega$ is determined by $L(\fk a,\wtd I)\Omega$. By Reeh-Schlieder property, $R(\fk H_j(\wtd J),\wtd J)\Omega=L(\fk H_j(\wtd J),\wtd J)\Omega$ spans a dense subspace of $\mc H_i$. Therefore $L(\fk a,\wtd I)$ is uniquely determined by $L(\fk a,\wtd I)\Omega$. Note also that $L(\fk a,\wtd I)\Omega\in\mc H_i(I)$. Hence we may relabel $\scr E$ to satisfy the following condition:

\begin{df}\label{lb14}
A categorical extension $\scr E=(\mc A,\scr C,\boxdot,\fk H)$ is called \textbf{vector-labeled}, if for any $\mc H_i\in\scr C,\wtd I\in\Jtd$, the set $\fk H_i(\wtd I)$ is a subset of $\mc H_i(I)$, and for any $\fk a\in \fk H_i(\wtd I)$, the following creation property (state-field correspondence) holds:
\begin{align}
L(\fk a,\wtd I)\Omega=\fk a.
\end{align}
\end{df}

If $\scr E$ is vector-labeled, $x\in\mc A(I)$, and $x\Omega\in\fk H_0(\wtd I)$, then for any $\mc H_k\in\scr C$,   $L(x\Omega,\wtd I)=\pi_k(x)=R(x\Omega,\wtd I)$ when acting on $\mc H_k$. Indeed, we choose an arbitrary $\wtd J\in\Jtd$ clockwise to $\wtd I$, and $\fk b\in\fk H_k(\wtd J)$. Then by locality, $L(x\Omega,\wtd I)R(\fk b,\wtd J)\Omega=R(\fk b,\wtd J)L(x\Omega,\wtd I)\Omega=R(\fk b,\wtd J)x\Omega=\pi_k(x)R(\fk b,\wtd J)\Omega$. Now $L(x\Omega,\wtd I)=\pi_k(x)$ follows immediately from the Reeh-Schlieder property. Similar argument shows that $\pi_k(x)=R(x\Omega,\wtd I)$.

\subsection{Connes categorical extensions}

The main goal in this section is to use Connes fusions to construct a conformal categorical extension $\scr E_C=(\mc A,\Rep(\mc A),\boxtimes,\mc H)$
of $\mc A$. For any $\mc H_i\in\Rep(\mc A),\wtd I=(I,\arg_I)\in\Jtd$, we set $\mc H_i(\wtd I)=\mc H_i(I)$, which plays the role of $\fk H_i(\wtd I)$ in the definition of categorical extensions. Fix $z_+=i,z_-=-i$, and let $\varrho:[0,1]\rightarrow S^1$ be an $180^\degree$ clockwise rotation from $z_+$ to $z_-$ defined for instance by \eqref{eq24}. Choose a path $\alpha_{\wtd I}:[0,1]\rightarrow S^1$ from (a point in) $I$ to $z_+$, such that the arg value $\arg_I(\alpha_{\wtd I}(0))$ of $\alpha_{\wtd I}(0)$ changes continuously along this path to the arg value $\frac \pi 2$ of $z_+=i$. (For example, if $-1\in I$, and $\wtd I=(I,\arg_I)$ is defined in such a way that $\arg_I(-1)=5\pi$, then we can choose $\alpha_{\wtd I}$ to be a $(5-\frac 12)\cdot 180^\degree$ clockwise rotation from $-1$ to $i$.) Then we know that   $\arg_I(\alpha_{\wtd I}(0))$ changes continuously along the path $\varrho*\lambda_{\wtd I}$ to the arg value $-\frac \pi 2$ of $z_-=-i$. 

Now for any $\xi\in\mc H_i(I)$ and any $\mc H_k\in\Rep(\mc A)$, we let $Z(\xi,I)$ (which is originally a linear map $\mc H_0\rightarrow \mc H_i$) also be a bounded linear operator from $\mc H_k$ to $\mc H_i(I)\boxtimes\mc H_k$:
\begin{gather*}
Z(\xi,I):\mc H_k\rightarrow\mc H_i(I)\boxtimes\mc H_k,\qquad \chi\mapsto\xi\otimes\chi.
\end{gather*}
Clearly $Z(\xi,I)\in\Hom_{\mc A(I^c)}(\mc H_k,\mc H_i(I)\boxtimes\mc H_k)$. Now we define $L(\xi,\wtd I):\mc H_k\rightarrow\mc H_i\boxtimes\mc H_k$ to be
\begin{align}
L(\xi,\wtd I):\mc H_k\xrightarrow{Z(\xi,I)}\mc H_i(I)\boxtimes\mc H_k\xrightarrow{\alpha_{\wtd I}^\bullet}\mc H_i(S^1_+)\boxtimes\mc H_k=\mc H_i\boxtimes\mc H_k.
\end{align}
Since path continuations commute with the actions of $\mc A$, we have $L(\xi,\wtd I)\in\Hom_{\mc A(I^c)}(\mc H_k,\mc H_i\boxtimes\mc H_k)$. Similarly, we  define $R(\xi,\wtd I)\in\Hom_{\mc A(I^c)}(\mc H_k,\mc H_k\boxtimes\mc H_i)$ to be
\begin{align}
R(\xi,\wtd I):\mc H_k\xrightarrow{Z(\xi,I)}\mc H_i(I)\boxtimes\mc H_k\xrightarrow{(\varrho*\alpha_{\wtd I})^\bullet}\mc H_i(S^1_-)\boxtimes\mc H_k=\mc H_k\boxtimes\mc H_i.
\end{align}
Then equation \eqref{eq33} (with $\ss=\mathbb B$) follows directly from the fact that $\mathbb B=\varrho^\bullet$.
\begin{thm}
With the above constructions, $\scr E_C=(\mc A,\Rep(\mc A),\boxtimes,\mc H)$ is a vector-labeled categorical extension of $\mc A$. We call it the \textbf{Connes categorical extension} of $\mc A$.
\end{thm}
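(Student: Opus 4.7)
Most axioms of a vector-labeled categorical extension follow routinely from the Connes-fusion machinery of Chapter~2. The state-field correspondence $L(\xi,\wtd I)\Omega=\xi$ (and similarly for $R$) is immediate from Theorem~\ref{lb5}: the unitary $\natural_i$ sends $\xi\otimes\Omega$ to $\xi$ and commutes with every path continuation, so $\alpha_{\wtd I}^\bullet Z(\xi,I)\Omega$ is identified with $\xi\in\mc H_i$. This makes $\scr E_C$ vector-labeled and at once yields neutrality~(c), Reeh--Schlieder~(d) (since $\mc H_i(I)$ is dense in $\mc H_i$), and density of fusion products~(e) (since $\mc H_i(I)\otimes\mc H_k$ is dense in $\mc H_i(I)\boxtimes\mc H_k$ while $\alpha_{\wtd I}^\bullet$ is unitary). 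Isotony~(a) follows from Proposition~\ref{lb4}: under $\wtd I_1\subset\wtd I_2$ one may take the same path $\alpha_{\wtd I_1}=\alpha_{\wtd I_2}$, while $Z(\xi,I_1)$ and $Z(\xi,I_2)$ agree on $\mc H_k$ whenever $\xi\in\mc H_i(I_1)$. Functoriality~(b) is Proposition~\ref{lb8}, since $F\boxtimes\id$ commutes with both $Z(\xi,I)$ and $\alpha_{\wtd I}^\bullet$. Finally, braiding~(g) is by design: $R(\xi,\wtd I)=(\varrho*\alpha_{\wtd I})^\bullet Z(\xi,I)=\varrho^\bullet L(\xi,\wtd I)=\mathbb B_{i,k}L(\xi,\wtd I)$, so we set $\ss_{i,j}=\mathbb B_{i,j}$.

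The axiom requiring genuine work is locality~(f). Fix disjoint $\wtd I,\wtd J\in\Jtd$ with $\wtd I$ anticlockwise to $\wtd J$. The plan is to realize both routes of diagram~\eqref{eq31} inside the three-fold Connes fusion $\mc H_i(I)\boxtimes\mc H_k\boxtimes\mc H_j(J)$ of Section~2.5. On this intermediate space the tautological insertions $\chi\mapsto\xi\otimes\chi\otimes\eta$ coming from the left and the right of $\mc H_k$ agree, so the problem reduces to commuting two factor-wise path continuations --- $\alpha_{\wtd I}^\bullet$ on the $\mc H_i$-slot (moving $I$ to $S^1_+$) and an analogous continuation $\beta^\bullet$ on the $\mc H_j$-slot (moving $J$ to $S^1_-$) --- past the associators relating the three-fold fusion to $(\mc H_i\boxtimes\mc H_k)\boxtimes\mc H_j$ and $\mc H_i\boxtimes(\mc H_k\boxtimes\mc H_j)$. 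The anticlockwise hypothesis is used precisely here: it lets one homotope $\varrho*\alpha_{\wtd J}$ to a path $\beta$ disjoint from $\alpha_{\wtd I}$ at every time, so that $(\alpha_{\wtd I},\beta)$ traces a genuine path in $\Conf_2(S^1)$, and by homotopy invariance (Proposition~\ref{lb4}) the definition of $R(\eta,\wtd J)$ is unchanged. Proposition~\ref{lb53} then interchanges the two factor-wise continuations with the associators, and unwinding definitions gives $R(\eta,\wtd J)L(\xi,\wtd I)=L(\xi,\wtd I)R(\eta,\wtd J)$ as operators $\mc H_k\to\mc H_i\boxtimes\mc H_k\boxtimes\mc H_j$.

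For the adjoint commutativity one runs the same argument with $Z(\xi,I)^*$ in place of $Z(\xi,I)$ on the left column: this adjoint is still a bounded $\mc A(I^c)$-intertwiner from the three-fold fusion to $\mc H_k\boxtimes\mc H_j(J)$, so Propositions~\ref{lb8} and~\ref{lb53} again let it slide past the $\mc H_j$-slot continuation $\beta^\bullet$ and hence past $R(\eta,\wtd J)$. The main obstacle I foresee is the triple-fusion bookkeeping: one must fix coherent associators between $(\mc H_i\boxtimes\mc H_k)\boxtimes\mc H_j$, $\mc H_i\boxtimes(\mc H_k\boxtimes\mc H_j)$, and the honest three-fold fusion, and then verify that the anticlockwise hypothesis produces a joint path $(\alpha_{\wtd I},\beta)$ lying in $\Conf_2(S^1)$. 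Once this geometric input is in place, both commutativity and adjoint commutativity reduce to the trivial observation that the two insertions act on disjoint tensor factors of the three-fold fusion.
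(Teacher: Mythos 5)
Your handling of the routine axioms and of the plain (non-adjoint) commutativity in locality is essentially the paper's argument: the paper likewise factors $L(\xi,\wtd I)=\alpha_{\wtd I}^\bullet Z(\xi,I)$ and $R(\eta,\wtd J)=\beta_{\wtd J}^\bullet Z(\eta,J)$, uses anticlockwiseness to arrange $(\alpha_{\wtd I},\beta_{\wtd J})$ into a path in $\Conf_2(S^1)$, and disposes of the outer squares by naturality of the insertions under unitary module maps together with proposition \ref{lb53}. The gap is in the adjoint half of locality, which is the analytic core of axiom (f). What must be shown is the identity $Z(\eta,J)\,Z(\xi,I)^*=Z(\xi,I)^*\,Z(\eta,J)$ as operators $\mc H_i(I)\boxtimes\mc H_k\to\mc H_k\boxtimes\mc H_j(J)$, and this does not follow from propositions \ref{lb8} and \ref{lb53}: those results concern tensor products $F\otimes G$ of genuine $\mc A$-module morphisms and factor-wise path continuations, whereas $Z(\xi,I)^*$ is only an $\mc A(I^c)$-intertwiner and is not of that form. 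Neither the paper nor your argument establishes that the tautological $J$-slot insertion $\chi\mapsto\chi\otimes\eta$ is natural with respect to arbitrary $\mc A(I^c)$- (or $\mc A(J)$-) intertwiners, nor that such an intertwiner induces a bounded operator ``acting on the first slot'' of $\mc H_k\boxtimes\mc H_j(J)$; saying that the adjoint ``slides past $R(\eta,\wtd J)$'' presupposes exactly the statement to be proved. Your geometric input (the joint path in $\Conf_2(S^1)$) is irrelevant to this step.

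The paper closes this gap by a direct computation (its Step 1): pairing against arbitrary vectors gives $Z(\xi,I)^*(\xi'\otimes\chi')=\pi_k\big(Z(\xi,I)^*Z(\xi',I)\big)\chi'$ for $\xi'\in\mc H_i(I)$, $\chi'\in\mc H_k$ (equation \eqref{eq34}); since $Z(\xi,I)^*Z(\xi',I)|_{\mc H_0}\in\mc A(I)$ and $I\subset J^c$, this element acts on $\mc H_k\boxtimes\mc H_j(J)$ through the first tensor factor and hence commutes with the insertion of $\eta$, which yields the adjoint commutativity of the inner square of insertions; the remaining squares, involving $\alpha_{\wtd I}^\bullet$, $\beta_{\wtd J}^\bullet$ and the associators, commute adjointly because those maps are unitary, exactly as you argue. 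To repair your proof you must either reproduce this computation or first prove a lemma that a bounded operator intertwining the $\mc A(J)$-actions induces a bounded map on the fusion over $J$ compatible with the insertions, and then identify that induced map with the adjoint $Z(\xi,I)^*$ you are using; without one of these, the locality axiom is not verified.
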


\begin{proof}
We only prove locality here. All the other axioms are easy to verify using the results obtained in the previous  chapter.

Step 1. We show that for any $\mc H_i,\mc H_j,\mc H_k\in\Rep(\mc A)$, disjoint $I,J\in\mc J$, and $\xi\in\mc H_i(I),\eta\in\mc H_j(J)$, the following diagram commutes adjointly:
\begin{align}
\begin{CD}
\mc H_k @> \quad Z(\eta,J)\quad  >> \mc H_k\boxtimes\mc H_j(J)\\
@V Z(\xi,I)  VV @V Z(\xi,I) VV\\
\mc H_i(I)\boxtimes\mc H_k @> \quad  Z(\eta,J)\quad  >> \mc H_i(I)\boxtimes\mc H_k\boxtimes\mc H_j(J)
\end{CD}.\label{eq35}
\end{align}
It is easy to show that this diagram commutes. Indeed, if we choose an arbitrary $\chi\in\mc H_k$, then clearly $Z(\eta,J)Z(\xi,I)\chi=\xi\otimes\chi\otimes\eta=Z(\xi,I)Z(\eta,J)\chi$. To prove the adjoint commutativity, we choose any $\xi'\in\mc H_i(I),\chi'\in\mc H_k$. Then it is easy to show
\begin{align}
Z(\xi,I)^*(\xi'\otimes\chi')=\pi_k(Z(\xi,I)^*Z(\xi',I))\chi'\label{eq34}
\end{align}
by evaluating both sides with an arbitrary vector $\chi\in\mc H_k$.
Similarly, we have
\begin{align*}
Z(\xi,I)^*Z(\eta,J)(\xi'\otimes\chi')=Z(\xi,I)^*(\xi'\otimes\chi'\otimes\eta)=\pi_{k\boxtimes j}(Z(\xi,I)^*Z(\xi',I))(\chi'\otimes\eta).
\end{align*}
Since $Z(\xi,I)^*Z(\xi',I)|_{\mc H_0}\in\mc A(I)$, the right hand side of the above expression equals
\begin{align*}
\pi_k(Z(\xi,I)^*Z(\xi',I))\chi'\otimes\eta=Z(\eta,J)\pi_k(Z(\xi,I)^*Z(\xi',I))\chi',
\end{align*}
which, by \eqref{eq34}, equals $Z(\eta,J)Z(\xi,I)^*(\xi'\otimes\chi')$. Thus we've proved $Z(\eta,J)Z(\xi,I)^*=Z(\xi,I)^*Z(\eta,J)$, and hence the adjoint commutativity of \eqref{eq35}.\\

Step 2. We prove the adjoint commutativity of \eqref{eq31} for $\fk a=\xi,\fk b=\eta$. Choose a path $\beta_{\wtd J}$ from $J$ to $z_-\in S^1_-$, such that the arg value $\arg_J(\beta_{\wtd J}(0))$ of  $\beta_{\wtd J}(0)$ changes continuously along this path to the arg value $-\frac\pi 2$ of $z_-$. Then clearly $R(\eta,\wtd J)=\beta_{\wtd J}^\bullet Z(\eta,J)$. By replacing $\alpha_{\wtd I}$ and $\beta_{\wtd J}$ with homotopic paths, we assume that $(\alpha_{\wtd I},\beta_{\wtd J})$ is a path in $\Conf_2(S^1)$, i.e., $\alpha_{\wtd I}(t)\neq \beta_{\wtd J}(t)$ for any $t\in[0,1]$. (It is here that the anticlockwiseness of $\wtd I$ to $\wtd J$ is used.) Consider the following $2\times 2$ matrix of diagrams:
\begin{align}
\begin{CD}
\mc H_k @>\quad Z(\eta,J) \quad >> \mc H_k\boxtimes\mc H_j(J) @> ~~~\quad  \beta_{\wtd J}^\bullet \quad~~~  >> \mc H_k\boxtimes\mc H_j(S^1_-)\\
@V Z(\xi,I)  VV @V Z(\xi,I) VV @V Z(\xi,I) VV\\
\mc H_i(I)\boxtimes\mc H_k @> \quad  Z(\eta,J) \quad  >> \mc H_i(I)\boxtimes\mc H_k\boxtimes\mc H_j(J)   @> \quad \id_i\otimes \beta_{\wtd J}^\bullet\quad  >>\mc H_i(I)\boxtimes( \mc H_k\boxtimes\mc H_j(S^1_-))\\
@V \alpha_{\wtd I}^\bullet  VV @V  \alpha_{\wtd I}^\bullet\otimes\id_j VV @V  \alpha_{\wtd I}^\bullet  VV\\
\mc H_i(S^1_+)\boxtimes\mc H_k @> \quad  Z(\eta,J) \quad  >> (\mc H_i(S^1_+)\boxtimes\mc H_k)\boxtimes\mc H_j(J)   @>~~~ \quad \beta_{\wtd J}^\bullet\quad~~~  >>\mc H_i(S^1_+)\boxtimes \mc H_k\boxtimes\mc H_j(S^1_-).
\end{CD}
\end{align}
If we can prove the adjoint commutativity of all these four diagrams, then \eqref{eq31} commutes adjointly. Now the $(1,1)$-diagram commutes adjointly due to step 1. It is easy to show that the $(2,1)$-diagram commutes when $\alpha_{\wtd I}^\bullet$ is more generally any morphism in $\Hom_{\mc A}(\mc H_i(I)\boxtimes\mc H_k,\mc H_i(S^1_+)\boxtimes\mc H_k)$. Therefore $(2,1)$ commutes. Since $\alpha_{\wtd I}^\bullet$ and $\alpha_{\wtd I}^\bullet\otimes\id_j$ are unitary, $(2,1)$ commutes adjointly. Similarly $(1,2)$ also commutes adjointly. The (adjoint) commutativity of the $(2,2)$-diagram follows from proposition \ref{lb53}.
\end{proof}

The above argument and result clearly hold  for any M\"obius covariant net. Now that we assume $\mc A$ to be conformal covariant, we can also show that the Connes categorical extension is conformal covariant.

\begin{thm}\label{lb16}
$\scr E_C$ is a  conformal vector-labeled categorical extension.
\end{thm}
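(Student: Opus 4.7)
The target is, for every $g \in \GA$, arg-valued $\wtd I = (I, \arg_I)$, and $\xi \in \mc H_i(I)$, to produce a vector $g\xi g^{-1} \in \mc H_i(gI)$ satisfying $L(g\xi g^{-1}, g\wtd I) = g L(\xi, \wtd I) g^{-1}$ when acting on any $\mc H_k \in \Rep(\mc A)$. I plan to take the natural candidate $g\xi g^{-1} := g Z(\xi, I) g^{-1}\Omega$ already introduced before Lemma \ref{lb60}, which lies in $\mc H_i(gI)$ because $g Z(\xi, I) g^{-1} \in \Hom_{\mc A(gI^c)}(\mc H_0, \mc H_i)$. The analogous identity for $R$ will then follow without further work from the braiding axiom (g) of Definition \ref{lb9}: since $\ss = \mathbb B$ in the Connes extension and $\mathbb B_{i,k}$ is an $\mc A$-module morphism, Corollary \ref{lb7} ensures it commutes with $g$, and $R(\cdot,\wtd I) = \mathbb B_{i,k} L(\cdot,\wtd I)$ on $\mc H_k$.

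The computation will proceed by sliding $g$ past the path continuation appearing in $L(\xi, \wtd I) = \alpha_{\wtd I}^\bullet Z(\xi, I)$. Since $\alpha_{\wtd I}^\bullet$ is an $\mc A$-module morphism (Proposition \ref{lb8}), it commutes with the action of $\GA$ by Corollary \ref{lb7}. Choose $z := \alpha_{\wtd I}(0) \in I$ and any path $\lambda:[0,1]\to\GA$ from $1$ to $g$ whose projection to $\scr G$ is continuous. Applying formula \eqref{eq25} on the Connes fusion $\mc H_i(I)\boxtimes \mc H_k$ to the vector $\xi\otimes g^{-1}\eta$ yields, for $\eta\in\mc H_k$,
\[
g L(\xi,\wtd I)g^{-1}\eta \;=\; \alpha_{\wtd I}^\bullet\, g(\xi\otimes g^{-1}\eta) \;=\; \alpha_{\wtd I}^\bullet(\lambda_z^\bullet)^{-1}(g\xi g^{-1}\otimes\eta) \;=\; (\alpha_{\wtd I} * \lambda_z^{-1})^\bullet(g\xi g^{-1}\otimes\eta),
\]
where $\lambda_z: t\mapsto \lambda(t)z$ is the path in $S^1$ from $z$ to $gz$, and the last equality uses that concatenation of paths corresponds to composition of path continuations.

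It remains to identify the right side with $L(g\xi g^{-1}, g\wtd I)\eta = \alpha_{g\wtd I}^\bullet(g\xi g^{-1}\otimes\eta)$. The concatenated path $\alpha_{\wtd I} * \lambda_z^{-1}$ runs in $S^1$ from $gz \in gI$ to $z_+ = i$; to qualify as a valid $\alpha_{g\wtd I}$ it must extend $\arg_{gI}(gz)$ continuously to $\pi/2$. This is exactly what the definition of $\GA\curvearrowright\Jtd$ guarantees: by construction of $g\wtd I$, $\arg_{gI}(gz)$ is the continuous extension of $\arg_I(z)$ along $\lambda_z$, so traversing $\lambda_z^{-1}$ returns $\arg_{gI}(gz)$ continuously to $\arg_I(z)$, and then $\alpha_{\wtd I}$ carries $\arg_I(z)$ continuously to $\pi/2$. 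Any two paths from $gz$ to $z_+$ satisfying this arg condition are homotopic rel endpoints in $S^1$ (the arg condition pins down the winding), and path continuations depend only on the homotopy class, so $(\alpha_{\wtd I} * \lambda_z^{-1})^\bullet = \alpha_{g\wtd I}^\bullet$ and the identity follows. The main obstacle I anticipate is precisely this bookkeeping: confirming that the continuous extension of $\arg_I$ along $\lambda_z$ (which \emph{defines} $\arg_{gI}$) is exactly what makes the concatenation with $\alpha_{\wtd I}$ arg-compatible as a path from $gI$ to $z_+$.
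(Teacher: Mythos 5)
Your proposal is correct and follows essentially the same route as the paper's own proof: take $g\xi g^{-1}=gZ(\xi,I)g^{-1}\Omega$, write $L(\xi,\wtd I)=\alpha_{\wtd I}^\bullet Z(\xi,I)$, slide $g$ past the path continuation via corollary \ref{lb7}, apply formula \eqref{eq25}, and observe that $\alpha_{\wtd I}*\lambda_z^{-1}$ is an admissible choice of $\alpha_{g\wtd I}$ because the arg bookkeeping (which defines $\arg_{gI}$) makes the concatenation arg-compatible. The only cosmetic difference is that the paper packages \eqref{eq25} as $gZ(\xi,I)=(\lambda_z^\bullet)^{-1}Z(g\xi g^{-1},gI)g$ rather than applying it to $\xi\otimes g^{-1}\eta$, which is the same computation.
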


\begin{proof}
Choose any $\mc H_i,\mc H_k\in\Rep(\mc A),\wtd I\in\Jtd,\xi\in\mc H_i(I),g\in\GA$. We show that
\begin{align}
gL(\xi,\wtd I)g^{-1}=L(g\xi g^{-1},g\wtd I).
\end{align}
Recall our notation  $g\xi g^{-1}=gZ(\xi,I)g^{-1}\Omega$. Choose $z\in I$, and let $\alpha_{\wtd I}$ be a path in $S^1$ from $z$ to $z_+=i$, along which $\arg_I(z)$ changes continuously to the argument $\frac \pi 2$ of $z_+$. Let furthermore $\lambda$ be a path in $\GA$ from $1$ to $g$, and let $\lambda_z:t\in[0,1]\mapsto\lambda(t)z\in S^1$. Therefore, if we write $g\wtd I=(gI,\arg_{gI})$, then $\alpha_{\wtd I}*(\lambda_z)^{-1}$ is a path in $S^1$ from $gz$ to $z_+$, along which the argument $\arg_{gI}(gz)$ of $gz$ changes continuously to the argument $\frac \pi 2$ of $z_+$. It follows that $L(\xi,\wtd I)=\alpha_{\wtd I}^\bullet Z(\xi,I)$ and $L(g\xi g^{-1},g\wtd I)=\alpha_{\wtd I}^\bullet (\lambda_z^\bullet)^{-1} Z(g\xi g^{-1},g I)$. By relation \eqref{eq25}, we have
\begin{align*}
gZ(\xi,I)=(\lambda_z^\bullet)^{-1}Z(g\xi g^{-1},gI)g.
\end{align*}
Using the fact that path continuations intertwine the actions of $\GA$ (since they intertwine the actions of $\mc A$. Note also corollary \ref{lb7}), we have
\begin{align*}
&gL(\xi,\wtd I)g^{-1}=g\alpha_{\wtd I}^\bullet Z(\xi,I)g^{-1}=\alpha_{\wtd I}^\bullet\cdot gZ(\xi,I)\cdot g^{-1}\\
=&\alpha_{\wtd I}^\bullet\cdot(\lambda_z^\bullet)^{-1}Z(g\xi g^{-1},gI)g\cdot g^{-1}=L(g\xi g^{-1},g\wtd I).
\end{align*}
\end{proof}

\subsection{Hexagon axioms}\label{lb11}

With the results obtained so far, we  give an easy proof of the hexagon axioms for the braid operators $\mathbb B$ defined in section \ref{lb10}. First we collect some useful formulas.  For any $\mc H_i,\mc H_j\in\Rep(\mc A)$, disjoint $\wtd I,\wtd J\in\Jtd$, and $\xi\in\mc H_i(I),\eta\in\mc H_j(J)$, if $\wtd I$ is anticlockwise to $\wtd J$, then
\begin{align}
L(\xi,\wtd I)\eta=\mathbb B_{j,i}L(\eta,\wtd J)\xi.
\end{align}
Indeed, $L(\xi,\wtd I)\eta=L(\xi,\wtd I)R(\eta,\wtd J)\Omega=R(\eta,\wtd J)L(\xi,\wtd I)\Omega=R(\eta,\wtd J)\xi=\mathbb B_{j,i}L(\eta,\wtd J)\xi$. Therefore, if $\wtd I$ is clockwise to $\wtd J$ then
\begin{align}
L(\xi,\wtd I)\eta=\mathbb B_{i,j}^{-1}L(\eta,\wtd J)\xi.
\end{align}
Next, if $F\in\Hom_{\mc A}(\mc H_i,\mc H_{i'}),G\in \Hom_{\mc A}(\mc H_j,\mc H_{j'})$, then it is easy to see that for any $\wtd I\in\Jtd,\xi\in\mc H_i(I),\eta\in\mc H_j$, 
\begin{align}
(F\otimes G)L(\xi,\wtd I)\eta=L(F\xi,\wtd I)G\eta.\label{eq49}
\end{align}
The following properties are parallel to the fusion and braid relations for intertwining operators of vertex operator algebras.

\begin{pp}\label{lb13}
Choose $\wtd I,\wtd J,\wtd O\in\Jtd$ such that $\wtd I,\wtd J\subset\wtd O$, and $\xi\in\mc H_i(I),\eta\in\mc H_j(J)$. Then $L(\xi,\wtd I)\eta\in(\mc H_i\boxtimes\mc H_j)(O)$, and
\begin{align}
L(\xi,\wtd I)L(\eta,\wtd J)=L(L(\xi,\wtd I)\eta,\wtd O)\label{eq39}
\end{align}
when acting on any $\mc H_k\in\Rep(\mc A)$.
\end{pp}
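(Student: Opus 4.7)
The plan is to first verify that $L(\xi,\wtd I)\eta$ is an $O$-bounded vector in $\mc H_i\boxtimes\mc H_j$, so that the right-hand side of the identity even makes sense, and then to establish the operator identity on a dense subspace of $\mc H_k$ by iterated use of the locality axiom of the Connes categorical extension $\scr E_C$.

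For the $O$-boundedness, I would consider the bounded operator
\begin{align*}
B:=L(\xi,\wtd I)\circ Z(\eta,J):\mc H_0\to\mc H_i\boxtimes\mc H_j.
\end{align*}
Since $I,J\subset O$, we have $O^c\subset I^c\cap J^c$; thus $Z(\eta,J)\in\Hom_{\mc A(J^c)}(\mc H_0,\mc H_j)$ and $L(\xi,\wtd I)\in\Hom_{\mc A(I^c)}(\mc H_j,\mc H_i\boxtimes\mc H_j)$ both intertwine the action of $\mc A(O^c)$, and so does the composite $B$. As $B\Omega=L(\xi,\wtd I)Z(\eta,J)\Omega=L(\xi,\wtd I)\eta$, this exhibits $L(\xi,\wtd I)\eta$ as an $O$-bounded vector, so that $L(L(\xi,\wtd I)\eta,\wtd O)$ is well defined.

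For the operator identity, I would choose $\wtd K\in\Jtd$ with $K\subset O^c$ and with arg function arranged so that $\wtd K$ is clockwise to $\wtd O$; this is always possible by fixing the cyclic ordering of arg values on $O^c$ just below that of $\arg_O$. Since $\wtd I,\wtd J\subset\wtd O$, the relation of $\wtd K$ being clockwise to $\wtd O$ passes immediately to $\wtd I$ and $\wtd J$, so the locality axiom of $\scr E_C$ applies to each of the three pairs $(\wtd I,\wtd K),(\wtd J,\wtd K),(\wtd O,\wtd K)$. For any $\chi\in\mc H_k(K)$, the creation property of the vector-labeled extension $\scr E_C$ gives $\chi=R(\chi,\wtd K)\Omega$, and two applications of locality yield
\begin{align*}
L(\xi,\wtd I)L(\eta,\wtd J)\chi
&=L(\xi,\wtd I)L(\eta,\wtd J)R(\chi,\wtd K)\Omega\\
&=L(\xi,\wtd I)R(\chi,\wtd K)L(\eta,\wtd J)\Omega\\
&=R(\chi,\wtd K)L(\xi,\wtd I)\eta.
\end{align*}
A further application of locality, combined with the creation property $L(L(\xi,\wtd I)\eta,\wtd O)\Omega=L(\xi,\wtd I)\eta$, gives
\begin{align*}
L(L(\xi,\wtd I)\eta,\wtd O)\chi=R(\chi,\wtd K)L(L(\xi,\wtd I)\eta,\wtd O)\Omega=R(\chi,\wtd K)L(\xi,\wtd I)\eta.
\end{align*}
Hence the two bounded operators $L(\xi,\wtd I)L(\eta,\wtd J)$ and $L(L(\xi,\wtd I)\eta,\wtd O)$ agree on the dense subspace $\mc H_k(K)\subset\mc H_k$, and therefore on all of $\mc H_k$.

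The argument is essentially a direct consequence of locality of $\scr E_C$ together with the creation property, and no substantial obstacle is expected. The only point requiring a moment's care is coordinating a single arg-valued interval $\wtd K$ against which all of $\wtd I,\wtd J,\wtd O$ sit anticlockwise; this is handled by choosing $\wtd K$ clockwise to $\wtd O$ and invoking the fact that the subinterval relation $\wtd I,\wtd J\subset\wtd O$ preserves being anticlockwise to $\wtd K$.
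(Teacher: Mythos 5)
Your proposal is correct and follows essentially the same route as the paper: the $O$-boundedness comes from observing that the composite operator intertwines $\mc A(O^c)$ and sends $\Omega$ to $L(\xi,\wtd I)\eta$, and the operator identity is verified on the dense subspace of bounded vectors localized clockwise to $\wtd O$ by two applications of locality plus the creation property. The only cosmetic difference is that the paper first invokes isotony to reduce to $\wtd I=\wtd J=\wtd O$ and tests against $\chi\in\mc H_k(O^c)$, whereas you keep $\wtd I,\wtd J$ and test against $\chi\in\mc H_k(K)$ for $K\subset O^c$; both are fine.
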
 

\begin{proof}
Since $L(\xi,\wtd I)=L(\xi,\wtd O)$, $L(\eta,\wtd J)=L(\xi,\wtd O)$, we may assume that $\wtd I=\wtd J=\wtd O$. Since both $L(\xi,\wtd O)$ and $L(\eta,\wtd O)$ intertwines the actions of $\mc A(O^c)$, so does $L(\xi,\wtd O)L(\eta,\wtd O)$. Hence $L(\xi,\wtd O)L(\eta,\wtd O)\in\Hom_{\mc A(O^c)}(\mc H_0,\mc H_i\boxtimes\mc H_j)$. Since $L(\xi,\wtd O)L(\eta,\wtd O)\Omega=L(\xi,\wtd O)\eta$, we conclude $L(\xi,\wtd O)\eta\in(\mc H_i\boxtimes\mc H_j)(O)$.

Now we choose $\arg_{O^c}$ such that $\wtd O^c=(O^c,\arg_{O^c})$ is clockwise to $\wtd O$. Then for any $\mc H_k\in\Rep(\mc A)$ and $\chi\in\mc H_k(O^c)$,
\begin{align*}
&L(\xi,\wtd O)L(\eta,\wtd O)\chi=L(\xi,\wtd O)L(\eta,\wtd O)R(\chi,\wtd O^c)\Omega=R(\chi,\wtd O^c)L(\xi,\wtd O)L(\eta,\wtd O)\Omega\\
=&R(\chi,\wtd O^c)L(\xi,\wtd O)\eta=R(\chi,\wtd O^c)L(L(\xi,\wtd O)\eta,\wtd O)\Omega=L(L(\xi,\wtd O)\eta,\wtd O)R(\chi,\wtd O^c)\Omega\\
=&L(L(\xi,\wtd O)\eta,\wtd O)\chi.
\end{align*}
This proves \eqref{eq39}.
\end{proof}

\begin{pp}\label{lb17}
Suppose that $\wtd I$ is anticlockwise to $\wtd J$, and there exits $\wtd O\in\Jtd$ such that $\wtd I,\wtd J\subset\wtd O$. Then for any $\mc H_i,\mc H_j,\mc H_k\in\Rep(\mc A),\xi\in\mc H_i(I),\eta\in\mc H_j(J),\chi\in
	\mc H_k$, we have
	\begin{align}
	L(\xi,\wtd I)L(\eta,\wtd J)\chi=(\mathbb B_{j,i}\otimes\id_k)L(\eta,\wtd J)L(\xi,\wtd I)\chi.\label{eq55}
	\end{align}
\end{pp}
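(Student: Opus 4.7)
The plan is to transform each side of \eqref{eq55} into a single operator of the form $L(\cdot,\wtd O)\chi$ via the fusion identity of Proposition~\ref{lb13}, and then to match the two results using the anticlockwise braid relation together with the functoriality formula \eqref{eq49}.

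First I would apply Proposition~\ref{lb13} on the left-hand side: since $\wtd I,\wtd J\subset\wtd O$, the vector $L(\xi,\wtd I)\eta$ is $O$-bounded and
$$L(\xi,\wtd I)L(\eta,\wtd J)\chi \;=\; L\bigl(L(\xi,\wtd I)\eta,\;\wtd O\bigr)\chi.$$
Applying the same proposition with the roles of $(\mc H_i,\xi,\wtd I)$ and $(\mc H_j,\eta,\wtd J)$ interchanged yields $L(\eta,\wtd J)\xi\in(\mc H_j\boxtimes\mc H_i)(O)$ and
$$L(\eta,\wtd J)L(\xi,\wtd I)\chi \;=\; L\bigl(L(\eta,\wtd J)\xi,\;\wtd O\bigr)\chi.$$

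Next, since $\mathbb B_{j,i}\in\Hom_{\mc A}(\mc H_j\boxtimes\mc H_i,\,\mc H_i\boxtimes\mc H_j)$ is an $\mc A$-module morphism, the functoriality formula \eqref{eq49} (with $F=\mathbb B_{j,i}$ and $G=\id_k$) gives
$$(\mathbb B_{j,i}\otimes\id_k)\,L\bigl(L(\eta,\wtd J)\xi,\;\wtd O\bigr)\chi \;=\; L\bigl(\mathbb B_{j,i}L(\eta,\wtd J)\xi,\;\wtd O\bigr)\chi.$$
Finally, because $\wtd I$ is anticlockwise to $\wtd J$, the braid identity noted immediately after \eqref{eq49} (namely $L(\xi,\wtd I)\eta=\mathbb B_{j,i}L(\eta,\wtd J)\xi$) identifies the label with $L(\xi,\wtd I)\eta$, so the right-hand side of \eqref{eq55} collapses to $L(L(\xi,\wtd I)\eta,\wtd O)\chi$, matching the left-hand side.

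There is no serious obstacle here: the whole argument is a short chain of identities already encoded in Proposition~\ref{lb13}, the anticlockwise braid relation, and the naturality \eqref{eq49}. The only point worth checking carefully is that the two localized fusion vectors $L(\xi,\wtd I)\eta$ and $L(\eta,\wtd J)\xi$ genuinely lie in the respective $O$-bounded subspaces so that Proposition~\ref{lb13} may be applied to re-expand them, and this is exactly the content of the first paragraph of the proof of Proposition~\ref{lb13}.
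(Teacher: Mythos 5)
Your proposal is correct and follows essentially the same route as the paper's proof: reduce both products to the form $L(\cdot,\wtd O)\chi$ via Proposition \ref{lb13}, move $\mathbb B_{j,i}\otimes\id_k$ inside the label by the functoriality relation \eqref{eq49}, and conclude with the anticlockwise braid identity $L(\xi,\wtd I)\eta=\mathbb B_{j,i}L(\eta,\wtd J)\xi$ (which, for the record, is stated just before \eqref{eq49}, not after). The paper simply chains these same identities starting from the right-hand side rather than matching the two sides separately.
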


\begin{proof}
We compute 
\begin{align*}
&(\mathbb B_{j,i}\otimes\id_k)L(\eta,\wtd J)L(\xi,\wtd I)\eta=(\mathbb B_{j,i}\otimes\id_k) L(L(\eta,\wtd J)\xi,\wtd O)\chi=L(\mathbb B_{j,i}L(\eta,\wtd J)\xi,\wtd O)\chi\\
=&L(L(\xi,\wtd I)\eta,\wtd O)\chi=L(\xi,\wtd I)L(\eta,\wtd J)\chi.
\end{align*}
\end{proof}

The above proposition is equivalent to the braiding condition (g) of definition \ref{lb9}. Although \ref{lb9}-(g) looks simpler and is easier to verify than \eqref{eq55},  the latter has clearer physical meaning: it says that the left operators $L$ satisfy \textbf{braid statistics}, which generalize the usual boson and fermion statistics.\\

Now we prove the hexagon axioms for $\mathbb B$. In this paper, we always let $\mc H_{i\boxtimes j}$ or its subscript $i\boxtimes j$ denote $\mc H_i\boxtimes\mc H_j$.

\begin{thm}[Hexagon axioms for $\mathbb B$]
Choose any $\mc H_i,\mc H_j,\mc H_k\in\Rep(\mc A)$. Then for the braiding $\mathbb B$ defined by  $180^\degree$ clockwise rotations (see the end of section \ref{lb10}),  we have the following relations for morphisms $\mc H_i\boxtimes\mc H_j\boxtimes\mc H_k\rightarrow \mc H_k\boxtimes \mc H_i\boxtimes\mc H_j$:
\begin{gather}
(\mathbb B_{i,k}\otimes\id_j)(\id_i\otimes\mathbb B_{j,k})=\mathbb B_{i\boxtimes j,k},\label{eq40}\\
(\mathbb B_{k,i}^{-1}\otimes\id_j)(\id_i\otimes\mathbb B_{k,j}^{-1})=\mathbb B_{k,i\boxtimes j}^{-1}.
\end{gather}
\end{thm}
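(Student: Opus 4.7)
The plan is to test both hexagon identities on a dense family of vectors produced by the Connes categorical extension $\scr E_C$, namely vectors of the form $v=L(\xi,\wtd I)L(\eta,\wtd J)\chi^{(k)}$ with $\xi\in\mc H_i(I)$, $\eta\in\mc H_j(J)$, $\chi^{(k)}\in\mc H_k(K)$. For the first hexagon I would choose the arg-valued intervals so that $\wtd I,\wtd J\subset\wtd O$ with $\wtd I$ anticlockwise to $\wtd J$ inside a common $\wtd O\in\Jtd$, so that $\wtd K$ is anticlockwise to $\wtd O$ (hence to both $\wtd I$ and $\wtd J$), and so that $\wtd K$ and $\wtd I$ both sit inside a further common $\wtd O'\in\Jtd$ allowing Proposition \ref{lb17} to be applied to the pair $(\wtd K,\wtd I)$. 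By Proposition \ref{lb13}, $v=L(L(\xi,\wtd I)\eta,\wtd O)\chi^{(k)}$, and by the density axiom such $v$ span a dense subspace of $\mc H_i\boxtimes\mc H_j\boxtimes\mc H_k$; hence it suffices to verify each hexagon on every such $v$.

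For the right-hand side of the first hexagon, the braiding axiom \textup{(g)} (which reads $\mathbb B\circ L=R$) gives
\begin{align*}
\mathbb B_{i\boxtimes j,k}v=\mathbb B_{i\boxtimes j,k}L(L(\xi,\wtd I)\eta,\wtd O)\chi^{(k)}=R(L(\xi,\wtd I)\eta,\wtd O)\chi^{(k)},
\end{align*}
and since $\wtd K$ is anticlockwise to $\wtd O$, locality together with the creation identity $L(\chi^{(k)},\wtd K)\Omega=\chi^{(k)}$ and neutrality $R(\cdot,\wtd O)\Omega=\cdot$ collapses this to $L(\chi^{(k)},\wtd K)L(\xi,\wtd I)\eta$. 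For the left-hand side, $\mathbb B_{j,k}$ is $\mc A$-linear, so functoriality pulls $\id_i\otimes\mathbb B_{j,k}$ through $L(\xi,\wtd I)$, giving
\begin{align*}
(\mathbb B_{i,k}\otimes\id_j)(\id_i\otimes\mathbb B_{j,k})v=(\mathbb B_{i,k}\otimes\id_j)L(\xi,\wtd I)\,\mathbb B_{j,k}L(\eta,\wtd J)\chi^{(k)}.
\end{align*}
Applying \textup{(g)} and then locality between $\wtd K$ and $\wtd J$ rewrites $\mathbb B_{j,k}L(\eta,\wtd J)\chi^{(k)}$ as $L(\chi^{(k)},\wtd K)\eta$, and Proposition \ref{lb17} applied to the pair $(\wtd K,\wtd I)$ identifies $(\mathbb B_{i,k}\otimes\id_j)L(\xi,\wtd I)L(\chi^{(k)},\wtd K)\eta$ with $L(\chi^{(k)},\wtd K)L(\xi,\wtd I)\eta$. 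Both sides thus coincide on $v$, hence everywhere by boundedness.

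The second hexagon is handled by the same template run with $\wtd K$ chosen clockwise to $\wtd O$ and the clockwise versions of the braid identities (e.g.\ $L(\xi,\wtd I)\eta=\mathbb B_{i,j}^{-1}L(\eta,\wtd J)\xi$ when $\wtd I$ is clockwise to $\wtd J$) in place of their anticlockwise counterparts; this converts every $\mathbb B$ in the preceding argument into the appropriate $\mathbb B^{-1}$ with swapped indices and ultimately produces $\mathbb B_{k,i\boxtimes j}^{-1}$ on the right. The only real obstacle I foresee is index bookkeeping: for a genuinely non-symmetric braiding the operators $\mathbb B_{i,k}$ and $\mathbb B_{k,i}^{-1}$ (both maps $\mc H_i\boxtimes\mc H_k\to\mc H_k\boxtimes\mc H_i$) are distinct, and one must be careful not to conflate them when tracking which side of the bifunctor $\boxtimes$ each braiding acts on; beyond this no new machinery past Propositions \ref{lb13}, \ref{lb17}, locality, functoriality, and axiom \textup{(g)} is required.
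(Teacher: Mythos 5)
Your proof is correct and follows essentially the same route as the paper's: both hexagon relations are checked on the dense family of vectors $L(\xi,\wtd I)L(\eta,\wtd J)\chi$ using Proposition \ref{lb13}, Proposition \ref{lb17}, the exchange formulas $L(\xi,\wtd I)\eta=\mathbb B_{j,i}L(\eta,\wtd J)\xi$ (and their clockwise/inverse variants, derived from locality, neutrality and the braiding axiom), and functoriality of the left actions. The only cosmetic difference is the circular ordering of the test intervals (you place $\wtd K$ anticlockwise to an $\wtd O\supset\wtd J,\wtd I$ and invoke an auxiliary $\wtd O'\supset\wtd K,\wtd I$, whereas the paper takes $\wtd I$ clockwise to $\wtd J$ clockwise to $\wtd K$ all inside one covering arg-interval), which does not affect the argument.
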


\begin{proof}
Since the proof of the two relations are similar, we only prove the first one. Choose disjoint $\wtd I,\wtd J,\wtd K\in\Jtd$ such that $\wtd I$ is clockwise to $\wtd J$, $\wtd J$ is clockwise to $\wtd K$, and $\wtd I,\wtd J,\wtd K$ can be covered by an arg-valued open interval in $S^1$. Choose $\wtd O\in\Jtd$ containing $\wtd I,\wtd J$ and clockwise to $\wtd K$. Then for any $\xi\in\mc H_i(I),\eta\in\mc H_j(J),\chi\in\mc H_k(K)$, the action of the left hand side of equation \eqref{eq40}  on $L(\xi,\wtd I)L(\eta,\wtd J)\chi$ is
\begin{align*}
&L(\xi,\wtd I)L(\eta,\wtd J)\chi\xrightarrow {(\id_i\otimes\mathbb B_{j,k})} L(\xi,\wtd I)\mathbb B_{j,k}L(\eta,\wtd J)\chi=L(\xi,\wtd I)L(\chi,\wtd K)\eta \xrightarrow{\mathbb B_{i,k}\otimes\id_j}L(\chi,\wtd K) L(\xi,\wtd I)\eta.
\end{align*}
On the other hand, the action of the right hand side of  \eqref{eq40}  on $L(\xi,\wtd I)L(\eta,\wtd J)\chi$ is
\begin{align*}
L(\xi,\wtd I)L(\eta,\wtd J)\chi=L(L(\xi,\wtd I)\eta,\wtd O)\chi\xrightarrow{\mathbb B_{i\boxtimes j,k}}\mathbb B_{i\boxtimes j,k} L(L(\xi,\wtd I)\eta,\wtd O)\chi=L(\chi,\wtd K)L(\xi,\wtd I)\eta.
\end{align*}
Hence \eqref{eq40} is proved.
\end{proof}

\begin{thm}
With the monoidal structure and the braid operators defined in section \ref{lb10}, $\Rep(\mc A)$ is a braided $C^*$-tensor category.
\end{thm}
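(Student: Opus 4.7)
The plan is to observe that almost all the required structure has already been assembled, and the final theorem merely packages what has been verified in the previous sections and the preceding theorem. I will organize the verification along the three remaining axioms of a braided $C^*$-tensor category: the $C^*$-tensor structure itself, unitarity of the braiding, and the naturality together with the hexagon axioms.

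First I would recall that in section \ref{lb10} the tensor bifunctor $\boxtimes: \Rep(\mc A) \times \Rep(\mc A) \to \Rep(\mc A)$ was equipped with functorial unitary associativity isomorphisms, unit object $\mc H_0$, and unit isomorphisms $\sharp_i$ and $\flat_j$, and the triangle and pentagon axioms were verified. The bifunctor was also shown to be compatible with the $*$-structure, since $(F \otimes G)^* = F^* \otimes G^*$. This gives the $C^*$-tensor category structure of $\Rep(\mc A)$.

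Next I would note that each braid operator $\mathbb B_{i,j}: \mc H_i \boxtimes \mc H_j \to \mc H_j \boxtimes \mc H_i$ was defined as the path continuation $\varrho^\bullet$ induced by the $180^\circ$ clockwise rotation $\varrho$. Since every path continuation on a Connes fusion is a unitary isomorphism (by construction in section 2.2), $\mathbb B_{i,j}$ is automatically unitary, with inverse given by the path continuation induced by the reverse rotation.

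For naturality, I would use proposition \ref{lb8}, which asserts that $F \otimes G$ commutes with path continuations whenever $F \in \Hom_{\mc A}(\mc H_i, \mc H_{i'})$ and $G \in \Hom_{\mc A}(\mc H_j, \mc H_{j'})$. Applying this with the specific path $\varrho$ implementing the braiding gives
\begin{align*}
\mathbb B_{i',j'} \circ (F \otimes G) = \varrho^\bullet \circ (F \otimes G) = (G \otimes F) \circ \varrho^\bullet = (G \otimes F) \circ \mathbb B_{i,j},
\end{align*}
which is precisely the naturality of $\mathbb B$ in both variables. Finally, the hexagon axioms were just established in the previous theorem. Combining all of these verifications, $\Rep(\mc A)$ with the bifunctor $\boxtimes$ and the braiding $\mathbb B$ satisfies all the axioms of a braided $C^*$-tensor category. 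I expect no serious obstacle here — the theorem is essentially a summary collecting the pieces from section \ref{lb10} and the hexagon theorem above, with naturality being an immediate consequence of proposition \ref{lb8}.
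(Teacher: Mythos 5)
Your proposal is correct and follows essentially the same route as the paper: the paper states this theorem without a separate proof, treating it as a summary of the $C^*$-monoidal structure built in section \ref{lb10} (where the functoriality of $\mathbb B=\varrho^\bullet$ is asserted as an immediate consequence of proposition \ref{lb8}, i.e.\ that $F\otimes G$ commutes with path continuations) together with the hexagon theorem just proved. Your write-up merely makes this bookkeeping explicit, including the unitarity of $\mathbb B$ as a path continuation, and there is no gap.
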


\subsection{Uniqueness of tensor categorical structures}

Let $\scr E=(\mc A,\scr C,\boxdot,\fk H)$ be a categorical extension of $\mc A$ with braid operator $\ss$. In this section, we show that $(\scr C,\boxdot,\ss)$ is a braided $C^*$-tensor category (i.e., the unitary map $\ss$ is a functorial $\mc A$-module isomorphism satisfying the Hexagon axioms), and that there is a natural equivalent between $(\scr C,\boxdot,\ss)$ and a braided tensor subcategory of $(\Rep(\mc A),\boxtimes,\mathbb B)$.

To begin with, we let $\wht {\scr C}$ be the $C^*$-category of all $\mc A$-modules $\mc H_i$ such that $\mc H_i$ is  unitarily equivalent to some object in $\scr C$. We assume without loss of generality that $\scr E$ is vector-labeled. So, in particular, for each $\wtd I\in\Jtd,\mc H_i\in\scr C$, $\fk H_i(\wtd I)$ is a subset of $\mc H_i(I)$. We thus write $\xi,\eta,\dots$ instead of $\fk a,\fk b,\dots$ for elements in $\fk H_i(\wtd I)$. But then there is a conflict of notations, as $L(\xi,\wtd I)$ may denote a left action in either $\scr E$ or the Connes categorical extension $\scr E_C$. We avoid this issue by letting $L^\boxdot(\xi,\wtd I)$ and $L^\boxtimes(\xi,\wtd I)$ denote  left actions  in $\scr E=(\mc A,\scr C,\boxdot,\fk H)$ and $\scr E_C=(\mc A,\Rep(\mc A),\boxtimes,\mc H)$ respectively. Similar notations apply to right actions.

\begin{thm}\label{lb12}
Let $\scr E=(\mc A,\scr C,\boxdot,\fk H)$ be a vector-labeled categorical extension of $\mc A$. Then $(\scr C,\boxdot,\ss)$ is a braided $C^*$-tensor category, $\wht{\scr C}$ is closed under Connes fusion product $\boxtimes$, and there is a (unique) unitary functorial (i.e. natural) isomorphism
\begin{align}
\Phi_{i,j}:\mc H_i\boxtimes\mc H_j\rightarrow\mc H_i\boxdot\mc H_j\qquad(\forall\mc H_i,\mc H_j\in\scr C),
\end{align}
such that for any $\wtd I\in\Jtd,\mc H_i,\mc H_j\in\scr C,\xi\in\fk H_i(\wtd I),\eta\in\mc H_j$,
\begin{gather}
\Phi_{i,j}L^\boxtimes(\xi,\wtd I)\eta=L^\boxdot(\xi,\wtd I)\eta,\label{eq43}\\
\Phi_{j,i}R^\boxtimes(\xi,\wtd I)\eta=R^\boxdot(\xi,\wtd I)\eta.\label{eq44}
\end{gather}

Moreover, $\Phi$ induces an equivalence of braided $C^*$-tensor categories $(\wht{\scr C},\boxtimes,\mathbb B)\simeq(\scr C,\boxdot,\ss)$. More precisely, ``equivalence" means that for any $\mc H_i,\mc H_j,\mc H_k\in\scr C$, the following conditions are satisfied:\\
(a) The following diagram commutes.
\begin{gather}
\begin{CD}
\mc H_i\boxtimes\mc H_k\boxtimes\mc H_j @>\quad\id_i\otimes\Phi_{k,j}\quad>> \mc H_i\boxtimes(\mc H_k\boxdot\mc H_j)\\
@V \Phi_{i,k}\otimes\id_j VV @V \Phi_{i,k\boxdot j} VV\\
(\mc H_i\boxdot\mc H_k)\boxtimes\mc H_j @> ~~\quad\Phi_{i\boxdot k,j} \quad~~>> \mc H_i\boxdot\mc H_k\boxdot\mc H_j
\end{CD}.\label{eq45}
\end{gather}
(b) The following two maps equal $\id_i:\mc H_i\rightarrow\mc H_i$.
\begin{gather}
\mc H_i\simeq \mc H_0\boxtimes\mc H_i\xrightarrow{\Phi_{0,i}} \mc H_0\boxdot\mc H_i\simeq\mc H_i,\label{eq46}\\
\mc H_i\simeq \mc H_i\boxtimes\mc H_0\xrightarrow{\Phi_{i,0}} \mc H_i\boxdot\mc H_0\simeq\mc H_i. \label{eq47}
\end{gather}
(c) The following diagram commutes.
\begin{gather}
\begin{CD}
\mc H_i\boxtimes\mc H_j @> \quad\mathbb B_{i,j} \quad>> \mc H_j\boxtimes\mc H_i\\
@V \Phi_{i,j} VV @V \Phi_{j,i} VV\\
\mc H_i\boxdot\mc H_j @>\quad\ss_{i,j}\quad>> \mc H_j\boxdot\mc H_i
\end{CD}.\label{eq48}
\end{gather}
\end{thm}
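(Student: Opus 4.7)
The plan is to construct $\Phi_{i,j}$ from the prescription \eqref{eq43}, verify it is a natural unitary $\mc A$-module isomorphism satisfying the compatibilities (a)--(c), and then transport the braided $C^*$-tensor structure from $(\Rep(\mc A),\boxtimes,\mathbb B)$ to $(\scr C,\boxdot,\ss)$ along $\Phi$. Uniqueness of $\Phi_{i,j}$ is immediate from axiom (e), which makes the set $L^\boxdot(\fk H_i(\wtd I),\wtd I)\mc H_j$ dense in $\mc H_i\boxdot\mc H_j$; the bulk of the work is establishing existence and unitarity.

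The key technical ingredient I will prove first is the operator identity
\begin{align*}
L^\boxdot(\xi_2,\wtd I)^*\,L^\boxdot(\xi_1,\wtd I) \;=\; \pi_j\bigl(Z(\xi_2,I)^*\,Z(\xi_1,I)\bigr) \qquad (\xi_1,\xi_2\in\fk H_i(\wtd I),\ \text{on } \mc H_j).
\end{align*}
On $\mc H_0$, both $L^\boxdot(\xi,\wtd I)|_{\mc H_0}$ and $Z(\xi,I)$ lie in $\Hom_{\mc A(I^c)}(\mc H_0,\mc H_i)$ and send $\Omega\mapsto\xi$ by the creation property; hence they coincide by Reeh-Schlieder on $\mc H_0$. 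For general $\eta\in\mc H_j$, I fix $\wtd J$ clockwise to $\wtd I$ and $\fk b\in\fk H_j(\wtd J)$. Using neutrality ($R^\boxdot(\fk b,\wtd J)\Omega=\fk b$) and locality (f) together with its adjoint,
\begin{align*}
L^\boxdot(\xi_2,\wtd I)^*\,L^\boxdot(\xi_1,\wtd I)\,\fk b \;=\; R^\boxdot(\fk b,\wtd J)\,L^\boxdot(\xi_2,\wtd I)^*\,\xi_1 \;=\; R^\boxdot(\fk b,\wtd J)\,x\,\Omega \;=\; \pi_j(x)\,\fk b,
\end{align*}
where $x:=Z(\xi_2,I)^*Z(\xi_1,I)\in\mc A(I)\subset\mc A(J^c)$ and the last equality uses that $R^\boxdot(\fk b,\wtd J)\in\Hom_{\mc A(J^c)}(\mc H_0,\mc H_j)$. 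Density of $\fk H_j(\wtd J)$ in $\mc H_j$ (Reeh-Schlieder) and boundedness extend the identity to all of $\mc H_j$. Combined with formula \eqref{eq1} for the Connes fusion inner product, this shows the map $L^\boxtimes(\xi,\wtd I)\eta\mapsto L^\boxdot(\xi,\wtd I)\eta$ is isometric on its domain, with image densely spanning $\mc H_i\boxdot\mc H_j$ by axiom (e).

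The main obstacle I anticipate is showing that the source of $\Phi_{i,j}$ is dense in $\mc H_i\boxtimes\mc H_j$, so that the isometry extends to a genuine unitary onto the full target. Taking $\wtd I$ with $I\sjs S^1_+$ and the natural arg function, $L^\boxtimes(\xi,\wtd I)\eta$ is just $\xi\otimes\eta$ under the canonical equivalence $\mc H_i(I)\boxtimes\mc H_j\cong\mc H_i\boxtimes\mc H_j$, reducing density to the statement that $\fk H_i(\wtd I)\otimes\mc H_j$ is dense in $\mc H_i(I)\boxtimes\mc H_j$. I plan to handle this via a Reeh-Schlieder-type argument combined with operator-algebraic approximation, using the $\mc A(I^c)$-intertwining of $L^\boxtimes(\xi,\wtd I)$ and $L^\boxdot(\xi,\wtd I)$ (so that the closure of the source is an $\mc A$-submodule of $\mc H_i\boxtimes\mc H_j$) and the freedom to vary $\wtd I$.

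Granting unitarity, $\mc A$-equivariance follows from varying $\wtd I$ in the $\mc A(I^c)$-intertwining relation, and naturality from the functoriality axiom (b) of $\scr E$ together with \eqref{eq49}. Condition (c) is checked on the dense family $L^\boxtimes(\xi,\wtd I)\eta$ by the chain
\begin{align*}
\Phi_{j,i}\,\mathbb B_{i,j}\,L^\boxtimes(\xi,\wtd I)\eta \;=\; \Phi_{j,i}\,R^\boxtimes(\xi,\wtd I)\eta \;=\; R^\boxdot(\xi,\wtd I)\eta \;=\; \ss_{i,j}\,L^\boxdot(\xi,\wtd I)\eta \;=\; \ss_{i,j}\,\Phi_{i,j}\,L^\boxtimes(\xi,\wtd I)\eta,
\end{align*}
using the braiding axiom (g) for both $\scr E$ and $\scr E_C$ together with \eqref{eq44}; condition (b) is immediate from the creation property $L(\xi,\wtd I)\Omega=\xi$; condition (a) is verified by evaluating on vectors $L^\boxtimes(\xi,\wtd I)L^\boxtimes(\eta,\wtd J)\chi$ and applying the fusion identity $L(\xi,\wtd I)L(\eta,\wtd J)=L(L(\xi,\wtd I)\eta,\wtd O)$ (proposition \ref{lb13} for $\scr E_C$, proved identically for $\scr E$ by the same locality argument). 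Once (a)--(c) are verified, functoriality and unitarity of $\ss$ and the pentagon, triangle, and hexagon axioms for $(\scr C,\boxdot,\ss)$ transfer from the corresponding properties of $(\Rep(\mc A),\boxtimes,\mathbb B)$ along $\Phi$; and closure of $\wht{\scr C}$ under $\boxtimes$ is automatic from $\mc H_i\boxtimes\mc H_j\cong\mc H_i\boxdot\mc H_j\in\scr C$.
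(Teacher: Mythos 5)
Your core construction is sound and is essentially the paper's: the operator identity $L^\boxdot(\xi_2,\wtd I)^*L^\boxdot(\xi_1,\wtd I)=\pi_j\bigl(Z(\xi_2,I)^*Z(\xi_1,I)\bigr)$, proved exactly as you do from the creation property, neutrality, locality and its adjoint, is a repackaging of the paper's inner-product computation (the equality of \eqref{eq41} and \eqref{eq42}), and your verifications of (b) and (c) match the paper's. Two points, however, need repair. First, the source density that you flag as the main obstacle does not need the submodule argument you sketch; as stated that argument would not conclude, since for a single $\xi$ the closure of $L^\boxtimes(\xi,\wtd I)\mc H_j$ is only the range of a partial isometry and need not be all of $\mc H_i\boxtimes\mc H_j$. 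The density follows in one line from \eqref{eq1}: for fixed $\eta\in\mc H_j(J)$ one has $\lVert\xi\otimes\eta\rVert\leq\lVert Z(\eta,J)\rVert\,\lVert\xi\rVert_{\mc H_i}$, so the Reeh--Schlieder axiom (d), which makes $\mathrm{span}\,\fk H_i(\wtd I)$ dense in $\mc H_i$, combined with the density of $\mc H_i(I)\otimes\mc H_j(J)$ in the fusion, gives what you need. (This same density is what uniqueness of $\Phi_{i,j}$ actually rests on; axiom (e) only gives density in the target $\mc H_i\boxdot\mc H_j$, i.e.\ surjectivity.)

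Second, and more seriously, your verification of condition (a) does not close. You evaluate on vectors $L^\boxtimes(\xi,\wtd I)L^\boxtimes(\eta,\wtd J)\chi$ and invoke the fusion identity $L(\xi,\wtd I)L(\eta,\wtd J)=L(L(\xi,\wtd I)\eta,\wtd O)$, ``proved identically for $\scr E$.'' But in $\scr E$ the operator $L^\boxdot(\cdot,\wtd O)$ is only defined on labels in $\fk H_{i\boxdot k}(\wtd O)$, and $L^\boxdot(\xi,\wtd I)\eta$ is merely an $O$-bounded vector of $\mc H_i\boxdot\mc H_k$, not a label; the fusion identity for $\scr E$ on arbitrary bounded vectors is precisely theorem \ref{lb15}(b), which is proved \emph{after} and \emph{from} the present theorem, so your route is circular. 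For the same reason $\Phi_{i\boxdot k,j}$ cannot be evaluated via \eqref{eq43} on $L^\boxtimes(L^\boxdot(\xi,\wtd I)\eta,\wtd O)\chi$. The fix is to use the spanning vectors $L^\boxtimes(\xi,\wtd I)R^\boxtimes(\eta,\wtd J)\chi$ with $\wtd I$ anticlockwise to $\wtd J$, $\xi\in\fk H_i(\wtd I)$, $\eta\in\fk H_j(\wtd J)$, $\chi\in\mc H_k$: both composites in \eqref{eq45} are then computed using only the functoriality axiom (b), the locality axiom (f) to commute $L$ past $R$, and the already-established \eqref{eq43} and \eqref{eq44}, with no appeal to any fusion identity. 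This is the paper's route, and all the tools for it are already in your first paragraph.
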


Note that the functoriality of $\Phi$ means that for any objects $\mc H_i,\mc H_{i'},\mc H_j,\mc H_{j'}$ of $\scr C$ and any $F\in\Hom_{\mc A}(\mc H_i,\mc H_{i'}),G\in\Hom_{\mc A}(\mc H_j,\mc H_{j'})$, we have $\Phi_{i',j'}(F\boxtimes G)=(F\boxdot G)\Phi_{i,j}$, where $F\boxtimes G$ and $F\boxdot G$ are the tensor products  $F\otimes G$ in $(\wht{\scr C},\boxtimes)$ and in $(\scr C,\boxdot)$ respectively.

\begin{proof}
Step 1. Let $\mc H_i,\mc H_j\in\scr C$. Choose any disjoint $\wtd I,\wtd J\in\Jtd$ such that $\wtd I$ is anticlockwise to $\wtd J$. Then for any $\xi\in\fk H_i(\wtd I)$ and $x\in\mc A(I^c)$,
\begin{align}
L^\boxdot(\xi,\wtd I)x\Omega=xL^\boxdot(\xi,\wtd I)\Omega=x\xi=xL^\boxtimes(\xi,\wtd I)\Omega=L^\boxtimes(\xi,\wtd I)x\Omega,
\end{align}
which shows that $L^\boxdot(\xi,\wtd I)=L^\boxtimes(\xi,\wtd I)$ when acting on $\mc H_0$. Similar result holds for the right actions of modules on $\mc H_0$. Then using locality, we compute, for any  $x_1,x_2\in\mc A(I),\xi_1,\xi_2\in\fk H_i(\wtd I),\eta_1,\eta_2\in\fk H_j(\wtd J)$, that
\begin{align}
&\bk{x_1L^\boxdot(\xi_1,\wtd I)\eta_1|x_2L^\boxdot(\xi_2,\wtd I)\eta_2 }=\bk{x_1L^\boxdot(\xi_1,\wtd I)R^\boxdot(\eta_1,\wtd J)\Omega|x_2L^\boxdot(\xi_2,\wtd I)R^\boxdot(\eta_2,\wtd J)\Omega }\nonumber\\
=&\bk{R^\boxdot(\eta_2,\wtd J)^*L^\boxdot(\xi_2,\wtd I)^*x_2^*x_1L^\boxdot(\xi_1,\wtd I)R^\boxdot(\eta_1,\wtd J)\Omega|\Omega}\nonumber\\
=&\bk{R^\boxdot(\eta_2,\wtd J)^*R^\boxdot(\eta_1,\wtd J)L^\boxdot(\xi_2,\wtd I)^*x_2^*x_1L^\boxdot(\xi_1,\wtd I)\Omega|\Omega}.\label{eq41}
\end{align}
Note that on the right hand side of \eqref{eq41}, $L^\boxdot(\xi_1,\wtd I),L^\boxdot(\xi_2,\wtd I),R^\boxdot(\eta_1,\wtd J),R^\boxdot(\eta_2,\wtd J)$ all act on the vacuum module. Similarly,
\begin{align}
\bk{x_1L^\boxtimes(\xi_1,\wtd I)\eta_1|x_2L^\boxtimes(\xi_2,\wtd I)\eta_2 }=\bk{R^\boxtimes(\eta_2,\wtd J)^*R^\boxtimes(\eta_1,\wtd J)L^\boxtimes(\xi_2,\wtd I)^*x_2^*x_1L^\boxtimes(\xi_1,\wtd I)\Omega|\Omega},\label{eq42}
\end{align}
and on the right hand side of this equations, all the operators $R^\boxtimes$ are acting on $\mc H_0$. Therefore the left hand sides of \eqref{eq41} and \eqref{eq42} are equal. We thus conclude, by the density of fusion products and the Reeh-Schlieder property (conditions (d) and (e) of definition \ref{lb9}), that there is a unique unitary map $\Phi^{\wtd I,\wtd J}_{i,j}:\mc H_i\boxtimes\mc H_j\rightarrow\mc H_i\boxdot\mc H_j$ satisfying
\begin{gather*}
\Phi^{\wtd I,\wtd J}_{i,j}xL^\boxtimes(\xi,\wtd I)\eta=xL^\boxdot(\xi,\wtd I)\eta\qquad(\forall x\in\mc A(I), \xi\in\fk H_i(\wtd I),\eta\in\fk H_j(\wtd J)).
\end{gather*}
In particular, $\Phi^{\wtd I,\wtd J}_{i,j}$ intertwines the actions of $\mc A(I)$ on $\mc H_i\boxtimes\mc H_j$ and $\mc H_i\boxdot\mc H_j$. It is obvious that if $\wtd I_0,\wtd J_0\in\Jtd$ and $\wtd I_0\subset\wtd I,\wtd J_0\subset\wtd J$, then $\Phi^{\wtd I_0,\wtd J_0}_{i,j}=\Phi^{\wtd I,\wtd J}_{i,j}$. Therefore it is easy to see that  $\Phi^{\wtd I,\wtd J}_{i,j}=\Phi^{\wtd I',\wtd J'}_{i,j}$ for any $\wtd I',\wtd J'\in\Jtd$ such that $\wtd I'$ is anticlockwise to $\wtd J'$, i.e., $\Phi^{\wtd I,\wtd J}_{i,j}$ is independent of $\wtd I$ and $\wtd J$. This shows that $\Phi^{\wtd I,\wtd J}_{i,j}$ intertwines the actions of $\mc A(I')$ for any $I'\in\mc J$.  

We thus write $\Phi^{\wtd I,\wtd J}_{i,j}=\Phi_{i,j}$. Then $\Phi_{i,j}:\mc H_i\boxtimes\mc H_j\rightarrow\mc H_i\boxdot\mc H_j$ is a unitary $\mc A$-module isomorphism satisfying \eqref{eq43} for any $\xi\in\fk H_i(\wtd I),\eta\in\fk H_j(\wtd J)$. In particular, $\mc H_i\boxtimes\mc H_j\in\wht {\scr C}$ as $\mc H_i\boxdot\mc H_j\in\scr C$. By Reeh-Schlieder property, \eqref{eq43} holds for any $\xi\in\fk H_i(\wtd I),\eta\in\mc H_j$.  By \eqref{eq43} and the functoriality of $L^\boxtimes(\xi,\wtd I)$ and $L^\boxdot(\xi,\wtd I)$, it is clear that $\Phi_{i,j'}(\id_i\boxtimes G)=(\id_i\boxdot G)\Phi_{i,j}$ for any $\mc H_j'\in\scr C$ and $G\in\Hom_{\mc A}(\mc H_j,\mc H_{j'})$. Now assume $\xi\in\fk H_i(\wtd I),\eta\in\fk H_j(\wtd J)$. Then
\begin{align*}
\Phi_{i,j}R^\boxtimes(\eta,\wtd J)\xi=\Phi_{i,j}L^\boxtimes(\xi,\wtd I)\eta=L^\boxdot(\xi,\wtd I)\eta=R^\boxdot(\eta,\wtd J)\xi.
\end{align*}
Since $\fk H_i(\wtd I)$ spans a dense subspace of $\mc H_i$ by Reeh-Schlieder property, the left and the right hand sides of the above equation are equal for any $\xi\in\mc H_i$. Equivalently, we've proved condition \eqref{eq44}. A similar argument shows that $\Phi_{i',j}(F\boxtimes\id_j)=(F\boxdot\id_j)\Phi_{i,j}$ for any $\mc H_{i'}\in\scr C$ and $F\in\Hom_{\mc A}(\mc H_i,\mc H_{i'})$. Therefore $\Phi$ is functorial.\\

Step 2. We now show that $\Phi$ induces an equivalence of $C^*$-tensor categories. We first verify the commutativity of digram \eqref{eq45}. Choose any $\mc H_i,\mc H_j,\mc H_k\in\scr C,$ disjoint $\wtd I,\wtd J\in\Jtd$ such that $\wtd I$ is anticlockwise to $\wtd J$, and $\xi\in\fk H_i(\wtd I),\eta\in\fk H_j(\wtd J),\chi\in\mc H_k$. Then using conditions \eqref{eq43}, \eqref{eq44}, and the functoriality and locality of categorical extensions (conditions (b) and (f) of definition \ref{lb9}), we compute
\begin{align*}
&L^\boxtimes(\xi,\wtd I)R^\boxtimes(\eta,\wtd J)\chi\xrightarrow{\id_i\otimes\Phi_{k,j}} L^\boxtimes(\xi,\wtd I)\Phi_{k,j}R^\boxtimes(\eta,\wtd J)\chi\\
=&L^\boxtimes(\xi,\wtd I)R^\boxdot(\eta,\wtd J)\chi \xrightarrow{\Phi_{i,k\boxdot j}} L^\boxdot(\xi,\wtd I)R^\boxdot(\eta,\wtd J)\chi,
\end{align*}
and also
\begin{align*}
&L^\boxtimes(\xi,\wtd I)R^\boxtimes(\eta,\wtd J)\chi=R^\boxtimes(\eta,\wtd J)L^\boxtimes(\xi,\wtd I)\chi \xrightarrow{\Phi_{i,k}\otimes\id_j} R^\boxtimes(\eta,\wtd J)\Phi_{i,k}L^\boxtimes(\xi,\wtd I)\chi\\
=&R^\boxtimes(\eta,\wtd J)L^\boxdot(\xi,\wtd I)\chi \xrightarrow{\Phi_{i\boxdot k,j}}  R^\boxdot(\eta,\wtd J)L^\boxdot(\xi,\wtd I)\chi= L^\boxdot(\xi,\wtd I)R^\boxdot(\eta,\wtd J)\chi.
\end{align*}
Therefore diagram \eqref{eq45} commutes.

For condition (b), we choose any $\wtd I\in\Jtd,\xi\in\mc H_i$. Then
\begin{align*}
\xi=L^\boxtimes(\Omega,\wtd I)\xi\xrightarrow{\Phi_{0,i}} L^\boxdot(\Omega,\wtd I)\xi=\xi.
\end{align*}
Thus \eqref{eq46} equals identity. Similarly \eqref{eq47} also equals identity. Thus $\Phi$ is an equivalence of $C^*$-tensor categories.

Finally, choose any  $\xi\in\fk H_i(\wtd I),\eta\in\mc H_j$. Then
\begin{gather*}
L^\boxtimes(\xi,\wtd I)\eta\xrightarrow{\Phi_{i,j}} L^\boxdot(\xi,\wtd I)\eta \xrightarrow{\ss_{i,j}} R^\boxdot(\xi,\wtd I)\eta,
\end{gather*}
and also
\begin{align*}
L^\boxtimes(\xi,\wtd I)\eta\xrightarrow{\mathbb B_{i,j}} R^\boxtimes(\xi,\wtd I)\eta \xrightarrow{\Phi_{j,i}} R^\boxdot(\xi,\wtd I)\eta.
\end{align*}
Hence diagram \eqref{eq48} commutes. This shows that $\Phi$ intertwines $\mathbb B$ and $\ss$. Thus, due to the functoriality of $\Phi$, we conclude that $\ss$ is functorial and satisfies hexagon axioms since these are true for $\mathbb B$. Since $\Phi$ and $\mathbb B$ are isomorphisms of $\mc A$-modules, so is $\ss$.  Therefore $(\scr C,\boxdot,\ss)$ is a braided $C^*$-tensor category equivalent to $(\wht{\scr C},\boxtimes,\mathbb B)$ under the functorial map $\Phi$.
\end{proof}

\subsection{Uniqueness of maximal categorical extensions}

\begin{df}
Let $\scr E=(\mc A,\scr C,\boxdot,\fk H)$ and $\scr F=(\mc A,\scr C,\boxdot,\fk K)$ be  vector-labeled categorical extensions of $\mc A$. We say that $\scr F$ is a \textbf{small extension} of $\scr E$ if $\fk H_i(\wtd I)\subset \fk K_i(\wtd I)$ for any $\mc H_i\in\scr C,\wtd I\in\Jtd$,  and for any $\xi\in\fk H_i(\wtd I)$, the operator $L(\xi,\wtd I)$ (resp. $R(\xi,\wtd I)$) of $\scr E$ equals the one of $\scr F$. In this case we write $\scr E\subset \scr F$.
\end{df}

By density of fusion products, it is clear that $\scr E$ and $\scr F$ have the same braid operator $\ss$.

\begin{thm}\label{lb15}
Let $\scr E=(\mc A,\scr C,\boxdot,\fk H)$ be a vector-labeled categorical extension of $\mc A$. Then $\scr E$ has a unique maximal small extension $\overline{\scr E}=(\mc A,\scr C,\boxdot,\mc H)$, called the \textbf{closure} of $\scr E$. Moreover, $\ovl{\scr E}$ satisfies  that for any $\wtd I\in\Jtd$ and $\mc H_i,\mc H_j\in\scr C$:\\
(a) $\mc H_i(\wtd I)=\mc H_i(I)$.\\
(b) If $\xi\in\mc H_i(I),\eta\in\mc H_j(I),$ then $L(\xi,\wtd I)\eta\in(\mc H_i\boxdot\mc H_j)(I)$, and $L(L(\xi,\wtd I)\eta,\wtd I)=L(\xi,\wtd I)L(\eta,\wtd I)$.
\end{thm}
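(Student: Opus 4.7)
The plan is to realize $\ovl{\scr E}$ by transporting the Connes categorical extension $\scr E_C$ through the unitary natural isomorphism $\Phi:(\wht{\scr C},\boxtimes)\xrightarrow\simeq(\scr C,\boxdot)$ of Theorem \ref{lb12}, and then to extract uniqueness from the state-field rigidity already established at the end of Section \ref{lb29}. Concretely, for each $\mc H_i\in\scr C$ and arg-valued $\wtd I$, I set $\mc H_i(\wtd I):=\mc H_i(I)$, and for every $\xi\in\mc H_i(I)$ and every $\mc H_k\in\scr C$ define
\begin{align*}
L(\xi,\wtd I):=\Phi_{i,k}\circ L^\boxtimes(\xi,\wtd I),\qquad R(\xi,\wtd I):=\Phi_{k,i}\circ R^\boxtimes(\xi,\wtd I),
\end{align*}
as operators $\mc H_k\to\mc H_i\boxdot\mc H_k$ and $\mc H_k\to\mc H_k\boxdot\mc H_i$ respectively. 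Since $\Phi$ is an $\mc A$-module isomorphism and $L^\boxtimes(\xi,\wtd I),R^\boxtimes(\xi,\wtd I)$ intertwine $\mc A(I^c)$, these land in the right hom-spaces.

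The verification of the axioms of Definition \ref{lb9} for $\ovl{\scr E}=(\mc A,\scr C,\boxdot,\mc H)$ is mostly a transport along $\Phi$. Isotony and functoriality are immediate from the corresponding properties of $\scr E_C$ together with the naturality of $\Phi$ in Theorem \ref{lb12}. Neutrality follows from $L^\boxtimes(\xi,\wtd I)\Omega=\xi=R^\boxtimes(\xi,\wtd I)\Omega$ in $\scr E_C$ together with the fact that \eqref{eq46} and \eqref{eq47} make $\Phi_{0,i}$ and $\Phi_{i,0}$ the identity after unit identifications. The Reeh-Schlieder and density axioms for $\scr E_C$ (which hold because $\mc A(I)\Omega$ is dense in $\mc H_0$ and $\mc H_k(I)$ is dense in $\mc H_k$) pass to $\ovl{\scr E}$ because $\Phi$ is unitary. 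The braiding axiom reduces to \eqref{eq48}: indeed
\begin{align*}
\ss_{i,j}L(\xi,\wtd I)=\ss_{i,j}\Phi_{i,j}L^\boxtimes(\xi,\wtd I)=\Phi_{j,i}\mathbb B_{i,j}L^\boxtimes(\xi,\wtd I)=\Phi_{j,i}R^\boxtimes(\xi,\wtd I)=R(\xi,\wtd I).
\end{align*}
Finally, for $\xi\in\fk H_i(\wtd I)\subset\mc H_i(I)$ equation \eqref{eq43} (together with the uniqueness-by-vacuum argument from Section \ref{lb29}) shows that the newly defined $L(\xi,\wtd I)$ agrees with the old one, so $\scr E\subset\ovl{\scr E}$.

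The main obstacle I anticipate is locality, since it mixes fusions at two different positions and involves the two distinct tensor bifunctors $\boxtimes,\boxdot$ at once. The key input is the hexagon-style coherence square \eqref{eq45} for $\Phi$. For disjoint $\wtd I$ anticlockwise to $\wtd J$ and $\xi\in\mc H_i(I),\eta\in\mc H_j(J)$, functoriality lets one push $\Phi_{k,j}$ past $L^\boxtimes(\xi,\wtd I)$ (yielding $\id_i\otimes\Phi_{k,j}$), then \eqref{eq45} converts $\Phi_{i,k\boxdot j}(\id_i\otimes\Phi_{k,j})$ to $\Phi_{i\boxdot k,j}(\Phi_{i,k}\otimes\id_j)$; applying locality for $\scr E_C$ and reversing the process on the other side yields the desired commutativity. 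Adjoint commutativity follows the same route starting from adjoint commutativity in $\scr E_C$, using unitarity of $\Phi$.

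Part (b) is then the combination of Proposition \ref{lb13} applied inside $\scr E_C$ (giving $L^\boxtimes(\xi,\wtd I)\eta\in(\mc H_i\boxtimes\mc H_j)(I)$ and $L^\boxtimes(\xi,\wtd I)L^\boxtimes(\eta,\wtd I)=L^\boxtimes(L^\boxtimes(\xi,\wtd I)\eta,\wtd I)$) with the unitarity of $\Phi_{i,j}$ (which preserves $I$-bounded vectors) and one further application of \eqref{eq45}; the latter is needed to rewrite $\Phi_{i,j\boxdot k}(\id_i\otimes\Phi_{j,k})$ as $\Phi_{i\boxdot j,k}(\Phi_{i,j}\otimes\id_k)$, so that the intermediate vector $L^\boxtimes(\xi,\wtd I)\eta$ gets replaced, via $\Phi_{i,j}$, by $L(\xi,\wtd I)\eta$. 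Uniqueness of the maximal small extension is automatic: as recalled just before Definition \ref{lb14}, for any vector-labeled categorical extension the operator $L(\xi,\wtd I)$ is uniquely determined by the vector $L(\xi,\wtd I)\Omega=\xi$; hence any small extension of $\scr E$ embeds into $\ovl{\scr E}$ by sending its $L,R$ operators to the unique ones with the same vacuum vector, proving both $\fk H_i(\wtd I)\subset\mc H_i(I)$ for any such extension and the maximality of the one just constructed.
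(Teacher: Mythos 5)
Your proposal is correct and follows essentially the same route as the paper: define $\ovl{\scr E}$ by setting $\mc H_i(\wtd I)=\mc H_i(I)$ and $L^\boxdot(\xi,\wtd I)=\Phi\,L^\boxtimes(\xi,\wtd I)$, $R^\boxdot(\xi,\wtd I)=\Phi\,R^\boxtimes(\xi,\wtd I)$, verify locality by splitting into functoriality squares plus the coherence square \eqref{eq45}, and deduce (b) from Proposition \ref{lb13}. The only (harmless) divergence is in the maximality step, where you argue directly via vacuum-vector rigidity (locality against the right operators of $\scr E$ plus its Reeh--Schlieder property) rather than, as the paper does, forming $\ovl{\scr F}$ and invoking the uniqueness of $\Phi$; these are equivalent arguments resting on the same density inputs.
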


\begin{proof}
As in the last chapter, we let $L^\boxdot, R^\boxdot$ denote actions  in $\scr E$, and let $L^\boxtimes, R^\boxtimes$ denote actions  in the Connes categorical extension $\scr E_C$. Consider the functorial isomorphism $\Phi$ in theorem \ref{lb12}. We now define $\overline{\scr E}=(\mc A,\scr C,\boxdot,\mc H)$ such that $\mc H_i(\wtd I)=\mc H_i(I)$ for any $\mc H_i\in\scr C,\wtd I\in\Jtd$, and  for any $\mc H_i,\mc H_j\in\scr C$, the bounded linear operators $L^\boxdot(\xi,\wtd I)$ and $R^\boxdot(\xi,\wtd I)$ mapping $\mc H_j\rightarrow\mc H_i\boxdot\mc H_j$ and $\mc H_j\rightarrow\mc H_j\boxdot\mc H_i$ respectively are defined by
\begin{gather}
L^\boxdot(\xi,\wtd I)\eta=\Phi_{i,j}L^\boxtimes(\xi,\wtd I)\eta,\label{eq53}\\
R^\boxdot(\xi,\wtd I)\eta=\Phi_{j,i}R^\boxtimes(\xi,\wtd I)\eta
\end{gather}
for any $\eta\in\mc H_j$. Condition (a) is clearly satisfied. Condition (b) follows from proposition \ref{lb13}. This construction is clearly compatible with $\scr E$. So once we've proved that $\ovl{\scr E}$ satisfies the axioms of a categorical extension, then $\ovl{\scr E}$ is a small extension of $\scr E$.

We only check that $\ovl{\scr E}$ satisfies locality, as all the other axioms are easy to verify. Choose any $\wtd I\in\Jtd$ anticlockwise to $\wtd J\in\Jtd$. Choose $\mc H_i,\mc H_j\in\scr C,\xi\in\mc H_i(I),\eta\in\mc H_j(J)$. Consider the following $2\times 2$ matrix of diagrams:
\begin{align}
\begin{CD}
\mc H_k @> \quad R^\boxtimes(\eta,\wtd J)\quad   >> \mc H_k\boxtimes\mc H_j  @> ~~\quad\Phi_{k,j}\quad~~ >> \mc H_k\boxdot\mc H_j \\
@V L^\boxtimes(\xi,\wtd I)   V  V @V L^\boxtimes(\xi,\wtd I) VV   @V  L^\boxtimes(\xi,\wtd I) VV\\
\mc H_i\boxtimes\mc H_k @> \quad R^\boxtimes(\eta,\wtd J) \quad  >> \mc H_i\boxtimes\mc H_k\boxtimes\mc H_j  @> \quad\id_i\otimes\Phi_{k,j}\quad >>\mc H_i\boxtimes(\mc H_k\boxdot\mc H_j)\\
@ V \Phi_{i,k} VV  @V \Phi_{i,k}\otimes\id_j VV  @V \Phi_{i,k\boxdot j}VV\\
\mc H_i\boxdot\mc H_k  @> \quad R^\boxtimes(\eta,\wtd J)\quad >> (\mc H_i\boxdot \mc H_k)\boxtimes\mc H_j  @> ~~\quad\Phi_{i\boxdot k,j}\quad~~ >> \mc H_i\boxdot\mc H_k\boxdot\mc H_j.
\end{CD}
\end{align}
The $(1,1)$-diagram commutes adjointly due to the locality of $\scr E_C$. The $(2,1)$-diagram and the $(1,2)$-diagram both commute (and hence commute adjointly since the maps involved are unitary) due to the functoriality axiom of $\scr E_C$. The $(2,2)$-diagram is exactly diagram \eqref{eq45}. So it commutes (adjointly) by theorem \ref{lb12}. Therefore the largest diagram commutes adjointly, which proves the locality of $\ovl{\scr E}$.

Now suppose that $\scr F$ is a small extension of $\scr E$. If we construct $\ovl{\scr F}$ in a similar way, then by the uniqueness of $\Phi$ (which follows from the density of fusion products and the Reeh-Schlieder property of $\scr E$), we clearly have $\ovl{\scr E}=\ovl{\scr F}$. Therefore $\scr F\subset\ovl{\scr E}.$ Thus we've proved that any small extension of $\scr E$ is included in $\ovl{\scr E}$, which means that $\ovl{\scr E}$ is the unique maximal small extension of $\scr E$.
\end{proof}

The proof of theorem \ref{lb15} implies a very interesting consequence.
\begin{thm}\label{lb35}
Any categorical extension $\scr E=(\mc A,\scr C,\boxdot,\fk H)$  of $\mc A$ is conformal.
\end{thm}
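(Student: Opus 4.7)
The plan is to transfer the conformal structure of the Connes categorical extension $\scr E_C$ (Theorem \ref{lb16}) to $\scr E$ through the functorial isomorphism $\Phi$ of Theorem \ref{lb12}, after passing to the closure $\ovl{\scr E}$ constructed in Theorem \ref{lb15}. First I would relabel $\scr E$ to be vector-labeled, which is harmless because each $\fk a\in\fk H_i(\wtd I)$ is uniquely determined by $L(\fk a,\wtd I)\Omega\in\mc H_i(I)$. Then I would replace $\scr E$ by its closure $\ovl{\scr E}=(\mc A,\scr C,\boxdot,\mc H)$ in which $\mc H_i(\wtd I)=\mc H_i(I)$; this maximal enlargement is essential, since conformal covariance requires the conjugate $g\xi g^{-1}$ to live in the relevant $\fk H$-set, and only in $\ovl{\scr E}$ is there enough room.

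For $\xi\in\mc H_i(I)$ and $g\in\GA$, mimicking the Connes construction I would set
\begin{align*}
g\xi g^{-1}:=gZ(\xi,I)g^{-1}\Omega,
\end{align*}
which lies in $\mc H_i(gI)=\mc H_i(g\wtd I)$ because $gZ(\xi,I)g^{-1}\in\Hom_{\mc A(gI^c)}(\mc H_0,\mc H_i)$. By Theorem \ref{lb16} this is exactly the element that implements the conformal covariance of $\scr E_C$, so
\begin{align*}
L^\boxtimes(g\xi g^{-1},g\wtd I)=gL^\boxtimes(\xi,\wtd I)g^{-1}
\end{align*}
on every $\mc H_k\in\Rep(\mc A)$. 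To push this identity across to $\ovl{\scr E}$, I would invoke the defining relation $L^\boxdot(\xi,\wtd I)=\Phi_{i,k}L^\boxtimes(\xi,\wtd I)$ from \eqref{eq53}. The unitary $\Phi_{i,k}:\mc H_i\boxtimes\mc H_k\to\mc H_i\boxdot\mc H_k$ is an $\mc A$-module isomorphism, so by Corollary \ref{lb7} it is automatically $\GA$-equivariant, i.e.\ $\Phi_{i,k}g=g\Phi_{i,k}$. Chaining these equalities gives
\begin{align*}
L^\boxdot(g\xi g^{-1},g\wtd I)=\Phi_{i,k}L^\boxtimes(g\xi g^{-1},g\wtd I)=\Phi_{i,k}gL^\boxtimes(\xi,\wtd I)g^{-1}=g\Phi_{i,k}L^\boxtimes(\xi,\wtd I)g^{-1}=gL^\boxdot(\xi,\wtd I)g^{-1},
\end{align*}
which is precisely the conformality condition of Definition \ref{lb9}.

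The conceptual core is the fact that $\mc A$-module intertwiners are automatically $\GA$-equivariant (Corollary \ref{lb7}), which lets the conformal structure of Connes fusion be transported across $\Phi$ without any extra analysis. I expect the main technical obstacle to be bookkeeping: making sure that the four $\GA$-actions (on $\mc H_i$, $\mc H_k$, $\mc H_i\boxtimes\mc H_k$, and $\mc H_i\boxdot\mc H_k$) are compatibly identified by $\Phi_{i,k}$, and then passing from the left-action identity for $L^\boxdot$ to the corresponding identity for $R^\boxdot$, which can be obtained either symmetrically from the right-action version of the Connes construction or directly from the braiding relation \eqref{eq33} combined with the observation that the braid operator $\ss$, being an $\mc A$-module morphism, is itself $\GA$-equivariant.
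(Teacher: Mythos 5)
Your argument is correct and is essentially the paper's own proof: conformality is transferred from the Connes categorical extension (Theorem \ref{lb16}) through the functorial unitary $\Phi$ of Theorem \ref{lb12}, using that $\mc A$-module morphisms automatically intertwine the $\GA$-actions and the relation $L^\boxdot(\xi,\wtd I)=\Phi\, L^\boxtimes(\xi,\wtd I)$ from \eqref{eq53}. Your explicit passage to the closure $\ovl{\scr E}$ (so that $g\xi g^{-1}$ lies in $\mc H_i(g\wtd I)$) and the remark that the $R$-version follows from \eqref{eq33} are only making explicit what the paper leaves implicit.
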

\begin{proof}
Assume that $\scr E$ is vector-labeled. We want to check that for any $g\in\GA,\wtd I\in\Jtd,\mc H_i\in\scr C,\xi\in\fk H_i(\wtd I)$,
\begin{gather}
L^\boxdot(g\xi g^{-1},g\wtd I)=gL^\boxdot(\xi,\wtd I)g^{-1}\label{eq51}
\end{gather}
when acting on any $\mc H_j\in\scr C$. By theorem \ref{lb16}, we have
\begin{gather}
L^\boxtimes(g\xi g^{-1},g\wtd I)=gL^\boxtimes(\xi,\wtd I)g^{-1}.\label{eq52}
\end{gather}
Since $\mc A$-module homomorphisms intertwine the actions of $\GA$ by proposition \ref{lb6}, $g$ commutes with $\Phi$. Therefore relation \eqref{eq51} follows from \eqref{eq52} and \eqref{eq53}.
\end{proof}

We say that a vector-labeled categorical extension $\scr E$ is \textbf{closed} if $\scr E=\ovl{\scr E}$.

\subsection{Semisimple categorical extensions}

In this section we assume that the full abelian $C^*$-subcategory $\scr C$ of $\Rep(\mc A)$ is semisimple, i.e., any module $\mc H_i\in\scr C$ is unitarily equivalent to a finite direct sum of $\mc A$-modules in $\scr C$. We equip $\scr C$ with a braided $C^*$-tensor categorical structure $(\scr C,\boxdot,\ss)$.\footnote{As we have seen in theorem \ref{lb12},  Hexagon axioms and the functoriality of braidings are consequences of the existence of a categorical existence. However, for the categorical local extensions, we need to assume these two properties  at the very beginning: see step 2 of the proof of theorem \ref{lb18}.}  If $\mc F$ is a set of  $\mc A$-modules in $\scr C$, we say that $\mc F$ \textbf{generates} $\scr C$, if for any irreducible $\mc H_i\in\scr C$, there exist $\mc H_{i_1},\dots,\mc H_{i_n}\in\mc F$ such that $\mc H_i$ is equivalent to an (irreducible) $\mc A$-submodule of $\mc H_{i_1}\boxdot\mc H_{i_2}\boxdot\cdots\boxdot\mc H_{i_n}$.

\begin{df}
Assume $(\scr C,\boxdot,\ss)$ is semisimple and $\mc F$ is a generating set of  $\mc A$-modules in $\scr C$. Let $\fk H$ assign, to each $\wtd I\in\Jtd,\mc H_i\in\mc F$, a set $\fk H_i(\wtd I)$ such that $\fk H_i(\wtd I_1)\subset\fk H_i(\wtd I_2)$ whenever $\wtd I_1\subset\wtd I_2$. A \textbf{categorical local extension} $\scr E^\loc=(\mc A,\mc F,\boxdot,\fk H)$ of $\mc A$ associates, to any $\mc H_i\in\mc F,\mc H_k\in\scr C,\wtd I\in\Jtd,\fk a\in\fk H_i(\wtd I)$, bounded linear operators
\begin{gather*}
L(\fk a,\wtd I)\in\Hom_{\mc A(I^c)}(\mc H_k,\mc H_i\boxdot\mc H_k),\\
R(\fk a,\wtd I)\in\Hom_{\mc A(I^c)}(\mc H_k,\mc H_k\boxdot\mc H_i),
\end{gather*}
such that  the  axioms of definition \ref{lb9} are satisfied only for $\mc H_i\in\mc F$, and the locality (axiom (f)) holds only for $\mc H_i,\mc H_j\in\mc F$. We also assume that the unitary operator $\ss$ in the braiding axiom (see \eqref{eq33}) is the same as the one of the braided $C^*$-tensor category $\scr C$.
\end{df}

Similar to categorical extensions, if $\scr E^\loc$ is a categorical local extension, then for any $\mc H_i\in\mc F,\fk a\in\fk H_i(\wtd I)$, the operator $L(\fk a,\wtd I)$, acting on any $\mc A$-module in $\scr C$, is determined by $L(\fk a,\wtd I)\Omega$. Indeed, for any irreducible $\mc H_j\in\scr C$, we can find $\mc H_{j_1},\dots,\mc H_{j_n}\in\mc F$ and an isometric $G\in\Hom_{\mc A}(\mc H_j,\mc H_{j_1}\boxdot\cdots\boxdot\mc H_{j_n})$. Choose $\wtd J\in \Jtd$ clockwise to $\wtd I$. Then for any $\fk b_1\in\fk H_{j_1}(\wtd J),\dots \fk b_n\in\fk H_{j_n}(\wtd J)$,
\begin{align}
L(\fk a,\wtd I)G^*\cdot R(\fk b_n,\wtd J)\cdots R(\fk b_1,\wtd J)\Omega=(\id_i\otimes G^*)\cdot R(\fk b_n,\wtd J)\cdots R(\fk b_1,\wtd J)L(\fk a,\wtd I)\Omega.\label{eq50}
\end{align}
By Reeh-Schlieder property and density of fusion products, vectors of the form $G^*\cdot R(\fk b_n,\wtd J)\cdots R(\fk b_1,\wtd J)\Omega$ span a dense subspace of $\mc H_j$. Thus the action of $L(\fk a,\wtd I)$ on any irreducible $\mc H_j\in\scr C$ is determined by $L(\fk a,\wtd I)\Omega$. The general case follows from the functoriality of $\scr E^\loc$. Therefore  $\scr E^\loc$ is equivalent to a vector-labeled categorical local extension. (The meaning of ``vector-labeled" is understood in a similar way as in definition \ref{lb14}.)

\begin{thm}\label{lb18}
After relabeling, $\scr E^\loc=(\mc A,\mc F,\boxdot,\fk H)$ can be extended to a unique closed vector-labeled categorical extension $\ovl{\scr E}=(\mc A,\scr C,\boxdot,\mc H)$.
\end{thm}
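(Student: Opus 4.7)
The plan is to mimic the combined strategy of theorems \ref{lb12} and \ref{lb15}: first I will construct a functorial unitary isomorphism $\Phi_{i,j}:\mc H_i\boxtimes\mc H_j\xrightarrow{\simeq}\mc H_i\boxdot\mc H_j$ for all $\mc H_i,\mc H_j\in\scr C$ that identifies the data of $\scr E^\loc$ with (a restriction of) the Connes categorical extension $\scr E_C$; then I will transport $\scr E_C$ through $\Phi$ and \emph{define} $\ovl{\scr E}$ by the formulas \eqref{eq53}. After relabeling $\scr E^\loc$ to be vector-labeled (as explained in the paragraph preceding the theorem), the labels $\fk H_i(\wtd I)$ are subsets of $\mc H_i(I)$ for every $\mc H_i\in\mc F$.

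The construction of $\Phi$ will proceed in three stages. First, when both $\mc H_i,\mc H_j\in\mc F$, the inner-product calculation \eqref{eq41}--\eqref{eq42} in step~1 of the proof of theorem \ref{lb12} applies verbatim, since it invokes the operators $L^\boxdot,R^\boxdot$ only on modules in $\mc F$; this produces a unitary $\mc A$-module isomorphism $\Phi_{i,j}$ satisfying the analogues of \eqref{eq43}--\eqref{eq44}. Second, because $\mc F$ generates the semisimple category $\scr C$, every $\mc H_k\in\scr C$ is isometrically a direct summand of an iterated fusion $\mc H_{i_1}\boxdot\cdots\boxdot\mc H_{i_n}$ with each $\mc H_{i_s}\in\mc F$; I extend $\Phi_{i,k}$ (still with $\mc H_i\in\mc F$) to such fusions by iteratively imposing the commutativity of diagram \eqref{eq45}, and then pass to direct summands by functoriality along an embedding $G\in\Hom_{\mc A}(\mc H_k,\mc H_{i_1}\boxdot\cdots\boxdot\mc H_{i_n})$. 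Third, the symmetric construction in the left slot, combined with the braiding compatibility \eqref{eq48} and the given functorial $\ss$, extends $\Phi_{i,j}$ to arbitrary $\mc H_i,\mc H_j\in\scr C$.

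With $\Phi$ in hand, I will set $\mc H_i(\wtd I):=\mc H_i(I)$ and, for $\xi\in\mc H_i(I)$ and $\eta\in\mc H_j$, define
\begin{align*}
L^\boxdot(\xi,\wtd I)\eta:=\Phi_{i,j}L^\boxtimes(\xi,\wtd I)\eta,\qquad R^\boxdot(\xi,\wtd I)\eta:=\Phi_{j,i}R^\boxtimes(\xi,\wtd I)\eta.
\end{align*}
The inclusion $\scr E^\loc\subset\ovl{\scr E}$ follows from the analogues of \eqref{eq43}--\eqref{eq44}. Verification of the axioms of definition \ref{lb9} then runs as in the proof of theorem \ref{lb15}: isotony, functoriality, neutrality, Reeh--Schlieder and density of fusion products all pass through the unitary functorial $\Phi$ from the known properties of $\scr E_C$; the braiding axiom reduces to diagram \eqref{eq48}; and locality is checked by the $2\times 2$ matrix of diagrams whose sole non-trivial square is precisely the compatibility \eqref{eq45} built into $\Phi$. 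Closedness of $\ovl{\scr E}$ is automatic from the choice $\mc H_i(\wtd I)=\mc H_i(I)$ together with proposition \ref{lb13}.

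For uniqueness, any closed vector-labeled extension $\scr F$ of $\scr E^\loc$ has label set $\mc H_i(I)$ by definition; fixing $\wtd J$ clockwise to $\wtd I$, the locality axiom forces $L^\boxdot(\xi,\wtd I)R^\boxdot(\eta,\wtd J)\chi=R^\boxdot(\eta,\wtd J)L^\boxdot(\xi,\wtd I)\chi$, and the Reeh--Schlieder property and density of fusion products then pin down $L^\boxdot(\xi,\wtd I)$ on a dense subspace, hence everywhere. The main obstacle I anticipate is the well-definedness of $\Phi$ under the two choices made in stage two, namely the bracketing of the iterated fusion $\mc H_{i_1}\boxdot\cdots\boxdot\mc H_{i_n}$ and the particular summand embedding of $\mc H_k$; after testing against vectors of the form $G^*R^\boxdot(\fk b_n,\wtd J)\cdots R^\boxdot(\fk b_1,\wtd J)\Omega$ as in \eqref{eq50}, both checks will reduce to the pentagon and hexagon coherence of $(\scr C,\boxdot,\ss)$, which is precisely why these coherences must be postulated at the outset for a categorical local extension.
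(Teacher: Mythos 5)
Your uniqueness argument is circular. For a closed vector-labeled extension the label set is all of $\mc H_i(I)$ for every $\mc H_i\in\scr C$, and the identity $L^\boxdot(\xi,\wtd I)R^\boxdot(\eta,\wtd J)\Omega=R^\boxdot(\eta,\wtd J)\xi$ only determines $L^\boxdot(\xi,\wtd I)$ in terms of the right actions $R^\boxdot(\eta,\wtd J)$ of the very extension whose uniqueness is in question; when $\mc H_j\notin\mc F$ those right actions are not part of the data of $\scr E^\loc$, so two competing closed extensions containing $\scr E^\loc$ have not been compared at all. The paper gets around this by bootstrapping outward from $\mc F$: for $\mc H_i\in\mc F$ the operator $L(\xi,\wtd I)$ is pinned down on every $\mc H_j\in\scr C$ via \eqref{eq50}, where the dense family of test vectors $G^*R(\fk b_n,\wtd J)\cdots R(\fk b_1,\wtd J)\Omega$ is built exclusively from $\scr E^\loc$-data and a morphism $G$ (handled by functoriality); one then passes to $\mc F^\oplus$ by \eqref{eq56}, to iterated fusions by the multiplicativity property of closed extensions (theorem \ref{lb15}-(b), used as \eqref{eq57}), and to arbitrary objects by \eqref{eq58}. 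Some version of this outward induction is indispensable, and your proposal does not contain it.

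The existence half has a matching gap. The inductive definition of $\Phi$ by ``imposing \eqref{eq45}'' is not well-founded: to extract $\Phi_{i,k\boxdot j}$ from that square you already need $\Phi_{k,j}$ and $\Phi_{i\boxdot k,j}$, whose left slots are fusions and hence lie outside the range of the maps constructed so far; and invoking \eqref{eq48} together with ``the given functorial $\ss$'' as an ingredient of the construction presupposes the compatibility of $\mathbb B$ with $\ss$, which is precisely one of the conclusions to be proved. More substantively, the well-definedness and functoriality of the extended $\Phi$ (independence of bracketing and of the chosen summand embedding, and the validity of \eqref{eq45} and \eqref{eq48} in full generality afterwards) do not reduce to pentagon/hexagon coherence of $(\scr C,\boxdot,\ss)$: one must in addition use the braiding axiom and the locality of $\scr E^\loc$ to commute left and right operators with charges in $\mc F$ past one another — this is exactly the content of the computation \eqref{eq54} in the paper's proof, where coherence only serves to decompose $\ss_{i,k}$ and the locality of the enlarged local extension does the real work. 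Note that the paper deliberately avoids your well-definedness problem altogether: because label sets in definition \ref{lb9} are arbitrary sets, it simply enlarges $\scr E^\loc$ to local extensions over $\mc F^\oplus$ and over the class $\mc P$ of iterated fusions, then over all of $\scr C$ by fixing one isometric embedding per object, obtains a genuine categorical extension $\scr E$, and only then applies theorems \ref{lb12} and \ref{lb15} to produce the canonical closed $\ovl{\scr E}$; independence of the choices is a consequence of the separately proved uniqueness rather than an input. If you insist on building the comparison isomorphism $\Phi$ first and transporting $\scr E_C$, you will have to reproduce essentially this bootstrap (including an analogue of \eqref{eq54}) to define $\Phi$ beyond $\mc F\times\mc F$ and to verify \eqref{eq43}–\eqref{eq45} and \eqref{eq48}, at which point your argument collapses back onto the paper's.
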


\begin{proof}

Step 1. Assume without loss of generality that $\scr E^\loc$ is vector-labeled. We first prove the uniqueness. Let $\mc F^\oplus$ be the class of all $\mc H_i\in\scr C$ which is equivalent to a finite direct sum of $\mc A$-modules in $\mc F$. Assume $\ovl{\scr E}=(\mc A,\scr C,\boxdot,\mc H)$ is a closed categorical extension containing $\scr E^\loc$. Then for any $\mc H_{i_0}\in\mc F,\wtd I\in\Jtd,\xi\in\mc H_{i_0}(I)$, the operator $L(\xi,\wtd I)$ is uniquely determined by $\scr E^\loc$ due to equation \eqref{eq50} (with $\fk a$ replaced by $\xi$) and the fact that $\fk H_i(\wtd I)$ spans a dense subspace of $\mc H_i$. 

Now if $\mc H_i\in\mc F^\oplus$, we can find $\mc H_{i_1},\dots,\mc H_{i_m}\in\mc F$, and isometric $U_1\in\Hom_{\mc A}(\mc H_{i_1},\mc H_i),\dots,U_m\in\Hom_{\mc A}(\mc H_{i_m},\mc H_i)$, such that $U_1U_1^*,\dots,U_mU_m^*$ are orthogonal projections, and $U_1U_1^*+\dots+U_mU_m^*=\id_i$. Then by
\eqref{eq49}, for any $\mc H_k\in\scr C,\xi\in\mc H_i(I),\chi\in\mc H_k$,
\begin{align}
L(\xi,\wtd I)\chi=(U_1\otimes\id_k)L(U_1^*\xi,\wtd I)\chi+\cdots+(U_m\otimes\id_k)L(U_m^*\xi,\wtd I)\chi.\label{eq56}
\end{align}
Therefore, $L(\xi,\wtd I)$ is uniquely determined by $\scr E^\loc$.

Next, if $\mc H_{i_1},\dots,\mc H_{i_m}\in\mc F^\oplus$, then by theorem \ref{lb15}-(b), for any $\xi^{(i_1)}\in\mc H_{i_1}(I),\dots,\xi^{(i_m)}\in\mc H_{i_m}(I)$,
\begin{align}
L(L(\xi^{(i_m)},\wtd I)\cdots L(\xi^{(i_2)},\wtd I)\xi^{(i_1)},\wtd I)=L(\xi^{(i_m)},\wtd I)\cdots L(\xi^{(i_2)},\wtd I)L(\xi^{(i_1)},\wtd I).\label{eq57}
\end{align}
Therefore, since vectors of the form $L(\xi^{(i_m)},\wtd I)\cdots L(\xi^{(i_2)},\wtd I)\xi^{(i_1)}$ span a dense subspace of $\mc H_{i_m}\boxdot\cdots\boxdot\mc H_{i_1}$, the left actions of $\mc H_{i_m}\boxdot\cdots\boxdot\mc H_{i_1}$ on modules in $\scr C$ are determined by $\scr E^\loc$.

Finally, for any $\mc H_i\in\scr C$, we can find $\mc H_{i_1},\dots,\mc H_{i_m}\in\mc F^\oplus$ and an isometric $F\in\Hom_{\mc A}(\mc H_i,\mc H_{i_m}\boxdot\cdots\boxdot\mc H_{i_1})$. Since $F^*F=\id_i$,  by \eqref{eq49}, for any $\wtd I\in\Jtd,\mc H_k\in\scr C,\xi\in\mc H_i(I),\chi\in\mc H_k$,
\begin{align}
L(\xi,\wtd I)\chi=(F^*\otimes\id_k)L(F\xi,\wtd I)\chi.\label{eq58}
\end{align}
Therefore $L(\xi,\wtd I)$ is determined by $\scr E^\loc$. As $R(\xi,\wtd I)$ is related to $L(\xi,\wtd I)$ by $\ss$, $R(\xi,\wtd I)$ is also determined by $\scr E^\loc$. Thus the uniqueness is proved.\\

Step 2. We prove the existence. Let us first extend $\scr E^\loc$ to a categorical local extension  $\scr E^\loc_1=(\mc A,\mc F^\oplus,\boxdot,\fk K)$. For any $\mc H_i\in\mc F^\oplus$, we choose $\mc H_{i_1},\dots,\mc H_{i_m}\in\mc F$ and $U_1,\dots,U_m$ as in step 1. If it turns out that $\mc H_i\in\mc F$, then we choose $m=1,\mc H_{i_1}=\mc H_i,U_1=\id_i$. Now for any $\wtd I\in\Jtd$, we define $\fk K_i(\wtd I)=\fk H_{i_1}(\wtd I)\sqcup\cdots\sqcup\fk H_{i_m}(\wtd I)$. If $s=1,2,\dots,m$ and $\fk k=\fk a_s\in\fk H_{i_s}(\wtd I)\subset\fk K_i(\wtd I)$, we set, for any $\mc H_k\in\scr C,\chi\in\mc H_k$,
\begin{gather}
L(\fk k,\wtd I)\chi=(U_s\otimes\id_k)L(\fk a_s,\wtd I),\\
 R(\fk k,\wtd I)\chi=\ss_{i,k}L(\fk k,\wtd I)=(\id_k\otimes U_s)R(\fk a_s,\wtd I).\label{eq85}
\end{gather}
(Note that the functoriality of $\ss$ is used in the second equation of \eqref{eq85}.) Then one can easily check that $\scr E^\loc_1=(\mc A,\mc F^\oplus,\boxdot,\fk K)$ satisfies all the axioms of a categorical local extension.

Let $\mc P$ be the class of all $\mc H_i\in\scr C$ equaling $\mc H_{i_1}\boxdot\cdots\boxdot\mc H_{i_m}$ for some $m=1,2,\dots$ and $\mc H_{i_1},\dots,\mc H_{i_m}\in\mc F^\oplus$.     We now extend $\scr E^\loc_1$ to a categorical local extension $\scr E^\loc_2=(\mc A,\mc P,\boxdot,\fk M)$. For each $\mc H_i\in\mc P$, we choose $\mc H_{i_1},\dots,\mc H_{i_m}\in\mc F^\oplus$ such that $\mc H_i=\mc H_{i_m}\boxdot\cdots\boxdot\mc H_{i_1}$. For any $\wtd I\in\Jtd$, we define $\Jtd_m(\wtd I)$ to be the set of all $(\wtd I_1,\wtd I_2,\dots,\wtd I_m)\in\Jtd^{\times m}$ satisfying that $\wtd I_1,\dots,\wtd I_m\subset\wtd I$, and that $\wtd I_s$ is anticlockwise to $\wtd I_{s-1}$ for any $s=2,\dots,m$. We now set
\begin{align*}
\fk M_i(\wtd I)=\coprod_{(\wtd I_1,\dots,\wtd I_m)\in\Jtd_m(\wtd I)}\fk K_{i_1}(\wtd I_1)\times\cdots\times\fk K_{i_m}(\wtd I_m),
\end{align*}
and define, for any $\mc H_k\in\scr C$ and $\fk m=(\fk a_1,\cdots,\fk a_m)\in\fk K_{i_1}(\wtd I_1)\times\cdots\times\fk K_{i_m}(\wtd I_m)\subset\fk M_i(\wtd I) $, bounded linear operators $L(\fk m,\wtd I)\in\Hom_{\mc A(I^c)}(\mc H_k,\mc H_i\boxdot\mc H_k), R(\fk m,\wtd I)\in\Hom_{\mc A(T^c)}(\mc H_k,\mc H_k\boxdot\mc H_i)$, such that for any $\chi\in\mc H_k$,
\begin{gather}
L(\fk m,\wtd I)\chi=L(\fk a_m,\wtd I_m)\cdots L(\fk a_1,\wtd I_1)\chi,\\
R(\fk m,\wtd I)\chi=\ss_{i,k}L(\fk m,\wtd I)\chi.
\end{gather}
Then all the axioms of a categorical local extension, except the locality, are easy to verify for $\scr E^\loc_2$. We now show that
\begin{align}
R(\fk m,\wtd I)\chi=R(\fk a_1,\wtd I_1)\cdots R(\fk a_m,\wtd I_m)\chi.\label{eq54}
\end{align}
Then the locality of $\scr E^\loc_2$ follows immediately from that of $\scr E^\loc_1$.

Let us prove \eqref{eq54} when $m=3$. For general $m$ the argument is similar. By the coherence theorem for braided tensor categories, we have
\begin{align*}
\ss_{i,k}=\ss_{i_3\boxdot i_2\boxdot i_1,k}=(\ss_{i_3,k}\otimes\id_{i_2}\otimes\id_{i_1}) (\id_{i_3}\otimes\ss_{i_2,k}\otimes\id_{i_1}) (\id_{i_3}\otimes\id_{i_2}\otimes\ss_{i_1,k}).
\end{align*}
Therefore the action of $\ss_{i,k}$ on $L(\fk m,\wtd I)\chi$ is
\begin{align*}
&\quad L(\fk m,\wtd I)\chi=L(\fk a_3,\wtd I_3)L(\fk a_2,\wtd I_2) L(\fk a_1,\wtd I_1)\chi\xrightarrow{\id_{i_3}\otimes\id_{i_2}\otimes\ss_{i_1,k}} L(\fk a_3,\wtd I_3)L(\fk a_2,\wtd I_2) \ss_{i_1,k}L(\fk a_1,\wtd I_1)\chi\\
&=L(\fk a_3,\wtd I_3)L(\fk a_2,\wtd I_2)R(\fk a_1,\wtd I_1)\chi=R(\fk a_1,\wtd I_1)L(\fk a_3,\wtd I_3)L(\fk a_2,\wtd I_2)\chi\\
&\xrightarrow{\id_{i_3}\otimes\ss_{i_2,k}\otimes\id_{i_1}}  R(\fk a_1,\wtd I_1)(\id_{i_3}\otimes\ss_{i_2,k})L(\fk a_3,\wtd I_3)L(\fk a_2,\wtd I_2)\chi=R(\fk a_1,\wtd I_1)L(\fk a_3,\wtd I_3)\ss_{i_2,k}L(\fk a_2,\wtd I_2)\chi\\
&=R(\fk a_1,\wtd I_1)L(\fk a_3,\wtd I_3)R(\fk a_2,\wtd I_2)\chi=R(\fk a_1,\wtd I_1)R(\fk a_2,\wtd I_2)L(\fk a_3,\wtd I_3)\chi\\
&\xrightarrow{\ss_{i_3,k}\otimes\id_{i_2}\otimes\id_{i_1}}  R(\fk a_1,\wtd I_1)R(\fk a_2,\wtd I_2)\ss_{i_3,k}L(\fk a_3,\wtd I_3)\chi=R(\fk a_1,\wtd I_1)R(\fk a_2,\wtd I_2)R(\fk a_3,\wtd I_3)\chi.
\end{align*}
Hence \eqref{eq54} is proved.

Finally we extend $\scr E^\loc_2$ to a categorical extension $\scr E=(\mc A,\scr C,\boxdot,\fk N)$. This will finish our proof. For any $\mc H_i\in\scr C$, we can find $\mc H_{i_0}\in\mc P$ and an isometric $F\in\Hom_{\mc A}(\mc H_i,\mc H_{i_0})$. For any $\wtd I\in\Jtd$, we set $\fk N_i(\wtd I)=\fk M_{i_0}(\wtd I)$. If $\fk n=\fk m \in\fk M_{i_0}(\wtd I)=\fk N_i(\wtd I)$ and $\mc H_k\in\scr C$, we define $L(\fk m,\wtd I)\in\Hom_{\mc A(I^c)}(\mc H_k,\mc H_i\boxdot\mc H_k), R(\fk m,\wtd I)\in\Hom_{\mc A(I^c)}(\mc H_k,\mc H_k\boxdot\mc H_i)$ satisfying that for any $\chi\in\mc H_k$,
\begin{gather*}
L(\fk n,\wtd I)\chi=(F^*\otimes\id_k)L(\fk m,\wtd I)\chi,\\
R(\fk n,\wtd I)\chi=\ss_{i,k}L(\fk n,\wtd I)\chi=(\id_k\otimes F^*)R(\fk m,\wtd I)\chi.
\end{gather*}
This construction makes $\scr E$ a categorical extension of $\mc A$. Its closure $\ovl{\scr E}=(\mc A,\scr C,\boxdot,\mc H)$ is the desired vector-labeled closed categorical extension containing $\scr E^\loc$. 
\end{proof}
{~}\\

We now give an application of this theorem. 
\begin{df}\label{lb42}
A \textbf{left operator} of $\scr E^\loc=(\mc A,\mc F,\boxdot,\fk H)$ is a quadruple $(A,\fk a,\wtd I,\mc H_i)$, where $\fk a$ is an element, $\mc H_i\in\scr C,\wtd I\in\Jtd$, and for any $\mc H_k\in\scr C$, there is a bounded linear operator $A(\fk a,\wtd I)\in\Hom_{\mc A(I^c)}(\mc H_k,\mc H_i\boxdot\mc H_k)$, such that the following conditions are satisfied:\\
(a) If $\mc H_k,\mc H_{k'}\in\scr C$, $F\in\Hom_{\mc A}(\mc H_k,\mc H_{k'})$, then  the following diagram commutes.
\begin{gather}
\begin{CD}
\mc H_k @>F>> \mc H_{k'}\\
@V A(\fk a,\wtd I)  VV @V A(\fk a,\wtd I)  VV\\
\mc H_i\boxdot\mc H_k @> \id_i\otimes F>> \mc H_i\boxdot\mc H_{k'}
\end{CD}.
\end{gather}
(b) For any $\mc H_j\in\mc F,\mc H_k\in\scr C$,  $\wtd J\in\Jtd$  clockwise to $\wtd I$, and any $\fk b_0\in\fk H_j(\wtd J)$, the following diagram  commutes (not necessarily adjointly).
\begin{align}
\begin{CD}
\mc H_k @> \quad R(\fk b_0,\wtd J)\quad   >> \mc H_k\boxdot\mc H_j\\
@V A(\fk a,\wtd I)   V  V @V A(\fk a,\wtd I) VV\\
\mc H_i\boxdot\mc H_k @> \quad R(\fk b_0,\wtd J) \quad  >> \mc H_i\boxdot\mc H_k\boxdot\mc H_j
\end{CD}.
\end{align}

Similarly, a \textbf{right operator} of $\scr E^\loc$ is a quadruple $(B,\fk b,\wtd J,\mc H_j)$, where $\fk b$ is an element, $\mc H_j\in\scr C,\wtd J\in\Jtd$, and for any $\mc H_k\in\scr C$, there is a bounded linear operator $B(\fk b,\wtd J)\in\Hom_{\mc A(J^c)}(\mc H_k,\mc H_k\boxdot\mc H_j)$, such that the following conditions are satisfied:\\
(a) If $\mc H_k,\mc H_{k'}\in\scr C$, $F\in\Hom_{\mc A}(\mc H_k,\mc H_{k'})$, then  the following diagram commutes.
\begin{gather}
\begin{CD}
\mc H_k @> B(\fk b,\wtd J)  >> \mc H_k\boxdot\mc H_j\\
@V F VV @V F\otimes\id_j  VV\\
\mc H_{k'} @>B(\fk b,\wtd J) >> \mc H_{k'}\boxdot\mc H_j
\end{CD}.
\end{gather}
(b) For any $\mc H_i\in\mc F,\mc H_k\in\scr C$,  $\wtd I\in\Jtd$  anticlockwise to $\wtd J$, and any $\fk a_0\in\fk H_i(\wtd I)$, the following diagram  commutes.
\begin{align}
\begin{CD}
\mc H_k @> \quad B(\fk b,\wtd J)\quad   >> \mc H_k\boxdot\mc H_j\\
@V L(\fk a_0,\wtd I)   V  V @V L(\fk a_0,\wtd I) VV\\
\mc H_i\boxdot\mc H_k @> \quad B(\fk b,\wtd J) \quad  >> \mc H_i\boxdot\mc H_k\boxdot\mc H_j
\end{CD}.
\end{align}
\end{df}

\begin{thm}\label{lb43}
Let $(A,\fk a,\wtd I,\mc H_i)$ and $(B,\fk b,\wtd J,\mc H_j)$ be a left operator and a right operator of $\scr E^\loc=(\mc A,\mc F,\boxdot,\fk H)$, where $\wtd I$ is anticlockwise to $\wtd J$, and $\mc H_i,\mc H_j\in\scr C$. Then these two operators commute adjointly, in the sense that for any $\mc H_k\in\scr C$, the following diagram commutes adjointly.
\begin{align}
\begin{CD}
\mc H_k @> \quad B(\fk b,\wtd J)\quad   >> \mc H_k\boxdot\mc H_j\\
@V A(\fk a,\wtd I)   V  V @V A(\fk a,\wtd I) VV\\
\mc H_i\boxdot\mc H_k @> \quad B(\fk b,\wtd J) \quad  >> \mc H_i\boxdot\mc H_k\boxdot\mc H_j
\end{CD}.
\end{align}
\end{thm}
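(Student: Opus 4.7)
The plan is to lift $A$ and $B$ to the closed categorical extension $\ovl{\scr E}=(\mc A,\scr C,\boxdot,\mc H)$ produced by Theorem \ref{lb18} and then invoke the locality axiom of $\ovl{\scr E}$. Relabeling $\scr E^\loc$ if necessary, we may assume it is vector-labeled. Using the identifications $\mc H_i\boxdot\mc H_0\simeq\mc H_i$ and $\mc H_j\boxdot\mc H_0\simeq\mc H_j$ as $\mc A$-module isomorphisms, set
\begin{align*}
\xi:=A(\fk a,\wtd I)\Omega\in\mc H_i,\qquad \eta:=B(\fk b,\wtd J)\Omega\in\mc H_j.
\end{align*}
Since $A(\fk a,\wtd I):\mc H_0\to\mc H_i$ intertwines $\mc A(I^c)$, we have $\xi\in\mc H_i(I)=\mc H_i(\wtd I)$; likewise $\eta\in\mc H_j(\wtd J)$. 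So $L^{\ovl{\scr E}}(\xi,\wtd I)$ and $R^{\ovl{\scr E}}(\eta,\wtd J)$ are defined on every $\mc H_k\in\scr C$.

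The key step is to prove $A(\fk a,\wtd I)=L^{\ovl{\scr E}}(\xi,\wtd I)$ (and dually $B(\fk b,\wtd J)=R^{\ovl{\scr E}}(\eta,\wtd J)$) as operators $\mc H_k\to\mc H_i\boxdot\mc H_k$ for every $\mc H_k\in\scr C$. Both operators lie in $\Hom_{\mc A(I^c)}(\mc H_k,\mc H_i\boxdot\mc H_k)$ and are functorial in $\mc H_k$. Pick $\wtd J_0\in\Jtd$ clockwise to $\wtd I$. For $\mc H_{j_0}\in\mc F$ and $\fk b_0\in\fk H_{j_0}(\wtd J_0)$, the defining property (b) of a left operator gives $A(\fk a,\wtd I)R(\fk b_0,\wtd J_0)=R(\fk b_0,\wtd J_0)A(\fk a,\wtd I)$ on $\mc H_0$, while the locality of $\ovl{\scr E}$ gives the same for $L^{\ovl{\scr E}}(\xi,\wtd I)$. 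Applying both to $\Omega$ and using $\fk b_0=R(\fk b_0,\wtd J_0)\Omega$ and $\xi=L^{\ovl{\scr E}}(\xi,\wtd I)\Omega$, we get
\begin{align*}
A(\fk a,\wtd I)\fk b_0=R(\fk b_0,\wtd J_0)\xi=L^{\ovl{\scr E}}(\xi,\wtd I)\fk b_0.
\end{align*}
Since $\fk H_{j_0}(\wtd J_0)$ spans a dense subspace of $\mc H_{j_0}$ by Reeh-Schlieder, the two operators coincide on every $\mc H_{j_0}\in\mc F$.

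To extend the agreement to an arbitrary $\mc H_k\in\scr C$, I use the construction in the proof of Theorem \ref{lb18}: any such $\mc H_k$ admits an isometric embedding $F:\mc H_k\hookrightarrow\mc H_{k_n}\boxdot\cdots\boxdot\mc H_{k_1}$ with $\mc H_{k_s}\in\mc F^\oplus$, and iterated fusions of modules in $\mc F^\oplus$ are densely generated (via the Reeh-Schlieder property together with equations \eqref{eq50}, \eqref{eq56}, \eqref{eq57}, \eqref{eq58}) by vectors of the form $L(\fk c_n,\wtd J_n)\cdots L(\fk c_1,\wtd J_1)\Omega$ for $\fk c_s$ in the spaces $\fk K_{k_s}(\wtd J_s)$ built in step 2 of the proof of Theorem \ref{lb18}. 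Since each $L(\fk c_s,\wtd J_s)$ is itself given in terms of left actions from $\mc F$ on $\mc H_0$ (by the vector-labeled property), the functoriality of $A$ together with its commutativity with right actions of $\mc F$ forces $A$ to agree with $L^{\ovl{\scr E}}(\xi,\wtd I)$ first on every $\mc H_{k_0}\in\mc P$ (by induction on $n$) and then, via $F^*\otimes\id$ and functoriality, on $\mc H_k$ itself. The dual argument shows $B(\fk b,\wtd J)=R^{\ovl{\scr E}}(\eta,\wtd J)$ on every $\mc H_k$.

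With these identifications in hand, the desired diagram becomes the locality diagram \eqref{eq31} for $\ovl{\scr E}$ with $\fk a$ replaced by $\xi$ and $\fk b$ by $\eta$, and the conclusion of the theorem follows directly from the locality axiom of $\ovl{\scr E}$ (which is a categorical extension over all of $\scr C$, proved in Theorem \ref{lb18}). The main obstacle is the middle step: reducing the abstract operator $A$ (defined only via commutation with a generating family of right actions from $\mc F$) to the concrete left action $L^{\ovl{\scr E}}(\xi,\wtd I)$; this is essentially the uniqueness of a natural extension of left actions from $\mc F$ to all of $\scr C$, and it mirrors the uniqueness argument in the first step of the proof of Theorem \ref{lb18}.
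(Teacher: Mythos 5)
Your proposal is correct and follows essentially the same route as the paper: both set $\xi=A(\fk a,\wtd I)\Omega$, $\eta=B(\fk b,\wtd J)\Omega$, identify $A(\fk a,\wtd I)$ with $L(\xi,\wtd I)$ and $B(\fk b,\wtd J)$ with $R(\eta,\wtd J)$ in the closed extension $\ovl{\scr E}$ from Theorem \ref{lb18} by commuting them past a generating family of right (resp.\ left) actions and using the Reeh--Schlieder and density properties, and then conclude by the locality axiom of $\ovl{\scr E}$. The paper compresses your bootstrap from $\mc F$ to $\mc P$ to $\scr C$ into the single observation that a left (right) operator of $\scr E^\loc$ is automatically a left (right) operator of the extension $\scr E$ built in step 2 of the proof of Theorem \ref{lb18}, whose labels on arbitrary $\mc H_k\in\scr C$ are exactly the composites you describe.
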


\begin{proof}
By step 2 of the proof of theorem \ref{lb18}, we can construct a categorical extension $\scr E=(\mc A,\scr C,\boxdot,\fk N)$ such that any left (resp. right) operator of $\scr E^\loc$ is also a left (resp. right) operator of $\scr E$. Let $\ovl{\scr E}$ be the closure of $\scr E$. Set $\xi=A(\fk a,\wtd I)\Omega\in\mc H_i(I)$. Then for any $\mc H_k\in\scr C,\fk c\in\fk N_k(\wtd J)$,
\begin{align*}
A(\fk a,\wtd I)R(\fk c,\wtd J)\Omega=R(\fk c,\wtd J)A(\fk a,\wtd I)\Omega=R(\fk c,\wtd J)\xi=R(\fk c,\wtd J)L(\xi,\wtd I)\Omega=L(\xi,\wtd I)R(\fk c,\wtd J)\Omega.
\end{align*}
Therefore $A(\fk a,\wtd I)$ equals $L(\xi,\wtd I)$ when acting on any $\mc H_k\in\scr C$. Similarly, if we let $\eta=B(\fk b,\wtd J)\Omega$, then $B(\fk b,\wtd J)$ equals $R(\eta,\wtd J)$. Therefore $A(\fk a,\wtd I)$ and $B(\fk b,\wtd J)$ commute adjointly.
\end{proof}

\section{VOAs and categorical extensions}

\subsection{Tensor categories of VOAs}\label{lb22}

We review the construction of tensor categories for ``rational" vertex operator algebras (VOAs) by Huang-Lepowsky. See \cite{Hua08b} for more details. The reader is also referred to \cite{Gui17a} section 2.4 for a  sketch of this construction. The notations we will use in this paper are close to those in \cite{Gui17a}.

Let $V$ be  a VOA of CFT type. This means that $V$ has grading  $V=\bigoplus_{n\in\mathbb Z_{\geq0}}V(n)$ with $V(0)=\mathbb C\Omega$ where $\Omega$ is the vacuum vector of $V$.  We let $\Rep^\ssp(V)$ be the category of semisimple $V$-modules. So if $W_i\in\Rep^\ssp(V)$, we assume that $W_i$ is a finite direct sum of irreducible $V$-modules. In this paper, unless otherwise stated, a $V$-module $W_k$ is always assumed to be semisimple. We also assume that the eigenvalues of $L_0$ on $W_i$ are real. For any $s\in\mathbb R$, we let $P_s$ be the projection of $W_i$ onto the $s$-eigenspace $W_i(s)$ of $L_0$.  If $w^{(i)}\in W_i$ is homogeneous (i.e., an eigenvector of $L_0$), we let $\Delta_{w^{(i)}}$ be the conformal weight (the corresponding eigenvalue) of $w^{(i)}$.  A vector $w^{(i)}\in W_i$ is called quasi-primary if it is homogeneous and $L_1w^{(i)}=0$. 

For any $W_i,W_j,W_k\in\Rep^\ssp(V)$, we let $\mc V{k\choose i~j}=\mc V{W_k\choose W_iW_j}$ be the vector space of intertwining operators of $V$. (See \cite{FHL93} for the general theory of intertwining operators of VOAs.) If $\mc Y_\alpha\in\mc V{k\choose i~j}$, we call $W_i,W_j,W_k$ the charge space, the source space, and the target space of $\mc Y_\alpha$ respectively. We assume the following:

\begin{cond}\label{CondA}
For any $W_i,W_j,W_k\in\Rep^\ssp(V)$, the vector space $\mc V{k\choose i~j}$ is finite dimensional.
\end{cond}
For each equivalence class of irreducible $V$-modules, we choose a representing module, and  $\mc E$ be the set of these $V$-modules. We let $\mc E$ contain the vacuum module $W_0=V$. We shall also write $i\in\mc E$ if $W_i\in\mc E$. The second condition we require on $V$ is:
\begin{cond}\label{lb19}
For any $W_i,W_j\in\Rep^\ssp(V)$, there are only finitely many $W_k\in\mc E$ satisfying $\dim\mc V{k\choose i~j}>0$.
\end{cond}

Now we can define a tensor bifunctor $\boxtimes$ on $\Rep^\ssp(V)$. For any $W_i,W_j\in\Rep^\ssp(V)$, we define
\begin{align}
W_{ij}\equiv W_i\boxtimes W_j=\bigoplus_{k\in\mc E}\mc V{k\choose i~j}^*\otimes W_k\label{eq60}
\end{align}
where $\mc V{k\choose i~j}^*$ is the dual vector space of $\mc V{k\choose i~j}$. Note that here $\mc V{k\choose i~j}$ is finite dimensional, and the sum of $k$ is finite by condition \ref{lb19}. The action of $V$ on $W_{ij}$ is
\begin{align}
Y_{ij}(v,x)=\bigoplus \id\otimes Y_k(v,x)\qquad(\forall v\in V)
\end{align}
where $Y_k$ is the vertex operator describing the action of $V$ on $W_k$, and $x$ is a formal variable. 

When $k\in\mc E$, any intertwining operator $\mc Y_\alpha\in\mc V{k\choose i~j}$ is naturally a linear map $\mc V{k\choose i~j}^*\rightarrow\mathbb C$, which can be extended naturally to a  homomorphism of $V$-modules $W_i\boxtimes W_j\rightarrow W_k$. For general $W_k\in\Rep^\ssp(V)$, $\mc V{k\choose i~j}$ can also be identified with $\Hom_V(W_i\boxtimes W_j, W_k)$ using the following identifications
\begin{gather*}
\mc V{k\choose i~j}\simeq\bigoplus_{t\in\mc E}\mc V{t\choose i~j}\otimes\Hom_V(W_t,W_k),\\
\Hom_V(W_i\boxtimes W_j, W_k)\simeq\bigoplus_{t\in\mc E}\Hom_V(W_i\boxtimes W_j,W_t)\otimes\Hom_V(W_t,W_k).
\end{gather*}

The tensor product of $F\in\Hom_V(W_{i_1},W_{i_2}),G\in\Hom_V(W_{j_1},W_{j_2})$ is defined in the following way. For each $k\in\mathcal E$ we have a linear map $(F\otimes G)^\tr:\mathcal V{k\choose i_2~j_2}\rightarrow \mathcal V{k\choose i_1~j_1}$, such that if $\mathcal Y\in\mathcal V{k\choose i_2~j_2}$, then  $(F\otimes G)^\tr\mathcal Y\in\mathcal V{k\choose i_1~j_1}$, and for any $w^{(i_1)}\in W_{i_1},w^{(j_1)}\in W_{j_1}$,
\begin{align}
\big((F\otimes G)^\tr\mathcal Y\big)(w^{(i_1)},x)w^{(j_1)}=\mathcal Y(Fw^{(i_1)},x)Gw^{(j_1)}.
\end{align}
Then $F\otimes G:\mathcal V{k\choose i_1~j_1}^*\rightarrow\mathcal V{k\choose i_2~j_2} ^*$ is defined  to be the transpose of $(F\otimes G)^\tr$, and can be extended to a homomorphism $$F\otimes G=\bigoplus_{k\in\mathcal E}(F\otimes G)\otimes \id_{k}:W_{i_1}\boxtimes W_{j_1}\rightarrow W_{i_2}\boxtimes W_{j_2}.$$  

To construct associativity and braid isomorphisms, we need to consider products and iterations of intertwining operators. For any $V$-module $W_i$, we let $W_{\overline i}$ denote its contragredient module. A sequence of intertwining operators $\mc Y_{\alpha_1},\dots,\mc Y_{\alpha_n}$ of $V$ is called a chain of intertwining operators, if the source space of $\mc Y_{\alpha_m}$ equals the target space of $\mc Y_{\alpha_{m-1}}$ for any $m=2,3,\dots,n$. We will also take the complex-analytic point of view instead of the formal one for intertwining operators. For any $\mc Y_\alpha\in\mc V{k\choose i~j}$,  and any $w^{(i)}\in W_i,w^{(j)}\in W_j,w^{(\ovl k)}\in W_{\ovl k},z\in\mathbb C^\times=\mathbb C\setminus\{0\}$,
\begin{align}
\bk {\mc Y_\alpha(w^{(i)},z)w^{(j)},w^{(\overline k)}}=\bk {\mc Y_\alpha(w^{(i)},x)w^{(j)},w^{(\overline k)}}\big|_{x=z}
\end{align}
depends not only on $z$ but also on the arg-value $\arg z$ of $z$. We regard $\mc Y_\alpha(w^{(i)},z)w^{(j)}$ as a vector in the algebraic completion $\wht{W_k}$ of $W_k$, which is also the dual vector space of $W_{\ovl k}$.

\begin{cond}\label{CondC}
Let $\mathcal Y_{\alpha_1},\dots,\mathcal Y_{\alpha_n}$ be an arbitrary chain of intertwining operators of $V$. For each $1\leq m\leq n$, we let $W_{i_m}$ be the charge space of $\mathcal Y_{\alpha_m}$. Let $W_{i_0}$ be the source space of $\mathcal Y_{\alpha_1}$, and let $W_k$ be the target space  of $\mathcal Y_{\alpha_n}$. Then for any $w^{(i_0)}\in W_{i_0},w^{(i_1)}\in W_{i_1},\dots, w^{(i_n)}\in W_{i_n},w^{(\overline k)}\in W_{\overline k}$, and arg-valued $z_1,z_2,\dots,z_n\in\mathbb C$ such that $0<|z_1|<|z_2|<\dots<|z_n|$, the expression
\begin{align}
\langle \mathcal Y_{\alpha_n}(w^{(i_n)},z_n)\mathcal Y_{\alpha_{n-1}}(w^{(i_{n-1})},z_{n-1})\cdots\mathcal Y_{\alpha_1}(w^{(i_1)},z_1)w^{(i_0)}, w^{(\overline k)}    \rangle\label{eq59}
\end{align}
converges absolutely and locally uniformly, which means that there exists a neighborhood $U$ of $(z_1,\dots,z_n)$ in the $n$-th order configuration space $\Conf_n(\mathbb C^\times)$ of $\mathbb C^\times=\mathbb C\setminus\{0\}$, such that  the series
\begin{align}
&\sum_{s_1,s_2,\dots,s_{n-1}\in\mathbb R}\big|\langle \mathcal Y_{\alpha_n}(w^{(i_n)},\zeta_n)P_{s_{n-1}}\mathcal Y_{\alpha_{n-1}}(w^{(i_{n-1})},\zeta_{n-1})P_{s_{n-2}}\nonumber\\
&\qquad\qquad\qquad\cdots P_{s_1}\mathcal Y_{\alpha_1}(w^{(i_1)},\zeta_1)w^{(i_0)}, w^{(\overline k)}    \rangle\big|
\end{align}
converges and is uniformly bounded for all $(\zeta_1,\dots,\zeta_n)\in U$. Here each $P_{s_m}$ ($1\leq m\leq n-1$) is the  projection of the  target space of $\mathcal Y_{\alpha_m}$ onto its weight-$s_m$ component. Moreover, the function locally defined by \eqref{eq59} for any $(z_1,\dots,z_n)\in U$ can be analytically continued to a multi-valued holomorphic function on $\Conf_n(\mathbb C^\times)$.
\end{cond}

Thus one can construct all genus-$0$ conformal blocks using products of intertwining operators.  Using the braid operators $B_\pm:\mc V{k\choose i~j}\rightarrow\mc V{k\choose j~i}$ defined by
\begin{align}
\mc Y_{B_\pm\alpha}(w^{(j)},z)w^{(i)}\equiv (B_\pm\mc Y_\alpha)(w^{(j)},z)w^{(i)}=e^{xL_{-1}}\mc Y_\alpha(w^{(i)},e^{\pm i\pi}z)w^{(j)}
\end{align}
(where $\arg (e^{\pm i\pi z})=\pm\pi+\arg z$) for any $\mc Y_\alpha\in\mc V{k\choose i~j},w^{(i)}\in W_i,w^{(j)}\in W_j$, one can show that for any $\mc Y_\alpha\in\mc V{k\choose i~j},\mc Y_\beta\in\mc V{t\choose k~s}, w^{(i)}\in W_i,w^{(j)}\in W_j$, the iteration of two intertwining operators
\begin{align*}
\mc Y_\beta(\mc Y_\alpha(w^{(i)},z_i-z_j)w^{(j)},z_j)
\end{align*}
converges absolutely and locally uniformly whenever $0<|z_i-z_j|<|z_j|$, in the sense that for any $w^{(s)}\in W_s,w^{(\ovl t)}\in W_{\ovl t}$, there exits a neighborhood $U\subset\Conf_2(\mathbb C^\times)$ of $(z_i,z_j)$, such that  the series
\begin{align*}
\sum_{a\in\mathbb R}\big|\bk {\mc Y_\beta(P_a\mc Y_\alpha(w^{(i)},z_i-z_j)w^{(j)},z_j)w^{(s)}|w^{(\ovl t)}}\big|
\end{align*}
converges and is uniformly bounded for all $(\zeta_i,\zeta_j)\in U$. We assume that iterations and products of intertwining operators can be related in the following way:

\begin{cond}\label{lb21}
Let $\mc Y_\alpha\in\mc V{t\choose i~r},\mc Y_\beta\in\mc V{r\choose j~k}$,  $z_i,z_j\in\mathbb C^\times$ satisfying $0<|z_i-z_j|<|z_j|<|z_i|$, and choose $\arg z_i,\arg z_j,\arg (z_i-z_j)$. Then there exist $W_s\in\Rep^\ssp(V),\mc Y_{\alpha'}\in\mc V{s\choose i~j},\mc Y_{\beta'}\in\mc V{t\choose s~k}$, and a neighborhood $U\subset\Conf_2(\mathbb C^\times)$ of $(z_i,z_j)$, such that for any $w^{(i)}\in W_i,w^{(j)}\in W_j$, and any $(\zeta_i,\zeta_j)\in U$ with $\arg \zeta_i,\arg \zeta_j,\arg(\zeta_i-\zeta_j)$ close to $\arg z_i,\arg z_j,\arg(z_i-z_j)$ respectively, the following equation holds when acting on $W_k$:
\begin{align}
\mc Y_\alpha(w^{(i)},\zeta_i)\mc Y_\beta(w^{(j)},\zeta_j)=\mc Y_{\beta'}(\mc Y_{\alpha'}(w^{(i)},\zeta_i-\zeta_j)w^{(j)},\zeta_j)
\end{align}
\end{cond}
Thus, products of intertwining operators can be written as iterations of intertwining operators. Using the braid operators $B_\pm$, one can easily prove the converse statement, i.e., iterations can be expressed as products of intertwining operators.\\

We are now ready to define the associativity isomorphism. Given three $V$-modules $W_i,W_j,W_k$,  we have natural identifications
\begin{gather}
(W_i\boxtimes W_j)\boxtimes W_k=\bigoplus_{s,t\in\mathcal E}\mathcal V{t\choose s~k}^*\otimes\mathcal V{s\choose i~j}^* \otimes W_t,\\
W_i\boxtimes (W_j\boxtimes W_k)=\bigoplus_{r,t\in\mathcal E} \mathcal V{t\choose i~r}^*\otimes\mathcal V{r\choose j~k}^*\otimes W_t.
\end{gather}
Choose basis $\Theta^t_{sk},\Theta^s_{ij},\Theta^t_{ir},\Theta^r_{jk}$ of these vector spaces of intertwining operators. Choose arg-valued $z_i,z_j\in\mathbb C^\times$ satisfying $0<|z_i-z_j|<|z_j|<|z_i|$ and $\arg z_i=\arg z_j=\arg(z_i-z_j)$.  For any $r,t\in\mathcal E,\alpha\in\Theta^t_{ir},\beta\in\Theta^r_{jk}$, there exist unique complex numbers $F^{\beta'\alpha'}_{\alpha\beta}$ ($\forall s\in\mc E,\alpha'\in\Theta^s_{ij},\beta'\in\Theta^t_{sk}$) independent of the choice of $z_i,z_j$, such that for any $w^{(i)}\in W_i,w^{(j)}\in W_j$, we have the \textbf{fusion relation}
\begin{align}
\mathcal Y_\alpha(w^{(i)},z_i)\mathcal Y_\beta(w^{(j)},z_j)=\sum_{s\in\mc E}\sum_{\alpha'\in\Theta^s_{ij},\beta'\in\Theta^t_{sk}}F^{\beta'\alpha'}_{\alpha\beta}\mathcal Y_{\beta'}(\mathcal Y_{\alpha'}(w^{(i)},z_i-z_j)w^{(j)},z_j).\label{eq68}
\end{align}
Thus  $\{F^{\beta'\alpha'}_{\alpha\beta}\}$ can be regarded as a matrix,  called \textbf{fusion matrix}. This matrix is invertible, as one can write iterations as products of intertwining operators, which gives the inverse matrix of $\{F^{\beta'\alpha'}_{\alpha\beta}\}$. For each $t\in\mc E$, define an isomorphism
\begin{gather}
A^\tr:\bigoplus_{r\in\mathcal E} \mathcal V{t\choose i~r}\otimes\mathcal V{r\choose j~k}\rightarrow\bigoplus_{s\in\mathcal E} \mathcal V{t\choose s~k}\otimes \mathcal V{s\choose i~j},\nonumber\\
\mathcal Y_\alpha\otimes \mathcal Y_\beta\mapsto\sum_{s\in\mc E} \sum_{\alpha'\in\Theta^s_{ij},\beta'\in\Theta^t_{sk}}F^{\beta'\alpha'}_{\alpha\beta}\mathcal Y_{\beta'}\otimes\mathcal Y_{\alpha'}.\label{eq69}
\end{gather}
Then $A^\tr$ is independent of the basis chosen. Define
\begin{equation}
A:\bigoplus_{s\in\mathcal E} \mathcal V{t\choose s~k}^*\otimes \mathcal V{s\choose i~j}^*\rightarrow\bigoplus_{r\in\mathcal E} \mathcal V{t\choose i~r}^*\otimes\mathcal V{r\choose j~k}^*
\end{equation}
to be the transpose of $A^\tr$, and extend it to
\begin{align}
A=\sum_{t\in\mathcal E}A\otimes\id_t: (W_i\boxtimes W_j)\boxtimes W_k\rightarrow W_i\boxtimes (W_j\boxtimes W_k).
\end{align}
Then $A$ is our associativity map. One can easily verify that $A$ is functorial. Moreover, $A$ satisfies the pentagon axiom.

Let $W_0=V$ be the identity object of $\Rep^\ssp(V)$. Then the isomorphism $W_0\boxtimes W_i=V\boxtimes W_i\rightarrow W_i$ is defined using the intertwining operator $Y_i\in\mc V{i\choose 0~i}$. Similarly, the isomorphism $W_i\boxtimes V\rightarrow W_i$ is defined using the \textbf{creation operator} $\mc Y_{\kappa(i)}$ of $W_i$, which is a type $i\choose i~0$ intertwining operator defined by $\mc Y_{\kappa(i)}=B_+ Y_i=B_-Y_i$. These isomorphisms satisfy the triangle axioms. Therefore $\Rep^\ssp(V)$ is a monoidal category.

\begin{cond}\label{lb24}
The monoidal category $\Rep^\ssp(V)$ is rigid, i.e., every object has left and right duals.
\end{cond}

Finally we define the braiding. Let $\ss_{i,j}:\mathcal V{k\choose i~j}^*\rightarrow \mathcal V{k\choose j~i}^*$ be the transpose of 
\begin{align*}
B_+:\mc V{k\choose j~i}\rightarrow\mc V{k\choose i~j},\qquad \mc Y\mapsto B_+\mc Y,
\end{align*}
and extend it to a morphism
\begin{align}
\ss_{i,j}=\sum_{t\in\mathcal E}\ss_{i,j}\otimes\id_t:W_i\boxtimes W_j\rightarrow W_j\boxtimes W_i.
\end{align}
This gives the \textbf{braid operator}. These braid operators satisfy the hexagon axioms. Therefore $\Rep^\ssp(V)$ becomes a rigid braided tensor category. We also define, for any $W_i\in\Rep^\ssp(V)$, the twist $\vartheta_i\in\End_V(W_i)$ to be the action of $e^{2i\pi L_0}$ on $W_i$. Then $\Rep^\ssp(V)$ becomes a rigid ribbon category (cf. \cite{Hua08b} theorem 4.1). In the case that there are only finitely many equivalence classes of irreducible $V$-modules (i.e., $\mc E$ is a finite set), $\Rep^\ssp(V)$ is a ribbon fusion category. \\

We now relate $\ss$ with the braid relations of intertwining operators. Let $\mc Y_\alpha\in\mc V{t\choose i~r},\mc Y_\beta\in\mc V{r\choose j~k}$, $z_i,z_j\in \mathbb C^\times$ with $|z_i|=|z_j|$. Choose $\arg z_i,\arg z_j$. Then for any $w^{(i)}\in W_i,w^{(j)}\in W_j$, the expression
$\mc Y_\alpha(w^{(i)},z_i)\mc Y_\beta(w^{(j)},z_j)$ is understood as a linear functional on $W_k\otimes W_{\ovl t}$ defined in the following way. If $w^{(k)}\in W_k,w^{(\ovl t)}\in W_{\ovl t}$, then
\begin{align*}
\bk{\mc Y_\alpha(w^{(i)},z_i)\mc Y_\beta(w^{(j)},z_j)w^{(k)},w^{(\ovl t)}}=\lim_{\lambda\searrow 1}\bk{\mc Y_\alpha(w^{(i)},\lambda z_i)\mc Y_\beta(w^{(j)},z_j)w^{(k)},w^{(\ovl t)}},
\end{align*}
(We set $\arg(\lambda z_i)=\arg z_i$ when $\lambda>0$.) the right hand side of which is definable due to condition \ref{CondC}.

For each $r,s,t\in\mc E$, choose basis $\Theta^t_{ir},\Theta^r_{jk},\Theta^s_{ik},\Theta^t_{js}$ of $\mc V{t\choose i~r},\mc V{r\choose j~k},\mc V{s\choose i~k},\mc V{t\choose j~s}$ respectively. Using condition \ref{lb21} and the braid operators $B_\pm$, one can show that  for any $r,t\in\mathcal E,\alpha\in\Theta^t_{ir},\beta\in\Theta^r_{jk}$, there exist unique complex numbers $(B_\pm)^{\beta'\alpha'}_{\alpha\beta}$ ($\forall s\in\mc E,\alpha'\in\Theta^s_{ik},\beta'\in\Theta^t_{js}$), such that for any $w^{(i)}\in W_i,w^{(j)}\in W_j$, the following \textbf{braid relation} holds for any $w^{(i)}\in W_i,w^{(j)}\in W_j$
\begin{align}
\mc Y_\alpha(w^{(i)},z_i)\mc Y_\beta(w^{(j)},z_j)=\sum_{s\in\mc E}\sum_{\alpha'\in\Theta^s_{ik},\beta'\in\Theta^t_{js}}(B_\pm)^{\beta'\alpha'}_{\alpha\beta}\mc Y_{\beta'}(w^{(j)},z_j)\mc Y_{\alpha'}(w^{(i)},z_i),\label{eq65}
\end{align}
where the sign $\pm$ is $+$ if $z_i$ is anticlockwise to $z_j$ (i.e., $\arg z_j<\arg z_i<\arg z_j+2\pi$), and $-$ if $z_i$ is clockwise to $z_j$. We can regard $\{(B_\pm)^{\beta'\alpha'}_{\alpha\beta} \}$ as  matrices, called  \textbf{braid matrices}, which again are invertible. $\{(B_\pm)^{\beta'\alpha'}_{\alpha\beta} \}$ depend only on the clockwise or anticlockwise order of $z_i$ and $z_j$, but not on the specific choice of $z_i,z_j$. Since we have
\begin{gather}
W_i\boxtimes (W_j\boxtimes W_k)=\bigoplus_{r,t\in\mathcal E} \mathcal V{t\choose i~r}^*\otimes\mathcal V{r\choose j~k}^*\otimes W_t,\label{eq62}\\
W_j\boxtimes(W_i\boxtimes W_k)=\bigoplus_{s,t\in\mc E}\mc V{t\choose j~s}^*\otimes\mc V{s\choose i~k}^*\otimes W_t,\label{eq63}
\end{gather}
the isomorphism $\ss_{j,i}\otimes\id_k:W_j\boxtimes(W_i\boxtimes W_k)\rightarrow W_i\boxtimes (W_j\boxtimes W_k)$ restricts, for each $t\in\mc E$, to an isomorphism
\begin{align*}
\bigoplus_{s\in\mc E}\mc V{t\choose j~s}^*\otimes\mc V{s\choose i~k}^*\rightarrow\bigoplus_{r\in\mc E}\mathcal V{t\choose i~r}^*\otimes\mathcal V{r\choose j~k}^*.
\end{align*}
By (for instance) \cite{Gui17a} proposition 2.12, the transpose of this map is given by
\begin{gather}
\bigoplus_{r\in\mc E}\mathcal V{t\choose i~r}\otimes\mathcal V{r\choose j~k}\rightarrow \bigoplus_{s\in\mc E}\mc V{t\choose j~s}\otimes\mc V{s\choose i~k},\nonumber\\
\mc Y_\alpha\otimes\mc Y_\beta\mapsto \sum_{s\in\mc E}\sum_{\alpha'\in\Theta^s_{ik},\beta'\in\Theta^t_{js}}(B_+)^{\beta'\alpha'}_{\alpha\beta}\mc Y_{\beta'}\otimes\mc Y_{\alpha'}.\label{eq64}
\end{gather}

\subsection{The intertwining operators $\mc L_i$ and $\mc R_i$}

In this section, we define, for any $W_i\in\Rep^\ssp(V)$, a pair of intertwining operators $\mc L_i$ and $\mc R_i$ that are closely related to the operators $L$ and $R$ in categorical extensions. Recall the definition of tensor product $W_i\boxtimes W_k=\bigoplus_{t\in\mc E}\mc V{t\choose i~k}^* \otimes W_t$. If $W_i\in\Rep^\ssp(V)$, we let $\mc L_i$ act on  any $W_k\in\Rep^\ssp(V)$ as an intertwining operator of type ${ik\choose i~k}={W_i\boxtimes W_k\choose W_i~W_k}$, such that for any $w^{(i)}\in W_i,w^{(k)}\in W_k$ and any $t\in\mc E,w^{(\ovl t)}\in W_{\ovl t},\mc Y_\alpha\in\mc V{t\choose i~k}$,
\begin{align}
\bk{\mc L_i(w^{(i)},x)w^{(k)},\mc Y_\alpha\otimes w^{(\ovl t)}}=\bk{\mc Y_\alpha(w^{(i)},x)w^{(k)},w^{(\ovl t)}  }.
\end{align}
This relation is simply written as
\begin{align}
\bk{\mc L_i(w^{(i)},x)w^{(k)},\mc Y_\alpha}=\mc Y_\alpha(w^{(i)},x)w^{(k)}.\label{eq88}
\end{align}
Let $\Theta^t_{ik}$ be a basis of $\mc V{t\choose i~k}$, and let $\{\wch{\mc Y}^\alpha:\alpha\in\Theta^t_{ik} \}$ be the dual basis, where each $\wch{\mc Y}^\alpha$ is the dual element of $\mc Y_\alpha$. Then we have another description of $\mc L_i$: for any $w^{(i)}\in W_i,w^{(k)}\in W_k$,
\begin{align}
\mc L_i(w^{(i)},x)w^{(k)}=\sum_{t\in\mc E}\sum_{\alpha\in\Theta^t_{ik}}\wch{\mc Y}^\alpha\otimes\mc Y_\alpha(w^{(i)},x)w^{(k)}.\label{eq66}
\end{align}
We also define, for any $W_k\in\Rep^\ssp(V)$, a type ${ki\choose i~k}={W_k\boxtimes W_i\choose W_i~W_k}$ intertwining operator $\mc R_i$, such that for any $w^{(i)}\in W_i,w^{(k)}\in W_k$,
\begin{align}
\mc R_i(w^{(i)},x)w^{(k)}=\ss_{i,k} \mc L_i(w^{(i)},x)w^{(k)}.
\end{align}
We write $\mc L_i$ (resp. $\mc R_i$) as $\mc L_i|_k$ (resp. $\mc R_i|_k$) if we want to emphasize that the source space of $\mc L_i$ (resp. $\mc R_i$) is $W_k$.


It is easy to check that $\mc L$ and $\mc R$ are functorial, in the sense that for any $F\ \in\Hom_V(W_i,W_{i'}),G\in\Hom_V(W_k,W_{k'}),w^{(i)}\in W_i,w^{(k)}\in W_k$,
\begin{gather}
(F\otimes G)\mc L_i(w^{(i)},x)w^{(k)}=\mc L_{i'}(Fw^{(i)},x)Gw^{(k)},\label{eq70}\\
(G\otimes F)\mc R_i(w^{(i)},x)w^{(k)}=\mc R_{i'}(Fw^{(i)},x)Gw^{(k)}.\label{eq74}
\end{gather}
{~}\\

Our next goal is to prove the commutativity  of $\mc L$ and $\mc R$. First we need a preparatory result.

\begin{pp}\label{lb23}
Choose $W_i,W_j,W_k\in\Rep^\ssp(V)$, and arg-valued distinct $z_i,z_j\in S^1$ such that $z_i$ is anti-clockwise to $z_j$. Then for any $w^{(i)}\in W_i,w^{(j)}\in W_j,w^{(k)}\in W_k$, the following braid relation holds.
\begin{align}
\mc L_i(w^{(i)},z_i)\mc L_j(w^{(j)},z_j)w^{(k)}=(\ss_{j,i}\otimes\id_k)\mc L_j(w^{(j)},z_j)\mc L_i(w^{(i)},z_i)w^{(k)}
\end{align}
\end{pp}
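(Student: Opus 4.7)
The strategy is to pair both sides of the claimed identity with an arbitrary element $\mc Y_\alpha\otimes\mc Y_\beta\otimes w^{(\ovl t)}$ of the dual space, using the decomposition
\begin{align*}
W_i\boxtimes(W_j\boxtimes W_k)=\bigoplus_{r,t\in\mc E}\mc V{t\choose i~r}^*\otimes\mc V{r\choose j~k}^*\otimes W_t,
\end{align*}
and then reduce the equality to the braid relation \eqref{eq65} for intertwining operators. The key observation is that by the defining relation \eqref{eq88}, iterated applications of $\mc L_i$ and $\mc L_j$ are designed precisely so that their matrix coefficients against $\mc Y_\alpha\otimes\mc Y_\beta$ (in this decomposition) reproduce the product $\mc Y_\alpha(w^{(i)},z_i)\mc Y_\beta(w^{(j)},z_j)w^{(k)}$.

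More concretely, the first step is to compute the left-hand side. Using \eqref{eq66} to expand $\mc L_j(w^{(j)},z_j)w^{(k)}$ as a sum indexed by bases of $\mc V{r\choose j~k}$, and then applying $\mc L_i(w^{(i)},z_i)|_r$ componentwise via \eqref{eq66} again, I obtain
\begin{align*}
\bk{\mc L_i(w^{(i)},z_i)\mc L_j(w^{(j)},z_j)w^{(k)},\mc Y_\alpha\otimes\mc Y_\beta\otimes w^{(\ovl t)}}=\bk{\mc Y_\alpha(w^{(i)},z_i)\mc Y_\beta(w^{(j)},z_j)w^{(k)},w^{(\ovl t)}}.
\end{align*}
The second step is to compute the right-hand side similarly. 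The vector $\mc L_j(w^{(j)},z_j)\mc L_i(w^{(i)},z_i)w^{(k)}$ lives in $W_j\boxtimes(W_i\boxtimes W_k)$, and by the same computation its matrix coefficient against $\mc Y_{\beta'}\otimes\mc Y_{\alpha'}\otimes w^{(\ovl t)}$ (for $\mc Y_{\alpha'}\in\mc V{s\choose i~k},\mc Y_{\beta'}\in\mc V{t\choose j~s}$) equals $\bk{\mc Y_{\beta'}(w^{(j)},z_j)\mc Y_{\alpha'}(w^{(i)},z_i)w^{(k)},w^{(\ovl t)}}$. Transposing $\ss_{j,i}\otimes\id_k$ via the description \eqref{eq64}, which is exactly the braid matrix $(B_+)^{\beta'\alpha'}_{\alpha\beta}$, one obtains
\begin{align*}
&\bk{(\ss_{j,i}\otimes\id_k)\mc L_j(w^{(j)},z_j)\mc L_i(w^{(i)},z_i)w^{(k)},\mc Y_\alpha\otimes\mc Y_\beta\otimes w^{(\ovl t)}}\\
=~&\sum_{s\in\mc E}\sum_{\alpha'\in\Theta^s_{ik},\beta'\in\Theta^t_{js}}(B_+)^{\beta'\alpha'}_{\alpha\beta}\bk{\mc Y_{\beta'}(w^{(j)},z_j)\mc Y_{\alpha'}(w^{(i)},z_i)w^{(k)},w^{(\ovl t)}}.
\end{align*}
The third (and concluding) step is to invoke the braid relation \eqref{eq65} with the $+$ sign, which applies because $z_i$ is anticlockwise to $z_j$: this is exactly the identity required to match the two expressions above. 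Since $\mc Y_\alpha,\mc Y_\beta,w^{(\ovl t)},t$ were arbitrary, the identity of vectors in $W_i\boxtimes(W_j\boxtimes W_k)$ follows.

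There is no substantial obstacle: the proof is essentially a bookkeeping unwinding of the definitions of $\mc L_i$, $\mc L_j$, and $\ss_{j,i}$, and the content is entirely carried by the braid relation \eqref{eq65}. The only point requiring care is ensuring that the definition of products $\mc Y_\alpha(w^{(i)},z_i)\mc Y_\beta(w^{(j)},z_j)w^{(k)}$ for $|z_i|=|z_j|$ (as the boundary limit $\lambda\searrow 1$ of $\mc Y_\alpha(w^{(i)},\lambda z_i)\mc Y_\beta(w^{(j)},z_j)$ discussed before \eqref{eq65}) is compatible with the pairing computation—but this is automatic since the projections $P_s$ realizing the formal sum in \eqref{eq66} are finite in each weight and the braid relation \eqref{eq65} is itself stated in this boundary sense.
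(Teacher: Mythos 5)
Your proposal is correct and follows essentially the same route as the paper's proof: pairing both sides against basis elements $\mc Y_\alpha\otimes\mc Y_\beta$ of the intertwining-operator decomposition of $W_i\boxtimes(W_j\boxtimes W_k)$, using \eqref{eq64} to transpose $\ss_{j,i}\otimes\id_k$ into the braid matrix $(B_+)^{\beta'\alpha'}_{\alpha\beta}$, and concluding by the braid relation \eqref{eq65} with the $+$ sign. The only cosmetic difference is that you additionally pair with $w^{(\ovl t)}$, whereas the paper leaves the equality as an identity of vectors in $\wht{W_t}$; the substance is identical.
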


It will be interesting to compare the above formula with proposition \ref{lb17}. 

\begin{proof}
Recall \eqref{eq62} and \eqref{eq63}. For any $r,s,t\in\mc E$, we choose basis $\Theta^t_{ir},\Theta^r_{jk},\Theta^t_{js},\Theta^s_{ik}$ of $\mc V{t\choose i~r},\mc V{r\choose j~k},\mc V{t\choose j~s},\mc V{s\choose i~k}$ respectively. Then for any $\alpha\in\Theta^t_{ir},\beta\in\Theta^r_{jk}$,
\begin{align*}
\bk{\mc L_i(w^{(i)},z_i)\mc L_j(w^{(j)},z_j)w^{(k)},\mc Y_\alpha\otimes\mc Y_\beta}=\mc Y_\alpha(w^{(i)},z_i)\mc Y_\beta(w^{(j)},z_j)w^{(k)}.
\end{align*}
Now for these basis intertwining operators we have the braid relation \eqref{eq65}, where the sign of $\pm$ is taken to be $+$. Therefore, by the discussion at the end of section \ref{lb22}, the transpose of $\ss\otimes\id$ on the vector spaces of intertwining operators is described by \eqref{eq64}. Thus we can compute that
\begin{align*}
&\bk{ (\ss_{j,i}\otimes\id_k)\mc L_j(w^{(j)},z_j)\mc L_i(w^{(i)},z_i)w^{(k)},\mc Y_\alpha\otimes\mc Y_\beta }\\
=&\sum_{s\in\mc E}\sum_{\alpha'\in\Theta^s_{ik},\beta'\in\Theta^t_{js}}(B_+)^{\beta'\alpha'}_{\alpha\beta}\bk{\mc L_j(w^{(j)},z_j)\mc L_i(w^{(i)},z_i)w^{(k)},\mc Y_{\beta'}\otimes\mc Y_{\alpha'} }\\
=&\sum_{s\in\mc E}\sum_{\alpha'\in\Theta^s_{ik},\beta'\in\Theta^t_{js}}(B_+)^{\beta'\alpha'}_{\alpha\beta}\mc Y_{\beta'}(w^{(j)},z_j)\mc Y_{\alpha'}(w^{(i)},z_i)w^{(k)}\\
=&\mc Y_\alpha(w^{(i)},z_i)\mc Y_\beta(w^{(j)},z_j)w^{(k)}.
\end{align*}
Hence the proposition is proved.
\end{proof}

\begin{thm}\label{lb26}
Choose $W_i,W_j,W_k\in\Rep^\ssp(V),w^{(i)}\in W_i,w^{(j)}\in W_j$, and arg-valued distinct $z_i,z_j\in S^1$ such that $z_i$ is anti-clockwise to $z_j$. Then the diagram
\begin{align}
\begin{CD}
W_k @> \quad\mc R_j(w^{(j)},z_j)\quad >> W_k\boxtimes W_j\\
@V \mc L_i(w^{(i)},z_i) VV @V \mc L_i(w^{(i)},z_i) VV\\
W_i\boxtimes W_k @> \quad\mc R_j(w^{(j)},z_j)\quad>> W_i\boxtimes W_k\boxtimes W_j
\end{CD}
\end{align}
commutes, in the sense that the following braid relation holds for any $w^{(k)}\in W_k$:
\begin{align*}
\mc L_i(w^{(i)},z_i)\mc R_j(w^{(j)},z_j)w^{(k)}=\mc R_j(w^{(j)},z_j)\mc L_i(w^{(i)},z_i)w^{(k)}.
\end{align*}
\end{thm}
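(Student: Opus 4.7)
The plan is to reduce the desired identity to the braid relation for pairs of $\mc L$'s proved in Proposition~\ref{lb23}, combined with the hexagon axiom for $\ss$. The idea is to unfold $\mc R_j = \ss_{j,\bullet}\mc L_j$ on both sides of the diagram and to transport the resulting $\ss_{j,k}$ past $\mc L_i$ using the functoriality of $\mc L$ established in \eqref{eq70}.

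Concretely, first I unfold the right-hand side: since $\mc R_j|_{i\boxtimes k} = \ss_{j,i\boxtimes k}\,\mc L_j|_{i\boxtimes k}$, we have
\begin{align*}
\mc R_j(w^{(j)},z_j)\mc L_i(w^{(i)},z_i)w^{(k)} = \ss_{j,i\boxtimes k}\,\mc L_j(w^{(j)},z_j)\mc L_i(w^{(i)},z_i)w^{(k)}
\end{align*}
in $W_i\boxtimes W_k\boxtimes W_j$. For the left-hand side, $\mc R_j(w^{(j)},z_j)w^{(k)} = \ss_{j,k}\,\mc L_j(w^{(j)},z_j)w^{(k)} \in W_k\boxtimes W_j$, and then the functoriality identity \eqref{eq70}, applied with $F=\id_i$ and $G=\ss_{j,k}\in\Hom_V(W_j\boxtimes W_k,W_k\boxtimes W_j)$, gives $\mc L_i(w^{(i)},z_i)\,\ss_{j,k}\xi = (\id_i\otimes\ss_{j,k})\mc L_i(w^{(i)},z_i)\xi$ for any $\xi\in W_j\boxtimes W_k$. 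Taking $\xi=\mc L_j(w^{(j)},z_j)w^{(k)}$ yields
\begin{align*}
\mc L_i(w^{(i)},z_i)\mc R_j(w^{(j)},z_j)w^{(k)} = (\id_i\otimes\ss_{j,k})\,\mc L_i(w^{(i)},z_i)\mc L_j(w^{(j)},z_j)w^{(k)}.
\end{align*}
Now I invoke Proposition~\ref{lb23}: since $z_i$ is anticlockwise to $z_j$,
\begin{align*}
\mc L_i(w^{(i)},z_i)\mc L_j(w^{(j)},z_j)w^{(k)} = (\ss_{j,i}\otimes\id_k)\,\mc L_j(w^{(j)},z_j)\mc L_i(w^{(i)},z_i)w^{(k)},
\end{align*}
so the left-hand side becomes $(\id_i\otimes\ss_{j,k})(\ss_{j,i}\otimes\id_k)\,\mc L_j(w^{(j)},z_j)\mc L_i(w^{(i)},z_i)w^{(k)}$.

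Comparing the two sides, the theorem reduces to the identity
\begin{align*}
\ss_{j,i\boxtimes k}=(\id_i\otimes\ss_{j,k})(\ss_{j,i}\otimes\id_k),
\end{align*}
which is exactly one of the hexagon axioms of the braided tensor category $(\Rep^\ssp(V),\boxtimes,\ss)$ recalled at the end of section~\ref{lb22}. The argument involves no serious obstacle: all the analytic content (convergence on $S^1$ of the iterated expressions, and their interpretation as the appropriate braid-theoretic limit) is already absorbed into condition~\ref{CondC} and into the statement of Proposition~\ref{lb23}, while the remaining content is purely the hexagon identity. If anything, the only delicate point to keep an eye on is making sure that the source and target modules are tracked correctly when applying \eqref{eq70}, so that the $\id_i\otimes\ss_{j,k}$ really does land on the correct associator of $W_i\boxtimes W_k\boxtimes W_j$; this is straightforward but must be done carefully.
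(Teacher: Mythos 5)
Your proposal is correct and is essentially the paper's own argument: both unfold $\mc R_j$ via $\ss$, apply the braid relation of Proposition \ref{lb23}, transport $\ss_{j,k}$ past $\mc L_i$ by functoriality \eqref{eq70}, and close the gap with the hexagon identity $\ss_{j,i\boxtimes k}=(\id_i\otimes\ss_{j,k})(\ss_{j,i}\otimes\id_k)$. The only difference is presentational (you work both sides toward a common middle term, while the paper computes one side into the other), so there is nothing substantive to add.
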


\begin{proof}
By hexagon axioms, $\ss_{j,ik}=(\id_i\otimes\ss_{j,k})(\ss_{j,i}\otimes\id_k)$. We thus compute, using proposition \ref{lb23} and the functoriality of $\mc L_i$, that
\begin{align*}
&\mc R_j(w^{(j)},z_j)\mc L_i(w^{(i)},z_i)w^{(k)}=\ss_{j,ik}\mc L_j(w^{(j)},z_j)\mc L_i(w^{(i)},z_i)w^{(k)}\\
=&(\id_i\otimes\ss_{j,k})(\ss_{j,i}\otimes\id_k)\mc L_j(w^{(j)},z_j)\mc L_i(w^{(i)},z_i)w^{(k)}\\
=&(\id_i\otimes\ss_{j,k})\mc L_i(w^{(i)},z_i)\mc L_j(w^{(j)},z_j)w^{(k)}=\mc L_i(w^{(i)},z_i)\ss_{j,k}\mc L_j(w^{(j)},z_j)w^{(k)}\\
=&\mc L_i(w^{(i)},z_i)\mc R_j(w^{(j)},z_j)w^{(k)}.
\end{align*}
\end{proof}

We close this section with the fusion relation of $\mc L$.
\begin{pp}\label{lb25}
Choose $W_i,W_j,W_k\in\Rep^\ssp(V)$, and arg-valued distinct $z_i,z_j\in S^1$ such that $0<|z_i-z_j|<|z_j|<|z_i|$ and $\arg z_i=\arg z_j=\arg(z_i-z_j)$. Then for any $w^{(i)}\in W_i,w^{(j)}\in W_j,w^{(k)}\in W_k$, the following fusion relation holds.
\begin{align}
\mc L_i(w^{(i)},z_i)\mc L_j(w^{(j)},z_j)w^{(k)}=\mc L_{ij}(\mc L_i(w^{(i)},z_i-z_j)w^{(j)},z_j)w^{(k)}.
\end{align}
\end{pp}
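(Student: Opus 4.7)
The plan is to unpack the universal intertwining operators $\mc L_i,\mc L_j,\mc L_{ij}$ via the defining relation \eqref{eq88} and then reduce the identity to the fusion relation \eqref{eq68} matched against the construction of the associator in \eqref{eq69}. Both sides live a priori in different associations of a triple product; I interpret the stated equality under the implicit associativity identification (as was done in Section \ref{lb10} for Connes fusions).

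Fix bases $\Theta^t_{ir},\Theta^r_{jk},\Theta^s_{ij},\Theta^t_{sk}$ of the relevant spaces of intertwining operators for $r,s,t\in\mc E$, inducing dual bases for the multiplicity spaces in $W_i\boxtimes(W_j\boxtimes W_k)$ and $(W_i\boxtimes W_j)\boxtimes W_k$. Using \eqref{eq88} twice, together with the natural identification $\mc V{t\choose i~(jk)}\simeq\bigoplus_r\mc V{r\choose j~k}\otimes\mc V{t\choose i~r}$ coming from $W_j\boxtimes W_k=\bigoplus_r\mc V{r\choose j~k}^*\otimes W_r$, I obtain, for any $\alpha\in\Theta^t_{ir},\beta\in\Theta^r_{jk}$:
\begin{align*}
\bk{\mc L_i(w^{(i)},z_i)\mc L_j(w^{(j)},z_j)w^{(k)},\mc Y_\alpha\otimes\mc Y_\beta}=\mc Y_\alpha(w^{(i)},z_i)\mc Y_\beta(w^{(j)},z_j)w^{(k)}.
\end{align*}
Similarly, expanding $\mc L_i(w^{(i)},z_i-z_j)w^{(j)}$ by \eqref{eq66} and then applying \eqref{eq88} to $\mc L_{ij}$, using $\mc V{t\choose (ij)~k}\simeq\bigoplus_s\mc V{t\choose s~k}\otimes\mc V{s\choose i~j}$, gives for any $\alpha'\in\Theta^s_{ij},\beta'\in\Theta^t_{sk}$:
\begin{align*}
\bk{\mc L_{ij}(\mc L_i(w^{(i)},z_i-z_j)w^{(j)},z_j)w^{(k)},\mc Y_{\beta'}\otimes\mc Y_{\alpha'}}=\mc Y_{\beta'}(\mc Y_{\alpha'}(w^{(i)},z_i-z_j)w^{(j)},z_j)w^{(k)}.
\end{align*}

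Since the associator $A:(W_i\boxtimes W_j)\boxtimes W_k\xrightarrow\simeq W_i\boxtimes(W_j\boxtimes W_k)$ is by \eqref{eq69} the transpose of $A^\tr(\mc Y_\alpha\otimes\mc Y_\beta)=\sum_{s,\alpha',\beta'}F^{\beta'\alpha'}_{\alpha\beta}\mc Y_{\beta'}\otimes\mc Y_{\alpha'}$, pairing $A(\text{RHS})$ against $\mc Y_\alpha\otimes\mc Y_\beta$ yields
\begin{align*}
\sum_{s,\alpha',\beta'}F^{\beta'\alpha'}_{\alpha\beta}\mc Y_{\beta'}(\mc Y_{\alpha'}(w^{(i)},z_i-z_j)w^{(j)},z_j)w^{(k)},
\end{align*}
which by the fusion relation \eqref{eq68} equals $\mc Y_\alpha(w^{(i)},z_i)\mc Y_\beta(w^{(j)},z_j)w^{(k)}$, matching the LHS pairing above. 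The hypothesis $0<|z_i-z_j|<|z_j|<|z_i|$ with aligned arg values is exactly the convergence region of \eqref{eq68}, so no further analytic input is needed. Running this for all basis elements shows $\text{LHS}=A(\text{RHS})$, which is the stated equation under the associativity identification.

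The main obstacle is not analytic but bookkeeping: one must correctly identify the two canonical decompositions $\mc V{t\choose i~(jk)}\simeq\bigoplus_r\mc V{r\choose j~k}\otimes\mc V{t\choose i~r}$ and $\mc V{t\choose (ij)~k}\simeq\bigoplus_s\mc V{t\choose s~k}\otimes\mc V{s\choose i~j}$, and verify that the dual pairings with $\mc Y_\alpha\otimes\mc Y_\beta$ and $\mc Y_{\beta'}\otimes\mc Y_{\alpha'}$ plug correctly into the fusion-matrix conventions of \eqref{eq68}--\eqref{eq69}.
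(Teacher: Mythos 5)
Your proposal is correct and follows essentially the same route as the paper: both sides are paired against basis elements $\mc Y_\alpha\otimes\mc Y_\beta$ via the defining property of $\mc L$, the iterated side is interpreted through the associator $A^\tr$ of \eqref{eq69} (exactly the identification the paper makes explicit), and the equality then reduces to the fusion relation \eqref{eq68}. No gaps; this matches the paper's argument.
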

\begin{proof}
For each $r,s,t\in\mc E$ we choose  basis $\Theta^t_{sk},\Theta^s_{ij},\Theta^t_{ir},\Theta^r_{jk}$ of $\mc V{t\choose s~k},\mc V{s\choose i~j},\mc V{t\choose i~r},\mc V{r\choose j~k}$ respectively, and assume that the fusion relation \eqref{eq68} holds. Choose any $r,t\in\mc E,\alpha\in\Theta^t_{ir},\beta\in\Theta^r_{jk}$, then
\begin{align*}
\bk{\mc L_i(w^{(i)},z_i)\mc L_j(w^{(j)},z_j)w^{(k)},\mc Y_\alpha\otimes\mc Y_\beta}=\mc Y_\alpha(w^{(i)},z_i)\mc Y_\beta(w^{(j)},z_j)w^{(k)}.
\end{align*}
On the other hand,  the expression 
$\bk{\mc L_{ij}(\mc L_i(w^{(i)},z_i-z_j)w^{(j)},z_j)w^{(k)},\mc Y_\alpha\otimes\mc Y_\beta}$, when written more precisely,  should be $\bk{\mc L_{ij}(\mc L_i(w^{(i)},z_i-z_j)w^{(j)},z_j)w^{(k)},A^\tr(\mc Y_\alpha\otimes\mc Y_\beta)}$, where $A$ is the associativity isomorphism. By \eqref{eq69}, we have
\begin{align*}
&\bk{\mc L_{ij}(\mc L_i(w^{(i)},z_i-z_j)w^{(j)},z_j)w^{(k)},A^\tr(\mc Y_\alpha\otimes\mc Y_\beta)}\\
=&\sum_{s\in\mc E} \sum_{\alpha'\in\Theta^s_{ij},\beta'\in\Theta^t_{sk}}F^{\beta'\alpha'}_{\alpha\beta}\bk{\mc L_{ij}(\mc L_i(w^{(i)},z_i-z_j)w^{(j)},z_j)w^{(k)},\mc Y_{\beta'}\otimes\mc Y_{\alpha'}}\\
=&\sum_{s\in\mc E} \sum_{\alpha'\in\Theta^s_{ij},\beta'\in\Theta^t_{sk}}F^{\beta'\alpha'}_{\alpha\beta}\mc Y_{\beta'}(\mc Y_{\alpha'}(w^{(i)},z_i-z_j)w^{(j)},z_j)w^{(k)}\\
=&\mc Y_\alpha(w^{(i)},z_i)\mc Y_\beta(w^{(j)},z_j)w^{(k)}.
\end{align*}
Thus our proof is complete.
\end{proof}

\subsection{Unitarity}

Beginning with this section, we assume the following unitary condition (see \cite{DL14,CKLW18} for the definition of unitary VOAs):
\begin{cond}\label{CondF}
$V$ is a unitary VOA with inner product $\bk{\cdot|\cdot}$ and PCT operator $\varTheta$. 
\end{cond}
Recall that a $V$-module $W_i$ is called unitary, if $W_i$ is equipped with an inner product $\bk{\cdot|\cdot}$, such that for any $v\in V$, the vertex operator $Y_i(v,x)$ (where $x$ is a formal variable) on $W_i$ satisfies
\begin{align}
Y_i(v,x)^\dagger=Y_i(e^{xL_1}(-x^{-2})^{L_0}\varTheta v,x^{-1}).
\end{align}
Here $\dagger$ means the formal adjoint operation. So the above relation is equivalent to saying that
\begin{align*}
\bk{Y_i(v,x)w^{(i)}_1|w^{(i)}_2 }=\bk{w^{(i)}_1|Y_i(e^{xL_1}(-x^{-2})^{L_0}\varTheta v,x^{-1})w^{(i)}_2 }
\end{align*}
for any $w^{(i)}_1,w^{(i)}_2\in W_i$.

\begin{cond}\label{CondG}
If $W_i,W_j\in\Rep^\ssp(V)$ are unitarizable, then $W_i\boxtimes W_j$ is also unitarizable.
\end{cond}

By this condition, if $W_i,W_j$ are unitarizable, then for any $t\in\mc E$, $\mc V{t\choose i~j}$  is trivial unless $W_t$ is also unitary.

We let $\Rep^\uni(V)$ be the category of unitary semisimple $V$-modules. Whenever $W_t\in\mc E$ is unitarizable, we fix a unitary structure on $W_t$. If $t=0$, then the unitary structure on $W_0$ is chosen to be the one on $V$. Then one can define a ribbon categorical structure on $\Rep^\uni(V)$ in a similar way as for $\Rep^\ssp(V)$. $\Rep^\uni(V)$ is clearly equivalent to a ribbon tensor subcategory of $\Rep^\ssp(V)$. In the rest of this paper, we will always focus on modules in $\Rep^\uni(V)$ instead of in $\Rep^\ssp(V)$.

Note that for any $W_i\in\Rep^\uni(V)$, the inner product $\bk{\cdot|\cdot}$ induces a natural anti-unitary map $C_i:W_i\rightarrow W_{\ovl i}$, such that for any $v\in V$,
\begin{align}
Y_{\ovl i}(v,x)=C_i Y_i(\varTheta v,x)C_i^{-1}.
\end{align}
(Cf. \cite{Gui17a} equation (1.19).) We write $\ovl{w^{(i)}}=C_iw^{(i)}$ for any $w^{(i)}\in W_i$. One can also show that $C_0\varTheta:V=W_0\rightarrow W_{\ovl 0}$ is a unitary $V$-module isomorphism. Therefore $V$ is self-dual. We identify $V$ with $W_{\ovl 0}$ using $C_0\varTheta$. Under this identification, $C_0$ is the same as $\varTheta^{-1}=\varTheta$. So by our notation, $\ovl v=\varTheta v$ for any $v\in V$.\\

Let us now equip the ribbon category $\Rep^\uni(V)$ with a unitary structure. In the following, we assume that all modules are unitary. If $\mc Y_\alpha\in\mc V{k\choose i~j}$, we define the \textbf{adjoint intertwining operator} $\mc Y_\alpha^\dagger\equiv\mc Y_{\alpha^*}$ to be a type $j\choose\ovl i~k$ intertwining operator satisfying that for any $w^{(i)}\in W_i$,
\begin{align}
\mc Y_\alpha^\dagger(\ovl{w^{(i)}},x)\equiv\mc Y_{\alpha^*}(\ovl{w^{(i)}},x)=\mc Y_\alpha(e^{xL_1}(e^{-i\pi}x^{-2})^{L_0}w^{(i)},x^{-1})^\dagger.
\end{align}

Recall the creation operator $\mc Y_{\kappa(i)}$ defined in section \ref{lb22}. It is clear that $\mc Y_{\kappa(i)}=\mc L_i|_0=\mc R_i|_0$. The adjoint intertwining operator $\mc Y_{\kappa(i)^*}$ of $\mc Y_{\kappa(i)}$ is called the annihilation operator of $W_i$. Now for any $W_i,W_j\in\Rep^\uni(V),t\in\mc E$, we choose a basis $\Theta^t_{ij}$ of $\mc V{t\choose i~j}$. Choose any arg-valued $z_1,z_2\in\mathbb C^\times$ satisfying $0<|z_2-z_1|<|z_1|<|z_2|$ and $\arg z_1=\arg z_2=\arg(z_2-z_1)$. Then there exists a unique complex matrix $\{\Lambda^{\alpha\beta} \}$ independent of the choice of $z_i,z_2$, such that for any $w^{(i)}_1,w^{(i)}_2\in W_i$, the following fusion relation holds.
\begin{align}
Y_j(\mc Y_{\kappa(i)^*}(\ovl{w_2^{(i)}},z_2-z_1)w^{(i)}_1,z_1)=\sum_{t\in\mc E}\sum_{\alpha,\beta\in\Theta^t_{ij}}\Lambda^{\alpha\beta}\mc Y_{\beta^*}(\ovl{w^{(i)}_2},z_2)\mc Y_\alpha(w^{(i)}_1,z_1)\label{eq67}
\end{align}
Recall that by our notation in the last section, $\wch{\mc Y}^\alpha\in\mc V{t\choose i~j}^*$ is the dual element of $\mc Y_\alpha$. For each $t\in\mc E$, we define a sesquilinear form $\Lambda$ on $\mc V{t\choose i~j}^*$, antilinear on the second variable, such that for any $\alpha,\beta\in\Theta^t_{ij}$,
\begin{align}
\Lambda(\wch{\mc Y}^\alpha|\wch{\mc Y}^\beta)=\Lambda^{\alpha\beta}.
\end{align}
It is easy to check that this definition of $\Lambda$ is independent of the choice of basis.

\begin{cond}\label{CondH}
For each $W_i,W_j\in\Rep^\uni(V),W_t\in\mc E$, the sesquilinear form $\Lambda$ on $\mc V{t\choose i~j}^*$ is positive.
\end{cond}
By the rigidity of $\Rep^\uni(V)$ (condition \ref{lb24}), $\Lambda$ is also non-degenerate (cf. \cite{Gui17b} theorem 6.7 step 3). Therefore, $\Lambda$ is a (non-degenerate) inner product on each $\mc V{t\choose i~j}^*$, which can be extended naturally to an inner product on $W_i\boxtimes W_j=\bigoplus_{t\in\mc E}\mc V{t\choose i~j}^*\otimes W_t$, under which $W_i\boxtimes W_j$ becomes a unitary $V$-module. One can show  (cf. \cite{Gui17b} chapter 7) that under these inner products, all the structural maps (associativity isomorphisms, braid operators, etc.) are unitary. We thus identify the unitary $V$-modules $W_i\boxtimes(W_j\boxtimes W_k)$ and $(W_i\boxtimes W_j)\boxtimes W_k$ as $W_i\boxtimes W_j\boxtimes W_k$, and identify $V\boxtimes W_i,W_i\boxtimes V$ as $W_i$. Hence $\Rep^\uni(V)$ is a braided $C^*$-tensor category. Moreover, if $\mc E$ is finite, then $\Rep^\uni(V)$ is a unitary ribbon fusion category.\\

Recall that for each $W_i\in\Rep^\uni(V)$, $\mc L_i$ acts on each $W_j\in\Rep^\uni(V)$  as a type $ij\choose i~j$ intertwining operator. We let $\mc L_i^\dagger$ act on each $W_i\boxtimes W_j$ as the adjoint intertwining operator of $\mc L_i|_j$, which is of type $j\choose \ovl i~ij$. In other words, we let $\mc L_i^\dagger|_{ij}=(\mc L_i|_j)^\dagger$. In the remaining part of this section, we shall show that $\mc L^\dagger$ and $\mc R$ commute.

For any $W_i\in\Rep^\uni(V)$, we let $\ev_{\ovl i, i}\in\Hom_V(W_{\ovl i}\boxtimes W_i,V)$ be the morphism defined by the intertwining operator $\mc Y_{\kappa(i)^*}$. Then for any  $w^{(i)}_1,w^{(i)}_2\in W_i$,
\begin{align}
\ev_{\ovl i,i}\mc L_{\ovl i}(\ovl{w^{(i)}_1},x)w^{(i)}_2=\mc Y_{\kappa(i)^*}(\ovl{w^{(i)}_1},x)w^{(i)}_2=\mc L_i^\dagger(\ovl{w^{(i)}_1},x)w^{(i)}_2.\label{eq71}
\end{align}
More generally, we have:

\begin{pp}\label{lb28}
For any $W_i,W_j\in\Rep^\uni(V),w^{(i)}\in W_i,w^{(ij)}\in W_{ij}=W_i\boxtimes W_j$,
\begin{align}
\mc L_i^\dagger(\ovl{w^{(i)}},z)w^{(ij)}=(\ev_{\ovl i,i}\otimes\id_j)\mc L_{\ovl i}(\ovl{w^{(i)}},z)w^{(ij)}.\label{eq73}
\end{align}
\end{pp}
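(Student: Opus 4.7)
The plan is to use the universal (defining) property of $\mc L_{\ovl i}|_{ij}$ to reduce (\ref{eq73}) to a form of the fusion relation (\ref{eq67}). Both sides of (\ref{eq73}) are intertwining operators of type ${j\choose \ovl i\,ij}$, so each factors uniquely as $F_k\circ \mc L_{\ovl i}|_{ij}$ for a unique $F_k\in\Hom_V(W_{\ovl i ij},W_j)$: on the right manifestly with $F_2=\ev_{\ovl i,i}\otimes\id_j$, and on the left with some $F_1$ determined by $\mc L_i^\dagger$. Since $F_1,F_2$ are $V$-module maps, to prove $F_1=F_2$ I will check equality on a dense subspace of $W_{\ovl i ij}=W_{\ovl i i}\boxtimes W_j$, namely on test vectors $\mc L_{\ovl i i}(\xi,\zeta)w^{(j)}$ with $\xi=\mc L_{\ovl i}(\ovl{w^{(i)}_1},\omega)w^{(i)}_2$ and arg-valued $\omega,\zeta$ chosen in the convergence regime of condition \ref{lb21}.

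By the fusion relation of proposition \ref{lb25}, such a test vector equals $\mc L_{\ovl i}(\ovl{w^{(i)}_1},\zeta+\omega)\mc L_i(w^{(i)}_2,\zeta)w^{(j)}$, so applying $F_1$ produces $\mc L_i^\dagger(\ovl{w^{(i)}_1},\zeta+\omega)\mc L_i(w^{(i)}_2,\zeta)w^{(j)}$. Applying $F_2$ to the original form $\mc L_{\ovl i i}(\xi,\zeta)w^{(j)}$, I invoke functoriality (\ref{eq70}) with charge-space morphism $\ev_{\ovl i,i}:W_{\ovl i i}\to V$ and $G=\id_j$ (turning the outer $\mc L_{\ovl i i}$ into $\mc L_0=Y_j$), and then use (\ref{eq71}) to identify $\ev_{\ovl i,i}\xi=\mc Y_{\kappa(i)^*}(\ovl{w^{(i)}_1},\omega)w^{(i)}_2$. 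The equality $F_1=F_2$ therefore reduces to
\[
\mc L_i^\dagger(\ovl{w^{(i)}_1},\zeta+\omega)\mc L_i(w^{(i)}_2,\zeta)w^{(j)}=Y_j\bigl(\mc Y_{\kappa(i)^*}(\ovl{w^{(i)}_1},\omega)w^{(i)}_2,\zeta\bigr)w^{(j)}.
\]

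To close the argument I expand $\mc L_i(w^{(i)}_2,\zeta)w^{(j)}=\sum_{t,\alpha\in\Theta^t_{ij}}\wch{\mc Y}^\alpha\otimes\mc Y_\alpha(w^{(i)}_2,\zeta)w^{(j)}$ via (\ref{eq66}) and compute the component action of $\mc L_i^\dagger|_{ij}$: using the Gram matrix $\Lambda^{\alpha\beta}$ of the unitary structure on $W_{ij}$ together with the definition of the adjoint intertwining operator, one obtains $\mc L_i^\dagger(\ovl{w^{(i)}_1},z)(\wch{\mc Y}^\alpha\otimes w^{(t)})=\sum_\beta\Lambda^{\alpha\beta}\mc Y_{\beta^*}(\ovl{w^{(i)}_1},z)w^{(t)}$. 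Substituting, the left-hand side of the displayed identity becomes $\sum_{t,\alpha,\beta}\Lambda^{\alpha\beta}\mc Y_{\beta^*}(\ovl{w^{(i)}_1},\zeta+\omega)\mc Y_\alpha(w^{(i)}_2,\zeta)w^{(j)}$, which is exactly the right-hand side of (\ref{eq67}); density of the test vectors in $W_{\ovl i ij}$ then finishes the proof. The main obstacle I anticipate is getting this component action of $\mc L_i^\dagger$ correct when $\{\wch{\mc Y}^\alpha\}$ is not orthonormal for $\Lambda$, so that $\Lambda^{\alpha\beta}$ appears in exactly the position needed to match (\ref{eq67}); everything else is a clean application of the universal property together with the two recorded fusion relations.
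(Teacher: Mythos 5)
Your proposal is correct, and its computational heart coincides with the paper's: both arguments reduce \eqref{eq73} to the fusion relation \eqref{eq67}, rewritten as an identity between $\mc L_i^\dagger(\ovl{w^{(i)}_1},\zeta+\omega)\mc L_i(w^{(i)}_2,\zeta)w^{(j)}$ and $Y_j\big(\mc Y_{\kappa(i)^*}(\ovl{w^{(i)}_1},\omega)w^{(i)}_2,\zeta\big)w^{(j)}$ (the paper's \eqref{eq72}), via proposition \ref{lb25}, functoriality \eqref{eq70}, and \eqref{eq71}. Where you differ is the packaging: the paper verifies \eqref{eq73} directly on vectors $\mc L_i(w^{(i)}_1,z_1)w^{(j)}$, passes to the modes $\mc L_i(w^{(i)}_1)_nw^{(j)}$ by \cite{Hua95} lemma 14.5, and spans the \emph{source} space $W_{ij}$ by lemma 14.9; you instead factor both sides through the universal operator $\mc L_{\ovl i}|_{ij}$ (legitimate, via $\mc V{j\choose \ovl i~ij}\simeq\Hom_V(W_{\ovl i}\boxtimes W_{ij},W_j)$) and compare the resulting morphisms $F_1,F_2$ on iterates spanning $W_{\ovl iij}$. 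This makes it conceptually transparent why checking on test vectors suffices, but it shifts the analytic bookkeeping: your test vectors $\mc L_{\ovl ii}\big(\mc L_{\ovl i}(\ovl{w^{(i)}_1},\omega)w^{(i)}_2,\zeta\big)w^{(j)}$ lie in the algebraic completion, so ``density'' really means a two-variable coefficient extraction (in $\omega$ to get charge vectors spanning $W_{\ovl ii}$, then in $\zeta$ to span $W_{\ovl iij}$), i.e.\ two applications of the lemmas the paper cites once; you should spell this out rather than appeal to density of completion-valued vectors. Your anticipated obstacle is not a real one: the component formula $\mc L_i^\dagger(\ovl{w^{(i)}_1},z)(\wch{\mc Y}^\alpha\otimes w^{(t)})=\sum_\beta\Lambda^{\alpha\beta}\mc Y_{\beta^*}(\ovl{w^{(i)}_1},z)w^{(t)}$ does hold, and the paper simply sidesteps the Gram matrix by taking $\Theta^t_{ij}$ orthonormal for $\Lambda$, which you may do as well. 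Finally, note that the regime you need is that of \eqref{eq67} and proposition \ref{lb25}, namely $0<|\omega|<|\zeta|<|\zeta+\omega|$ with equal arg-values, which is slightly more specific than ``the convergence regime of condition \ref{lb21}''.
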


\begin{proof}
Choose any $w^{(i)}_1,w^{(i)}_2\in W_i,w^{(j)}\in W_j$, and choose arg-valued $z_1,z_2$ such that $0<|z_2-z_1|<|z_1|<|z_2|$ and $\arg z_1=\arg z_2=\arg (z_2-z_1)$. We first rewrite \eqref{eq67} using $\mc L$. Note that $\mc L_0|_j$ is just $Y_j$, and $\mc Y_{\kappa(i)^*}=\mc L_i^\dagger|_i$. Therefore the left hand side of \eqref{eq67} is
\begin{align*}
\mc L_0(\mc L_i^\dagger(\ovl{w_2^{(i)}},z_2-z_1)w^{(i)}_1,z_1)w^{(j)}
\end{align*}
when acting on $w^{(j)}\in W_j$. On the other hand, if we let $\Theta^t_{ij}$ be an orthonormal basis of $\mc V{t\choose i~j}$, and write $\mc L_i$ as \eqref{eq66}, then it is easy to see that the right hand side of \eqref{eq67} equals
\begin{align*}
\mc L_i^\dagger(\ovl{w^{(i)}_2},z_2)\mc L_i(w^{(i)}_1,z_1)w^{(j)}
\end{align*}
when acting on $w^{(j)}$. Therefore \eqref{eq67} is equivalent to
\begin{align}
\mc L_0(\mc L_i^\dagger(\ovl{w_2^{(i)}},z_2-z_1)w^{(i)}_1,z_1)w^{(j)}=\mc L_i^\dagger(\ovl{w^{(i)}_2},z_2)\mc L_i(w^{(i)}_1,z_1)w^{(j)}.\label{eq72}
\end{align}

Using proposition \ref{lb25}, the functoriality of $\mc L$ (equation \eqref{eq70}), and equations \eqref{eq71} and \eqref{eq72}, we compute
\begin{align*}
&(\ev_{\ovl i,i}\otimes\id_j)\mc L_{\ovl i}(\ovl{w^{(i)}_2},z_2)\mc L_i(w^{(i)}_1,z_1)w^{(j)}=(\ev_{\ovl i,i}\otimes\id_j)\mc L_{\ovl ii}(\mc L_{\ovl i}(\ovl{w^{(i)}_2},z_2-z_1)w^{(i)}_1,z_1)w^{(j)}\\
=&\mc L_0(\ev_{\ovl i,i}\mc L_{\ovl i}(\ovl{w^{(i)}_2},z_2-z_1)w^{(i)}_1,z_1)w^{(j)}=\mc L_0(\mc L_i^\dagger(\ovl{w^{(i)}_2},z_2-z_1)w^{(i)}_1,z_1 )w^{(j)}\\
=&\mc L_i^\dagger(\ovl{w^{(i)}_2},z_2)\mc L_i(w^{(i)}_1,z_1)w^{(j)}.
\end{align*}
Thus we've proved \eqref{eq73} when both sides ``act on" $\mc L_i(w^{(i)}_1,z_1)w^{(j)}$. Write $\mc L_i(w^{(i)}_1,z_1)=\sum_{n\in\mathbb R}\mc L_i(w^{(i)}_1)_nz^{-n-1}$. Then by \cite{Hua95} lemma 14.5 (see also \cite{Gui17a} proposition A.1), \eqref{eq73} holds when acting on $\mc L_i(w^{(i)}_1)_nw^{(j)}$ for any $w^{(i)}_1,\in W_i,w^{(j)}\in W_j,n\in\mathbb R$. By \cite{Hua95} lemma 14.9 (see also \cite{Gui17a} corollary A.4), vectors of the form $\mc L_i(w^{(i)}_1)_nw^{(j)}$ span the vector space $W_{ij}$. Therefore \eqref{eq73} is proved.
\end{proof}

We are now ready to prove the adjoint version of theorem \ref{lb26}.

\begin{thm}\label{lb27}
Choose $W_i,W_j,W_k\in\Rep^\ssp(V),w^{(i)}\in W_i,w^{(j)}\in W_j$, and arg-valued distinct $z_i,z_j\in S^1$ such that $z_i$ is anti-clockwise to $z_j$. Then the following diagram commutes in the sense of braiding of intertwining operators.
\begin{align}
\begin{CD}
W_k @> \quad\mc R_j(w^{(j)},z_j)\quad >> W_k\boxtimes W_j\\
@A \mc L_i^\dagger(\ovl{w^{(i)}},z_i) AA @A \mc L_i^\dagger(\ovl{w^{(i)}},z_i) AA\\
W_i\boxtimes W_k @> \quad\mc R_j(w^{(j)},z_j)\quad>> W_i\boxtimes W_k\boxtimes W_j
\end{CD}.\label{eq75}
\end{align}
\end{thm}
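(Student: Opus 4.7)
The plan is to reduce Theorem \ref{lb27} to the already-proved Theorem \ref{lb26} via Proposition \ref{lb28} together with the functoriality of $\mc R$. The idea is simply that $\mc L_i^\dagger$ factors through $\mc L_{\ovl i}$, and since we already know $\mc L_{\ovl i}$ commutes with $\mc R_j$, functoriality of $\mc R_j$ with respect to the evaluation map will finish the job.

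First I would use Proposition \ref{lb28} twice, with its ``$W_j$'' specialized first to $W_k$ and then to $W_k\boxtimes W_j$, to rewrite the two vertical arrows of \eqref{eq75} in terms of $\mc L_{\ovl i}$ and the evaluation morphism $\ev_{\ovl i,i}\in\Hom_V(W_{\ovl i}\boxtimes W_i,V)$. On the left column this gives
\begin{align*}
\mc L_i^\dagger(\ovl{w^{(i)}},z_i)=(\ev_{\ovl i,i}\otimes\id_k)\mc L_{\ovl i}(\ovl{w^{(i)}},z_i)\quad\text{on } W_i\boxtimes W_k,
\end{align*}
and, after the associativity identification $W_i\boxtimes W_k\boxtimes W_j=W_i\boxtimes(W_k\boxtimes W_j)$, on the right column it gives
\begin{align*}
\mc L_i^\dagger(\ovl{w^{(i)}},z_i)=(\ev_{\ovl i,i}\otimes\id_k\otimes\id_j)\mc L_{\ovl i}(\ovl{w^{(i)}},z_i)\quad\text{on } W_i\boxtimes W_k\boxtimes W_j.
\end{align*}

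Next I would apply Theorem \ref{lb26} to the triple $(W_{\ovl i},W_j,W_i\boxtimes W_k)$ (viewing $W_i\boxtimes W_k$ as the source module that plays the role of ``$W_k$'') to obtain the braid commutativity
\begin{align*}
\mc L_{\ovl i}(\ovl{w^{(i)}},z_i)\mc R_j(w^{(j)},z_j)w^{(ik)}=\mc R_j(w^{(j)},z_j)\mc L_{\ovl i}(\ovl{w^{(i)}},z_i)w^{(ik)}
\end{align*}
for every $w^{(ik)}\in W_i\boxtimes W_k$. Then the functoriality relation \eqref{eq74} of $\mc R$, applied to the $V$-module homomorphism $G=\ev_{\ovl i,i}\otimes\id_k\in\Hom_V(W_{\ovl i}\boxtimes W_i\boxtimes W_k,W_k)$ and $F=\id_j$, yields
\begin{align*}
((\ev_{\ovl i,i}\otimes\id_k)\otimes\id_j)\,\mc R_j(w^{(j)},z_j)=\mc R_j(w^{(j)},z_j)\,(\ev_{\ovl i,i}\otimes\id_k).
\end{align*}
Chaining these three ingredients in the order ``factor $\mc L_i^\dagger$, braid past $\mc R_j$, then pull $\ev_{\ovl i,i}\otimes\id_k$ back through $\mc R_j$'' delivers the required identity $\mc L_i^\dagger(\ovl{w^{(i)}},z_i)\mc R_j(w^{(j)},z_j)=\mc R_j(w^{(j)},z_j)\mc L_i^\dagger(\ovl{w^{(i)}},z_i)$ as operators $W_i\boxtimes W_k\to W_k\boxtimes W_j$.

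I do not anticipate a serious obstacle: all the nontrivial analytic content lives in Proposition \ref{lb28} and Theorem \ref{lb26}. The only bookkeeping point is to make sure the associativity isomorphisms linking $(W_{\ovl i}\boxtimes W_i\boxtimes W_k)\boxtimes W_j$ and $W_{\ovl i}\boxtimes W_i\boxtimes(W_k\boxtimes W_j)$ intertwine $\mc L_{\ovl i}$, $\mc R_j$, and $\ev_{\ovl i,i}\otimes\id_k$ coherently; this is immediate from Mac Lane's coherence theorem together with the functoriality of $\mc L$ and $\mc R$ recorded in \eqref{eq70} and \eqref{eq74}.
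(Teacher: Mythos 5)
Your proposal is correct and follows essentially the same route as the paper: the paper likewise stacks a square coming from Theorem \ref{lb26} (with charge space $W_{\ovl i}$ acting on $W_i\boxtimes W_k$) on top of a square expressing the functoriality \eqref{eq74} of $\mc R_j$ with respect to $\ev_{\ovl i,i}\otimes\id_k$, and then identifies the composite vertical map with $\mc L_i^\dagger(\ovl{w^{(i)}},z_i)$ via Proposition \ref{lb28}. No gaps; the bookkeeping remark about associativity identifications matches the paper's implicit treatment.
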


\begin{proof}
Consider the diagrams
\begin{align}
\begin{CD}
W_i\boxtimes W_k @> \quad\mc R_j(w^{(j)},z_j)\quad >> W_i\boxtimes W_k\boxtimes W_j\\
@V \mc L_{\ovl i}(\ovl{w^{(i)}},z_i) VV @V \mc L_{\ovl i}(\ovl{w^{(i)}},z_i) VV\\
W_{\ovl i}\boxtimes W_i\boxtimes W_k @> \quad\mc R_j(w^{(j)},z_j)\quad>> W_{\ovl i}\boxtimes W_i\boxtimes W_k\boxtimes W_j\\
@V \ev_{\ovl i,i}\otimes\id_k VV @V \ev_{\ovl i,i}\otimes\id_k\otimes\id_j VV\\
 W_k @> \quad\mc R_j(w^{(j)},z_j)\quad >> W_k\boxtimes W_j.
\end{CD}
\end{align}
The first small diagram commutes due to theorem \ref{lb26}, the second one commutes due to the functoriality of $\mc R$ (equation \eqref{eq74}). Therefore the large diagram commutes, which is equivalent to the commutativity of diagram \eqref{eq75} by proposition \ref{lb28}.
\end{proof}

\subsection{Smeared intertwining operators}

We recall the definition and some of the basic properties of energy bounded intertwining operators. See \cite{Gui17a} chapter 3 for more details. We first fix some notations. If $A$ is an unbounded operator on a Hilbert space $\mc H$, we let $\scr D(A)$ be the domain of $A$. If $A$ is densely-defined and preclosed, we let $\ovl A$ denote its closure, and $A^*=\ovl A^*$ its adjoint. If $A$ and $B$ are densely-defined with common domain $\scr D=\scr D (A)=\scr D(B)$, we say that $B$ is the (clearly unique) \textbf{formal adjoint} of $A$, and write $B=A^\dagger$, if for any $\xi,\eta\in\Dom$,
\begin{align}
\bk{A\xi|\eta}=\bk{\xi|B\eta}.
\end{align}

If $A,B$ are preclosed operators on $\mc H$, we say that $A$ \textbf{commutes strongly} with $B$, if the von Neumann algebra generated by $\ovl A,\ovl A^*$ commutes with the one generated by $\ovl B,\ovl B^*$. (See \cite{Gui17a} section B.1 for more details.) Therefore, by our definition, two bounded operators commute strongly if and only if they commute adjointly.

\begin{df}\label{lb37}
Let $\mc P,\mc Q, \mc R,\mc S$ be Hilbert spaces, and  $A:\mc P\rightarrow\mc R,B:\mc Q\rightarrow\mc S,C:\mc P\rightarrow\mc Q,D:\mc R\rightarrow\mc S$ be unbounded preclosed operators. By saying that the diagram of preclosed operators
\begin{align}
\begin{CD}
\mc P @>C>> \mc Q\\
@V A VV @V B VV\\
\mc R @>D>> \mc S
\end{CD}
\end{align}
\textbf{commutes strongly}, we mean the following: Let $\mc H=\mc P\oplus\mc Q\oplus\mc R\oplus\mc S$. Define unbounded preclosed operators $R,S$ on $\mc H$ with domains $\Dom(R)=\Dom(A)\oplus\Dom(B)\oplus\mc R\oplus \mc S$, $\Dom(S)=\Dom(C)\oplus\mc Q\oplus\Dom(D)\oplus \mc S$, such that
\begin{gather*}
R(\xi\oplus\eta\oplus\chi\oplus\varsigma)=0\oplus 0\oplus A\xi\oplus B\eta\qquad(\forall \xi\in\Dom(A),\eta\in\Dom(B),\chi\in\mc R,\varsigma\in \mc S),\\
S(\xi\oplus\eta\oplus\chi\oplus\varsigma)=0\oplus C\xi\oplus 0\oplus D\chi   \qquad(\forall \xi\in\Dom(C),\eta\in\mc Q,\chi\in \Dom(D),\varsigma\in\mc S).
\end{gather*}
(Such construction is called the \textbf{extension} from $A,B$ to $R$, and from $C,D$ to $S$.) Then $R$ and $S$ commute strongly.

\end{df}

Now we return to the unitary VOA $V$ and its unitary modules. For any $W_i\in\Rep^\uni(V)$, we let $\mc H_i$ be the Hilbert space completion of $W_i$. Then $L_0$ is a preclosed operator on $\mc H_i$ with dense domain $W_i$. Its closure $\ovl{L_0}$ is clearly self-adjoint. We set $\mc H_i^\infty=\bigcap_{n\in\mathbb Z_{\geq0}}\Dom((1+\ovl{L_0})^n)$. Then as $W_i\subset \mc H_i^\infty$, $\mc H_i^\infty$ is a dense subspace of $\mc H_i$. Vectors in $\mc H_i^\infty$ are called \textbf{smooth}.

Let $W_i,W_j,W_k\in\Rep^\uni(V)$. For any $\mc Y_\alpha\in\mc V{k\choose i~j}$ and any homogeneous vector $w^{(i)}\in W_i$, we write $\mc Y_\alpha(w^{(i)},x)=\sum_{n\in\mathbb R}\mc Y_\alpha(w^{(i)})_nx^{-n-1}$, where each $\mc Y_\alpha(w^{(i)})_n$ is a linear map from $W_j$ to $W_k$. For any $a\geq 0$, we say that $\mc Y_\alpha(w^{(i)},x)$ satisfies \textbf{$a$-th order energy bounds}, if there exist $M,b\geq0$, such that for any $n\in\mathbb R,w^{(j)}\in W_j$, 
\begin{align}
\lVert \mc Y_\alpha(w^{(i)})_nw^{(j)} \lVert\leq M(1+|n|)^b\lVert (1+L_0)^aw^{(j)}\lVert.
\end{align}
By \cite{Gui17a} proposition 3.4, if $w^{(i)}$ is quasi-primary and $\mc Y_\alpha(w^{(i)},x)$  satisfies $a$-th order energy bounds, then so does $\mc Y_{\alpha^*}(\ovl{w^{(i)}},x)$.

We say that $\mc Y_\alpha(w^{(i)},x)$ is \textbf{energy-bounded} if it satisfies $a$-th order energy bounds for some $a\geq0$.  We say that $V$ is energy-bounded if $Y(v,x)$ is energy-bounded for any homogeneous $v\in V$. We say that a unitary $V$-module $W_i$ is energy-bounded if $Y_i(v,x)$ is energy-bounded for any homogeneous $v\in V$.

We now define smeared intertwining operators for energy bounded intertwining operators (cf. \cite{Gui17a} section 3.2). Recall the discussion of arg-valued intervals in section \ref{lb29}. For any $\wtd I=(I,\arg_I)\in\Jtd$ and $f\in C^\infty_c(I)$ , we call $\wtd f=(f,\arg_I)$ a (smooth) \textbf{arg-valued function} on $S^1$ with support inside $\wtd I$, and let $C^\infty_c(\wtd I)$ be the set of all such $\wtd f$. We set the complex conjugate of $\wtd f$ to be $\ovl{\wtd f}=(\ovl f,\arg I)$. If $\wtd I\subset\wtd J\in\Jtd$, then $C^\infty_c(\wtd I)$ is naturally a subspace of $C^\infty_c(\wtd J)$ by identifying each $(f,\arg I)\in C^\infty_c(\wtd I)$ with $(f,\arg J)$.

Now if $\mc Y_\alpha\in\mc V{k\choose i~j}$, $w^{(i)}\in W_i$ is homogeneous, $\mc Y_\alpha(w^{(i)},x)$ is energy-bounded, $\wtd I=(I,\arg_I)\in\Jtd$, and $\wtd f=(f,\arg_I)\in C^\infty_c(\wtd I)$ , we  define the smeared intertwining operator $\mc Y_\alpha(w^{(i)},\wtd f)$ to be a bilinear form on $W_j\otimes W_{\ovl k}$ satisfying
\begin{align}
\mc Y_\alpha(w^{(i)},\wtd f)=\int_{\arg_I(I)}\mc Y_\alpha(w^{(i)},e^{i\theta})f(e^{i\theta})\cdot\frac{e^{i\theta}}{2\pi}d\theta.\label{eq90}
\end{align}
Then $\mc Y_\alpha(w^{(i)},\wtd f)$ maps $W_j$ into $\mc H_k^\infty$. Regarding $\mc Y_\alpha(w^{(i)},\wtd f)$ as an unbounded operator from $\mc H_j$ to $\mc H_k$ with domain $W_j$, $\mc Y_\alpha(w^{(i)},\wtd f)$ is preclosed, the closure of which contains $\mc H_j^\infty$. Moreover, we have
\begin{gather*}
\ovl{\mc Y_\alpha(w^{(i)},\wtd f)}\mc H_j^\infty\subset\mc H_k^\infty,\qquad \ovl{\mc Y_\alpha(w^{(i)},\wtd f)}^*\mc H_k^\infty\subset\mc H_j^\infty.
\end{gather*}
In the following, \emph{we will always denote by $\mc Y_\alpha(w^{(i)},\wtd f)$ the restriction of the closed operator $\ovl{\mc Y_\alpha(w^{(i)},\wtd f)}$ to the core $\mc H_j^\infty$.} Then the formal adjoint $\mc Y_\alpha(w^{(i)},\wtd f)^\dagger$ exists, which is the restriction of $\mc Y_\alpha(w^{(i)},\wtd f)^*$ to $\mc H_k^\infty$.

We now give  formulae for the rotation covariance of smeared intertwining operators.  Recall that we have an action of $\scr G$ on $\Jtd$ defined in section \ref{lb29}. For any $t\in\mathbb R$ and $\wtd I\in\Jtd$, write $\wtd\exp(it L_0)\wtd I=\wtd J=(J,\arg_J)$. We define a linear map $\fk r(t):C^\infty_c(\wtd I)\rightarrow C^\infty_c(\wtd J)$, such that for any $\wtd f=(f,\arg I)$, $\fk r(t)\wtd f=(\fk r(t) f,\arg_J)$ satisfies $\fk r(t)f(e^{i\theta})=f(e^{i(\theta-t)})$ ($\forall \theta\in\mathbb R$). Then using the proof of \cite{Gui17a} proposition 3.15, one can easily show that
\begin{align}
e^{it\ovl{L_0}}\mc Y_\alpha(w^{(i)},\wtd f)e^{-it\ovl{L_0}}=\mc Y_\alpha(w^{(i)},e^{i(\Delta_{w^{(i)}}-1)t}\fk r(t) \wtd f)\label{eq78}
\end{align}
for any homogeneous $w^{(i)}\in W_i$ with conformal weight $\Delta_{w^{(i)}}$. Set $f'(e^{i\theta})=\frac d{d\theta}f(e^{i\theta})$ and $\wtd f'=(f',\arg I)$. Then we have another version of rotation covariance
\begin{align}
[\ovl{L_0},\mc Y_\alpha(w^{(i)},\wtd f)]=\mc Y_\alpha(w^{(i)},(\Delta_{w^{(i)}}-1)\wtd f+i\wtd f' ),\label{eq79}
\end{align}
where both sides of the equation act on $\mc H_j^\infty$. (See also \cite{Gui17a} proposition 3.15.)

Next we relate $\mc Y_\alpha(w^{(i)},\wtd f)^\dagger$ with the smeared intertwining operator of $\mc Y_{\alpha^*}\equiv\mc Y_\alpha^\dagger$. It was proved in \cite{Gui17a} proposition 3.4 that if $\mc Y_\alpha$ satisfies $a$-th order energy bounds, then so does $\mc Y_{\alpha^*}$.  Now, for any $a\in\mathbb R,\wtd f\in C^\infty_c(\wtd I)$, we set $e_a\wtd f=(e_a f,\arg I)\in C^\infty_c(\wtd I)$, where $e_a f$ is the smooth function on $S^1$ defined by
\begin{align*}
e_a f(e^{i\theta}) = \left\{ \begin{array}{ll}
e^{ia\theta}f(e^{i\theta}) & \textrm{if $\theta\in\arg_I(I)$}\\
0 & \textrm{if $e^{i\theta}\notin I$}
\end{array} \right. .
\end{align*}
Then for any homogeneous $w^{(i)}\in W_i$,
\begin{align}
\mc Y_\alpha(w^{(i)},\wtd f)^\dagger=\sum_{m\in\mathbb Z_{\geq0}}\frac{e^{-i\pi\Delta_{w^{(i)}}}}{m!}\mc Y_{\alpha^*}(\ovl{L_1^mw^{(i)}},\ovl{e_{m+2-2\Delta_{w^{(i)}}}\wtd f})\label{eq76}
\end{align}
(cf. \cite{Gui17a} proposition 3.9), recalling that $\Delta_{w^{(i)}}$ is the conformal weight of $w^{(i)}$.

We also have braiding of smeared intertwining operators (cf. \cite{Gui17a} corollary 3.13):

\begin{pp}\label{lb30}
Choose disjoint $\wtd I,\wtd J\in\Jtd$, and  $z_i\in I,z_j\in J$ with arguments $\arg_I(z_i),\arg_J(z_j)$ respectively. Suppose $W_i,W_j,W_k,W_r,W_s,W_t$ are unitary $V$-modules, $\mc Y_\alpha\in\mc V{t\choose i~s},\mc Y_\beta\in\mc V{s\choose j~k},\mc Y_{\alpha'}\in\mc V{r\choose i~k},\mc Y_{\beta'}\in\mc V{t\choose j~r}$, and for any $w^{(i)}\in W_i,w^{(j)}\in W_j$, the following braid relation holds:
\begin{align*}
\mc Y_\alpha(w^{(i)},z_i)\mc Y_\beta(w^{(j)},z_j)=\mc Y_{\beta'}(w^{(j)},z_j)\mc Y_{\alpha'}(w^{(i)},z_i).
\end{align*}
Then if $w^{(i)},w^{(j)}$ are homogeneous, $\mc Y_\alpha(w^{(i)},x),\mc Y_\beta(w^{(j)},x),\mc Y_{\alpha'}(w^{(i)},x),\mc Y_{\beta'}(w^{(j)},x)$ are energy-bounded, and $\wtd f\in C^\infty_c(\wtd I),\wtd g\in C^\infty_c(\wtd J)$, the following equation holds when acting on $\mc H^\infty_k$:
\begin{align*}
\mc Y_\alpha(w^{(i)},\wtd f)\mc Y_\beta(w^{(j)},\wtd g)=\mc Y_{\beta'}(w^{(j)},\wtd g)\mc Y_{\alpha'}(w^{(i)},\wtd f).
\end{align*}
\end{pp}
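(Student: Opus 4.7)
The plan is to verify the identity by testing matrix coefficients. Fix $w^{(k)}\in\mc H_k^\infty$ and $w^{(\ovl t)}\in W_{\ovl t}$, and aim to establish
\begin{align*}
\bk{\mc Y_\alpha(w^{(i)},\wtd f)\mc Y_\beta(w^{(j)},\wtd g)w^{(k)},w^{(\ovl t)}}=\bk{\mc Y_{\beta'}(w^{(j)},\wtd g)\mc Y_{\alpha'}(w^{(i)},\wtd f)w^{(k)},w^{(\ovl t)}}.
\end{align*}
The asserted operator identity on $\mc H_k^\infty$ then follows by density of $W_{\ovl t}$ in $\mc H_{\ovl t}$, since both sides map $\mc H_k^\infty$ into $\mc H_t^\infty$.

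The first step is to promote the given pointwise braid relation at $(z_i,z_j)$ to every $(\zeta_i,\zeta_j)\in I\times J$ with arguments $\arg_I(\zeta_i),\arg_J(\zeta_j)$. By condition \ref{CondC}, the matrix coefficient of each product extends to a single-valued holomorphic function on the region of $\Conf_2(\mbb C^\times)$ determined by those arg choices; since the two sides agree at one point, uniqueness of analytic continuation forces them to agree throughout the arg-specified $I\times J$.

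Next I would realize both smeared matrix coefficients as absolutely convergent double integrals of a common kernel. For the left-hand side I would insert $\mc Y_\beta(w^{(j)},\wtd g)w^{(k)}=\sum_{s'}P_{s'}\mc Y_\beta(w^{(j)},\wtd g)w^{(k)}$ in $\mc H_s^\infty$, where each summand lives in a finite-dimensional weight space. Applying Fubini to each summand yields
\begin{align*}
\bk{\mc Y_\alpha(w^{(i)},\wtd f)P_{s'}\mc Y_\beta(w^{(j)},\wtd g)w^{(k)},w^{(\ovl t)}}=\iint f(e^{i\theta})g(e^{i\phi})\Phi^{s'}_L(\theta,\phi)\frac{e^{i(\theta+\phi)}}{(2\pi)^2}\,d\theta\,d\phi,
\end{align*}
where $\Phi^{s'}_L(\theta,\phi)=\bk{\mc Y_\alpha(w^{(i)},e^{i\theta})P_{s'}\mc Y_\beta(w^{(j)},e^{i\phi})w^{(k)},w^{(\ovl t)}}$ is a smooth function on $\arg_I(I)\times\arg_J(J)$. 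Summing over $s'$ and interchanging the sum with the double integral produces the integral with kernel $\Phi_L:=\sum_{s'}\Phi^{s'}_L$, which by condition \ref{CondC} is exactly the boundary value of the analytically-continued product matrix element. The right-hand side is expanded symmetrically via an intermediate projection onto the weight spaces of $W_r$, yielding an integral with kernel $\Phi_R$. By step one, $\Phi_L=\Phi_R$ on $\arg_I(I)\times\arg_J(J)$, so the two integrals coincide.

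The main obstacle is justifying the interchange of $\sum_{s'}$ and $\iint$, because condition \ref{CondC} supplies absolute uniform convergence only on compact subsets of $\{|\zeta_i|>|\zeta_j|>0\}$, whereas the integration is on the boundary $|\zeta_i|=|\zeta_j|=1$. To produce a summable majorant on the boundary, I would combine the energy bounds of $\mc Y_\alpha(w^{(i)},x)$, which furnish estimates $\lVert\mc Y_\alpha(w^{(i)})_n\zeta\rVert\leq M(1+|n|)^b\lVert(1+L_0)^a\zeta\rVert$, with the rapid decay of $\lVert P_{s'}\mc Y_\beta(w^{(j)},\wtd g)w^{(k)}\rVert$ in $s'$, which is automatic from $\mc Y_\beta(w^{(j)},\wtd g)w^{(k)}\in\mc H_s^\infty$. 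This yields a $(\theta,\phi)$-uniform summable bound on $|\Phi^{s'}_L|$; dominated convergence (equivalently Fubini--Tonelli) then permits the interchange. The same estimate handles the right-hand side, and step one completes the proof.
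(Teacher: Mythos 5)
Your overall strategy — testing against matrix coefficients with $w^{(\ovl t)}\in W_{\ovl t}$, writing both smeared products as double integrals of pointwise correlation kernels, and feeding in the pointwise braid relation — is essentially the route behind the result the paper cites here (\cite{Gui17a}, corollary 3.13), but two of your steps do not go through as written. The first is the promotion of the hypothesis from the single configuration $(z_i,z_j)$ to all of the arg-specified $I\times J$: ``the two sides agree at one point, so uniqueness of analytic continuation forces them to agree throughout'' is not a valid principle — two holomorphic functions of two variables agreeing at one point need not coincide, and the two ordered products are a priori branches of \emph{different} multivalued functions (in general they differ by braiding matrices). What rescues the statement is precisely the part of the hypothesis you never use: the relation holds for \emph{all} $w^{(i)},w^{(j)}$, so by the $L_{-1}$-derivative property of intertwining operators, applying it to $L_{-1}^m w^{(i)},L_{-1}^n w^{(j)}$ shows that every partial derivative of the difference of the two correlation functions vanishes at $(z_i,z_j)$; only then does analyticity (via condition \ref{CondC}) give the identity on the connected branch containing the arg-specified $I\times J$.

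The second gap is the interchange of $\sum_{s'}$ with $\iint$, which you correctly flag as the main obstacle but whose proposed resolution would fail. Your kernel $\Phi_L^{s'}(\theta,\phi)$ contains the \emph{unsmeared} operator $\mc Y_\beta(w^{(j)},e^{i\phi})$, and energy bounds only give $\lVert P_{s'}\mc Y_\beta(w^{(j)},e^{i\phi})w^{(k)}\rVert\lesssim (1+s')^N$ uniformly in $\phi$ — polynomial growth, not decay; the rapid decay you invoke belongs to the smeared vector $\mc Y_\beta(w^{(j)},\wtd g)w^{(k)}$, which does not appear in $\Phi_L^{s'}$. In fact $\sum_{s'}|\Phi_L^{s'}|$ generally diverges on $|\zeta_i|=|\zeta_j|=1$: condition \ref{CondC} gives absolute convergence only for strictly ordered radii, which is exactly why the paper defines products at equal radii only as a $\lambda\searrow 1$ limit. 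So the $(\theta,\phi)$-uniform summable majorant you describe does not exist, and dominated convergence cannot be applied in this form. The standard repair (and the one used in the cited reference) is either to prove the identity first with one insertion pushed off the circle (radius $\lambda>1$, equivalently inserting $\lambda^{-L_0}$), where Fubini is legitimate by condition \ref{CondC}, and then let $\lambda\searrow 1$ using the energy bounds and strong continuity of $\lambda^{-\ovl{L_0}}$ on smooth vectors; or to smear in $\theta$ first, so that for homogeneous $w^{(\ovl t)}$ only a single mode of $\mc Y_\alpha(w^{(i)},\cdot)$ contributes at each intermediate weight $s'$ and the needed decay in $s'$ comes from the Fourier coefficients of the smooth function $f$. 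Without one of these devices your argument is incomplete at its central analytic step.
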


This proposition, together with relation \eqref{eq76}, implies immediately the following main result of this section. Note that by our notation, if $W_i,W_j\in\Rep^\uni(V)$, then $\mc H_{ij}$ is the Hilbert space completion of $W_{ij}=W_i\boxtimes W_j$, and $\mc H^\infty_{ij}$ is the subspace of smooth vectors. Similarly, if we also have $W_k\in\Rep^\uni(V)$, then $\mc H_{ikj}$ is the Hilbert space completion of $W_{ikj}=W_i\boxtimes W_k\boxtimes W_j$, and $\mc H^\infty_{ikj}$ is its smooth subspace.

\begin{thm}\label{lb38}
Choose $W_i,W_j,W_k\in\Rep^\uni(V),w^{(i)}\in W_i,w^{(j)}\in W_j$, and disjoint $\wtd I,\wtd J\in \Jtd$ such that $\wtd I$ is anticlockwise to $\wtd J$. Assume that $w^{(i)},w^{(j)}$ are homogeneous,  and $\mc L_i|_k(w^{(i)},x),\mc L_i|_{kj}(w^{(i)},x),\mc R_j|_k(w^{(j)},x),\mc R_j|_{ik}(w^{(j)},x)$ are energy-bounded. Then the diagram
\begin{align}
\begin{CD}
\mc H^\infty_k @> \quad\mc R_j(w^{(j)},\wtd g)\quad >> \mc H^\infty_{kj}\\
@V \mc L_i(w^{(i)},\wtd f) VV @V \mc L_i(w^{(i)},\wtd f) VV\\
\mc H^\infty_{ik} @> \quad\mc R_j(w^{(j)},\wtd g)\quad>> \mc H^\infty_{ikj}
\end{CD}
\end{align}
commutes adjointly, in the sense that both this diagram and the following diagram commute:
\begin{align}
\begin{CD}
\mc H^\infty_k @> \quad\mc R_j(w^{(j)},\wtd g)\quad >> \mc H^\infty_{kj}\\
@A \mc L_i(w^{(i)},\wtd f)^\dagger AA @A \mc L_i(w^{(i)},\wtd f)^\dagger AA\\
\mc H^\infty_{ik} @> \quad\mc R_j(w^{(j)},\wtd g)\quad>> \mc H^\infty_{ikj}
\end{CD}.
\end{align}
\end{thm}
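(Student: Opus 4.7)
The strategy is to reduce both diagrams to the braid identities proved at the intertwining-operator level (Theorems \ref{lb26} and \ref{lb27}), and then invoke Proposition \ref{lb30}, which converts such unsmeared braid relations into commutativity of the corresponding smeared operators on smooth subspaces.

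For the first (non-adjoint) diagram, the argument is direct. Theorem \ref{lb26} gives, for any $z_i\in I$ and $z_j\in J$ equipped with $\arg_I(z_i)$ and $\arg_J(z_j)$ making $\wtd I$ anticlockwise to $\wtd J$, the braid identity $\mc L_i(w^{(i)},z_i)\mc R_j(w^{(j)},z_j)=\mc R_j(w^{(j)},z_j)\mc L_i(w^{(i)},z_i)$ of intertwining-operator products. Taking $\mc Y_\alpha=\mc L_i|_{kj}$, $\mc Y_\beta=\mc R_j|_k$, $\mc Y_{\beta'}=\mc R_j|_{ik}$, $\mc Y_{\alpha'}=\mc L_i|_k$ (all energy-bounded by hypothesis), Proposition \ref{lb30} immediately yields the commutativity $\mc L_i(w^{(i)},\wtd f)\mc R_j(w^{(j)},\wtd g)=\mc R_j(w^{(j)},\wtd g)\mc L_i(w^{(i)},\wtd f)$ on $\mc H_k^\infty$.

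For the adjoint diagram, I would invoke formula \eqref{eq76} to write
\begin{equation*}
\mc L_i(w^{(i)},\wtd f)^\dagger=\sum_{m\geq 0}\frac{e^{-i\pi\Delta_{w^{(i)}}}}{m!}\mc L_i^\dagger\bigl(\ovl{L_1^m w^{(i)}},\,\ovl{e_{m+2-2\Delta_{w^{(i)}}}\wtd f}\bigr)
\end{equation*}
as an identity on $\mc H_{ik}^\infty$, and analogously on $\mc H_{ikj}^\infty$. Since $W_i$ has conformal weights bounded below and each application of $L_1$ lowers weight by one, the sum terminates after finitely many nonzero terms. Each surviving summand has homogeneous charge vector $\ovl{L_1^m w^{(i)}}$, and Theorem \ref{lb27} supplies the corresponding unsmeared braid identity between $\mc L_i^\dagger(\ovl{L_1^m w^{(i)}},z_i)$ and $\mc R_j(w^{(j)},z_j)$. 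Since $\ovl{e_{m+2-2\Delta_{w^{(i)}}}\wtd f}$ remains supported in $\wtd I$, Proposition \ref{lb30} applies to each summand and produces the termwise commutativity with $\mc R_j(w^{(j)},\wtd g)$ on $\mc H_k^\infty$; summing the finitely many resulting identities gives the second diagram.

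The principal technical obstacle is securing the energy boundedness of the operators $\mc L_i^\dagger(\ovl{L_1^m w^{(i)}},x)$ on $\mc H_k$ and $\mc H_{kj}$, which is a prerequisite for applying Proposition \ref{lb30} in the adjoint case. The quasi-primary case is handled by \cite{Gui17a} Proposition 3.4; the general homogeneous case reduces to it by decomposing each $L_1^m w^{(i)}$ as a finite combination of $L_{-1}$-descendants of quasi-primary vectors (so that the corresponding smeared adjoints reduce, via integration by parts in \eqref{eq90}, to smeared adjoints with quasi-primary charge), or equivalently by propagating the energy bound of $\mc L_i$ through the intertwiner $L_1$-commutator formula $[L_1,\mc L_i(w,x)]=\mc L_i(L_1 w,x)+2x\mc L_i(L_0 w,x)+x^2\mc L_i(L_{-1} w,x)$. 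Once this is in hand, the two-step reduction described above completes the proof.
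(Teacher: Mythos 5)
Your proposal follows the paper's own proof: the first diagram is obtained from Theorem \ref{lb26} together with Proposition \ref{lb30}, and the adjoint diagram from the expansion \eqref{eq76} of $\mc L_i(w^{(i)},\wtd f)^\dagger$ combined with Theorem \ref{lb27} and Proposition \ref{lb30}, exactly as the paper does. The extra care you take about energy bounds of the adjoint charge vectors $\ovl{L_1^m w^{(i)}}$ is already subsumed in the paper's citation of \cite{Gui17a} propositions 3.4 and 3.9 underlying \eqref{eq76}, so your argument is correct and essentially identical.
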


\begin{proof}
The first diagram commutes due to theorem \ref{lb26} and proposition \ref{lb30}. The second one commutes due to theorem \ref{lb27}, proposition \ref{lb30}, and relation \eqref{eq76}.
\end{proof}

\subsection{Conformal nets associated to VOAs}\label{lb55}

In  this section, we discuss some relations between unitary VOAs and conformal nets as well as their modules. Let $W_i$ be a unitary $V$-module. Then for any $\wtd f=(f,\arg I)$, the smeared vertex operator $Y_i(v,\wtd f)$ is independent of the choice of arguments as $Y_i(v,z)$ is a meromorphic field. We thus write $Y_i(v,\wtd f)$ as $Y_i(v,f)$. In particular, $Y_0=Y$, and $Y(v,\wtd f)$ is written as $Y(v,f)$.

\begin{cond}\label{CondI}
The unitary VOA $V$ is energy-bounded. Moreover, $V$ is \textbf{strongly local}, which means that for any disjoint $I,J\in\mc J$, homogeneous $u,v\in V$, and $f\in C^\infty_c(I),g\in C^\infty_c(J)$, the closed operators $\ovl{Y(u,f)}$ and $\ovl{Y(v,g)}$ commute strongly.
\end{cond}

Then by \cite{CKLW18}, there exists a (unique) conformal net $\mc A_V$ acting on $\mc H_0$ (the Hilbert space completion of $V=W_0$), such that for any $I\in \mc J$, $\mc A_V(I)$ is the von Neumann algebra generated by all $\ovl{Y(v,f)}$ and $\ovl{Y(v,f)}^*$ (where $v\in V$ is homogeneous, and $f\in C^\infty_c(I)$). Moreover, the projective representation  of $\Diffp(S^1)$ (and hence of $\scr G$) is integrated from the positive energy representation of the Virasoro algebra on $V$. We call $\mc A_V$ the \textbf{conformal net associated to $V$}.

A unitary $V$-module $W_i$ is called \textbf{strongly-integrable} (cf. \cite{CWX}), if $W_i$ is energy-bounded, and there is a (unique) $\mc A_V$-module $(\mc H_i,\pi_i)\in\Rep(\mc A_V)$, such that for any $I\in\mc J,f\in C^\infty_c(I)$, and any homogeneous $v\in V$, we have $\pi_i(\ovl{Y(v,f)})=\ovl{Y_i(v,f)}$. 

We now show that the action of $\scr G_{\mc A_V}$ on $\mc H_i$ is integrated from the action of the Virasoro algebra on $W_i$. For any $n\in\mathbb Z$ we set $e_n\in C^\infty(S^1)$ to be $e_n(e^{i\theta})=e^{in\theta}$. For any $f\in C^\infty(S^1)$, write $f=\sum_{n\in\mathbb Z}a_ne_n$ where $\{a_n \}$ are the Fourier series of $f$, and set $T(f)=\sum_n a_nL_{n-1}\in \Vect^\mathbb C(S^1)$. Then $T(f)$ is self-adjoint (namely, $iT(f)\in\Vect(S^1)$) when $e_{-1}f$ is real. Recall that $U$ and $U_i$ are respectively the representations of $\GA$ on $\mc H_0$ and $\mc H_i$.

\begin{pp}\label{lb34}
Let $W_i\in\Rep^\uni(V)$ be strongly-integrable, and let $\mc H_i$ be the corresponding $\mc A_V$-module. For any $g\in\scr G_{\mc A_V}$, if there exist $f\in e_1\cdot C^\infty_c(S^1,\mathbb R)$ and $\lambda\in\mbb C$ with $|\lambda|=1$ satisfying
\begin{align}
g=(\wtd\exp(iT(f)),\lambda e^{i\ovl {Y(\nu,f)}})\in \scr G_{\mc A_V}\subset\scr G\times\mc U(\mc H_0),\label{eq91}
\end{align}
then $U_i(g)=\lambda e^{i\ovl{Y_i(\nu,f)}}$.
\end{pp}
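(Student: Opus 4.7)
The plan is to reduce to the case $\lambda = 1$ and then exploit a one-parameter-subgroup argument, treating the localized case via theorem \ref{lb59} and the general case via a Trotter decomposition. For the reduction, I would observe that any element $(1,\mu\cdot\id_{\mc H_0})\in\GA$ with $\mu\in U(1)$ lies in $\GA(I)$ for every $I\in\mc J$ (since $1\in\scr G(I)$ and $\mu\cdot\id$ represents $U(1)$), so theorem \ref{lb59} gives $U_i((1,\mu))=\pi_{i,I}(\mu\cdot\id)=\mu\cdot\id_{\mc H_i}$. Factoring $g=(1,\lambda)\cdot(\wtd\exp(iT(f)),e^{i\ovl{Y(\nu,f)}})$ then reduces everything to $\lambda=1$. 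I would next consider the one-parameter family $g_t:=(\wtd\exp(itT(f)),e^{it\ovl{Y(\nu,f)}})\in\GA$, where self-adjointness of $\ovl{Y(\nu,f)}$---which follows from the reality of $h:=e_{-1}f$ combined with formula \eqref{eq76} applied to the quasi-primary, $\varTheta$-fixed conformal vector $\nu$ of weight $2$---ensures $e^{it\ovl{Y(\nu,f)}}$ is a representing element of $U(\wtd\exp(itT(f)))$ on $\mc H_0$. Thus $t\mapsto g_t$ is a continuous one-parameter subgroup of $\GA$, and Stone's theorem yields a self-adjoint $H$ on $\mc H_i$ with $U_i(g_t)=e^{itH}$; the goal becomes to show $H=\ovl{Y_i(\nu,f)}$.

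I would then handle the localized case first: if $h$ is supported in a proper interval $I\in\mc J$, then $iT(f)=h\tfrac{d}{d\theta}\in\Vect_I(S^1)$, so $\wtd\exp(iT(f))\in\scr G^0(I)$ and $g\in\GA^0(I)\subset\GA(I)$ by \eqref{eq94}. Applying theorem \ref{lb59}, together with the fact that $\ovl{Y(\nu,f)}$ is self-adjoint and affiliated with $\mc A_V(I)$ (by definition of the net), the normality of $\pi_{i,I}$ and Borel functional calculus give $U_i(g)=\pi_{i,I}(e^{i\ovl{Y(\nu,f)}})=e^{i\pi_{i,I}(\ovl{Y(\nu,f)})}=e^{i\ovl{Y_i(\nu,f)}}$, the last equality using strong integrability of $W_i$. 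This is precisely where $\GA^0(I)$ is used essentially, matching the remark preceding the proposition.

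For general $h\in C^\infty_c(S^1,\mbb R)$, I would write $h=h_1+h_2$ using a smooth partition of unity subordinate to a cover of $S^1$ by two proper open intervals $I_1,I_2$, and set $f_k=e_1h_k$, so $f=f_1+f_2$ with each $f_k$ localized. Standard Virasoro energy bounds together with Nelson's commutator theorem should show that $\ovl{Y_i(\nu,f_1)}+\ovl{Y_i(\nu,f_2)}$ is essentially self-adjoint on the common core $\mc H_i^\infty$ with closure $\ovl{Y_i(\nu,f)}$, and the Trotter product formula then gives
\[
e^{i\ovl{Y_i(\nu,f)}}=\lim_{m\to\infty}\bigl(e^{i\ovl{Y_i(\nu,f_1)}/m}\,e^{i\ovl{Y_i(\nu,f_2)}/m}\bigr)^m,
\]
with analogous identities on $\mc H_0$ and (as time-one flows of vector fields) in the Fr\'echet Lie group $\scr G$. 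For each $m$, the localized case applied to the factors $(\wtd\exp(iT(f_k)/m),e^{i\ovl{Y(\nu,f_k)}/m})\in\GA^0(I_k)$ identifies $U_i$ of each factor as $e^{i\ovl{Y_i(\nu,f_k)}/m}$. Multiplying, raising to the $m$-th power, and letting $m\to\infty$ using strong continuity of $U_i$ yields $U_i(g)=e^{i\ovl{Y_i(\nu,f)}}$.

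The main obstacle will be the last paragraph, specifically synchronizing the three Trotter formulas (in $\scr G$, on $\mc H_0$, and on $\mc H_i$) while checking that the $U(1)$-phase ambiguities inherent in the central extension $\GA\to\scr G$ cancel. These cancel because the same explicit lift $e^{i\ovl{Y(\nu,f_k)}/m}$ is used in both the group-level lifting and in the application of $\pi_{i,I_k}$; but making this rigorous---together with the essential self-adjointness on a common core and the Lie-group Trotter formula for the tame Fr\'echet exponential of $\scr G$---constitutes the technical heart of the argument.
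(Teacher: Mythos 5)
Your localized case is exactly the paper's final step: for $f\in e_1\cdot C^\infty_c(I,\mathbb R)$ one has $g\in\GA(I)$ and $U_i(g)=\pi_{i,I}(\lambda e^{i\ovl{Y(\nu,f)}})=\lambda e^{i\ovl{Y_i(\nu,f)}}$ by strong integrability (your self-adjointness check via \eqref{eq76} for the quasi-primary weight-$2$ vector $\nu$ is also correct). Where you diverge is the globalization. The paper does not localize-and-glue at all: it forms the strongly integrable module $W_j=V\oplus W_i$, invokes Toledano-Laredo's integration theorem (\cite{TL99} theorem 5.2.1) to get a continuous projective representation $\fk U_j$ of $\scr G$ on $\mc H_0\oplus\mc H_i$ with $e^{i\ovl{Y_j(\nu,f)}}=\diag(e^{i\ovl{Y(\nu,f)}},e^{i\ovl{Y_i(\nu,f)}})$ representing $\fk U_j(\wtd\exp(iT(f)))$ for \emph{arbitrary} $f\in e_1\cdot C^\infty_c(S^1,\mathbb R)$, and reads off from the block-diagonal form a candidate representation $U_i'$ of $\GA$ that manifestly satisfies the desired formula; the comparison $U_i'=U_i$ then only requires the localized case, since $\GA$ is generated by the $\GA(I)$. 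The non-localized $f$ are thus handled wholesale by the integration theorem rather than by any limiting procedure.

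The genuine gap in your route is the Lie--Trotter product formula in $\scr G=\wtd{\Diffp(S^1)}$: you need $\bigl(\wtd\exp(iT(f_1)/m)\,\wtd\exp(iT(f_2)/m)\bigr)^m\to\wtd\exp(iT(f))$ with uniform convergence of \emph{all} derivatives, since that is the topology in which convergence in $\GA$ (and hence applicability of the continuity of $U_i$ from theorem \ref{lb59}) is measured. This is not a citable standard fact in the form you need --- the exponential map of $\Diffp(S^1)$ is famously pathological (not locally surjective, not $C^1$ in any reasonable sense), and while a splitting formula for flows of vector fields on a compact manifold is plausible, establishing $C^\infty$ convergence requires uniform estimates on all derivatives of the iterated flows that you have not supplied. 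The Hilbert-space Trotter formulas on $\mc H_0$ and $\mc H_i$ are fine (essential self-adjointness of $Y_i(\nu,f)$ on $\mc H_i^\infty$ follows from linear energy bounds for the Virasoro field plus the Nelson/commutator criterion, and Trotter--Kato applies since the closure of the sum on the common core is already self-adjoint), and the phase bookkeeping in $\GA$ does cancel as you say; but without the group-level formula the whole limit argument does not get off the ground. I would either prove that formula as a separate lemma or adopt the direct-sum device, which eliminates the need for it.
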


\begin{proof}

Our strategy is to define a unitary representation $U_i':\scr G_{\mc A_V}\curvearrowright\mc H_i$ satisfying the claim of this proposition, and to show that $U_i'$ equals the standard one $U_i$. 

Let $W_j=W_0\oplus W_i=V\oplus W_i$. Then $W_j$ is strongly-integrable. By \cite{TL99} theorem 5.2.1 (see also \cite{CKLW18} theorem 3.4), there exists a (continuous) projective representation $\fk U_j$ of $\scr G$ on $\mc H_j$ such that for any  $f\in e_1\cdot C^\infty_c(S^1,\mathbb R)$, the unitary operator $e^{i\ovl {Y_j(\nu,f)}}$ belongs to the equivalence class $\fk U_j(\wtd\exp(iT(f)))\in P\mc U(\mc H_j)$. Notice that $\mc H_j=\mc H_0\oplus\mc H_i$ and
\begin{align}
e^{i\ovl {Y_j(\nu,f)}}=\diag(e^{i\ovl {Y(\nu,f)}}, e^{i\ovl {Y_i(\nu,f)}}).\label{eq95}
\end{align}
Thus, for any $g_0\in\scr G$ of the form $\wtd\exp(iT(f))$, any element of $\mc U(\mc H_i)$ belonging to the equivalence class $\fk U_j(g_0)\in P\mc U(\mc H_j)$ takes the form $\diag(\fk V_0,\fk V_i)$ where $\fk V_0,\fk V_i$ are unitary operators on $\mc H_0,\mc H_j$ respectively, and $\fk V_0$ is a representing element of $U(g_0)$. (Recall that $U:\scr G\rightarrow P\mc U(\mc H_0)$ is integrated from the action of the Virasoro algebra on $V$.) By remark \ref{lb58}, $\scr G$ is generated by  elements of the form $\wtd\exp(iT(f))$ where $f\in e_1\cdot C^\infty_c(S^1,\mathbb R)$. (Here, we do not require $f$ to be supported in some open interval.) Thus the previous statement is true for any $g_0\in\scr G$. We now define a map $U_i':\scr G_{\mc A_V}\rightarrow\mc U(\mc H_i)$ as follows. Choose any $g=(g_0,\fk V_0)\in\scr G_{\mc A_V}\subset \scr G\times\mc U(\mc H_0)$, noting that $\fk V_0$ belongs to the equivalence class $U(g_0)$. Then one can find a unique $\fk V_i$ such that $\diag(\fk V_0,\fk V_i)$ belongs to the equivalence class $\fk U_j(g_0)$. We set $U_i(g)=\fk V_i$. It is easy to check that $U_i'$ is a homomorphism of groups. We thus obtain a unitary representation $U_i'$ of $\scr G_{\mc A_V}$ on $\mc H_i$. Moreover, if $g$ is of the form \eqref{eq91}, then, by \eqref{eq95}, we have $U_i'(g)=\lambda e^{i\ovl{Y_i(\nu,f)}}$. Therefore, to finish the proof, it remains to check that $U_i=U_i'$.

By remark \ref{lb58},  it suffices to show  $U_i'(g)=U_i(g)$ for any $g=(\wtd\exp(iT(f)),\lambda e^{i\ovl {Y(\nu,f)}})$ satisfying $I\in\mc J,f\in e_1\cdot C^\infty_c(I,\mathbb R),|\lambda|=1$. This follows from the strong-integrability of $W_i$:
\begin{align*}
U_i(g)=\pi_{i,I}(U(g))=\pi_{i,I}\big(\lambda e^{i\ovl {Y(\nu,f)}}\big)=\lambda e^{i\ovl {Y_i(\nu,f)}}=U_i'(g).
\end{align*}
\end{proof}

A more detailed study of the strong locality of VOA modules can be found in \cite{CWX}. (See also \cite{Ten18} for related topics.) Here we give a criterion for strong integrability which will be enough for applications to various examples.  To begin with, we let $\mc C$ be a full rigid monoidal subcategory of $\Rep^\uni(V)$. In other words, $\mc C$ is a class of objects of $\Rep^\uni(V)$ satisfying the following conditions:\\
(a) $\mc C$ contains the identity object $V$.\\
(b) If $W_i\in\mc C$, then any subobject of $W_i$ is equivalent to an object of $\mc C$.\\
(c) If $W_i\in\mc C$, then its dual $W_{\ovl i}$ is  equivalent to an object of $\mc C$.\\
(d) If $W_i,W_j\in\mc C$ then $W_i\boxtimes W_j\in\mc C$.

\begin{df}\label{lb51}
	Assume that any unitary $V$-module in $\mc C$ is energy-bounded. If $W_i\in\mc C$ and $w^{(i)}\in W_i$ is homogeneous, we say that the action $w^{(i)}\curvearrowright \mc C$ satisfies the \textbf{strong intertwining property}, if for any $W_j,W_k\in\mc C$, and $\mc Y_\alpha\in\mc V{k\choose i~j}$, the following conditions are satisfied:\\
	(a) $\mc Y_\alpha(w^{(i)},x)$ is energy-bounded.\\
	(b) For any homogeneous $v\in V$,  $\wtd I\in\Jtd$, $J\in\mc J$ which is disjoint from $I$, and $\wtd f\in C^\infty_c(\wtd I),g\in C^\infty_c(J)$, the following diagram of preclosed operators commutes strongly:
	\begin{align}\label{eq86}
	\begin{CD}
	\mc H_j @> \quad Y_j(v,g)\quad  >> \mc H_j\\
	@V \mc Y_\alpha(w^{(i)},\wtd f) VV @V \mc Y_\alpha(w^{(i)},\wtd f) VV\\
	\mc H_{k} @>\quad Y_k(v,g)\quad >> \mc H_{k}
	\end{CD}.
	\end{align}
\end{df}

Let $\mc F$ be a set of objects of $\mc C$. We say that $\mc F$ \textbf{generates} $\mc C$, if any irreducible object of $\mc C$ is equivalent to a subobject of a tensor product of elements in $\mc F$. The following theorem can be proved in a very similar way as \cite{Gui17b} theorem 4.8.

\begin{thm}\label{lb31}
Let $V$ be unitary and strongly local,  $\mc C$  a full rigid monoidal subcategory of $\Rep^\uni(V)$, and $\mc F$ a set of irreducible objects in $\mc C$. Assume that $\mc F$ generates $\mc C$, and for any $W_i\in\mc F$, there exists a non-zero homogeneous $w^{(i)}\in W_i$ such that $w^{(i)}\curvearrowright \mc C$ satisfies the strong intertwining property. Then any $W_k\in\mc C$ is strongly integrable. Moreover, for any $W_i\in\mc F,W_j,W_k\in\mc C,\mc Y_\alpha\in\mc V{k\choose i~j}$,  $\wtd I\in\Jtd$, $J\in\mc J$ disjoint from $I$, and $\wtd f\in C^\infty_c(\wtd I),y\in\mc A_V(J)$, the following diagram of preclosed operators commutes strongly.
	\begin{align}
\begin{CD}
\mc H_j @> \quad \pi_j(y)\quad  >> \mc H_j\\
@V \mc Y_\alpha(w^{(i)},\wtd f) VV @V \mc Y_\alpha(w^{(i)},\wtd f) VV\\
\mc H_{k} @>\quad \pi_k(y)\quad >> \mc H_{k}
\end{CD}.
\end{align}
\end{thm}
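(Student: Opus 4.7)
The plan is to proceed in three stages: (i) establish strong integrability of each $W_i \in \mc F$; (ii) propagate strong integrability to all of $\mc C$ via iterated tensor products and subobjects, using the strong intertwining property for the canonical intertwining operators $\mc L_i$; (iii) upgrade the strong intertwining property from strong commutativity with smeared vertex operators (which generate $\mc A_V(J)$) to strong commutativity with arbitrary $y \in \mc A_V(J)$, and organize the picture via the categorical extension machinery of Chapter 3.

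For stage (i), fix $W_i \in \mc F$ with distinguished vector $w^{(i)}$. Apply the strong intertwining property to the creation operator $\mc Y_{\kappa(i)} \in \mc V{i\choose i~0}$: for every $\wtd I \in \Jtd$, $\wtd f \in C^\infty_c(\wtd I)$, and $J \in \mc J$ with $J \subset I^c$, the preclosed operator $T := \mc Y_{\kappa(i)}(w^{(i)},\wtd f)\colon \mc H_0 \to \mc H_i$ strongly commutes with $\ovl{Y(v,g)}$ and $\ovl{Y_i(v,g)}$ for every homogeneous $v \in V$ and every $g \in C^\infty_c(J)$. Energy boundedness of $Y_i(v,x)$ itself, needed to make sense of $\ovl{Y_i(v,g)}$, is bootstrapped from energy boundedness of $\mc Y_{\kappa(i)}(w^{(i)},x)$ via $\mc Y_{\kappa(i)} = B_+ Y_i$ and the irreducibility of $W_i$. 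Set $\mc M_i(J) := \bigvee_{v,g}\{\ovl{Y_i(v,g)},\ovl{Y_i(v,g)}^*\}''$, the candidate image of $\mc A_V(J)$. The polar decomposition $\ovl T = U|T|$, combined with the type III$_1$ factor structure of $\mc A_V(J)$ and the Reeh--Schlieder-type density of the ranges of such $T$'s in $\mc H_i$ (as $(w^{(i)},\wtd f)$ varies), produces a normal unital $*$-representation $\pi_{i,J}\colon \mc A_V(J) \to \mc M_i(J)$ sending $\ovl{Y(v,g)} \mapsto \ovl{Y_i(v,g)}$. Varying $J$, isotony is automatic (since $\ovl{Y_i(v,g)}$ depends only on $(v,g)$); additivity plus conformal covariance (supplied by Proposition \ref{lb34} applied to $W_0 \oplus W_i$) then assemble the local pieces into a global $\mc A_V$-module structure on $\mc H_i$.

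For stage (ii), iterate this idea on tensor products. Given $W_j \in \mc C$ already strongly integrable and $W_i \in \mc F$, consider $T := \mc L_i(w^{(i)},\wtd f)\colon \mc H_j \to \mc H_{ij}$, where $\mc L_i \in \mc V{ij \choose i~j}$. The strong intertwining property (which applies since $W_j, W_{ij} \in \mc C$) ensures $T$ strongly commutes with $\ovl{Y_j(v,g)}$ and $\ovl{Y_{ij}(v,g)}$, and the same polar-decomposition argument as in stage (i) produces an $\mc A_V$-module structure on $\mc H_{ij}$ making $W_{ij}$ strongly integrable. By induction on the number of factors, every iterated fusion $W_{i_1} \boxtimes \cdots \boxtimes W_{i_n}$ with $W_{i_s} \in \mc F$ is strongly integrable. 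For general irreducible $W_k \in \mc C$, realize $W_k$ as a $V$-submodule (equivalently, a direct summand by semisimplicity) of such an iterated tensor product; the $\mc A_V$-action restricts to $\mc H_k$ since $Y_{i_1\cdots i_n}(v,g)$ preserves $W_k$, and one checks this restriction is $\ovl{Y_k(v,g)}$, so $W_k$ is strongly integrable.

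Stage (iii) is then essentially formal: the strong intertwining property, in the sense of Definition \ref{lb37}, asserts that the extension of $\ovl{\mc Y_\alpha(w^{(i)},\wtd f)}$ to $\mc H_j\oplus\mc H_k$ generates a von Neumann algebra commuting with the one generated by the extension of $\ovl{Y_j(v,g)} \oplus \ovl{Y_k(v,g)}$; by stage (ii), $\pi_j(\mc A_V(J)) \subset \mc M_j(J)$ and $\pi_k(\mc A_V(J)) \subset \mc M_k(J)$, so the extension of $\pi_j(y) \oplus \pi_k(y)$ lies in the same commutant, giving the claimed strong commutativity of the diagram. Finally, the smeared $\mc L_i(w,\wtd f), \mc R_i(w,\wtd f)$ ($W_i \in \mc F$) assemble into a categorical local extension $\scr E^\loc$ of $\mc A_V$ over $\mc F$ (functoriality from \eqref{eq70}--\eqref{eq74}, braiding from $\mc R_i = \ss_{i,\cdot}\mc L_i$, locality from Theorem \ref{lb38}), and Theorem \ref{lb18} extends this to a closed categorical extension over all of $\mc C$, providing a clean conceptual organization. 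The main obstacle is stage (i): constructing $\pi_{i,J}$ from the polar decomposition of $T$ and verifying that it is a well-defined, normal $*$-homomorphism independent of $(w^{(i)},\wtd f)$ and compatible across different $J$. The key technical inputs are the type III$_1$ factor structure of $\mc A_V(J)$, Reeh--Schlieder density, and locality; once these are in place, stages (ii) and (iii) follow by induction and by unwinding definitions.
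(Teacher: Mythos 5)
Your outline follows the same route as the proof the paper has in mind: Theorem \ref{lb31} is given no written proof beyond the remark that it ``can be proved in a very similar way as \cite{Gui17b} theorem 4.8,'' and that argument is precisely your three stages --- construct $\pi_{i,J}$ from the polar decomposition of a smeared intertwining operator localized in some $\wtd I$ with $I\subset J^c$, using Reeh--Schlieder-type density of the ranges and the type III property of the local algebras; propagate along iterated fusions and subobjects using $\mc L_i$; and pass from the generators $\ovl{Y(v,g)}$ to all of $\mc A_V(J)$ by normality of the resulting representation. Two caveats. First, your appeal to Proposition \ref{lb34} to ``supply conformal covariance'' is circular, since that proposition presupposes that $W_i$ is already strongly integrable; what is actually needed both to glue the locally defined $\pi_{i,J}$ and to run the density argument for the ranges of the smeared intertwiners is the rotation covariance \eqref{eq78} of smeared intertwining operators (a purely VOA-level fact) together with additivity of the net, as in the proof of Proposition \ref{lb56}. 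Second, the ``bootstrapping'' of energy bounds for $Y_i(v,x)$ from those of $\mc Y_{\kappa(i)}(w^{(i)},x)$ is both unnecessary and not obviously valid as stated: Definition \ref{lb51} already presupposes that every object of $\mc C$ is energy-bounded, so this hypothesis is available from the start.
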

{~}\\

Note that definition \ref{lb51} does not rely on conditions \ref{CondF} and \ref{CondH}. Indeed, if $\mc C_0$ is a full rigid monoidal subcategory of $\Rep(V)$ whose objects are unitarizable,  and if $\mc C$ is the class of all unitary $V$-modules equivalent to some objects of $\mc C_0$, then we can still apply definition \ref{lb51}  to $\mc C$. Moreover,   condition \ref{CondH} (restricted to $\mc C$) will be a consequence of strong intertwining property; see remark \ref{lb45} and theorem \ref{lb54}. On the other hand, under the assumption of condition \ref{CondH},   the strong intertwining property for $w^{(i)}\curvearrowright\mc C$ can be equivalently stated as follows: For any homogeneous $v\in V$, and any $W_j\in\mc C,\wtd I\in\Jtd$ which is disjoint from $I$, and $\wtd f\in C^\infty_c(\wtd I),g\in C^\infty_c(J)$, the following diagram of preclosed operators commutes strongly:
\begin{align}\label{eq87}
\begin{CD}
\mc H_j @> \quad Y_j(v,g)\quad  >> \mc H_j\\
@V \mc L_i(w^{(i)},\wtd f) VV @V \mc L_i(w^{(i)},\wtd f) VV\\
\mc H_{ij} @>\quad Y_{ij}(v,g)\quad >> \mc H_{ij}
\end{CD}.
\end{align}

To see the equivalence of the two statements, note that  condition \eqref{eq87} is clearly a special case of the statement in definition \ref{lb51}. Now assume condition \eqref{eq87}. To prove \eqref{eq86}, we recall that $\mc Y_\alpha$ can be  identified with a morphism  $T_\alpha\in\Hom_V(W_i\boxtimes W_j,W_k)$ in a natural way. Then $\mc Y_\alpha=T_\alpha\mc L_i|_j$ by equation \eqref{eq88}. Since the two small diagrams of preclosed operators in
\begin{align}
\begin{CD}
\mc H_j @> \quad Y_j(v,g)\quad  >> \mc H_j\\
@V \mc L_i(w^{(i)},\wtd f) VV @V  V\mc L_i(w^{(i)},\wtd f)V\\
\mc H_{ij} @>\quad Y_{ij}(v,g)\quad >> \mc H_{ij}\\
	@V T_\alpha VV @VVT_\alpha V\\
\mc H_{k} @>\quad Y_k(v,g)\quad >> \mc H_{k}
\end{CD}
\end{align}
commute strongly, we have the strong commutativity of the large diagram by lemma \ref{lb44}, which is equivalent to the strong commutativity of \eqref{eq86}.

We close this section with a density property. First, for each $I\in\mc J$, we let $\mc A_V(I)_\infty$ be the set of all $x\in\mc A_V(I)$ such that $x\mc H_i^\infty\subset\mc H_i^\infty$ and $x^*\mc H_i^\infty\subset\mc H_i^\infty$ for any unitary $V$-module $W_i$. By \cite{Gui17b} proposition 4.2, $\mc A_V(I)_\infty$ is a strongly-dense $*$-subalgebra of $\mc A_V(I)$.

\begin{pp}\label{lb56}
Suppose that $W_i\in\mc F$, $w^{(i)}_0\in W_i$ is non-zero and homogeneous, and $w^{(i)}_0\curvearrowright\mc C$ satisfies the strong intertwining property. Then for each $W_j\in\mc C$ and $\wtd I\in\Jtd$, vectors of the form $\mc L_i(w^{(i)}_0,\wtd f)w^{(j)}$ (where $\wtd f\in C_c^\infty(\wtd I)$ and $w^{(j)}\in W_j$) spans a dense subspace of $\mc H_{ij}$.
\end{pp}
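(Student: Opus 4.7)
My plan is to let $K\subseteq\mc H_{ij}$ denote the closure of the linear span of $\{\mc L_i(w^{(i)}_0,\wtd f)w^{(j)}:\wtd f\in C_c^\infty(\wtd I),\,w^{(j)}\in W_j\}$, and to show $K=\mc H_{ij}$ in two stages. First I will promote $K$ to a subspace invariant under the entire action of $\mc A_V$ on $\mc H_{ij}$; then, using this invariance together with rotation covariance, Fourier extraction of modes, and the Jacobi identity for intertwining operators, I will force $K$ to contain every $\mc L_i(w^{(i)})_nw^{(j)}$, hence the dense subspace $W_{ij}\subseteq\mc H_{ij}$.

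For the invariance, Theorem \ref{lb31} ensures that both $W_j$ and $W_{ij}$ are strongly integrable, and the strong intertwining property for $w^{(i)}_0$ provides that $\ovl{\mc L_i(w^{(i)}_0,\wtd f)}$ strongly commutes with $\ovl{Y_j(v,g)}$ and $\ovl{Y_{ij}(v,g)}$ whenever $v\in V$ is homogeneous and $g\in C_c^\infty(J)$ with $J\subset I^c$. Borel calculus combined with density of the smooth subalgebra of $\mc A_V(I^c)$ then yields $\pi_{ij}(\mc A_V(I^c))K\subseteq K$, after graph-norm approximation of $\pi_j(y)w^{(j)}$ by vectors in $W_j$. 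Next, formula \eqref{eq78} applied to a homogeneous $w^{(j)}$ gives $e^{it\ovl{L_0}}\mc L_i(w^{(i)}_0,\wtd f)w^{(j)}$ equal up to a phase to $\mc L_i(w^{(i)}_0,\fk r(t)\wtd f)w^{(j)}$, with $\fk r(t)\wtd f\in C_c^\infty(\wtd I)$ whenever the rotated support stays in $I$; unitarity of the one-parameter group then extends $e^{it\ovl{L_0}}K\subseteq K$ to every $t\in\mathbb R$. Conjugating $\mc A_V(I^c)$ by rotations realizes $\mc A_V(J)$ for every rotate $J$ of $I^c$, and additivity of the conformal net lets these generate $N:=\bigvee_I\pi_{ij}(\mc A_V(I))$, so $NK\subseteq K$.

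For the generation, rotation invariance implies each spectral projection $P_s$ of $\ovl{L_0}$ on $\mc H_{ij}$ preserves $K$; combined with $\mc L_i(w^{(i)}_0,\wtd f)w^{(j)}=\sum_n\wht f(n)\mc L_i(w^{(i)}_0)_nw^{(j)}$ and the placement of each mode in a single $L_0$-eigenspace, this gives $\wht f(n_s)\mc L_i(w^{(i)}_0)_{n_s}w^{(j)}\in K$ for $n_s=\Delta_{w^{(i)}_0}+\Delta_{w^{(j)}}-s-1$. Since bump Fourier transforms on any open arc do not vanish at any fixed real $n$, I obtain $\mc L_i(w^{(i)}_0)_nw^{(j)}\in K$ for every mode $n$ and every $w^{(j)}\in W_j$. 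An analogous Fourier-projection argument, applied to $\ovl{Y_{ij}(v,g)}$ (affiliated with $N$ by strong integrability of $W_{ij}$), extracts $Y_{ij}(v)_m\xi\in K$ for every $v\in V$, $m$, and homogeneous $\xi\in K\cap\mc H_{ij}^\infty$. Taking $\xi=\mc L_i(w^{(i)}_0)_nw^{(j)}$ in the Jacobi commutator
\[
[Y_{ij}(v)_m,\mc L_i(w^{(i)}_0)_n]=\sum_{k\geq 0}\binom{m}{k}\mc L_i(v_kw^{(i)}_0)_{m+n-k},
\]
and inverting this triangular system in $m$ yields $\mc L_i(v_kw^{(i)}_0)_{\cdot}w^{(j)}\in K$ for every $v\in V$ and $k\geq 0$. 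Iterating and invoking the irreducibility of $W_i$ (so that $V\cdot w^{(i)}_0=W_i$) gives $\mc L_i(w^{(i)})_nw^{(j)}\in K$ for all $w^{(i)}\in W_i$, all $n$, all $w^{(j)}\in W_j$; these modes span $W_{ij}$ by the universal property of the fusion product (as used at the end of the proof of Proposition \ref{lb28}), and density concludes.

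The main obstacle I expect is the second Fourier-projection step, where modes of $Y_{ij}$ are extracted from smeared vertex operators affiliated with $N$: this requires that the smooth subalgebra $\mc A_V(J)_\infty$ preserve $\mc H_{ij}^\infty$ (available via Proposition 4.2 of \cite{Gui17b}) so that the spectral calculus of $\ovl{Y_{ij}(v,g)}$ can be applied to vectors produced by $\mc L_i$ and then recombined with its modes. The invariance arguments of the first half are conceptually clean once the strong-commutativity and graph-norm bookkeeping is done, and the closing algebraic step is a routine application of the Jacobi identity together with the irreducibility of $W_i$.
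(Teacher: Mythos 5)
The decisive gap is your rotation-invariance step. From \eqref{eq78} you only obtain, for each fixed generator $\mc L_i(w^{(i)}_0,\wtd f)w^{(j)}$, an interval of times $|t|<\epsilon_f$ (with $\epsilon_f$ depending on the distance from the support of $f$ to the endpoints of $I$) on which $e^{it\ovl{L_0}}\mc L_i(w^{(i)}_0,\wtd f)w^{(j)}$ is again, up to a phase, a generator. This does not give $e^{it\ovl{L_0}}K\subseteq K$ for any fixed $t\neq 0$, let alone for all $t$, and ``unitarity of the one-parameter group'' cannot supply the missing implication: compare translations on $L^2(\mathbb R)$ with $K$ the subspace of functions supported in $[0,1]$ --- every generator with compact support in $(0,1)$ can be translated a little without leaving $K$, yet $K$ is invariant under no nonzero translation. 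Since your passage to the rotated algebras $\mc A_V(J)$, the additivity step, and the spectral projections $P_s$ used for mode extraction all rest on this claim, the argument collapses here. What actually makes the extension work --- and what the paper does --- is an analytic continuation using positivity of $\ovl{L_0}$: for $\xi\perp K$ and $y\in\mc A_V(I^c)_\infty$, the function $z\mapsto\bk{y\,z^{\ovl{L_0}}\mc L_i(w^{(i)}_0,\wtd f)\eta|\xi}$ is (multi-valued) continuous on the punctured closed disk, holomorphic in its interior, and vanishes on a small boundary arc around $1$ by the partial rotation covariance you noted; Schwarz reflection then forces it to vanish identically, so the orthogonality holds for all rotations, i.e.\ for $\wtd f$ supported in an arbitrary $\wtd J\in\Jtd$. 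A second application of the same trick, now rotating $y\in\mc A_V(I_1)_\infty$ with $I_1\sjs I^c$, gives invariance of $K^\perp$ under $\mc A_V(K')_\infty$ for every sufficiently small $K'$, and additivity together with the strong density of the smooth subalgebras finishes the invariance. Some such positive-energy argument must be inserted before any of your later steps can run.

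Separately, your endgame is a genuinely different, and much heavier, route than the paper's. Once the invariance is secured, the paper does not extract modes at all: it invokes \cite{Gui17b} corollary 4.4 to produce a nonzero vector $w$ of $W_{ij}$ inside the (invariant) orthogonal complement, notes via \eqref{eq90} that $\bk{w|\mc L_i(w^{(i)}_0,z)w^{(j)}}=0$ for all $z\in I$ and $w^{(j)}\in W_j$, and derives a contradiction from \cite{Gui17a} proposition A.3 and corollary 2.15 (which is where the irreducibility of $W_i$ enters). Your plan --- Fourier extraction of $\mc L_i(w^{(i)}_0)_n$, the commutator formula with $Y_{ij}(v)_m$, iteration and the spanning of $W_{ij}$ by modes --- could probably be repaired on top of a corrected invariance step, but it introduces extra domain and convergence bookkeeping (e.g.\ assembling full-circle modes of $Y_{ij}(v)$ from interval-localized smearings affiliated with $\pi_{ij}(\mc A_V(J))$) that the paper's duality argument avoids entirely; also, your phrase that bump Fourier coefficients ``do not vanish at any fixed real $n$'' should be replaced by the correct statement that for each $n$ one can choose $\wtd f\in C_c^\infty(\wtd I)$ with nonvanishing $n$-th coefficient.
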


\begin{proof}
By \cite{Gui17a} proposition A.3, for any $w\in W_{ij}$ and $z\in\mathbb C^\times$, if $\bk {w|\mc L_i(w^{(i)},z)w^{(j)}}=0$ for any homogeneous $w^{(i)}\in W_i,w^{(j)}\in W_j$, then $w=0$. Since $W_i$ is irreducible, by the proof of \cite{Gui17a} corollary 2.15, if $\bk {w|\mc L_i(w^{(i)}_0,z)w^{(j)}}=0$ for any homogeneous $w^{(j)}\in W_j$, then $w=0$.

Let $\mc W$ be the closure of the subspace spanned by all $\mc L_i(w^{(i)}_0,\wtd f)w^{(j)}$ (where $\wtd f\in C_c^\infty(\wtd I)$ and $w^{(j)}\in W_j$) which contains all $\mc L_i(w^{(i)}_0,\wtd f)\eta$ (where $\wtd f\in C_c^\infty(\wtd I)$ and $\eta\in\mc H_j^\infty$). We shall show that its orthogonal complement $\mc W^\perp$ is trivial. Suppose that we can prove that $\mc W^\perp$ is an $\mc A_V$-submodule of $\mc H_{ij}$. If $\mc W^\perp$ is non-trivial, then by \cite{Gui17b} corollary 4.4, there is a non-zero vector $w\in \mc W^\perp$. So $\bk {w|\mc L_i(w^{(i)}_0,\wtd f)w^{(j)}}=0$ for any $w^{(j)}\in W_j,\wtd f\in C_c^\infty(\wtd I)$. Then, by our definition of smeared intertwining operator \eqref{eq90}, we have $\bk {w|\mc L_i(w^{(i)}_0,z)w^{(j)}}=0$ for any $w^{(j)}\in W_j$ and any $z\in I$ whose argument is taken to be $\arg_I(z)$. Thus by the first paragraph, we must have $w=0$, which is a contradiction.

We now prove that $\mc W^\perp$ is $\mc A_V$-invariant.  Let $\mbb D^\times=\{z\in\mbb C:| z|\leq1,z\neq 0\}$. Fix any $\xi\in\mc W^\perp$. For any $\wtd f\in C_c^\infty(\wtd I),y\in\mc A_V(I^c)_\infty,\eta\in\mc H_j^\infty$, by the strong intertwining property, 
\begin{align*}
\bk{y\mc L_i(w^{(i)}_0,\wtd f)\eta|\xi}=\bk{\mc L_i(w^{(i)}_0,\wtd f)y\eta|\xi}=0.
\end{align*}
By the positivity of $\ovl{L_0}$, the function
\begin{align*}
z\mapsto \bk{y\cdot z^{\ovl{L_0}}\mc L_i(w^{(i)}_0,\wtd f)\eta|\xi}
\end{align*}
is a multi-valued continuous function on $\mbb D^\times$, analytic on its interior, and (by \eqref{eq78}) equals zero on a small interval of $S^1$ containing $1$. Thus, by Schwarz reflection principle, the function is always zero. Thus $\bk{ye^{it\ovl{L_0}}\mc L_i(w^{(i)}_0,\wtd f)\eta|\xi}=0$ for any $\wtd f\in C_c^\infty(\wtd I),\eta\in\mc H_j^\infty,t\in\mbb R,y\in\mc A_V(I^c)_\infty$. By \eqref{eq78}, we conclude that $\bk{y\mc L_i(w^{(i)}_0,\wtd f)\eta|\xi}=0$ for any $\wtd J\in\Jtd,\wtd f\in C_c^\infty(\wtd J),\eta\in\mc H_j^\infty,y\in\mc A_V(I^c)_\infty$. Another application of Schwarz reflection principle shows that $\bk{e^{it\ovl{L_0}}ye^{-it\ovl{L_0}}\mc L_i(w^{(i)}_0,\wtd f)\eta|\xi}=0$ for any $\wtd J\in\Jtd,\wtd f\in C_c^\infty(\wtd J),\eta\in\mc H_j^\infty,I_1\sjs I^c,y\in\mc A_V(I_1)_\infty,t\in\mbb R$. Thus, for any $K\in\mc J$ whose size is smaller than that of $I^c$, we have $\bk{y\mc L_i(w^{(i)}_0,\wtd f)\eta|\xi}=0$ for any $\wtd J\in\Jtd,\wtd f\in C_c^\infty(\wtd J),\eta\in\mc H_j^\infty,y\in\mc A_V(K)_\infty$. Thus $\mc W^\perp$ is invariant under the action of  $\mc A_V(K)_\infty$ whenever $K$ has smaller size than $I^c$. Thus $\mc W^\perp$ is $\mc A_V$-invariant by the additivity of conformal nets and by the fact that $\mc A_V(K)_\infty$ is dense in $\mc A_V(K)$.
\end{proof}

\subsection{Vertex categorical extensions}\label{lb48}

Let $V$ be unitary and strongly local. Let $\mc C$ be a full rigid monoidal subcategory of $\Rep^\uni(V)$ as in the last section. We assume that $\mc F$ is a generating set of irreducible objects in $\mc C$ satisfying the conditions described in theorem \ref{lb31}. Then by that theorem, any unitary $V$-module $W_k$ in $\mc C$ can be integrated to an $\mc A_V$-module $\mc H_k$. We define a $*$-functor $\fk F:\mc C\rightarrow\Rep(\mc A_V)$ mapping each $W_k\in\mc C$ to $\fk F(W_k)=\mc H_k$. If $W_k,W_{k'}\in\mc C$, and $G\in\Hom_V(W_k,W_{k'})$, then $\fk F(G)\in\Hom_{\mc A_V}(\mc H_k,\mc H_{k'})$ is the closure of $G$ if we regard $G$ as a densely-defined linear operator from $\mc H_k$ to $\mc H_{k'}$ with domain $W_k$. Then by \cite{CWX} or by \cite{Gui17b} theorem 4.3, $\fk F:\mc C\rightarrow\Rep(\mc A_V)$ is a fully-faithful $*$-functor. We now equip $\fk F(\mc C)$ with the braided $C^*$-tensor categorical structure  $(\fk F(\mc C),\boxdot,\ss)$  naturally equivalent to $(\mc C,\boxtimes,\ss)$ under the $*$-functor $\fk F$. So, for instance, if $W_i,W_j\in\mc C$, we set $\mc H_i\boxdot\mc H_j=\mc H_{ij}$ (not to be confused with $\mc H_i\boxtimes\mc H_j$ defined by Connes fusion) to be the $\mc A_V$-module integrated from $W_{ij}=W_i\boxtimes W_j$. The braid operator $\ss_{i,j}:\mc H_{ij}\rightarrow\mc H_{ji}$ is defined to be the closure of $\ss_{i,j}:W_{ij}\rightarrow W_{ji}$

\begin{df}
Let $\mc C$ be a full rigid monoidal subcategory of $\Rep^\uni(V)$, $W_i,W_j\in\mc C$. Choose homogeneous vectors $w^{(i)}\in W_i,w^{(j)}\in W_j$. We say that the actions $w^{(i)},w^{(j)}\curvearrowright \mc C$ satisfy the \textbf{strong braiding property}, if for any $W_k\in\mc C$, $\wtd I,\wtd J\in\Jtd$ such that $\wtd I$ is anticlockwise to $\wtd J$, $\wtd f\in C^\infty_c(\wtd I),\wtd g\in C^\infty_c(\wtd J)$, the following diagram of preclosed operators commutes strongly:
\begin{align}
\begin{CD}
\mc H_k @> \quad\mc R_j(w^{(j)},\wtd g)\quad >> \mc H_{kj}\\
@V \mc L_i(w^{(i)},\wtd f) VV @V \mc L_i(w^{(i)},\wtd f) VV\\
\mc H_{ik} @> \quad\mc R_j(w^{(j)},\wtd g)\quad>> \mc H_{ikj}
\end{CD}.
\end{align}
\end{df}

\begin{thm}[Construction of vertex categorical extensions]\label{lb36}
Assume that $V$ satisfies conditions \ref{CondA} - \ref{CondI}. Let $\mc C$ be a full rigid monoidal subcategory of $\Rep^\uni(V)$ whose objects are energy-bounded, and let $\mc F$ be a set of irreducible $V$-modules in $\mc C$, such that $\mc F$ generates $\mc C$. Suppose that for each $W_i\in\mc F$ we can find a non-zero homogeneous vector $w^{(i)}\in W_i$, such that the following conditions hold:\\
(a) For any $W_i\in\mc F$, the action $w^{(i)}\curvearrowright\mc C$ satisfies the strong intertwining property.\\
(b) For any $W_i,W_j\in\mc F$, the actions $w^{(i)}, w^{(j)}\curvearrowright\mc C$ satisfy the strong braiding property.\\
Then objects in $\mc C$ are strongly integrable, and there exists a categorical local extension $\scr E^\loc=(\mc A_V,\fk F(\mc F),\boxdot,\fk H)$ of $\mc A_V$, which can be extended to a unique vector-labeled  closed categorical extension $\ovl{\scr E}=(\mc A_V,\fk F(\mc C),\boxdot,\mc H)$. Moreover, $\ovl{\scr E}$ is conformal.
\end{thm}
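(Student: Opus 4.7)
Plan: Strong integrability of every $W_k\in\mc C$ is immediate from Theorem \ref{lb31} applied to hypothesis (a), yielding a fully faithful $*$-functor $\fk F:\mc C\to\Rep(\mc A_V)$, $W_k\mapsto\mc H_k$. Transporting the ribbon structure of $\mc C$ along $\fk F$ produces a braided $C^*$-tensor category $(\fk F(\mc C),\boxdot,\ss)$ with $\mc H_i\boxdot\mc H_k:=\mc H_{ik}$. The remaining two clauses of the theorem---uniqueness/closure of $\ovl{\scr E}$ and its conformality---are then formal applications of Theorems \ref{lb18} and \ref{lb35} once a categorical local extension $\scr E^\loc=(\mc A_V,\fk F(\mc F),\boxdot,\fk H)$ has been built, so the substantive task is the construction of $\scr E^\loc$.

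For the construction, fix $W_i\in\mc F$ and $\wtd I\in\Jtd$. For each $\wtd f\in C_c^\infty(\wtd I)$, set $\fk a:=\mc L_i(w^{(i)},\wtd f)\Omega\in\mc H_i$ and take $\fk H_i(\wtd I)$ to be the set of such vectors as $\wtd f$ ranges. For each $W_k\in\fk F(\mc C)$, the left action $L(\fk a,\wtd I):\mc H_k\to\mc H_{ik}$ is to be the bounded extension of the preclosed operator $\mc L_i|_k(w^{(i)},\wtd f):\mc H_k^\infty\to\mc H_{ik}^\infty$, and $R(\fk a,\wtd I):=\ss_{i,k}L(\fk a,\wtd I)$ is the corresponding bounded extension of $\mc R_i|_k(w^{(i)},\wtd f)$. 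Hypothesis (a) together with Theorem \ref{lb31} ensures that these preclosed operators commute strongly with $\pi_k(\mc A_V(J))$ for every $J\sjs I^c$, hence by additivity with all of $\pi_k(\mc A_V(I^c))$; combined with the cyclic-separating property of $\Omega$ for the type III factor $\mc A_V(I^c)$, a standard Tomita--Takesaki argument then shows that $\fk a$ is $I$-bounded and that $L(\fk a,\wtd I)$ is a bounded element of $\Hom_{\mc A_V(I^c)}(\mc H_k,\mc H_{ik})$.

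The axioms of a categorical local extension are then verified as follows. Isotony is immediate, and functoriality in the source slot $W_k$ restates the functorialities \eqref{eq70} and \eqref{eq74}. Neutrality, $L(\fk a,\wtd I)\Omega=R(\fk a,\wtd I)\Omega=\fk a$, follows from $\mc L_i|_0=\mc R_i|_0=\mc Y_{\kappa(i)}$ and the fact that $\ss_{i,0}$ reduces to the identity under the unitor identification. The Reeh--Schlieder property for $\fk H_i(\wtd I)$ and the density of fusion products are precisely the $W_j=W_0$ and general-$W_j$ cases of Proposition \ref{lb56}. The braiding axiom is built into the definition $R=\ss L$. The locality axiom is the crucial use of hypothesis (b): Theorem \ref{lb38} yields the adjoint-commutative diagram of smeared operators on smooth vectors, and the strong commutativity of their closures---which is precisely the strong braiding property---lifts this to adjoint commutativity of the bounded $L(\fk a,\wtd I)$ and $R(\fk b,\wtd J)$ for $\wtd I$ anticlockwise to $\wtd J$. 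With $\scr E^\loc$ in hand, Theorem \ref{lb18} extends it uniquely to a closed categorical extension $\ovl{\scr E}=(\mc A_V,\fk F(\mc C),\boxdot,\mc H)$, and Theorem \ref{lb35} supplies conformality automatically.

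The principal obstacle is the analytic step in the second paragraph: bridging from the a priori unbounded smeared intertwining operators $\mc L_i(w^{(i)},\wtd f)$ to bounded morphisms of $\mc A_V(I^c)$-modules, i.e.\ showing that $\fk a$ is $I$-bounded and that its left action closes to a bounded map into $\mc H_{ik}$. This rests on the type III factorial property of the local algebras together with the strong commutativity supplied by Theorem \ref{lb31}, and is the standard but delicate technical price one must pay to convert VOA-theoretic data into the categorical-extension formalism; once this analytic bridge is in place, the verification of all remaining axioms is either formal functoriality or a direct invocation of Theorems \ref{lb18}, \ref{lb35}, \ref{lb38}, and Proposition \ref{lb56}.
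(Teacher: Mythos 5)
Your overall architecture matches the paper's: strong integrability from Theorem \ref{lb31}, a categorical local extension $\scr E^\loc$ over $\fk F(\mc F)$ whose locality comes from the strong braiding hypothesis via Theorem \ref{lb38}, Reeh--Schlieder and density from Proposition \ref{lb56}, and then Theorems \ref{lb18} and \ref{lb35} to close up and obtain conformality. But there is a genuine gap in your key analytic step. You define $L(\fk a,\wtd I)$ as ``the bounded extension'' of the preclosed operator $\mc L_i|_k(w^{(i)},\wtd f)$ and assert that a Tomita--Takesaki/type III argument shows $\fk a=\mc L_i(w^{(i)},\wtd f)\Omega$ is $I$-bounded. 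This is false in general: the energy bounds condition only controls $\mc Y_\alpha(w^{(i)})_n$ by powers of $1+L_0$, so $\ovl{\mc L_i|_k(w^{(i)},\wtd f)}$ is a genuinely unbounded closed operator unless $0$-th order energy bounds hold, which is not among the hypotheses (the theorem is designed precisely to cover cases where they fail). Strong commutativity with $\pi_k(\mc A_V(I^c))$ makes the closure \emph{affiliated} with the relevant commutant, not bounded, and no factoriality argument converts an unbounded affiliated operator into a bounded one; correspondingly, the vector $\mc L_i(w^{(i)},\wtd f)\Omega$ need not lie in $\mc H_i(I)$.

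The paper circumvents this by never using the smeared intertwining operator itself as the left action. It takes the left polar decomposition $\ovl{\mc L_i|_k(w^{(i)},\wtd f)}=UH$ and uses only the phase $U_i|_k(w^{(i)},\wtd f)$, a partial isometry (hence bounded) which still lies in $\Hom_{\mc A_V(I^c)}(\mc H_k,\mc H_{ik})$ because strong commutativity passes to both polar factors. Since the phase of a single operator does not by itself give enough vectors, the label set is enlarged to $\fk H_i(\wtd I)=C^\infty_c(\wtd I)\times\mc A_V(I)$ and one sets $L((\wtd f,x),\wtd I)=U_i|_k(w^{(i)},\wtd f)\pi_k(x)$, with $R$ defined via the phase of $\ovl{\mc R_i|_k(w^{(i)},\wtd f)}$; Reeh--Schlieder and density of fusion products then still follow from Proposition \ref{lb56}, and locality for the phases (and their compositions with localized elements) follows from the strong braiding hypothesis because the generated von Neumann algebras contain the phases and spectral projections. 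Your proof needs to be repaired along these lines; as written, the second paragraph's boundedness claim is the load-bearing step and it does not hold.
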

\begin{proof}
For each $\wtd I\in\Jtd,W_i\in\mc F$, we let $\fk H_i(\wtd I)=C^\infty_c(\wtd I)\times \mc A_V(I)$. Choose $\fk a=(\wtd f,x)\in C^\infty_c(\wtd I)\times \mc A_V(I)=\fk H_i(I)$. For each $W_k\in\mc C$, consider the left polar decomposition  $\ovl{\mc L_i|_k(w^{(i)},\wtd f)}=UH$ of the closed operator $\ovl{\mc L_i|_k(w^{(i)},\wtd f)}$ from $\mc H_k$ to $\mc H_{ik}$, where $U$ is the partial isometry (the phase) from $\mc H_k$ to $\mc H_{ik}$, and $H$ is the self-adjoint operator on $\mc H_k$. We write the phase $U$ as $U_i|_k(w^{(i)},\wtd f)$. Similarly, we let $V_i|_k(w^{(i)},\wtd f)$ be the phase of $\ovl{\mc R_i|_k(w^{(i)},\wtd f)}$.

Now for any $\xi^{(k)}\in\mc H_k$, we define
\begin{align}
L(\fk a,\wtd I)\xi^{(k)}=U_i|_k(w^{(i)},\wtd f)\pi_k(x)\xi^{(k)},\qquad R(\fk a,\wtd I)\xi^{(k)}=V_i|_k(w^{(i)},\wtd f)\pi_k(x)\xi^{(k)}.
\end{align}
We now verify that such construction makes $\scr E^\loc=(\mc A_V,\fk F(\mc F),\boxdot,\fk H)$ a categorical local extension of $\mc A_V$. By the strong intertwining property and theorem \ref{lb31}, the actions of $L(\fk a,\wtd I)$ and $R(\fk a,\wtd I)$ on $\mc H_k$ satisfy $L(\fk a,\wtd I)\in\Hom_{\mc A(I^c)}(\mc H_k,\mc H_{ik})$, $R(\fk a,\wtd I)\in\Hom_{\mc A(I^c)}(\mc H_k,\mc H_{ki})$. Isotony is easy to check. Since $\mc L_i$ and $\mc R_i$ are related by the braid operator $\ss$, so do their phases. So braiding is checked. Neutrality follows immediately from the braiding and the coherence theorem of $\ss$. Functoriality follows from \eqref{eq70}, \eqref{eq74}. The Reeh-Schlieder property and the density of fusion products follows from proposition \ref{lb56}. Finally, locality follows immediately from the strong braiding property.

Thus we've proved that $\scr E^\loc=(\mc A_V,\fk F(\mc F),\boxdot,\fk H)$ is a categorical local extension. By theorem \ref{lb18}, there exists a unique vector-labeled closed categorical extension  $\ovl{\scr E}=(\mc A_V,\fk F(\mc C),\boxdot,\mc H)$ containing $\scr E^\loc$. By theorem \ref{lb35}, $\ovl{\scr E}$ is conformal.
\end{proof}

\begin{co}
Assume that $V$ and $\mc C$ satisfy the conditions in theorem \ref{lb36}. Then $(\mc C,\boxtimes,\ss)$ is equivalent to a braided $C^*$-tensor subcategory of $(\Rep(\mc A_V),\boxtimes,\mathbb B)$ under the $*$-functor $\fk F$.
\end{co}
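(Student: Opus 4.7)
The plan is to chain two theorems already in place. By theorem \ref{lb36}, the hypotheses give a vector-labeled closed categorical extension $\ovl{\scr E}=(\mc A_V,\fk F(\mc C),\boxdot,\mc H)$ of the conformal net $\mc A_V$, with braid operator $\ss$ inherited (via $\fk F$) from the VOA braiding of $\mc C$. The construction of $\boxdot$ on $\fk F(\mc C)$ is by definition the transport of $(\mc C,\boxtimes,\ss)$ along the fully faithful $*$-functor $\fk F$ (as set up at the beginning of section \ref{lb48}), so $(\mc C,\boxtimes,\ss)$ and $(\fk F(\mc C),\boxdot,\ss)$ are already equivalent as braided $C^*$-tensor categories under $\fk F$. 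Thus it suffices to show that $(\fk F(\mc C),\boxdot,\ss)$ is equivalent to a braided $C^*$-tensor subcategory of $(\Rep(\mc A_V),\boxtimes,\mathbb B)$.

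For this, I would apply theorem \ref{lb12} directly to $\ovl{\scr E}$. Since $\ovl{\scr E}$ is vector-labeled, the theorem produces a unitary functorial isomorphism $\Phi_{i,j}:\mc H_i\boxtimes\mc H_j\to \mc H_i\boxdot\mc H_j$ (for all $\mc H_i,\mc H_j\in\fk F(\mc C)$) which is an equivalence of braided $C^*$-tensor categories between $(\wht{\fk F(\mc C)},\boxtimes,\mathbb B)$ and $(\fk F(\mc C),\boxdot,\ss)$, where $\wht{\fk F(\mc C)}$ denotes the $C^*$-category of $\mc A_V$-modules unitarily equivalent to objects of $\fk F(\mc C)$. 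In particular, $\wht{\fk F(\mc C)}$ is a full subcategory of $\Rep(\mc A_V)$, is closed under Connes fusion $\boxtimes$ (since the theorem exhibits $\mc H_i\boxtimes\mc H_j$ as unitarily equivalent to $\mc H_i\boxdot\mc H_j\in\fk F(\mc C)$), and hence is a braided $C^*$-tensor subcategory of $(\Rep(\mc A_V),\boxtimes,\mathbb B)$ in its own right.

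Composing the two equivalences $(\mc C,\boxtimes,\ss)\xrightarrow{\;\fk F\;}(\fk F(\mc C),\boxdot,\ss)\xrightarrow{\;\Phi^{-1}\;}(\wht{\fk F(\mc C)},\boxtimes,\mathbb B)$, we obtain the desired equivalence of $(\mc C,\boxtimes,\ss)$ with a braided $C^*$-tensor subcategory of $(\Rep(\mc A_V),\boxtimes,\mathbb B)$ under $\fk F$ (at the level of objects, one simply sends $W_i\in\mc C$ to $\mc H_i=\fk F(W_i)\in\Rep(\mc A_V)$, equipped with its Connes fusion monoidal structure; on morphisms one uses that $\fk F$ is fully faithful by the setup in section \ref{lb48}). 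There is no genuine obstacle: both ingredients have been established, and the only real point to check is the essentially tautological observation that $\Phi$ intertwines the braidings on both sides (clause (c) of theorem \ref{lb12}), so the composition is a braided equivalence rather than merely a monoidal one.
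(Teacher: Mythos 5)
Your proposal is correct and follows exactly the paper's route: the paper's proof is simply to combine theorems \ref{lb36} and \ref{lb12}, which is what you do, with the extra (accurate) bookkeeping that $\boxdot$ on $\fk F(\mc C)$ is by construction the transport of $(\mc C,\boxtimes,\ss)$ along the fully faithful $*$-functor $\fk F$. No gaps; the additional detail about clause (c) of theorem \ref{lb12} ensuring the equivalence is braided rather than merely monoidal is exactly the point the paper leaves implicit.
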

\begin{proof}
This follows immediately from  theorems \ref{lb36} and \ref{lb12}.
\end{proof}

The construction (but not just the existence) of the conformal categorical extensions in theorem \ref{lb36} is very important to us. We call them \textbf{vertex categorical extensions}. In the following we give some criteria for the strong intertwining and  braiding properties, which are the crucial conditions required in theorem \ref{lb36}. The following lemma can be proved using results in \cite{TL99} (see \cite{Gui17a} theorem B.9)

\begin{lm}\label{lb39}
Let $D$ be a self-adjoint positive operator on a Hilbert space $\mc H$, and let $\mc H^\infty=\bigcap_{n\in\mathbb Z_{\geq0}}\Dom(D^n)$ be the dense subspace of smooth vectors in $\mc H$. Suppose that $K,T$ are preclosed operators on $\mc H$ satisfying the following conditions:\\
(a) $\Dom(K)=\Dom(T)=\mc H^\infty$, $K\mc H^\infty\subset \mc H^\infty,T\mc H^\infty\subset\mc H^\infty$.\\
(b) $K$ is symmetric, which is equivalent to saying that $K=K^\dagger$ or $K\subset K^*$.\\
(c) $KT\xi=TK\xi$ for any $\xi\in\mc H^\infty$.\\
(d) There exists $m\in\mathbb Z_{\geq0}$, such that for any $n\in\mathbb Z_{\geq0}$, we can find  positive numbers $|K|_{n+1},|K|_{D,n+1},|T|_{n+m}$, such that for all $\xi\in\mc H^\infty$,
\begin{gather}
\lVert (1+D)^nK\xi\lVert\leq |K|_{n+1}\lVert (1+D)^{n+1}\xi \lVert,\\
\lVert (1+D)^n(DK-KD)\xi\lVert\leq |K|_{D,n+1}\lVert (1+D)^{n+1}\xi \lVert,\\
\lVert (1+D)^nT\xi\lVert\leq |T|_{n+m}\lVert (1+D)^{n+m}\xi \lVert.
\end{gather}
Then $\ovl K$ is self-adjoint, and $T$ and $K$ commute strongly.
\end{lm}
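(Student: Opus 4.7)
The plan is to split the lemma into two assertions and attack them using standard commutator-theorem techniques, since the hypotheses (d) are precisely first-order energy bounds of $K$ and $[D,K]$ relative to $D$.

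First, I would prove that $\overline K$ is self-adjoint by invoking Nelson's commutator theorem (or the Glimm--Jaffe version) with $1+D$ as the comparison operator. The hypotheses say exactly that $\mc H^\infty$ is a common invariant core, that $K$ maps $\Dom(1+D)$ into $\mc H$ with $\|K\xi\|\le |K|_1\|(1+D)\xi\|$, and that the commutator form $[(1+D),K]$ is bounded by $|K|_{D,1}\|(1+D)\xi\|^2$ via polarization. Combined with symmetry of $K$ and self-adjointness of $\ovl{(1+D)}$, Nelson's theorem yields essential self-adjointness of $K$ on $\mc H^\infty$, i.e.\ $\ovl K=\ovl K^*$.

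Second, I would show that $e^{it\ovl K}$ preserves $\Dom(\ovl{(1+D)^n})$ for every $n$, with polynomial growth in $t$. The standard trick is to set $\xi_t=e^{it\ovl K}\xi$ for $\xi\in\mc H^\infty$ and estimate $\tfrac{d}{dt}\|(1+D)^n\xi_t\|^2$ using the iterated-commutator bound $\|(1+D)^{n-1}[(1+D),K]\xi_t\|\le C_n\|(1+D)^n\xi_t\|$ obtained by induction from hypothesis (d). A Gr\"onwall argument then gives $\|(1+D)^n\xi_t\|\le e^{C_n|t|}\|(1+D)^n\xi\|$, so $e^{it\ovl K}$ maps $\mc H^\infty$ into itself.

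Third, with invariance of $\mc H^\infty$ secured, I want to verify that $e^{it\ovl K}$ conjugates $\ovl T$ to itself. On $\mc H^\infty$, condition (c) gives $KT\xi=TK\xi$, so formally $\tfrac{d}{dt}(e^{-it\ovl K}\ovl T e^{it\ovl K}\xi)=0$. To make this rigorous one checks that $t\mapsto T e^{it\ovl K}\xi$ is strongly $C^1$ on $\mc H^\infty$ (using the $|T|_{n+m}$ bound on $T$ plus the invariance estimate of the previous step), obtaining $e^{it\ovl K}T\xi=T e^{it\ovl K}\xi$ for every $\xi\in\mc H^\infty$. Passing to closures yields $e^{it\ovl K}\ovl T\subset \ovl T e^{it\ovl K}$ and, replacing $t$ by $-t$, the reverse inclusion; the same argument with $T^\dagger$ (which satisfies the same energy bound as $T$) handles $\ovl T^*$. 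Thus every spectral projection of $\ovl K$ commutes with $\ovl T$ and $\ovl T^*$, i.e.\ $K$ and $T$ commute strongly.

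The main obstacle is the bookkeeping in the second step: making the induction on $n$ work requires iterated commutators $[(1+D),[(1+D),\ldots,K]\ldots]$ to be controlled, which is not literally in hypothesis (d) but follows by expanding the commutators and repeatedly using the two inequalities on $K$ and $[D,K]$. The differentiability argument in the third step also needs care because $T$ raises the energy by $m$, so one must work inside $\Dom(\ovl{(1+D)^{n+m+1}})$ and invoke the $t$-uniform bounds just obtained.
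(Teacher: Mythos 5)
Your outline is essentially the proof the paper itself relies on: no in-text argument is given for this lemma, only a citation to \cite{TL99} (see \cite{Gui17a}, theorem B.9), and the argument there is exactly your three-step scheme --- essential self-adjointness of $K$ by a commutator theorem, invariance of $\mc H^\infty$ under $e^{it\ovl K}$ with controlled growth via a Gr\"onwall-type (regularized) estimate using the $[D,K]$ bounds, and differentiation of $t\mapsto e^{-it\ovl K}Te^{it\ovl K}\xi$ together with the same treatment of $T^\dagger$ to pass to strong commutation. One small repair in your first step: with comparison operator $1+D$ the quadratic-form hypothesis of Nelson's commutator theorem does not follow from $\lVert (DK-KD)\xi\rVert\le |K|_{D,1}\lVert(1+D)\xi\rVert$ by Cauchy--Schwarz alone (that only yields $\lVert\xi\rVert\,\lVert(1+D)\xi\rVert$, not $c\lVert(1+D)^{1/2}\xi\rVert^2$); instead apply the theorem with $N=(1+D)^2$, where the skew-symmetry of $DK-KD$ on $\mc H^\infty$ gives $|\langle K\xi,N\xi\rangle-\langle N\xi,K\xi\rangle|\le 2|K|_{D,1}\lVert(1+D)\xi\rVert^2$, which is exactly the required bound, or simply quote the corresponding statement from \cite{TL99}.
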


The above lemma can be applied to the situations where one of the two preclosed operators $A$ and $B$ is symmetric. In the case that neither of them is symmetric, we need to decompose $A$ into its real and imaginary parts $A=H+iK$ (where $H$ and $K$ are symmetric), prove the strong commutativity of $H,K$ with $B$, and finally show the strong commutativity of $A$ and $B$ by linearity. However, to be rigorous, we have to first verify (at least in our situations) that linear sums preserve the strong commutativity of preclosed operators. This is achieved by the following very useful lemma.

\begin{lm}\label{lb44}
	Let $P(z_1,\cdots,z_m)$ and $Q(\zeta_1,\cdots,\zeta_n)$ be polynomials of $z_1,\dots,z_m$ and $\zeta_1,\dots,\zeta_n$ respectively. Let $D$ be a self-adjoint positive operator on $\mc H$, and set $\mc H^\infty=\bigcap_{n\in\mathbb Z_{\geq0}}\Dom(D^n)$. Choose  preclosed operators $A_1,\dots,A_m$ and $B_1,\dots,B_n$   on 
	$\mc H$ with common invariant (dense) domain $\mc H^\infty$. Assume that there exists $\varepsilon>0$ such that   $e^{itD}A_re^{-itD}$ commutes strongly with $B_s$ for any $r=1,\dots,m,s=1,\dots,n$, and  $t\in(-\varepsilon,\varepsilon)$.  Assume also that the unbounded operators $A=P(A_1,\cdots.A_m),B=Q(B_1,\cdots,B_n)$ (with common domain $\mc H^\infty$) are preclosed. Then $A$ commutes strongly with $B$.
\end{lm}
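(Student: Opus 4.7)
The plan is to promote the strong commutativity of individual generators to that of polynomials by showing that the closure of each side is affiliated with commuting von Neumann algebras generated respectively by the individual $\ovl{A_r}$'s and $\ovl{B_s}$'s.

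First, setting $t = 0$ in the hypothesis yields that $A_r$ and $B_s$ strongly commute for each $r, s$. Consequently, if $\mc M, \mc N$ denote the von Neumann algebras generated respectively by $\{\ovl{A_r}, \ovl{A_r}^*\}_{r=1}^m$ and $\{\ovl{B_s}, \ovl{B_s}^*\}_{s=1}^n$, then $\mc M \subseteq \mc N'$. The lemma will follow once $\ovl A$ is shown to be affiliated to $\mc N'$ (and, symmetrically, $\ovl B$ to $\mc M$); strong commutativity of $\ovl A$ and $\ovl B$ is then immediate from $\mc N' \supseteq \mc M$.

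To prove affiliation of $\ovl A$, one must show that every unitary $U \in \mc N$ preserves $\Dom(\ovl A)$ and commutes with $\ovl A$ on it. Each $\ovl{A_r}$ individually is affiliated to $\mc N'$, so $U$ commutes with each $\ovl{A_r}$; however, $U$ need not preserve the common invariant core $\mc H^\infty$ on which the polynomial $A = P(A_1,\ldots,A_m)$ is defined, so the commutation does not directly pass to the polynomial. This is where the rotation hypothesis enters. For $\phi \in C_c^\infty((-\varepsilon,\varepsilon))$ with $\int \phi = 1$, I would form the smoothed unitary $U^\phi = \int \phi(t)\, e^{-itD}U e^{itD}\, dt$. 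The rotated strong commutativity, $e^{itD}A_r e^{-itD} \leftrightarrow B_s$ for $t \in (-\varepsilon,\varepsilon)$, is equivalent (since $U \in \mc N$) to $e^{-itD}U e^{itD}$ commuting with $\ovl{A_r}$ in the affiliated sense for each such $t$. Integrating against $\phi$ should then yield $U^\phi A_r \xi = A_r U^\phi \xi$ for $\xi \in \mc H^\infty$, because $\ovl{A_r}$ is closed and the integrand is bounded-continuous in $t$. If $U^\phi$ also preserves $\mc H^\infty$, then $U^\phi$ commutes pointwise with $P(A_1,\ldots,A_m)$ on $\mc H^\infty$; sending $\phi$ to the Dirac delta, $U^\phi \to U$ strongly, and the preclosedness of $P(A_1,\ldots,A_m)$ combined with a graph-closure argument promotes the identity to $U \ovl A = \ovl A U$, establishing the affiliation.

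The main obstacle is controlling the smoothed unitary $U^\phi$: verifying that it actually preserves $\mc H^\infty$, and that the Bochner integral genuinely commutes with the unbounded closed operator $\ovl{A_r}$ on the core. Since $U$ mixes spectral subspaces of $D$, the boundedness of $D^k U^\phi$ on $\mc H^\infty$ is not immediate from the smoothness of $\phi$ alone. I expect to resolve this by combining integration by parts in the $D$-variable (exploiting $D e^{\pm itD} = \mp i\, d/dt\, e^{\pm itD}$) with the positivity of $D$, which permits analytic continuation of $t \mapsto e^{itD}$ into the lower half-plane and converts real-time rotation-smoothing into heat-kernel-type smoothing that manifestly preserves $\mc H^\infty$. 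A Schwarz-reflection argument analogous to the one in the proof of proposition \ref{lb56} should then let one transfer the commutation back from the analytically continued regime to the original $U^\phi$, completing the proof.
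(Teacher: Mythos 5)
Your central mechanism is the right one, and it is in fact the engine of the paper's own proof: smooth an operator from one commutant by conjugation with $e^{itD}$ against a bump function supported in $(-\varepsilon,\varepsilon)$, use the rotated hypothesis to keep the smoothed operator commuting with each generator, exploit the fact that the smoothed operator preserves $\mc H^\infty$ to pass from generators to the polynomial pointwise on $\mc H^\infty$, and then remove the smoothing by a strong limit together with closedness of $\ovl A$. (The paper smooths a general element $x$ of the relevant von Neumann algebra rather than a unitary, and simply cites \cite{Gui17b} theorem 4.2 for the invariance of $\mc H^\infty$ under $x_h$; your integration-by-parts identity $D e^{\pm itD}=\mp i\,\partial_t e^{\pm itD}$ is exactly how that is proved. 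The subsequent detour through analytic continuation and Schwarz reflection is unnecessary and in fact problematic, since $e^{-izD}$ is unbounded for $\mathrm{Im}\,z>0$, so the conjugated operator $e^{-izD}Ue^{izD}$ does not continue into a half-plane.)

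The genuine gap is in your opening reduction, and it is not cosmetic. Writing $W^*(T)$ for the von Neumann algebra generated by $\ovl T$ and $\ovl T^*$, your affiliation claims give $W^*(A)\subseteq\mc N'=\bigcap_s W^*(B_s)'$ and (on the charitable reading $\mc M'$ for $\mc M$) $W^*(B)\subseteq\mc M'$; that is, $\ovl A$ commutes strongly with each individual $\ovl{B_s}$ and $\ovl B$ with each individual $\ovl{A_r}$. This does \emph{not} imply that $W^*(A)$ and $W^*(B)$ commute with each other: for that you would additionally need $W^*(B)\subseteq\mc N$ (or $W^*(A)\subseteq\mc M$), i.e.\ that the closure of a polynomial in the $B_s$ is affiliated to the von Neumann algebra generated by the $\ovl{B_s}$ — and this is precisely the Nelson-type statement that fails in general and that the lemma is engineered to avoid assuming. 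The repair is a two-pass bootstrap, which is how the paper organizes the proof: first run your smoothing argument with a single $A_r$ against the polynomial $B$, noting that shifting the rotation parameter gives you for free the \emph{rotated} conclusion that $e^{itD}A_re^{-itD}$ commutes strongly with $B$ for all $t\in(-\varepsilon,\varepsilon)$; only with this in hand can you run the smoothing a second time, now applied to unitaries of $W^*(B)$ itself (rather than of $\mc N$) against the polynomial $A$, to obtain $W^*(A)\subseteq W^*(B)'$. As written, your argument stops one iteration short and only proves strong commutativity of $\ovl A$ with each $\ovl{B_s}$, not with $\ovl B$.
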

Note that here $P(A_1,\cdots.A_m)$ and $Q(B_1,\cdots,B_n)$ are understood in the obvious way, i.e., by substituting the operators into the polynomials. So, for instance, if $P(z_1,z_2,z_3)=2z_1z_3+z_2^2$ then $A=2A_1A_3+A_2^2$. Note also that an invariant domain of an unbounded operator means that this domain is invariant under the action of this operator.
\begin{proof}
We first prove the special case when  $m=1$ and $P(z_1)=z_1$. Then $A=A_1$. Let $\mc M$ be the von Neumann algebra generated by $\ovl A,\ovl A^*$, and choose an arbitrary $x\in\mc M$. Then it is not hard to show that for any $h\in C^\infty_c(-\varepsilon,\varepsilon)$ satisfying $\int^\varepsilon_{-\varepsilon} h(t)dt=1$, the operator $x_h=\int^\varepsilon_{-\varepsilon} e^{itD}x e^{-itD}h(t)dt$ satisfies $x_h\mc H^\infty\subset\mc H^\infty,x_h^*\mc H^\infty\subset\mc H^\infty$ (see, for example, the proof of \cite{Gui17b} theorem 4.2), and that as $h$ converges to the delta-function at $0$, $x_h$ converges strongly to $x$. Now, by assumption, each $x_h$ commutes strongly with any $B_s$. Therefore, $x_hB_s\xi=B_sx_h\xi,x_h^*B_s=B_sx_h^*\xi$  for any $\xi\in\mc H^\infty$ and any $s=1,\dots,n$. By the invariance of $\mc H^\infty$ under $B_1,\dots,B_n$,  we have $x_hB\xi=Bx_h\xi, x_h^*B\xi=Bx_h^*\xi$ for any $\xi\in\mc H^\infty$, which implies the strong commutativity of $x_h$ and $\ovl B$ (see \cite{Gui17a} section B.1).  Since $x_h$ converges strongly to $x$, $x$ and $\ovl B$ also commute strongly. Thus $\ovl A$ and $\ovl B$ commute strongly. Note that for any $t\in (-\varepsilon,\varepsilon)$, $e^{itD}A e^{-itD}$ and $B_1,\dots,B_n$ satisfy a similar condition. Therefore $e^{itD}A e^{-itD}$ also commutes strongly with $B$.

Now for general $m$ and $P$, we know from the last paragraph that $A_r$ commutes strongly with $e^{-itD}Be^{itD}$ for any $r=1,\dots,m$ and $t\in(-\varepsilon,\varepsilon)$. Therefore, by the last paragraph again, $B$ commutes strongly with $A$.
\end{proof}

Using the above two lemmas and the rotation covariance of smeared intertwining operators \eqref{eq78}, one can prove the following theorems.

\begin{thm}
Let $V$ be  unitary and strongly local,  $\mc C$  a full rigid monoidal subcategory of $\Rep^\uni(V)$ whose objects are energy-bounded. Choose  $W_i\in\mc C$, and a quasi-primary vector $w^{(i)}\in W_i$. If $\mc L_i|_k(w^{(i)},x)$ satisfies $1$-st order energy bounds for any $W_k\in\mc C$, then the action $w^{(i)}\curvearrowright\mc C$ satisfies the strong intertwining property.
\end{thm}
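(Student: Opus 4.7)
The plan is to verify the two requirements of Definition \ref{lb51} for the action $w^{(i)} \curvearrowright \mc C$. The first (energy bounds for every $\mc Y_\alpha$) reduces to the hypothesis almost immediately, while the second (adjoint commutation of smeared operators on disjoint intervals) is the real content and will be proved by the Toledano--Laredo-style symmetrization used throughout the paper: assemble $\mc Y_\alpha$ and its formal adjoint into a self-adjoint block operator on $\mc H_j \oplus \mc H_k$ and invoke Lemma \ref{lb39}.

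For the energy bounds, I will use the universal property of $\mc L_i$ encoded in \eqref{eq88}: any $\mc Y_\alpha \in \mc V{k\choose i~j}$ factors as $\mc Y_\alpha = T_\alpha \mc L_i|_j$ for a $V$-module morphism $T_\alpha \in \Hom_V(W_{ij}, W_k)$. Because $\mc C \subset \Rep^\uni(V)$ consists of unitary modules with finite-dimensional $\Hom$-spaces, $T_\alpha$ closes to a bounded operator $\ovl{T_\alpha}: \mc H_{ij} \to \mc H_k$ commuting with $\ovl{L_0}$, so the hypothesized 1-st order bounds on $\mc L_i|_j(w^{(i)}, x)$ transfer directly to $\mc Y_\alpha(w^{(i)}, x)$.

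For the main strong-commutation part, I fix $\mc Y_\alpha$, a homogeneous $v \in V$, and test functions $\wtd f \in C^\infty_c(\wtd I), g \in C^\infty_c(J)$ with $I, J$ disjoint. On $\mc H = \mc H_j \oplus \mc H_k$ with $D = \ovl{L_0}_j \oplus \ovl{L_0}_k$, I will introduce the preclosed operators
\[ K = \begin{pmatrix} 0 & \mc Y_\alpha(w^{(i)}, \wtd f)^\dagger \\ \mc Y_\alpha(w^{(i)}, \wtd f) & 0 \end{pmatrix}, \qquad T = \begin{pmatrix} Y_j(v, g) & 0 \\ 0 & Y_k(v, g) \end{pmatrix}, \]
with $K$ formally self-adjoint on $\mc H^\infty = \mc H_j^\infty \oplus \mc H_k^\infty$. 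The energy-bound hypotheses of Lemma \ref{lb39} will follow from the first paragraph, \cite{Gui17a} proposition 3.4 (applicable because $w^{(i)}$ is quasi-primary, so $\mc Y_{\alpha^*}(\ovl{w^{(i)}}, \cdot)$ also satisfies 1-st order bounds), the rotation-covariance identity \eqref{eq79} for $[D, K]$, and the standing energy-boundedness of $V$ and of the objects of $\mc C$. The formal commutation $KT = TK$ on $\mc H^\infty$ reduces to the identity $\mc Y_\alpha(w^{(i)}, \wtd f) Y_j(v, g) = Y_k(v, g) \mc Y_\alpha(w^{(i)}, \wtd f)$ and its formal-adjoint twin, both of which follow from Proposition \ref{lb30} applied to the trivial pointwise braid between a vertex operator on $V$ and an intertwining operator (whose Jacobi-identity commutator is supported on the diagonal $\{z_v = z_i\}$ and so vanishes on $J \times I$); for the adjoint version one first uses the quasi-primary hypothesis to collapse formula \eqref{eq76} to a single $\mc Y_{\alpha^*}$-term still supported in $\wtd I$, and then invokes Proposition \ref{lb30} again. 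Lemma \ref{lb39} will then give self-adjointness of $\ovl K$ and strong commutativity of $K$ and $T$.

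The remaining step, which I expect to be the main obstacle, is translating strong commutativity of $K, T$ on $\mc H_j \oplus \mc H_k$ into the strong commutativity of the asymmetric diagram in the sense of Definition \ref{lb37}. After reindexing the auxiliary four-block space $\mc P \oplus \mc Q \oplus \mc R \oplus \mc S$ of that definition as $(\mc H_j \oplus \mc H_k)^{\oplus 2}$, the extension operators $R, S$ take block forms assembled from $K$, $T$, and the bounded block-diagonal projection $E_j$ onto $\mc H_j$. Since $E_j$ commutes with both $D$ and $T$, and since small time-translates $e^{itD} K e^{-itD}$ continue to satisfy the hypotheses of Lemma \ref{lb39} (the rotated support of $\wtd f$ remains disjoint from $J$), Lemma \ref{lb44} applied to the polynomial $P(K, E_j) = KE_j$ will yield strong commutativity of $\ovl{KE_j}$ with $\ovl T$. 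Unwinding the block structure identifies this with the required strong commutativity of $R$ and $S$, completing Definition \ref{lb51}. The subtlety is precisely that $E_j$ does not commute with $K$, so $\mc Y_\alpha(w^{(i)}, \wtd f)$ cannot simply be read off the spectral decomposition of $\ovl K$; Lemma \ref{lb44}, which is tailored to polynomial combinations with an invariant smooth domain, is what makes this extraction rigorous.
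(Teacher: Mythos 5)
Your proposal is correct and follows essentially the same route as the paper, which defers this proof to step 2 of \cite{Gui17b} theorem 4.8 or to the proof of theorem \ref{lb41}: smear, symmetrize, apply lemma \ref{lb39} against the (strongly local) smeared vertex operators, and recombine with lemma \ref{lb44}. The only real difference is in the recombination step: the paper recovers the asymmetric operator as $A=H+iK$ from the two symmetric parts $H=(A+A^\dagger)/2$, $K=(A-A^\dagger)/(2i)$, whereas you recover it as $(A+A^\dagger)E_j$ using the bounded block-diagonal projection $E_j$; both are legitimate instances of lemma \ref{lb44}, and your reduction of definition \ref{lb37} to the two-block operator $N=KE_j$ is sound.
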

\begin{proof}
See either step 2 of the proof of \cite{Gui17b} theorem 4.8, or the proof of the next theorem.
\end{proof}

\begin{thm}\label{lb41}
Let $V$ be  unitary and strongly local,  $\mc C$  a full rigid monoidal subcategory of $\Rep^\uni(V)$.  Choose $W_i,W_j\in\mc C$,  quasi-primary $w^{(i)}\in W_i$, and homogeneous $w^{(j)}\in W_j$. If for any $W_k\in\mc C$, $\mc L_i|_k(w^{(i)},x)$ satisfies $1$-st order energy bounds, and $\mc L_j|_k(w^{(j)},x)$ is energy bounded, then the actions $w^{(i)},w^{(j)}\curvearrowright\mc C$ satisfy the strong braiding property. 
\end{thm}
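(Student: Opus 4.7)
My plan is to follow the strategy of \cite{Gui17b} Theorem 4.8: assemble the diagram into one Hilbert space, decompose the resulting non-symmetric preclosed operators into symmetric parts, apply Lemma \ref{lb39} to pairs of symmetric operators, and recombine via Lemma \ref{lb44}. First, set $\mc H=\mc H_k\oplus\mc H_{kj}\oplus\mc H_{ik}\oplus\mc H_{ikj}$ with positive self-adjoint $D=\bigoplus\ovl{L_0}$, and let $R,S$ be the extensions (Definition \ref{lb37}) of $\mc L_i(w^{(i)},\wtd f)$ and $\mc R_j(w^{(j)},\wtd g)$ as they appear in the diagram; the goal reduces to strong commutativity of $R$ with $S$. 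Both $R,S$ and their formal adjoints $R^\dagger,S^\dagger$ share the invariant domain $\mc H^\infty=\mc H^\infty_k\oplus\mc H^\infty_{kj}\oplus\mc H^\infty_{ik}\oplus\mc H^\infty_{ikj}$, so one obtains manifestly symmetric operators $H_1=(R+R^\dagger)/2$, $H_2=(R-R^\dagger)/(2i)$, $K_1=(S+S^\dagger)/2$, $K_2=(S-S^\dagger)/(2i)$, with $R=H_1+iH_2$ and $S=K_1+iK_2$.

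Next, translate the hypotheses into energy bounds. Quasi-primarity of $w^{(i)}$ collapses \eqref{eq76} to its $m=0$ term, so $\mc L_i(w^{(i)},\wtd f)^\dagger$ is, up to phase, a single smeared adjoint intertwining operator; combined with the fact (recalled in Section 4.4) that $1$st-order energy bounds pass from $\mc Y_\alpha(w^{(i)},\cdot)$ to $\mc Y_{\alpha^*}(\ovl{w^{(i)}},\cdot)$ in the quasi-primary case, this gives $H_1,H_2$ the $1$st-order energy bounds required by Lemma \ref{lb39}, while the rotation covariance \eqref{eq79} handles the commutator bound for $[D,H_a]$. For $S$, the braiding relation $\mc R_j=\ss\mc L_j$ together with the energy-boundedness of $\mc L_j(w^{(j)},\cdot)$ gives energy bounds on $\mc R_j(w^{(j)},\wtd g)$, and homogeneity of $w^{(j)}$ makes \eqref{eq76} a finite sum and hence yields energy bounds for $\mc R_j(w^{(j)},\wtd g)^\dagger$ as well; consequently $K_1,K_2$ are energy-bounded of some order.

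The commutation $H_a K_b=K_b H_a$ on $\mc H^\infty$ follows from Theorem \ref{lb38} and its adjoint version, which together yield $R^\varepsilon S^\delta=S^\delta R^\varepsilon$ on $\mc H^\infty$ for all sign combinations $\varepsilon,\delta\in\{1,\dagger\}$. Since $\ovl{\wtd I}$ and $\ovl{\wtd J}$ are disjoint, we may choose $\varepsilon>0$ so that $\wtd\exp(itL_0)\wtd I$ remains anticlockwise to $\wtd J$ for every $t\in(-\varepsilon,\varepsilon)$, and by \eqref{eq78} the conjugate $e^{itD}H_ae^{-itD}$ is the symmetric part attached to the rotated test function $e^{i(\Delta_{w^{(i)}}-1)t}\fk r(t)\wtd f$, still supported in an arg-valued interval anticlockwise to $\wtd J$. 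The same commutation argument therefore persists after replacing $H_a$ by $e^{itD}H_ae^{-itD}$, so Lemma \ref{lb39} (with $K=e^{itD}H_ae^{-itD}$ and $T=K_b$) gives strong commutativity of each such pair for each $t\in(-\varepsilon,\varepsilon)$. Lemma \ref{lb44} with $P(z_1,z_2)=Q(z_1,z_2)=z_1+iz_2$ then upgrades this to strong commutativity of $R=H_1+iH_2$ with $S=K_1+iK_2$, which is the strong braiding property.

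The main obstacle is ensuring that the symmetrized operators $H_1,H_2$ inherit $1$st-order energy bounds, and this is exactly the point at which the quasi-primary assumption on $w^{(i)}$ cannot be dropped: without it, \eqref{eq76} would express $R^\dagger$ as a sum of operators attached to the vectors $L_1^m w^{(i)}$ for $m\geq 1$, whose energy-bound order could be strictly worse than $1$ and would thereby obstruct the application of Lemma \ref{lb39}. By contrast the much weaker assumption on $w^{(j)}$ (merely homogeneous, with $\mc L_j$ energy-bounded of unspecified order) suffices, because Lemma \ref{lb39} only requires $T$ to be energy-bounded of some possibly higher order $m$.
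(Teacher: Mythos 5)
Your proposal is correct and follows essentially the same route as the paper's proof: assemble the four corners into $\mc H=\mc H_k\oplus\mc H_{kj}\oplus\mc H_{ik}\oplus\mc H_{ikj}$, split the extension of $\mc L_i(w^{(i)},\wtd f)$ into symmetric parts (where quasi-primarity collapses \eqref{eq76} to its $m=0$ term and preserves the $1$st-order bounds), invoke Lemma \ref{lb39} via the adjoint commutativity of Theorem \ref{lb38}, and recombine with \eqref{eq78} and Lemma \ref{lb44}. The only deviation is that you also symmetrize $\mc R_j(w^{(j)},\wtd g)$ into $K_1,K_2$; this is harmless (your justification of the energy bounds for $S^\dagger$ is fine) but unnecessary, since Lemma \ref{lb39} does not require the operator $T$ to be symmetric and the paper simply takes $T=B$ directly.
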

\begin{proof}
For each $\wtd I,\wtd J\in\Jtd$ with $\wtd I$ anticlockwise to $\wtd J$, and any $\wtd f\in C^\infty_c(\wtd I),\wtd g\in C^\infty_c(\wtd J)$, consider the diagram
\begin{align}
\begin{CD}
\mc H_k @> \quad\mc R_j|_k(w^{(j)},\wtd g)\quad >> \mc H_{kj}\\
@V \mc L_i|_k(w^{(i)},\wtd f) VV @V \mc L_i|_{kj}(w^{(i)},\wtd f) VV\\
\mc H_{ik} @> \quad\mc R_j|_{ik}(w^{(j)},\wtd g)\quad>> \mc H_{ikj}
\end{CD}.
\end{align}
Set $\mc H=\mc H_k\oplus\mc H_{kj}\oplus\mc H_{ik}\oplus\mc H_{ikj}$ and $\mc H^\infty=\mc H^\infty_k\oplus\mc H^\infty_{kj}\oplus\mc H^\infty_{ik}\oplus\mc H^\infty_{ikj}$, and extend  $\mc L_i|_k(w^{(i)},\wtd f),\mc L_i|_{kj}(w^{(i)},\wtd f)$ (resp. $\mc R_j|_k(w^{(j)},\wtd g),\mc R_j|_{ik}(w^{(j)},\wtd g)$) to a preclosed operator $A$ (resp. $B$) on $\mc H$ (see definition \ref{lb37}).  Let $H=(A+A^\dagger)/2$ and $K=(A-A^\dagger)/(2i)$ be symmetric operators on $\mc H$ with domains $\mc H^\infty$. By lemma \ref{lb39}, theorem \ref{lb38}, the energy bounds conditions of intertwining operators, the adjoint formula \eqref{eq76}, and  equation \eqref{eq79} which shows the  energy bounds of $[\ovl {L_0},H]$ and $[\ovl{L_0},K]$, the preclosed operators $H$ and $K$ commute strongly with $B$. Therefore, by \eqref{eq78} and lemma \ref{lb44}, $A=H+iK$ commutes strongly with $B$.
\end{proof}

We summarize the results of this section in the following theorem which will be convenient for applications.

\begin{thm}\label{lb40}
Assume that $V$ satisfies conditions \ref{CondA} - \ref{CondI}. Let $\mc C$ be a full rigid monoidal subcategory of $\Rep^\uni(V)$ whose objects are energy-bounded, and let $\mc F$ be a set of irreducible $V$-modules in $\mc C$, such that $\mc F$ generates $\mc C$. Suppose that for each $W_i\in\mc F$ we can find a non-zero quasi-primary vector $w^{(i)}$, such that whenever $W_i,W_j\in\mc F,W_k\in\mc C$, the intertwining operators $\mc L_i|_k(w^{(i)},x)$ and $\mc L_j|_k(w^{(j)},x)$ satisfy $1$-st order energy bounds. Then objects in $\mc C$ are strongly integrable, and there exists a  vertex categorical extension $\ovl{\scr E}=(\mc A_V,\fk F(\mc C),\boxdot,\ss)$ of $\mc A_V$. Consequently, $(\mc C,\boxtimes,\ss)$ is equivalent to a braided $C^*$-tensor subcategory of $(\Rep(\mc A_V),\boxtimes,\mathbb B)$ under the $*$-functor $\fk F$.

Moreover, if $W_h,W_l\in\mc C$, $w^{(h)}\in W_h$ and $w^{(l)}\in W_l$ are homogeneous, and $w^{(h)}\curvearrowright\mc C,w^{(l)}\curvearrowright\mc C$ satisfy the strong intertwining property, then the actions $w^{(h)},w^{(l)}\curvearrowright\mc C$ satisfy the strong braiding property.
\end{thm}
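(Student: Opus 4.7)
The first statement (strong integrability, existence of $\ovl{\scr E}$, and equivalence with a Connes--fusion subcategory) is immediate from Theorem~\ref{lb41} and the unlabeled strong-intertwining theorem preceding it, both applied to the quasi-primary $w^{(i)}$ with $1$-st order energy bounds for each $i\in\mc F$; together they verify hypotheses (a) and (b) of Theorem~\ref{lb36}, and the subsequent corollary then yields the categorical equivalence. So the substance of the proof lies in the moreover clause, which I handle in two stages.

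Stage one is to establish, for each $i',j'\in\mc F$, strong braiding of $w^{(i')},w^{(l)}$ and of $w^{(h)},w^{(j')}$. The former is a direct application of Theorem~\ref{lb41}: $w^{(i')}$ is quasi-primary with $\mc L_{i'}|_k$ satisfying $1$-st order bounds, and $\mc L_l|_k$ is energy bounded since $w^{(l)}\curvearrowright\mc C$ satisfies strong intertwining. The latter requires a dual form of Theorem~\ref{lb41} obtained by interchanging the roles of $A=\mc L_i$ and $B=\mc R_j$ in its proof: since $\mc R_{j'}|_k=\ss_{j',k}\mc L_{j'}|_k$ and $\ss$ is unitary, $\mc R_{j'}|_k$ inherits the $1$-st order bounds of $\mc L_{j'}|_k$, and the formal adjoint $\mc R_{j'}(w^{(j')},\wtd g)^\dagger$, given by \eqref{eq76} as a finite sum of adjoint intertwining operators, likewise satisfies $1$-st order bounds because $w^{(j')}$ is quasi-primary. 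Decomposing the extension $B$ of $\mc R_{j'}$ on $\mc H_k\oplus\mc H_{kj'}\oplus\mc H_{hk}\oplus\mc H_{hkj'}$ into symmetric parts $H_B+iK_B$, both summands thus have $1$-st order bounds, so Lemma~\ref{lb39} applies with $K\in\{H_B,K_B\}$ as the symmetric operator and $T$ the extension of $\mc L_h(w^{(h)},\wtd f)$: condition~(c) is the braid relation of Theorem~\ref{lb38} in both its original and adjoint forms, and (d) uses the $1$-st order bounds on $H_B,K_B$ together with the energy bounds on $\mc L_h$. Lemma~\ref{lb44} combined with the rotation covariance \eqref{eq78} then upgrades to strong commutativity of $A$ and $B$, which is the desired strong braiding of $w^{(h)},w^{(j')}$.

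Stage two applies Theorem~\ref{lb43}. Fix $W_k\in\mc C$, $\wtd I$ anticlockwise to $\wtd J$, $\wtd f\in C^\infty_c(\wtd I), \wtd g\in C^\infty_c(\wtd J)$, and form the polar decompositions $T_h:=\ovl{\mc L_h|_k(w^{(h)},\wtd f)}=U_hH_h$ and $T_l:=\ovl{\mc R_l|_k(w^{(l)},\wtd g)}=V_lK_l$. For any bounded Borel $\psi$, the operator $U_h\psi(H_h):\mc H_k\to\mc H_{hk}$ is bounded, and I claim it qualifies as a left operator of $\scr E^\loc$ in the sense of Definition~\ref{lb42}. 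Condition~(a), intertwining $\mc A_V(I^c)$, follows from the strong intertwining of $w^{(h)}$ extended to strong commutativity with $\pi_k(\mc A_V(I^c))$ (as in the proof of Theorem~\ref{lb31}), which propagates through polar decomposition and Borel functional calculus. Condition~(b), commutation with each right action $R(\fk b_0,\wtd J)=V_{j'}|_k(w^{(j')},\wtd g_0)\pi_k(x_0)$ for $j'\in\mc F$, splits into commutation with the phase $V_{j'}|_k$, which follows from stage one because strong commutativity of $T_h$ with $\ovl{\mc R_{j'}|_k(w^{(j')},\wtd g_0)}$ passes to commutation of their polar parts and spectral projections, and commutation with $\pi_k(x_0)\in\pi_k(\mc A_V(J))\subset\pi_k(\mc A_V(I^c))$, which is immediate from (a). Symmetrically, bounded Borel functional calculus of $T_l$ yields right operators of $\scr E^\loc$. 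Theorem~\ref{lb43} then gives adjoint commutativity of every such bounded left/right pair; since these bounded operators generate the von Neumann algebras attached to $T_h$ and $T_l$, we obtain strong commutativity of $T_h$ and $T_l$. The identical argument carried out on the full direct sum $\mc H_k\oplus\mc H_{hk}\oplus\mc H_{kl}\oplus\mc H_{hkl}$ (Definition~\ref{lb37}) produces strong commutativity of the full braiding diagram, i.e., strong braiding of $w^{(h)},w^{(l)}$. The main obstacle is stage one (the dual form of Theorem~\ref{lb41}): the $1$-st order bounds must be carefully transferred from $\mc L_{j'}$ to both $\mc R_{j'}$ and its adjoint, and the braid identities of Theorem~\ref{lb38} must be marshalled correctly through the symmetric decomposition.
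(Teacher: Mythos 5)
Your proof is correct and follows essentially the same route as the paper: bounded spectral truncations of the polar decompositions of $\ovl{\mc L_h(w^{(h)},\wtd f)}$ and $\ovl{\mc R_l(w^{(l)},\wtd g)}$ are exhibited as left and right operators of the categorical local extension $\scr E^\loc$ from Theorem \ref{lb36}, and Theorem \ref{lb43} then yields adjoint commutativity of all such bounded pairs and hence strong commutativity of the full braiding diagram. Your ``stage one'' --- the dual form of Theorem \ref{lb41}, in which the symmetric decomposition and Lemma \ref{lb39} are applied to the $1$-st order bounded $\mc R_{j'}$ (and its adjoint) rather than to the $\mc L$ side --- is a worthwhile explicit justification of condition (b) of Definition \ref{lb42}, a point the paper dispatches with a bare citation of Theorem \ref{lb41}.
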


\begin{proof}
The claim of the first paragraph follows directly from what we've proved in this section. We now prove the second half. For each $\wtd I\in\Jtd,\wtd f\in C^\infty_c(\wtd I)$, we consider, for any $W_k\in\mc C$, the preclosed operator $\mc L_h|_k(w^{(h)},\wtd f)$ from $\mc H_k$ to $\mc H_{hk}$. Let $\ovl {\mc L_h|_k(w^{(h)},\wtd f)}=HU$ be the right polar decomposition, and let $H=\int_0^{+\infty} \lambda dE_k(\lambda)$ be the spectral decomposition of the positive operator $H$. Now  choose an arbitrary $\lambda\geq0$, and let $\fk a=(\lambda,\wtd f)$. We define a bounded linear operator $A(\fk a,\wtd I)$ acting on any $\mc H_k$ (where $W_k\in\mc C$), such that for any $\xi^{(k)}\in\mc H_k$,
\begin{align*}
A(\fk a,\wtd I)\xi^{(k)}=\ovl{E_k(\lambda)\ovl {\mc L_h|_k(w^{(h)},\wtd f)}}\xi^{(k)}.
\end{align*}
Then by the strong intertwining property of $w^{(h)}\curvearrowright\mc C$, $A(\fk a,\wtd I)\in\Hom_{\mc A_V(I^c)}(\mc H_k,\mc H_{hk})$. By the functoriality of $\mc L_h$ and theorem \ref{lb41}, $(A,\fk a,\wtd I,\mc H_h)$ is a left operator (see definition \ref{lb42}) of the categorical local extension $\mc E^\loc=(\mc A_V,\fk F(\mc F),\boxdot,\fk H)$ constructed in theorem \ref{lb36}.

Similarly, for any $\wtd J\in\Jtd$ clockwise to $\wtd I$, any $\wtd g\in C^\infty_c(\wtd J)$, and any $W_k\in\mc C$, we take the right polar decomposition $\ovl {\mc R_l|_k(w^{(l)},\wtd g)}=H'U'$, take the spectral decomposition $H'=\int_0^{+\infty}\mu dE'_k(\mu)$, choose an arbitrary $\mu\geq0$,   and let $\fk b=(\mu,\wtd g)$. Then there is a right operator $(B,\fk b,\wtd J,\mc H_l)$ of $\mc E^\loc$, such that for any $\xi^{(k)}\in\mc H_k$,
\begin{align*}
B(\fk b,\wtd J)\xi^{(k)}=\ovl{E'_k(\mu)\ovl {\mc R_l|_k(w^{(l)},\wtd g)}}\xi^{(k)}.
\end{align*}
Therefore, by theorem \ref{lb43}, the diagram
\begin{align*}
\begin{CD}
\mc H_k @> \quad\ovl{E'_k(\mu)\ovl {\mc R_l|_k(w^{(l)},\wtd g)}}\quad  >> \mc H_{kl}\\
@V \ovl{E_k(\lambda)\ovl {\mc L_h|_k(w^{(h)},\wtd f)}} VV @VV \ovl{E_{kl}(\lambda)\ovl {\mc L_h|_{kl}(w^{(h)},\wtd f)}} V\\
\mc H_{hk} @> \quad\ovl{E'_{hk}(\mu)\ovl {\mc R_l|_{hk}(w^{(l)},\wtd g)}}\quad >> \mc H_{hkl}
\end{CD}
\end{align*}
commutes adjointly for any $\lambda,\mu\geq0$. This proves the strong commutativity of the diagram
\begin{align*}
\begin{CD}
\mc H_k @> \quad\ovl {\mc R_l|_k(w^{(l)},\wtd g)}\quad  >> \mc H_{kl}\\
@V \ovl {\mc L_h|_k(w^{(h)},\wtd f)} VV @VV \ovl {\mc L_h|_{kl}(w^{(h)},\wtd f)}  V\\
\mc H_{hk} @> \quad\ovl {\mc R_l|_{hk}(w^{(l)},\wtd g)}\quad >> \mc H_{hkl}
\end{CD}
\end{align*}
for any $W_k\in\mc C$. Therefore $w^{(h)},w^{(l)}\curvearrowright\mc C$ satisfy the strong braiding property.
\end{proof}

\begin{rem}\label{lb45}
We remark that in theorems \ref{lb36} and \ref{lb40}, condition \ref{CondH} is not necessarily needed. By all the other conditions and the strong intertwining property, we can prove the positivity of the sesquilinear form $\Lambda$ on $\mc V{t\choose i~j}$ for any $W_i,W_j\in\mc C,W_t\in\mc E$ using the arguments in \cite{Gui17a,Gui17b}. Thus $\mc C$ is a braided $C^*$-tensor category, which is enough to prove these theorems. Indeed, in \cite{Gui17b} we gave two criteria (conditions A and B in section 5.3) for the positivity of $\Lambda$, both concerning the ($1$-st order) energy bounds conditions for intertwining operators. The reason these energy bounds conditions are required is to guarantee the strong intertwining property, which is the essential property for proving the main results of that paper. Those results clearly hold if we replace the $1$-st order energy bounds condition with the more general strong intertwining properties.\footnote{\cite{Gui17b} conditions A and B  require that the homogeneous vectors are quasi-primary. This is not necessary once we know the strong intertwining property. Indeed, the quasi-primary condition is used only in the following two places: (1) If $w^{(i)}$ is quasi-primary and $\mc Y_\alpha(w^{(i)},x)$ satisfies $1$-st order energy bounds, then the adjoint intertwining operator $\mc Y_{\alpha^*}(w^{(i)},x)$ also satisfies $1$-st order energy bounds, cf. \cite{Gui17a} proposition 3.4. (2) The formula for the adjoint of generalized intertwining operators, cf. \cite{Gui17b} corollary 5.7. Now (1) is used only to prove the strong intertwining property. As for (2), it is not hard to write down the adjoint formula for general homogeneous vectors using the non-smeared version of \cite{Gui17a} equation (3.25) and \cite{Gui17b} remark 5.6.  Therefore one can safely drop the quasi-primary condition once the strong intertwining property is known.} Thus, let us summarize the positivity result in \cite{Gui17a,Gui17b} in the following theorem.
\end{rem}

\begin{thm}\label{lb54}
Assume that $V$ satisfies conditions \ref{CondA}-\ref{CondF} and \ref{CondI}. Let $\mc C_0$ be a full rigid monoidal subcategory of $\Rep(V)$ whose objects are unitarizable and energy-bounded. Let $\mc F$ be a set of irreducible $V$-modules generating $\mc C_0$, and fix for each $W_i\in\mc F$ a unitary structure. Let $\mc C$ be the $C^*$-category of all unitary $V$-modules equivalent to some objects of $\mc C_0$. Suppose  for each $W_i\in\mc F$ we can find a non-zero homogeneous vector $w^{(i)}\in W_i$ such that  $w^{(i)}\curvearrowright\mc C$ satisfies the strong intertwining property. Then for any $W_j,W_k\in\mc C,W_t\in\mc E$, the sesquilinear form $\Lambda$ on $\mc V{t\choose j~k}^*$ is positive. As a consequence, $\mc C$ becomes a unitary ribbon fusion category.
\end{thm}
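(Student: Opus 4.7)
The plan is to adapt the arguments of \cite{Gui17a,Gui17b}, substituting the strong intertwining property for the $1$-st order energy bounds hypothesis used there, exactly as outlined in remark \ref{lb45}. The underlying idea is to realize $\Lambda^{\alpha\beta}$ as (a positive multiple of) a genuine Gram matrix of smeared intertwining operators acting on a common vector, which is manifestly positive semi-definite, and then to promote this to positive-definiteness via rigidity.

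First I would reduce to the case when the charge module is one of the generators $W_i\in\mc F$. For $W_i\in\mc F,W_j\in\mc C$, $W_t\in\mc E$, pick a basis $\{\mc Y_\alpha\}$ of $\mc V{t\choose i~j}$. The strong intertwining property ensures that each $\mc Y_\alpha(w^{(i)},\wtd f)$ is preclosed on the smooth core and that its formal adjoint is expressed through the adjoint intertwining operator $\mc Y_{\alpha^*}(\ovl{w^{(i)}},\cdot)$ by the general-homogeneous analogue of formula \eqref{eq76} (see \cite{Gui17b} remark 5.6, adapted as described in footnote 21 of remark \ref{lb45}). Smearing the fusion relation \eqref{eq67} against suitable test functions and pairing with a smooth vector $\eta\in\mc H_j^\infty$, one obtains for any scalars $c_\alpha$ an identity of the shape
\begin{align*}
\sum_{\alpha,\beta}\Lambda^{\alpha\beta}\,c_\alpha\,\ovl{c_\beta}\cdot N(\wtd f,\eta) \;=\; \Bigl\lVert \sum_\alpha c_\alpha\,\mc Y_\alpha(w^{(i)},\wtd f)\eta \Bigr\rVert^2,
\end{align*}
where $N(\wtd f,\eta)>0$ is a normalization coefficient extracted from $Y_j(\mc Y_{\kappa(i)^*}(\ovl{w^{(i)}},\cdot)w^{(i)},\cdot)$ evaluated on $\eta$. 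This gives positivity of $\Lambda$ on $\mc V{t\choose i~j}^*$ whenever $W_i\in\mc F$. Extending to arbitrary $W_j\in\mc C$ uses that $\mc F$ generates $\mc C_0$: every intertwining operator in $\mc V{t\choose j~k}$ can be expressed via the fusion matrix \eqref{eq69} as a linear combination of iterated products whose charges lie in $\mc F$, so positivity propagates by a Gram-matrix argument in which the sesquilinear form on $\mc V{t\choose j~k}^*$ becomes the restriction of a positive form via the embedding induced by the change-of-basis. Non-degeneracy then follows from rigidity (condition \ref{lb24}), upgrading positive semi-definite to positive definite.

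The main obstacle is the functional-analytic rigor of the smeared fusion computation: one must justify that, after integration against smooth test functions $\wtd f,\wtd g$, the iterated product $\mc Y_{\beta^*}(\ovl{w^{(i)}},\wtd g)\mc Y_\alpha(w^{(i)},\wtd f)$ can be rewritten, using the smeared analogue of \eqref{eq67}, as an iterated smeared $Y_j$ applied to $\mc Y_{\kappa(i)^*}(\ovl{w^{(i)}},\cdot)w^{(i)}$, with all products converging strongly on the smooth core and the absolute convergence guaranteed by condition \ref{CondC}. This is exactly where the strong intertwining property enters: it replaces the linear energy bounds needed in \cite{Gui17a,Gui17b} to close these operator products and interchange adjoint, smearing and composition, thereby making the same chain of manipulations available without any quasi-primary or $1$-st order energy bound assumption on $w^{(i)}$. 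Once positivity of $\Lambda$ is established, the conclusion that $\mc C$ is a unitary ribbon fusion category follows by equipping each $W_i\boxtimes W_j$ with the $\Lambda$-induced inner product and verifying unitarity of the associator, braiding, and twist, exactly as in \cite{Gui17b} chapter 7.
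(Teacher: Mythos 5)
Your high-level plan --- rerun the arguments of \cite{Gui17a,Gui17b} with the $1$-st order energy bounds hypothesis replaced by the strong intertwining property --- is exactly the route the paper takes: theorem \ref{lb54} is presented as a \emph{summary} of the positivity results of those references, and remark \ref{lb45} together with its footnote is essentially the entire proof, recording only which two uses of the quasi-primary/energy-bounds hypotheses must be modified. So at the citation level your strategy matches the paper. The problem is with the concrete mechanism you supply for the positivity step.

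The displayed identity
\begin{align*}
\sum_{\alpha,\beta}\Lambda^{\alpha\beta}\,c_\alpha\,\ovl{c_\beta}\cdot N(\wtd f,\eta)=\Bigl\lVert\sum_\alpha c_\alpha\,\mc Y_\alpha(w^{(i)},\wtd f)\eta\Bigr\rVert^2
\end{align*}
asserts that the Gram matrix of the vectors $\mc Y_\alpha(w^{(i)},\wtd f)\eta$ equals $N(\wtd f,\eta)\cdot\Lambda$, i.e.\ is proportional to $\Lambda$ by a scalar depending only on $(\wtd f,\eta)$. That is not what the fusion relation \eqref{eq67} gives, and it is false in general. Smearing \eqref{eq67} and pairing with $\eta$ yields a \emph{single scalar} identity: the full contraction $\sum_{\alpha,\beta}\Lambda^{\alpha\beta}\bk{\mc Y_\alpha(w^{(i)},\wtd f)\eta\,|\,(\cdots)_\beta\,\eta}$ equals $\bk{Y_j(u,\cdots)\eta|\eta}$ for a certain $u$; there is no way to insert arbitrary scalars $c_\alpha$ and peel off the individual entries $\Lambda^{\alpha\beta}$. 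Equivalently, expanding $\mc Y_\beta(w^{(i)},\wtd f)^\dagger\mc Y_\alpha(w^{(i)},\wtd f)$ via the inverse fusion matrix, the Gram matrix receives contributions from \emph{every} irreducible channel $W_s$ of $W_{\ovl i}\boxtimes W_i$, each weighted by a different $(\wtd f,\eta)$-dependent factor, whereas $\Lambda$ governs only the vacuum channel $s=0$. Isolating that channel requires either a limiting procedure in which the test functions concentrate (with all the attendant interchanges of limits and unbounded-operator manipulations justified), or, as in \cite{Gui17b}, passing to bounded localized realizations of the smeared intertwiners via polar decomposition inside the conformal net --- which is precisely where the strong intertwining property actually enters. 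You also never explain why the smeared left-hand side $Y_j(\mc Y_{\kappa(i)^*}(\ovl{w^{(i)}},\cdot)w^{(i)},\cdot)$ of \eqref{eq67} should be a positive operator for fixed test functions; it is not manifestly so. Your reduction to generating charges and the use of rigidity for non-degeneracy are fine, but the Gram-matrix identity at the heart of the positivity argument does not hold, so the proof as written does not close.
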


Here the notions of full rigid monoidal subcategories and generating sets of objects are understood in the same way as in section \ref{lb55}.

\section{Examples and applications}

In this chapter, we apply the main results in the previous chapter (mainly theorems \ref{lb36} and \ref{lb40}) to various examples. Let us assume that $V$ is a unitary regular VOA of CFT type. Here ``regular" means that any weak $V$-module is completely reducible \cite{DLM95}. Then $V$ satisfies conditions \ref{CondA} - \ref{CondF} by a series works of Huang and Lepowsky \cite{HL95a,HL95b,HL95c,Hua95,Hua05a,Hua05b,Hua08a,Hua08b}.  Examples of such $V$ include unitary Virasoro VOAs (minimal models), unitary affine VOAs (WZW models), and  lattice VOAs (cf.\cite{DLM95}). For these examples, all semisimple representations are unitarizable. (See \cite{FQS84,Wang93} for unitary Virasoro VOAs, \cite{Kac94,FZ92} for unitary affine VOAs, and \cite{FLM89} or \cite{LL12} for lattice VOAs.) Therefore condition \ref{CondG} holds for these examples. Unitary Heisenberg VOAs also satisfy conditions \ref{CondA} - \ref{CondG}. Indeed, the convergence of products of intertwining operators (condition \ref{CondC}) can be checked directly using the explicit construction of intertwining operators (the well known ``vertex operator" construction). One can also compute by hand the tensor categorical structures of their representation categories using the braid and fusion relations obtained in \cite{DL93}. A detailed discussion will be given later in this chapter. Condition \ref{CondI} also holds for all these examples: see \cite{BS90} section 2 for Virasoro, affine, and Heisenberg VOAs; see \cite{TL04} chapter VI for lattice VOAs (see also \cite{Gui18} theorem A.6). 

In the following, we will verify the strong intertwining property and the strong braiding property for many of these examples. As discussed in remark \ref{lb45}, the positivity of the sesquilinear form  $\Lambda$, and hence the unitarity of the relevant braided ribbon fusion categories are  consequences but not  assumptions of these analytic properties.

\subsection{Unitary Virasoro and affine VOAs}\label{lb50}

Suppose that $V$ is a unitary Virasoro VOA $L(c,0)$ ($c<1$), or a unitary affine VOA $L_\gk(l,0)$ at level $l\in\mathbb Z_{\geq0}$, where $\gk$ is a complex simple Lie algebra of type $A$, $C$, or $G_2$. Then by the works of \cite{Loke94} (for Virasoro VOAs), \cite{Was98} (for type $A$ affine VOAs), \cite{Gui18} (for type $C$ and $G_2$ affine VOAs), there exists a set $\mc F$ of irreducible unitary $V$-modules, such that for any $W_i\in\mc F,W_k\in\Rep^\uni(V)$, and any lowest weight vector $w^{(i)}\in W_i$ (which is automatically quasi-primary), the intertwining operator $\mc L_i|_k(w^{(i)},x)$ satisfies $1$-st order energy bounds. (Indeed, except for type $G_2$ affine VOAs, the $0$-th order energy bounds hold.) In the case $V=L(c,0)$ where $c=1-\frac 6{m(m+1)}$ ($m=2,3,4,\dots$), we can choose $\mc F=\{L(c,h_{1,2}), L(c,h_{2,2})\}$, where for each $r=1,2,\dots,m-1,s=1,2,\dots,m$, $h_{r,s}=\frac{((m+1)r-ms)^2-1}{4m(m+1)}$ is the highest weight of $L(c,h_{r,s})$. If $V=L_{\gk}(l,0)$ and $\gk$ is of type $A,C,G_2$, one can choose $L(l,\Box)$ to be the smallest (in the sense of the dimension of the lowest weight subspace) non-vacuum irreducible $V$-module, and let $\mc F=\{L(l,\Box)\}$. Thus, by remark \ref{lb45} or by \cite{Gui17b} theorems 6.7 and 7.8, the sesquilinear form $\Lambda$ is always positive, and $\Rep^\uni(V)$ is a unitary fusion category. By \cite{Gal12} theorem 3.5, $\Rep^\uni(V)$ admits a unique unitary ribbon structure (which, by \cite{Gui17b} section 7.3, is defined by the twist $e^{2i\pi L_0}$). By \cite{Hua08b}, the $S$-matrix is non-degenerate. Therefore $\Rep^\uni(V)$ is a unitary modular tensor category. 

Now apply theorem \ref{lb40}, we see that $\fk F(\Rep^\uni(V))$ is closed under Connes fusion $\boxtimes$, and the braided $C^*$-tensor category $(\fk F(\Rep^\uni(V)),\boxtimes,\mathbb B)$ is equivalent to $(\Rep^\uni(V),\boxtimes,\ss)$. Therefore $(\fk F(\Rep^\uni(V)),\boxtimes,\mathbb B)$ is a unitary braided fusion category, which admits a unique unitary ribbon structure. We thus conclude that $\fk F(\Rep^\uni(V))$ is equivalent to $\Rep^\uni(V)$ as a unitary modular tensor category.

Finally we determine the category $\fk F(\Rep^\uni(V))$. For a general unitary regular $V$ of CFT type, it is widely believed that  $\fk F(\Rep^\uni(V))$ is the category $\Rep^\ssp(\mc A_V)$ of semisimple $\mc A_V$-modules. In the case that $V$ is one of the examples mentioned above, this conjecture can actually be proved. Indeed, if $V$ is a unitary minimal model, then by \cite{Wang93}, one has a complete classification of irreducible $V$-modules. By \cite{Xu00a} theorem 4.6 and \cite{KL04} corollary 3.3, irreducible $\mc A_V$-modules were also classified, and one easily sees that $V$ and $\mc A_V$ have the same number of equivalence classes of irreducible representations. (See also \cite{KL04} the discussions before corollary 3.6.) One thus concludes that $\fk F(\Rep^\uni(V))=\Rep^\ssp(\mc A_V)$.

Now assume that $V$ is a unitary affine VOA $L_\gk(l,0)$, the strong integrability of all representations of which has already been shown. Then by \cite{Hen19} theorems 26 and 27 (with $\gk=\fk{sl}_2$ excluded), or the first theorem any \cite{Zel15} theorem 2.16, any semisimple $\mc A_V$-module $\mc H_i$ can be integrated from an irreducible positive energy representation of $\wht{\fk g}$. Such  $\wht{\fk g}$-module can be extended uniquely to a unitary $V$-module $W_i$ such that $Y_i(X(-1)\Omega)_n=X(n)$ ($\forall n\in\mathbb Z,X\in\fk g$). By \cite{CKLW18} theorem 8.1, $\mc A_V$ is generated by operators of the form $\ovl{Y(X(-1)\Omega,f)}$. Therefore, by strong integrability, the $V$-module $W_i$ integrates to $\mc H_i$. Thus the conjecture is proved in this case. We conclude the following:

\begin{thm}
Let $V$ be a unitary Virasoro VOA $L(c,0)$ ($c<1$), or a unitary affine VOA $L_\gk(\l,0)$ where $\gk$ is a complex simple Lie algebra of type $A$, $C$, or $G_2$. Then any unitary $V$-module is strongly integrable, $\Rep^\ssp(\mc A_V)$ is closed under Connes fusions, and   $(\Rep^\ssp(\mc A_V),\boxtimes,\mathbb B)$ and $(\Rep^\uni(V),\boxtimes,\ss)$ are equivalent as unitary modular tensor categories  under the $*$-functor $\fk F$.
\end{thm}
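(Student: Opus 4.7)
The plan is to apply Theorem \ref{lb40} to obtain strong integrability together with the equivalence of braided $C^*$-tensor categories onto the image of $\fk F$, and then to establish essential surjectivity of $\fk F$ onto $\Rep^\ssp(\mc A_V)$ by a case-specific classification argument. First I would verify the hypotheses of Theorem \ref{lb40} by choosing an appropriate generating set $\mc F$ of irreducible unitary $V$-modules. For the unitary minimal model $L(c,0)$ with $c=1-\tfrac{6}{m(m+1)}$, I would take $\mc F=\{L(c,h_{1,2}),L(c,h_{2,2})\}$, and for the affine case $L_\gk(l,0)$ with $\gk$ of type $A,C,G_2$, I would take $\mc F=\{L_\gk(l,\Box)\}$ where $\Box$ is the smallest non-vacuum irreducible module. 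The required first order energy bounds for $\mc L_i|_k(w^{(i)},x)$ at lowest weight (hence quasi-primary) vectors $w^{(i)}$ follow in the Virasoro case from Loke's thesis, and in the affine cases from Wassermann (type $A$) and Gui (types $C$ and $G_2$). Since $\mc F$ generates $\Rep^\uni(V)$ by the known fusion rules, this places us in the setting of Theorem \ref{lb40}.

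Once the hypothesis is checked, Theorem \ref{lb40} yields simultaneously the strong integrability of every object of $\Rep^\uni(V)$ and an equivalence of braided $C^*$-tensor categories $\fk F:(\Rep^\uni(V),\boxtimes,\ss)\xrightarrow{\simeq}(\fk F(\Rep^\uni(V)),\boxtimes,\mathbb B)$. To upgrade this to an equivalence of unitary modular tensor categories, I would note that $\Rep^\uni(V)$ is rigid and has non-degenerate $S$-matrix by Huang's theorems; combining this with Galindo's uniqueness result for unitary ribbon structures on a unitary pre-modular category shows that the ribbon structure given by $\vartheta_i=e^{2i\pi L_0}$ is the unique unitary one. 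Hence $\Rep^\uni(V)$ is a unitary modular tensor category, and so is its image $\fk F(\Rep^\uni(V))$.

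The main obstacle, and the step requiring genuinely different arguments for the two families, is the equality $\fk F(\Rep^\uni(V))=\Rep^\ssp(\mc A_V)$. For the Virasoro case I would combine Wang's classification of irreducible $L(c,0)$-modules with the classification of irreducible $\mc A_V$-modules (DHR sectors) from Xu and Kawahigashi--Longo, observing that the two sets of equivalence classes have the same (finite) cardinality; since $\fk F$ is fully faithful and injective on equivalence classes, this forces essential surjectivity onto $\Rep^\ssp(\mc A_V)$. For the affine case I would invoke Henriques' theorem (or the results of Zellner) stating that every semisimple $\mc A_V$-module $\mc H_i$ arises by integration from a positive-energy representation of $\wht\gk$ at level $l$; such representations extend uniquely to unitary $V$-modules $W_i$ with $Y_i(X(-1)\Omega)_n=X(n)$. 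Because $\mc A_V$ is generated by the smeared currents $\ovl{Y(X(-1)\Omega,f)}$ by \cite{CKLW18} Theorem 8.1, the strong integrability established in the first step forces $\fk F(W_i)\simeq\mc H_i$ as $\mc A_V$-modules, proving essential surjectivity.

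The hard part of the argument is not the tensor-categorical comparison, which is entirely handled by the machinery of vertex categorical extensions, but rather the representation-theoretic matching of irreducible $V$-modules with irreducible $\mc A_V$-modules: it depends on deep and highly example-specific results (Wang, Xu, Kawahigashi--Longo on one side, Henriques on the other) that do not extend to affine VOAs of type $B$ or $D$, which is precisely why only the weaker partial statement is available there.
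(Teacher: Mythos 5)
Your proposal is correct and follows essentially the same route as the paper: the same generating sets $\mc F$ with the same energy-bounds references (Loke, Wassermann, Gui), the same application of Theorem \ref{lb40} plus Galindo's uniqueness of unitary ribbon structures and Huang's modularity, and the same case-split for essential surjectivity (counting irreducibles via Wang/Xu/Kawahigashi--Longo for Virasoro, and Henriques/Zellner plus \cite{CKLW18} Theorem 8.1 for the affine cases). No gaps.
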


An important question in algebraic quantum field theory is to prove the complete rationality \cite{KLM01} of conformal nets corresponding to rational chiral conformal field theories. The first non-trivial examples of completely rational conformal nets are those associated to unitary affine  VOAs (WZW-models) of type $A$ by \cite{Xu00b}. The complete rationality of $c<1$ Virasoro nets was proved in \cite{KL04}. Now, with the help of the above theorem, we have the following expanded list of completely rational WZW-nets.

\begin{co}
If $V$ is a unitary affine VOA of type $A$, $C$, or $G_2$, then the conformal net $\mc A_V$ is completely rational.
\end{co}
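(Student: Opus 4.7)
The plan is to verify the three properties characterizing complete rationality by \cite{KLM01}: the split property, strong additivity, and finiteness of the $\mu$-index $\mu(\mc A_V) = [\mc A_V(E')' : \mc A_V(E)]$ for $E$ a union of two disjoint open intervals of $S^1$. The split property for $\mc A_V$ is standard, following from the trace-class condition $\mathrm{Tr}(e^{-\beta \ovl{L_0}}) < \infty$ for all $\beta > 0$, which in turn follows from the finite-dimensionality of the homogeneous weight spaces of $V$ together with the positivity of $L_0$ on $\mc H_0$.

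For the finite $\mu$-index, I would combine the previous theorem, which yields $(\Rep^\ssp(\mc A_V), \boxtimes, \mathbb B) \simeq (\Rep^\uni(V), \boxtimes, \mathfrak{s})$ as unitary modular tensor categories under the $*$-functor $\fk F$, with Henriques's essential-surjectivity result \cite{Hen19} (problem 2 in the introduction), which for $V$ of affine type $A$, $C$, or $G_2$ ensures that every separable locally normal representation of $\mc A_V$ arises, up to isomorphism, by integrating a unitary $V$-module. Combined with the fact that $\fk F$ is already fully faithful by \cite{CWX}, this identifies $\Rep(\mc A_V) = \Rep^\ssp(\mc A_V) \simeq \Rep^\uni(V)$ as a finite unitary modular tensor category with finitely many irreducibles (since $V$ is rational). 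By the theorem of M\"uger (see also \cite{KLM01}) relating the $\mu$-index to the categorical global dimension, $\mu(\mc A_V) = \sum_i d_i^2 < \infty$.

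Strong additivity is the main obstacle. For type $A$ it was established by Wassermann \cite{Was98}, whence Xu's original result \cite{Xu00b} follows. For types $C$ and $G_2$, I would adapt the Wassermann-type argument using the primary fields with linear energy bounds constructed in \cite{Gui18}, which are precisely the intertwining operators used to build the vertex categorical extension in the proof of the previous theorem. Since the smeared primary fields localized in $I$ generate $\mc A_V(I)$, and linear energy bounds allow one to approximate any such smeared field by sums of smeared fields supported in the two connected components of $I \setminus \{z\}$ (for any interior point $z \in I$), one obtains $\mc A_V(I) = \mc A_V(I_1) \vee \mc A_V(I_2)$ where $I_1, I_2$ are these components. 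With split property, strong additivity, and finite $\mu$-index all established, \cite{KLM01} yields the complete rationality of $\mc A_V$.
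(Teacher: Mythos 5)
There is a genuine gap in your treatment of strong additivity, and it is precisely the step the paper's own (one-line) proof is designed to avoid. The paper argues: by the previous theorem $\Rep^\ssp(\mc A_V)$ is equivalent to $\Rep^\uni(V)$, hence is a fusion category (finitely many irreducible sectors, all of finite index), and then cites \cite{LX04}, theorem 4.9, which derives complete rationality --- in particular strong additivity and the finiteness of the $\mu$-index, with $\mu=\sum_i d_i^2$ --- from exactly this hypothesis. Your split-property argument via $\mathrm{Tr}(e^{-\beta\ovl{L_0}})<\infty$ is fine, and your $\mu$-index paragraph is essentially a restatement of the Longo--Xu circle of ideas (note that the formula $\mu=\sum_i d_i^2$ in \cite{KLM01} is proved \emph{assuming} complete rationality; the implication you need runs the other way and is the content of \cite{LX04}). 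But your proposed direct proof of strong additivity does not work as sketched: a smeared field $Y(v,f)$ with $f\in C^\infty_c(I)$ cannot be approximated by sums of fields smeared against functions supported in the two components $I_1,I_2$ of $I\setminus\{z\}$ in any topology that controls affiliation to $\mc A_V(I_1)\vee\mc A_V(I_2)$ --- the obstruction is exactly at the removed point $z$, and linear energy bounds give no handle on it. Strong additivity for loop group nets is a genuinely deep theorem (Wassermann's proof for type $A$ in \cite{Was98} uses detailed information about the local subfactors, not an approximation of test functions), and no direct proof is known for types $C$ and $G_2$; the only available route in this paper's setting is through \cite{LX04}.

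So the correct repair is simply to drop the direct attack on strong additivity and cite \cite{LX04} theorem 4.9 after observing that $\Rep^\ssp(\mc A_V)$ is a fusion category; your appeal to \cite{Hen19} for essential surjectivity is consistent with how the paper establishes $\fk F(\Rep^\uni(V))=\Rep^\ssp(\mc A_V)$ in the previous theorem, but even that is more than is needed, since finiteness and rigidity of $\fk F(\Rep^\uni(V))\subset\Rep(\mc A_V)$ already suffice once one knows it exhausts the sectors.
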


\begin{proof}
By the previous theorem, $\Rep^\ssp(\mc A_V)$ is a fusion category since this is true for $\Rep^\uni(V)$. Thus the strong rationality of $\mc A_V$ follows from \cite{LX04} theorem 4.9.
\end{proof}

If $V=L_\gk(l,0)$ is a unitary affine VOA of type $B$ or $D$, a partial result can be obtained. Write $\gk=\fk{so}_n$ where $n\geq5$. The smallest non-vacuum irreducible $V$-module $L_\gk(l,\Box)$ (the one corresponding to the vector representation of $\gk$) unfortunately does not generate the whole tensor category $\Rep^\uni(V)$. One also needs to include the spin representations, in which case the linear energy bounds conditions of intertwining operators are not guaranteed. Set $\mc F=\{L_\gk(l,\Box) \}$. Then the tensor category $\mc C$ generated by $\mc F$ is the tensor subcategory of single-valued $V$-modules. Here an  $L_\gk(l,0)$-module $W_i$ is called single-valued if   the action of $\gk$ on the lowest weight subspace of any irreducible component of $W_i$ can be integrated to a representation of $SO(n)$ (but not just its covering space $\mathrm{Spin}(n)$). By \cite{TL04} theorem VI.3.1 and \cite{Gui18} theorem 3.3, for any lowest weight vector $w^{(\Box)}\in L_\gk(l,\Box)=W_\Box$ and any $W_k\in\Rep^\uni(V)$, the intertwining operator $\mc L_\Box|_k(w^{(\Box)},x)$ satisfies $0$-th order energy bounds. Therefore, by theorem \ref{lb40}, we have the following equivalence theorem.

\begin{thm}
Let $V$ be a unitary affine VOA of type $B$ or $D$, and let $\mc C$ be the tensor category of unitary single-valued $V$-modules. Then any object in $\mc C$ is strongly integrable, the category $\fk F(\mc C)$ of all $\mc A_V$-modules which can be integrated from objects in $\mc C$ is closed under Connes fusions, and  $(\fk F(\mc C),\boxtimes,\mathbb B)$ and $(\mc C,\boxtimes,\ss)$ are equivalent as unitary ribbon fusion categories under the $*$-functor $\fk F$. 
\end{thm}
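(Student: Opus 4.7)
The plan is to apply Theorem \ref{lb40} to $(V,\mc C,\mc F)$ with $\mc F=\{W_\Box\}$, where $W_\Box=L_\gk(l,\Box)$ is the vector module. The hypotheses of that theorem reduce to a single analytic check, after which all three assertions (strong integrability, closure of $\fk F(\mc C)$ under Connes fusion, and the ribbon-fusion equivalence) follow uniformly.

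First I would verify the structural hypotheses on $V$ and $\mc C$. Conditions \ref{CondA}--\ref{CondG} hold for any unitary affine VOA, as recalled at the opening of Chapter 5. Condition \ref{CondI} holds because $V$ is generated (in the VOA sense) by the quasi-primary currents $X(-1)\Omega$ ($X\in\gk$), whose vertex operators satisfy linear energy bounds, so $V$ is strongly local by \cite{CKLW18} Theorem 8.1. The category $\mc C$ is a full rigid monoidal subcategory of $\Rep^\uni(V)$: closure under subobjects and finite direct sums is automatic by regularity, closure under tensor products holds because the tensor product of two single-valued $\widehat{\gk}$-modules is again single-valued (the zero-mode action on the lowest-weight subspaces descends to $SO(n)$), and closure under duality holds since $W_\Box$ is self-dual and the contragredient preserves single-valuedness. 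Energy-boundedness of every object of $\mc C$ is standard for unitary affine VOAs. Finally, $\mc F=\{W_\Box\}$ generates $\mc C$ by the classical representation-theoretic fact that the tensor representations of $\fk{so}_n$ are tensor-generated by the vector representation.

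The analytic input is then to choose $w^{(\Box)}$: take any nonzero vector in the lowest-weight subspace of $W_\Box$, which is quasi-primary since $L_1$ annihilates the lowest-weight subspace. By \cite{TL04} Theorem VI.3.1, extended in \cite{Gui18} Theorem 3.3 to all of $\Rep^\uni(V)$, the operator $\mc L_\Box|_k(w^{(\Box)},x)$ satisfies zeroth-order energy bounds for every $W_k\in\mc C$, hence in particular the linear bounds required by Theorem \ref{lb40}. Because $|\mc F|=1$, the pair condition $W_i,W_j\in\mc F$ reduces to this single verification.

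Invoking Theorem \ref{lb40} then delivers strong integrability for every object of $\mc C$, the existence of a vertex categorical extension $\ovl{\scr E}=(\mc A_V,\fk F(\mc C),\boxdot,\mc H)$, and an equivalence of braided $C^*$-tensor categories $(\mc C,\boxtimes,\ss)\simeq(\fk F(\mc C),\boxtimes,\mathbb B)$ via $\fk F$; in particular $\fk F(\mc C)$ is closed under Connes fusion. To upgrade to a unitary ribbon fusion equivalence, note that $\mc C$ is a unitary ribbon fusion category (regularity gives finitely many simples, $C^*$-structure is supplied by Theorem \ref{lb54} via the same strong intertwining input, and the twist is $e^{2i\pi L_0}$), and that $\fk F$ is fully faithful by \cite{CWX} and intertwines the $L_0$-actions by Proposition \ref{lb34}. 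Hence the twists $e^{2i\pi\ovl{L_0}}$ on both sides are matched, and the ribbon fusion structure transports along the equivalence. There is no serious obstacle; the only subtlety is the bookkeeping that the single-generator hypothesis genuinely suffices and that the spin representations have been excluded from $\mc C$ precisely because the relevant energy-bounds input is unavailable for them.
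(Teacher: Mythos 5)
Your proposal is correct and follows essentially the same route as the paper: take $\mc F=\{L_\gk(l,\Box)\}$ as the single generating object, invoke the $0$-th order energy bounds for $\mc L_\Box|_k(w^{(\Box)},x)$ from \cite{TL04} theorem VI.3.1 and \cite{Gui18} theorem 3.3, and apply theorem \ref{lb40}. Your additional bookkeeping (rigidity and closure of $\mc C$, generation by the vector module, and the transport of the ribbon structure via the twist $e^{2i\pi L_0}$) fills in details the paper leaves implicit but does not change the argument.
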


We remark that we can prove the strong integrability of  any (not necessarily single-valued) representation  $W_i$ of $V$ in a slightly weaker sense: there exists an $\mc A_V$-module $(\mc H_i,\pi_i)$ such that $\pi_i(\ovl{Y(X(-1)\Omega,f)})=\ovl{Y_i(X(-1)\Omega),f)}$ whenever $X\in\fk g,I\in\mc J,f\in C^\infty_c(I)$, and $Y(X(-1)\Omega,f)$ is symmetric (see \cite{Gui17b} remark 5.8). This is due to the fact that any $Y_i(X(-1),x)$ satisfies $1$-st order energy bounds, so that the smeared vertex operators of which are intertwined strongly by any energy bounded intertwining operators (see \cite{Gui17a} proposition 3.16). Since, by \cite{CKLW18} theorem 8.1, operators of the form $\ovl{Y(X(-1)\Omega,f)}$  (where $f\in C^\infty_c(I)$) generate $\mc A_V(I)$ for each $I$, $\pi_i$ is uniquely determined by $Y_i$. So we can define a fully faithful $*$-functor $\fk F:\Rep^\uni(V)\rightarrow \Rep(\mc A_V)$.

However, it will be much harder to show that the whole modular tensor category $\Rep^\uni(V)$ is equivalent to its image in $\Rep(\mc A_V)$ (i.e., $\Rep^\ssp(\mc A_V)$) under the $*$-functor $\fk F$. The difficulty lies in that, due to lack of $1$-st order energy bounds, we don't know how to prove the strong braiding property for the intertwining operators whose charge spaces are double-valued representations (say, the spin representations). A possible way to tackle this problem is through conformal inclusions: one tries to realize $V$ as a unitary VOA extension of another unitary rational VOA $U$, such that there exists a generating set of irreducible $U$-modules whose intertwining operators satisfy $1$-st order energy-bounds. Then one can construct the vertex category extension of $\mc A_U$, which can be shown to be restricted to the one of $\mc A_V$ (notice that the tensor category of $V$ is smaller than that of $U$). The equivalence of $\Rep^\uni(V)$ and $\Rep^\ssp(\mc A_V)$ as ribbon categories can therefore be proved. (Indeed, we will use this method to treat lattice VOAs in subsequent sections.) A general theory of categorical extensions for VOA extensions will be developed in future works.

\subsection{Unitary Heisenberg VOAs}

Heiserberg VOAs are not rational as they have infinitely many equivalence classes of irreducible representations. But it is still interesting to study their tensor categories and categorical extensions, mainly because of their relations to Lattice VOAs (which are rational). The main purpose of this section is to prove that all intertwining operators of a unitary Heisenberg VOA satisfy the strong intertwining and braiding properties. This result will be used in the next section to construct vertex categorical extensions for even lattice VOAs.

Heisenberg VOAs share many similarities with affine VOAs, as the former are affinizations of (obviously non-semisimple) abelian Lie algebras. The main differences are that the levels add no constraints on Heisenberg VOAs, and that all (positive) levels are equivalent. So we may well assume that the level $l=1$. In the following we summarize some key features of  the tensor categories of Heisenberg VOAs. A detailed account of the representation theory of Heisenberg VOAs (as well as Lattice VOAs) can be found in \cite{LL12} chapter 6.  \cite{DL93} lays down the foundation of the tensor product theory for representations of Heisenberg VOAs. A brief exposition of this theory can be found in \cite{TZ11}. These works were written in the language of formal variables. In particular, the braid and fusion relations of intertwining operators were written in the form of the Jacobi identity for ``generalized vertex algebras". A translation of these results in the language of complex variables was provided in \cite{Gui18} chapter A, where a discussion of the energy-bounds condition is also included.\\

Let $V=L_\hk(1,0)$ be the unitary Heisenberg VOA for a unitary finite-dimensional complex abelian Lie algebra $\hk$. Here ``unitary" means that $\hk$ is equipped with an inner product $(\cdot|\cdot)$ and an anti-unitary involution $*$. Let $\hk_{\mathbb R}=\{X\in\hk:X^*=-X \}$ be the real Lie algebra for $\hk$. The real inner product $(\cdot|\cdot)$ on $i\hr$ induces a natural isomorphism  between $i\hr$ and its dual vector space $(i\hr)^*$.  The equivalence classes of irreducible unitary $V$-modules can then be identified with $(i\hr)^*\simeq i\hr$ in the following way: For any $\lambda\in i\hr$, we have an  irreducible  positive energy representation $W_\lambda=L_\hk(1,\lambda)$ of the affinization $\wht\hk$ of $\hk$, such that  $\alpha(0)w_\lambda=(\alpha|\lambda)w_\lambda$ for any lowest conformal-weight vector $w_\lambda$ and any $\alpha\in \hk$.\footnote{From the perspective of infinite dimensional Lie algebras, $w_\lambda$ is called a \emph{highest} weight vector.} $L_\hk(1,\lambda)$ can be extended uniquely to a unitary $V$-module.

For any $\lambda,\mu,\nu\in i\hr$, The fusion rule $N^\nu_{\lambda\mu}=\dim\mc V{\nu\choose\lambda~\mu}=\dim\mc V{W_\nu\choose W_\lambda W_\mu}$ equals $1$ when $\nu=\lambda+\mu$, and $0$ otherwise. We therefore have $W_\lambda\boxtimes W_\mu\simeq W_{\lambda+\mu}$. If $\nu=\lambda+\mu$, there is a distinguished non-zero type $\nu\choose\lambda~\mu$ intertwining operator $\mc Y^\nu_{\lambda,\mu}$ defined using the well-known ``vertex operator" construction $\exp\big(\sum_{n>0}\frac {\lambda(-n)}nx^n\big)Y(v,x)\exp\big(-\sum_{n>0}\frac {\lambda(n)}nx^{-n}\big)$, with which the braid and fusion relations are easy to express. To simplify our notations, we let $\mc Y_\lambda$ act on any possible $W_\mu$ as the intertwining operator $\mc Y^\nu_{\lambda,\mu}$. By \cite{DL93} theorem 5.1, for any $\lambda,\mu,\nu\in i\hr,w^{(\lambda)}\in W_\lambda,w^{(\mu)}\in W_\mu,w^{(\nu)}\in W_\nu$, we have the fusion relation
\begin{align}
\mc Y_\lambda(w^{(\lambda)},z_1)\mc Y_\mu(w^{(\mu)},z_2)w^{(\nu)}=\mc Y_{\lambda+\mu}(\mc Y_\lambda(w^{(\lambda)},z_1-z_2)w^{(\mu)},z_2)w^{(\nu)}
\end{align}
for any $z_1,z_2\in\mathbb C^\times$ satisfying $0<|z_1-z_2|<|z_2|<|z_1|$ and $\arg z_1=\arg z_2=\arg(z_1-z_2)$, and the braid relation
\begin{align}
\mc Y_\lambda(w^{(\lambda)},z_1)\mc Y_\mu(w^{(\mu)},z_2)w^{(\nu)}=e^{i\pi(\lambda|\mu)}\mc Y_\mu(w^{(\mu)},z_2)\mc Y_\lambda(w^{(\lambda)},z_1)w^{(\nu)}
\end{align}
whenever $z_1,z_2\in S^1$ and $z_1$ is anticlockwise to $z_2$.

The energy bounds condition for the intertwining operators of $V$ was essentially proved in \cite{TL04} chapter VI. A brief explanation of the proof can be found in \cite{Gui18} chapter A. Here we only summarize  the results that will be used later: For any $\lambda,\mu\in i\hr$ and homogeneous $w^{(\lambda)}\in W_\lambda$, $\mc Y^{\lambda+\mu}_{\lambda,\mu}(w^{(\lambda)},x)$ is energy bounded.  If, moreover, $w^{(\lambda)}$ has lowest conformal dimension, and $(\lambda|\lambda)\leq 1$, then $\mc Y^{\lambda+\mu}_{\lambda,\mu}(w^{(\lambda)},x)$ satisfies $0$-th order energy bounds.

Now we can easily construct the vertex categorical extension for $V$. Set $\mc F=\{W_\lambda=L_\hk(1,\lambda):(\lambda|\lambda)\leq 1 \}$, which clearly generates $\Rep^\uni(V)$. For each $W_\lambda\in\mc F$, we choose a non-zero lowest weight vector $w^{(\lambda)}\in W_\lambda$. Then $\mc Y_\lambda(w^{(\lambda)},x)$ satisfies $0$-th order (and hence $1$-st order) energy bounds. Now  theorem \ref{lb40} implies the following theorem.

\begin{thm}
Let $V=L_\hk(1,0)$ be a unitary Heisenberg VOA. Then any irreducible unitary $V$-module is strongly integrable, the category $\fk F(\Rep^\uni(V))$ of all $\mc A_V$-modules which can be integrated from semisimple unitary $V$-modules is closed under Connes fusions, and the braided $C^*$-tensor categories $(\fk F(\Rep^\uni(V)),\boxtimes,\mathbb B)$ and $(\Rep^\uni(V),\boxtimes,\ss)$ are equivalent under the $*$-functor $\fk F$.
\end{thm}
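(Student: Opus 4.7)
The plan is to apply Theorem~\ref{lb40} with $\mc C = \Rep^\uni(V)$ and with the generating set $\mc F = \{W_\lambda = L_\hk(1,\lambda) : (\lambda|\lambda)\leq 1\}$. The preliminaries at the start of Chapter~5 ensure that $V = L_\hk(1,0)$ satisfies conditions \ref{CondA}--\ref{CondI}. Condition~\ref{CondI} in particular rests on \cite{BS90}, which guarantees both energy-boundedness and strong locality of $V$, so that $\mc A_V$ is defined. What remains is to verify that $\mc F$ generates a rigid monoidal $C^*$-subcategory whose objects are energy-bounded, and to exhibit, for each $W_\lambda\in\mc F$, a non-zero quasi-primary vector $w^{(\lambda)}\in W_\lambda$ satisfying the $1$-st order energy bounds hypothesis of Theorem~\ref{lb40}.

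Since the fusion rule is $W_\lambda\boxtimes W_\mu \simeq W_{\lambda+\mu}$ and $W_\lambda^*\simeq W_{-\lambda}$, the category $\Rep^\uni(V)$ is rigid with trivial associator, and every irreducible $W_\lambda$ is a finite fusion product of members of $\mc F$ (decompose $\lambda\in i\hr$ into a sum of vectors of norm-squared $\leq 1$). Energy-boundedness of every semisimple unitary $V$-module follows from the standard fact that $V$ is generated by the quasi-primary fields $Y(\alpha(-1)\Omega,x)$ of conformal dimension~$1$.

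For each $W_\lambda\in\mc F$, pick a non-zero lowest-weight vector $w^{(\lambda)}$. The Sugawara construction gives $L_nw^{(\lambda)}=0$ for all $n\geq 1$ (each $L_n$ with $n\geq 1$ is a normally-ordered polynomial in $\{\alpha(m) : m\geq 1\}$, which annihilate $w^{(\lambda)}$), so $w^{(\lambda)}$ is primary, hence quasi-primary. Because the fusion spaces of Heisenberg modules are at most one-dimensional, $\mc L_\lambda|_{W_\mu}$ is a scalar multiple of the distinguished ``vertex operator'' intertwiner $\mc Y^{\lambda+\mu}_{\lambda,\mu}$ of \cite{DL93}. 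The discussion preceding the statement (following \cite{TL04,Gui18}) records that for $w^{(\lambda)}$ of lowest conformal weight with $(\lambda|\lambda)\leq 1$, the field $\mc Y^{\lambda+\mu}_{\lambda,\mu}(w^{(\lambda)},x)$ satisfies $0$-th order energy bounds, which a fortiori implies $1$-st order bounds; the extension to reducible $W_\mu$ is automatic by taking orthogonal direct sums.

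With these hypotheses verified, Theorem~\ref{lb40} delivers all three conclusions in one stroke: every unitary $V$-module is strongly integrable, $\fk F(\Rep^\uni(V))$ is closed under Connes fusion, and the $*$-functor $\fk F$ induces an equivalence of braided $C^*$-tensor categories $(\Rep^\uni(V),\boxtimes,\ss)\simeq(\fk F(\Rep^\uni(V)),\boxtimes,\mathbb B)$. There is no serious obstacle at this stage, because the nontrivial analytic content---$0$-th order energy bounds for charged primary fields of Heisenberg type and the abstract categorical machinery---has been quarantined in \cite{TL04,Gui18} and in Theorems~\ref{lb36} and \ref{lb40}, respectively. If anything is delicate, it is the bookkeeping that $(\lambda|\lambda)\leq 1$ is truly enough to tensor-generate all of $\Rep^\uni(V)$, but this reduces to the elementary linear-algebra fact that $i\hr$ is generated as an abelian group by its unit ball.
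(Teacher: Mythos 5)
Your proposal is correct and follows exactly the paper's own route: the paper likewise sets $\mc F=\{W_\lambda : (\lambda|\lambda)\leq 1\}$, takes a non-zero lowest-weight (hence quasi-primary) vector $w^{(\lambda)}$ in each, invokes the $0$-th order energy bounds for $\mc Y^{\lambda+\mu}_{\lambda,\mu}(w^{(\lambda)},x)$ from \cite{TL04,Gui18}, and applies theorem \ref{lb40}. The extra verifications you spell out (generation of $\Rep^\uni(V)$ by the unit ball, primarity via Sugawara, reduction to the one-dimensional fusion spaces) are exactly the details the paper leaves implicit.
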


We now prove the strong braiding property for intertwining operators of $V$. By the second half of theorem \ref{lb40}, it suffices to check the strong intertwining property.  Choose  $\lambda,\mu\in i\hr$ and disjoint $\wtd I,\wtd J\in\Jtd$. Then for any homogeneous  $w^{(\lambda)}\in W_\lambda$, $\wtd f\in C^\infty_c(\wtd I)$, and $g\in C^\infty_c(J),\alpha\in\hk$ satisfying that $Y(\alpha(-1)\Omega,g)$ is symmetric, lemma \ref{lb39} tells us that $\ovl{Y(\alpha(-1)\Omega,g)}$ is self-adjoint, and the smeared intertwining operator $\mc Y^{\lambda+\mu}_{\lambda,\mu}(w^{(\lambda)},\wtd f)$, when regarded as an unbounded operator on $\mc H_\lambda\oplus\mc H_\mu$ with domain $\mc H_\lambda^\infty\oplus\mc H_\mu^\infty$, commutes strongly with the preclosed operator $\diag(Y_\lambda(\alpha(-1)\Omega,g),Y_\mu(\alpha(-1)\Omega,g))=Y_{\lambda\oplus\mu}(\alpha(-1)\Omega,g)$ (see also \cite{Gui17a} proposition 3.16). By strong integrability, $\ovl{Y_{\lambda\oplus\mu}(\alpha(-1)\Omega,g)}=\pi_{\lambda\oplus\mu}(\ovl{Y(\alpha(-1)\Omega,g)})$. By \cite{CKLW18} theorem 8.1, $\mc A_V(J)$ is generated by all such $\ovl{Y(\alpha(-1)\Omega,g)}$. Therefore $\mc Y^{\lambda+\mu}_{\lambda,\mu}(w^{(\lambda)},\wtd f)$ commutes strongly with $\pi_{\lambda\oplus\mu}(y)$ for any $y\in\mc A_V(J)$. The strong intertwining property for $w^{(\lambda)}\curvearrowright\Rep^\uni(V)$ hence follows. By  theorem \ref{lb40}, we have the strong braiding property for any $w^{(\lambda)},w^{(\mu)}\in\Rep^\uni(V)$. Note that we can identify $\mc L_\lambda(w^{(\lambda)},x)$ with $\mc Y_\lambda(w^{(\lambda)},x)$, and identify $\mc R_\lambda(w^{(\mu)},x)$ with $e^{i\pi(\mu|\nu)}\mc Y_\mu(w^{(\mu)},x)$ when acting on any $W_\nu$. The strong braiding property can therefore be written in the following equivalent form:
\begin{thm}\label{lb47}
Let $V=L_\hk(1,0)$ be a unitary Heisenberg VOA. Then for any $\lambda,\mu,\nu\in i\hr$, any homogeneous vectors $w^{(\lambda)}\in W_\lambda,w^{(\mu)}\in W_\mu$, any intervals $\wtd I,\wtd J\in\Jtd$ with $\wtd I$ anticlockwise to $\wtd J$, and any $\wtd f\in C^\infty_c(\wtd I),\wtd g\in C^\infty_c(\wtd J)$, the following diagram of preclosed operators commutes strongly.
\begin{align}
\begin{CD}
\mc H_\nu @> \qquad \mc Y_\mu(w^{(\mu)},\wtd g) \qquad >> \mc H_{\nu+\mu}\\
@V \mc Y_\lambda(w^{(\lambda)},\wtd f) VV @VV  \mc Y_\lambda(w^{(\lambda)},\wtd f) V\\
\mc H_{\lambda+\nu} @>\quad e^{i\pi(\lambda|\mu)}\mc Y_\mu(w^{(\mu)},\wtd g) \quad>> \mc H_{\lambda+\nu+\mu}
\end{CD}\label{eq80}
\end{align}
\end{thm}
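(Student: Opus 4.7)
The plan is to reduce Theorem~\ref{lb47} to the strong intertwining property for individual homogeneous charge vectors and then invoke the second half of Theorem~\ref{lb40}, which states that once two homogeneous vectors $w^{(\lambda)},w^{(\mu)}$ each satisfy the strong intertwining property on $\Rep^\uni(V)$, the pair automatically satisfies the strong braiding property. Since in the Heisenberg setting $\mc L_\lambda(w^{(\lambda)},x)=\mc Y_\lambda(w^{(\lambda)},x)$ and $\mc R_\mu(w^{(\mu)},x)|_{W_\nu}=e^{i\pi(\mu|\nu)}\mc Y_\mu(w^{(\mu)},x)$ (by abelianness of the Heisenberg braid phase), the categorical strong-braiding square translates verbatim into the square \eqref{eq80}. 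Hence everything reduces to establishing, for every $\lambda\in i\hr$ and every homogeneous $w^{(\lambda)}\in W_\lambda$, the strong intertwining property of $w^{(\lambda)}\curvearrowright\Rep^\uni(V)$.

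To verify this, fix $\wtd I\in\Jtd$, $\wtd f\in C_c^\infty(\wtd I)$, and $J\in\mc J$ disjoint from $I$; I need to show that the extension of $\mc Y_\lambda(w^{(\lambda)},\wtd f)$ to $\mc H=\mc H_\mu\oplus\mc H_{\lambda+\mu}$ commutes strongly with $\pi_\mu(y)\oplus\pi_{\lambda+\mu}(y)$ for every $y\in\mc A_V(J)$. By \cite{CKLW18} Theorem~8.1 applied to $V=L_\hk(1,0)$, $\mc A_V(J)$ is generated by the self-adjoint closed operators $\ovl{Y(\alpha(-1)\Omega,g)}$ with $\alpha\in\hk_\mbb R$ and $g\in C_c^\infty(J,\mbb R)$ chosen so that $Y(\alpha(-1)\Omega,g)$ is symmetric, and by the strong integrability proved just before the theorem these act on each $\mc H_\nu$ as $\ovl{Y_\nu(\alpha(-1)\Omega,g)}$. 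Thus it suffices to prove strong commutativity of $\mc Y_\lambda(w^{(\lambda)},\wtd f)$ with each such symmetric smeared current on $\mc H$.

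This is precisely the setting of Lemma~\ref{lb39}: take $D=1+\ovl{L_0}$, let $K$ be the block-diagonal symmetric current $\diag(Y_\mu(\alpha(-1)\Omega,g),Y_{\lambda+\mu}(\alpha(-1)\Omega,g))$, and let $T$ be the off-diagonal extension of $\mc Y_\lambda(w^{(\lambda)},\wtd f)$, both with common invariant domain $\mc H_\mu^\infty\oplus\mc H_{\lambda+\mu}^\infty$. The first-order energy bound on $K$ is the classical linear bound for Heisenberg currents, and $[\ovl{L_0},K]$ is again of the same form by \eqref{eq79}; the polynomial energy bound on $T$ is the energy-boundedness of Heisenberg intertwining operators recalled earlier from \cite{TL04,Gui18}. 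The remaining hypothesis, $KT\xi=TK\xi$ on smooth vectors, is the content of Proposition~\ref{lb30} applied to the trivial-phase braid relation $Y(\alpha(-1)\Omega,z_1)\mc Y_\lambda(w^{(\lambda)},z_2)=\mc Y_\lambda(w^{(\lambda)},z_2)Y(\alpha(-1)\Omega,z_1)$ for $z_1,z_2\in S^1$ with $z_1$ anticlockwise to $z_2$ (the $\mu=0$ case of the Heisenberg braid formula, in which the braid phase $e^{i\pi(0|\lambda)}=1$ disappears). Lemma~\ref{lb39} then upgrades the smooth-vector commutation to strong commutativity, completing the strong intertwining property, after which Theorem~\ref{lb40} yields the desired strong braiding. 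The step most deserving of care is isolating the correct form of the braid relation used in the Proposition~\ref{lb30} step — one must check that both sides are intertwining operators with matching source and target modules and that the trivial braid phase really is what the Heisenberg formulae produce in the $v\in V$ direction — but once this is done the remaining ingredients (Lemma~\ref{lb39}, the Heisenberg generation theorem of \cite{CKLW18}, and the second half of Theorem~\ref{lb40}) are mechanical.
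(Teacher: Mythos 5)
Your proposal follows essentially the same route as the paper: reduce to the strong intertwining property via the second half of Theorem \ref{lb40}, verify that property by checking strong commutativity with the symmetric smeared currents $\ovl{Y(\alpha(-1)\Omega,g)}$ (using Lemma \ref{lb39} together with the generation result of \cite{CKLW18} Theorem 8.1 and strong integrability), and then translate the categorical strong-braiding square into \eqref{eq80} via the identifications $\mc L_\lambda=\mc Y_\lambda$ and $\mc R_\mu|_{W_\nu}=e^{i\pi(\mu|\nu)}\mc Y_\mu$. The argument is correct; your write-up is merely more explicit than the paper's about which hypotheses of Lemma \ref{lb39} are being checked.
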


\subsection{Lattice VOAs}

In this section, a unitary Heisenberg VOA $L_\hk(1,0)$ is denoted by $U$, and the symbol $V$ will be reserved for a lattice VOA. Let $\Upsilon$ be an even lattice in $i\hr$ satisfying $\textrm{rank}(\Upsilon)=\dim(i\hr)$, and let $\Upsilon^\circ$ be the dual lattice of $\Upsilon$. Then the unitary $U$-module $V=\bigoplus_{\alpha\in\Upsilon}L_\hk(1,\alpha)$ can be extended to a unitary VOA structure by choosing a map $\epsilon:\Upsilon^\circ\times\Upsilon^\circ\rightarrow S^1$ satisfying
\begin{gather*}
\epsilon(\alpha,0)=1,\qquad  \epsilon(\alpha,\beta+\gamma)\epsilon(\beta,\gamma)=\epsilon(\alpha,\beta)\epsilon(\alpha+\beta,\gamma)\qquad (\forall \alpha,\beta,\gamma \in\Upsilon^\circ),\\
\epsilon(\alpha,\beta)=(-1)^{(\alpha|\beta)}\epsilon(\beta,\alpha)\qquad (\forall \alpha,\beta \in\Upsilon)
\end{gather*}
(see \cite{LL12} remark 6.4.12 for the existence of such $\epsilon$), and setting, for each $\alpha,\mu\in\Upsilon,w^{(\alpha)}\in L_\hk(1,\alpha),w^{(\mu)}\in L_\hk(1,\mu)$,
\begin{align}
Y(w^{(\alpha)},x)w^{(\mu)}=\epsilon(\alpha,\mu)\mc Y_\alpha(w^{(\alpha)},x)w^{(\mu)}.\label{eq77}
\end{align}
where $\mc Y_\alpha$ is  as in the last section. Then $(V,Y)$ becomes a VOA, called the lattice VOA for $\Upsilon$. By \cite{Miy04} proposition 2.7 or \cite{DL14} theorem 4.12, $V$ is unitary. As $\Upsilon\subset\Upsilon^\circ$, we have a quotient map $[\cdot]:\Upsilon^\circ\rightarrow\Upsilon^\circ/\Upsilon,\lambda\mapsto [\lambda]$. Then for each $\lambda\in\Upsilon^\circ$, the unitary $U$-module $W_{[\lambda]}=\bigoplus_{\mu\in \lambda+\Upsilon}L_\hk(1,\mu)$ can be extended to an irreducible unitary $V$-module $(W_{[\lambda]},Y_{[\lambda]})$ by letting $Y_{[\lambda]}(w^{(\alpha)},x)w^{(\mu)}$ equal the right hand side of \eqref{eq77} for any $\alpha\in\Upsilon,\mu\in\lambda+\Upsilon,w^{(\alpha)}\in L_\hk(1,\alpha),w^{(\mu)}\in L_\hk(1,\mu)$. Moreover, any irreducible $V$-module arises in this way (\cite{LL12} theorem 6.5.24). We thus have a bijection between $\Upsilon^\circ/\Upsilon$ and the equivalence classes of irreducible (unitary) $V$-modules.

Intertwining operators of $V$ can be described as follows (cf. \cite{DL93} proposition 12.2). For any $\lambda_0,\mu_0,\nu_0\in\Upsilon^\circ$, we let $\mc V{[\nu_0]\choose [\lambda_0]~[\mu_0]}$ be the vector space of type ${[\nu_0]\choose[\lambda_0]~[\mu_0]}={W_{[\nu_0]}\choose W_{[\lambda_0]} W_{[\mu_0]}}$ intertwining operators of $V$, and let $N^{[\nu_0]}_{[\lambda_0][\mu_0]}$ be the fusion rule $\dim\mc V{[\nu_0]\choose [\lambda_0]~[\mu_0]}$. Then $N^{[\nu_0]}_{[\lambda_0][\mu_0]}$ equals $1$ when $\nu_0-\lambda_0-\mu_0\in\Upsilon$, and equals $0$ otherwise. Therefore $W_{[\lambda_0]}\boxtimes W_{[\mu_0]}\simeq W_{[\lambda_0+\mu_0]}$. A distinguished type $[\lambda_0+\mu_0]\choose[\lambda_0]~[\mu_0]$ intertwining operator $\mc Y_{[\lambda_0][\mu_0]}^{[\lambda_0+\mu_0]}$, written simply as $\mc Y_{[\lambda_0]}$, can be chosen to satisfy that for any $\lambda\in\lambda_0+\Upsilon,\mu\in\mu_0+\Upsilon,w^{(\lambda)}\in L_\hk(1,\lambda),w^{(\mu)}\in L_\hk(1,\mu)$,
\begin{align}
\mc Y_{[\lambda_0]}(w^{(\lambda)},x)w^{(\mu)}=\frac {\epsilon(\lambda,\mu)\epsilon(\mu-\mu_0,\lambda) e^{i\pi(\mu-\mu_0|\lambda)}}{\epsilon(\lambda,\mu-\mu_0)}\cdot \mc Y_\lambda(w^{(\lambda)},x)w^{(\mu)}
\end{align}
Thus the energy-boundedness of $V$-intertwining operators follows from that of $U$-intertwining operators.

We now prove the strong braiding property of intertwining operators of $V$. First we need a lemma.

\begin{lm}\label{lb46}
Let $A,B$ be preclosed operators on a Hilbert space $\mc H$ with common invariant domain $\Dom$.  Let $\{p_\alpha \}$ be a collection of  projections on $\mc H$ satisfying $\bigvee_\alpha p_\alpha=\id_{\mc H}$. Assume that for any $\alpha$,  $p_\alpha\Dom\subset\Dom$, $p_\alpha$ commutes strongly with $A,B$, and the restrictions of $A,B$ to $p_\alpha\mc H$ (with common domain $p_\alpha\Dom$) commute strongly. Then $A$ and $B$ commutes strongly.
\end{lm}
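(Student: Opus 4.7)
The plan is to show that the von Neumann algebras $\mc M_A := W^*(\ovl A,\ovl A^*)$ and $\mc M_B := W^*(\ovl B,\ovl B^*)$ commute on $\mc H$, by reducing to each invariant piece $p_\alpha\mc H$ and then gluing using $\bigvee_\alpha p_\alpha=\id$.

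First I would verify that each $p_\alpha$ reduces $\ovl A$ (and $\ovl B$), i.e.\ $p_\alpha\Dom(\ovl A)\subset\Dom(\ovl A)$ and $p_\alpha\ovl A=\ovl A p_\alpha$ on $\Dom(\ovl A)$; this is immediate from $p_\alpha\in\mc M_A'$, which holds by the assumed strong commutativity of $p_\alpha$ and $A$. Combined with $p_\alpha\Dom\subset\Dom$, this allows one to view $A|_{p_\alpha\Dom}$ as a densely defined preclosed operator on $p_\alpha\mc H$. A routine core argument (approximating $\xi\in\Dom(\ovl A)\cap p_\alpha\mc H$ by $p_\alpha\xi_n$, where $\xi_n\in\Dom$ satisfies $\xi_n\to\xi$ and $A\xi_n\to\ovl A\xi$) shows that $\overline{A|_{p_\alpha\Dom}}=\ovl A|_{p_\alpha\mc H}$, and similarly for $B$. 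Consequently, the compressions $(\mc M_A)_{p_\alpha}$ and $(\mc M_B)_{p_\alpha}$, which act on $p_\alpha\mc H$ because $p_\alpha$ lies in $\mc M_A'\cap \mc M_B'$, coincide with the von Neumann algebras generated by $\overline{A|_{p_\alpha\Dom}}$, $\overline{B|_{p_\alpha\Dom}}$ together with their adjoints, i.e.\ with the algebras to which the strong-commutativity hypothesis on the $p_\alpha$-pieces directly applies.

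Next, pick $x\in\mc M_A$ and $y\in\mc M_B$. Since $p_\alpha$ commutes with both, it commutes with $[x,y]$, so $p_\alpha\mc H$ is invariant under $[x,y]$. The restriction $[x,y]|_{p_\alpha\mc H}$ equals $[\,x|_{p_\alpha\mc H},\,y|_{p_\alpha\mc H}\,]$, which vanishes by the previous paragraph and the hypothesis that $A|_{p_\alpha\Dom}$ and $B|_{p_\alpha\Dom}$ commute strongly. Hence $[x,y]p_\alpha=0$ for every $\alpha$, meaning every range $p_\alpha\mc H$ lies in $\ker[x,y]$. Since $\ker[x,y]$ is closed, it contains the closed linear span of $\bigcup_\alpha p_\alpha\mc H$, which is the range of $\bigvee_\alpha p_\alpha=\id$. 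Therefore $[x,y]=0$, so $\mc M_A$ and $\mc M_B$ commute, which is exactly the statement that $A$ and $B$ commute strongly.

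The main obstacle is the identification in the first step: the strong-commutativity hypothesis on the $p_\alpha$-pieces concerns the closures of the restrictions $A|_{p_\alpha\Dom}$, $B|_{p_\alpha\Dom}$, whereas the conclusion to be drawn globally concerns the closures $\ovl A$, $\ovl B$ of the whole operators; one must therefore argue carefully, via the invariance of $\Dom$ and the strong commutativity of the $p_\alpha$ with $A,B$, that these two points of view produce the same von Neumann algebras on $p_\alpha\mc H$. Once that is settled, the rest of the argument is purely formal.
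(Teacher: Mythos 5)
Your argument is correct and follows essentially the same route as the paper: reduce along the projections $p_\alpha$, identify the closure of the restricted operator with the restriction of $\ovl A$ (and hence the generated von Neumann algebra with the compression $(\mc M_A)_{p_\alpha}$), and then glue the vanishing commutators using $\bigvee_\alpha p_\alpha=\id$. The one step you flag as needing care — that $W^*(\overline{A|_{p_\alpha\Dom}})$ equals $(\mc M_A)_{p_\alpha}$ — is handled in the paper by observing that the left polar decomposition $\ovl A=UH$ restricts to the polar decomposition $Up_\alpha\cdot Hp_\alpha$ of $\ovl{A|_{p_\alpha\mc H}}$, so the generators compress correctly.
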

Note that since $p_\alpha$ commutes strongly with $A$ and $A\Dom,p_\alpha\Dom\subset\Dom$, we have $Ap_\alpha\Dom= p_\alpha A\Dom\subset p_\alpha\Dom$, and similarly $Bp_\alpha\Dom\subset p_\alpha\Dom$. Therefore the restrictions  in this lemma make sense.

\begin{proof}
For each $n$ we set $\mc H_\alpha=p_\alpha\mc H,\Dom_\alpha=p_\alpha\Dom$, and let $A|_{\mc H_\alpha}$  be  the preclosed operator on $\mc H_\alpha$ with dense domain $\Dom_\alpha$ satisfying $A|_{\mc H_\alpha}\xi=A\xi$ for any $\xi\in\Dom_\alpha$. Then, using the strong commutativity of $p_\alpha$ and $A$, one easily checks that $\ovl{A|_{\mc H_\alpha}}$ is the restriction of $\ovl A p_\alpha$ to $\mc H_\alpha$. To put it simply, we have $\ovl{A|_{\mc H_\alpha}}=\ovl A p_\alpha$. Let $\ovl A=UH$ be the left polar decomposition of $\ovl A$ with $U$ the phase of $\ovl A$. Then by the uniqueness of polar decompositions, $\ovl{A|_{\mc H_\alpha}}$ also has polar decomposition $\ovl{A|_{\mc H_\alpha}}=Up_\alpha\cdot Hp_\alpha$. Define $B|_{\mc H_\alpha}$ in a similar way, and let $\ovl B=U'H'$ be the left polar decomposition of $B$. Then we also have the polar decomposition $\ovl{B|_{\mc H_\alpha}}=U'p_\alpha\cdot H'p_\alpha$ of $\ovl{B|_{\mc H_\alpha}}$. Now we choose $x$ (resp. $y$) to be an arbitrary element in  the von Neumann algebra generated by $\ovl A,\ovl A^*$ (resp. $\ovl B,\ovl B^*$). Then, since $\ovl{A|_{\mc H_\alpha}}$ commutes strongly with $\ovl{B|_{\mc H_\alpha}}$ by assumption, we see that $xp_\alpha$ commutes with $yp_\alpha$. As $[x,p_\alpha]=[y,p_\alpha]=0$, we have $xy\xi=yx\xi$ for any $\xi\in\mc H_\alpha$. Since $\alpha$ is arbitrary, we actually have the commutativity of $x$ and $y$, which therefore proves the strong commutativity of $\ovl A$ and $\ovl B$.
\end{proof}

\begin{thm}
	Let $V$ be the VOA for a non-degenerate even lattice $\Upsilon\subset i\hr$. Then for any $\lambda_0,\mu_0,\nu_0\in \Upsilon^\circ$, any homogeneous vectors $w^{[\lambda_0]}\in W_{[\lambda_0]},w^{[\mu_0]}\in W_{[\mu_0]}$, any intervals $\wtd I,\wtd J\in\Jtd$ with $\wtd I$ anticlockwise to $\wtd J$, and any $\wtd f\in C^\infty_c(\wtd I),\wtd g\in C^\infty_c(\wtd J)$, the following diagram of preclosed operators commutes strongly.
\begin{align}
	\begin{CD}
	\mc H_{[\nu_0]} @> \quad \mc R_{[\mu_0]}(w^{[\mu_0]},\wtd g) \quad >> \mc H_{[\nu_0][\mu_0]}\\
	@V \mc L_{[\lambda_0]}(w^{[\lambda_0]},\wtd f) VV @VV  \mc L_{[\lambda_0]}(w^{[\lambda_0]},\wtd f) V\\
	\mc H_{[\lambda_0][\nu_0]} @>\quad\mc R_{[\mu_0]}(w^{[\mu_0]},\wtd g) \quad>> \mc H_{[\lambda_0][\nu_0][\mu_0]}
	\end{CD}
\end{align}
\end{thm}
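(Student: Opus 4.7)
The strategy is to reduce the problem to the strong braiding property for the underlying Heisenberg sub-VOA $U=L_\hk(1,0)$ established in Theorem \ref{lb47}. The key structural input is the explicit formula displayed above, which realises the lattice intertwining operator $\mc Y_{[\lambda_0]}$, when restricted to a Heisenberg weight summand $L_\hk(1,\mu)\subset W_{[\mu_0]}$ with charge vector in $L_\hk(1,\lambda)\subset W_{[\lambda_0]}$, as a unimodular scalar multiple of the Heisenberg intertwining operator $\mc Y_\lambda$. Since $W_{[\lambda_0]}\boxtimes W_{[\nu_0]}\simeq W_{[\lambda_0+\nu_0]}$ is irreducible with one-dimensional space of intertwining operators, $\mc L_{[\lambda_0]}$, and similarly $\mc R_{[\mu_0]}=\ss_{[\mu_0],\cdot}\mc L_{[\mu_0]}$, agrees up to a scalar with $\mc Y_{[\lambda_0]}$ (resp.\ $\mc Y_{[\mu_0]}$) and inherits the same block-diagonal weight-shifting structure.

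By bilinearity in the charge variables together with Lemma \ref{lb44}, it suffices to assume $w^{[\lambda_0]}\in L_\hk(1,\lambda)$ for some $\lambda\in\lambda_0+\Upsilon$ and $w^{[\mu_0]}\in L_\hk(1,\mu)$ for some $\mu\in\mu_0+\Upsilon$. Following Definition \ref{lb37}, I form the direct sum $\mc H=\mc H_{[\nu_0]}\oplus\mc H_{[\nu_0][\mu_0]}\oplus\mc H_{[\lambda_0][\nu_0]}\oplus\mc H_{[\lambda_0][\nu_0][\mu_0]}$ with common invariant smooth domain $\mc H^\infty$, and extend the two smeared intertwining operators to preclosed operators $A$ and $B$ on $\mc H$. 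For each $\tau\in\nu_0+\Upsilon$, define the orthogonal projection $q_\tau$ on $\mc H$ whose restriction to the four summands projects onto the Heisenberg weight subspaces $L_\hk(1,\tau)$, $L_\hk(1,\tau+\mu)$, $L_\hk(1,\tau+\lambda)$, and $L_\hk(1,\tau+\lambda+\mu)$, respectively; clearly $\bigvee_\tau q_\tau=1$. Since $\mc L_{[\lambda_0]}(w^{[\lambda_0]},\wtd f)$ raises the $\hk$-weight by $\lambda$, and by the adjoint formula \eqref{eq76} its formal adjoint (whose charge lies in $L_\hk(1,-\lambda)\subset W_{-[\lambda_0]}$) lowers it by $\lambda$, both $A$ and $A^\dagger$ preserve each $q_\tau\mc H^\infty$; the analogous statement holds for $B$ with weight shift $\mu$. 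Hence $q_\tau$ reduces $A$ and $B$, and so commutes strongly with both $\ovl A$ and $\ovl B$.

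On each invariant subspace $q_\tau\mc H$, the restrictions of $A$ and $B$ are diagonal-phase twists of the extended Heisenberg smeared operators built from $\mc Y_\lambda(w^{[\lambda_0]},\wtd f)$ and $\mc Y_\mu(w^{[\mu_0]},\wtd g)$ respectively. By Theorem \ref{lb47} the latter pair commutes strongly, and the twisting by unimodular scalars on each Heisenberg summand does not affect strong commutativity, because the orthogonal projections onto the individual Heisenberg summands lie in the von Neumann algebras generated by the closures of the (nontrivial) Heisenberg operators and their adjoints. Therefore the restrictions of $\ovl A$ and $\ovl B$ to each $q_\tau\mc H$ commute strongly, and Lemma \ref{lb46} then gives the desired strong commutativity on $\mc H$. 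The main obstacle is this last point: verifying carefully that the diagonal scalar twists are absorbed into the Heisenberg von Neumann algebras. One approach is to observe that the Heisenberg smeared operators have dense range onto the appropriate weight summand, which follows from irreducibility and Proposition \ref{lb56} applied in the Heisenberg setting, so that the block projections indeed lie in the relevant von Neumann algebras; alternatively, one may bypass this issue by first proving the strong intertwining property of $\mc Y_{[\lambda_0]}$ against lattice vertex operators via the same weight-chain reduction to the Heisenberg case, and then invoking the second half of Theorem \ref{lb40}.
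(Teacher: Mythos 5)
Your overall architecture coincides with the paper's: reduce, via Lemma \ref{lb44} together with the rotation covariance \eqref{eq78} (which is needed to verify the hypothesis of that lemma for the rotated operators, and which you should cite explicitly), to the case where $w^{[\lambda_0]}$ and $w^{[\mu_0]}$ lie in single Heisenberg summands $L_\hk(1,\lambda)$ and $L_\hk(1,\mu)$; form the four-fold direct sum $\mc H$ as in Definition \ref{lb37}; decompose it into the blocks $\mc K_\tau=\mc H_\tau\oplus\mc H_{\tau+\mu}\oplus\mc H_{\lambda+\tau}\oplus\mc H_{\lambda+\tau+\mu}$ indexed by $\tau\in\nu_0+\Upsilon$; check that the block projections commute strongly with $A$ and $B$; and then combine Lemma \ref{lb46} with Theorem \ref{lb47}. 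Up to that point your proof is the paper's proof.

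The gap is precisely in the step you flag as the main obstacle. The claim that the orthogonal projections onto the \emph{individual} Heisenberg summands lie in the von Neumann algebras generated by the closures of the Heisenberg operators and their adjoints is unjustified and in general false: from $\ovl{A_0}$ and $\ovl{A_0}^{\,*}$ (where $A_0$ is the extension of the two copies of $\mc Y_\lambda(w^{(\lambda)},\wtd f)$ inside $\mc K_\tau$) one recovers at most the support and range projections, i.e.\ the projections onto $\mc H_\tau\oplus\mc H_{\tau+\mu}$ and onto $\mc H_{\lambda+\tau}\oplus\mc H_{\lambda+\tau+\mu}$; nothing in $W^*(\ovl{A_0},\ovl{A_0}^{\,*})$ separates $\mc H_\tau$ from $\mc H_{\tau+\mu}$, since the two source summands carry essentially the same operator, and density of the range does not help. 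Your proposed fallback via the second half of Theorem \ref{lb40} is also unavailable: that statement operates under the standing hypothesis that $V$ admits a generating set of modules whose $\mc L$-operators satisfy $1$-st order energy bounds, which is exactly what fails for lattice VOAs and what the Heisenberg detour is designed to avoid. The correct way to absorb the unimodular twists does not require any projection to lie in the algebras: both the lattice diagram (by Theorem \ref{lb38}) and the Heisenberg diagram \eqref{eq80} commute adjointly on the common smooth domain, so the four unimodular scalars $b_1,b_2$ (vertical arrows) and $a_1,a_2$ (horizontal arrows) relating the two diagrams, after choosing unitary identifications of the fusion products, must satisfy $b_2a_1=a_2b_1$. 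Consequently the diagonal unitary $D_u=\diag(1,a_1,b_1,b_2a_1)$ on $\mc K_\tau$ conjugates the Heisenberg extensions $A_0,B_0$ into the lattice extensions $A|_{\mc K_\tau},B|_{\mc K_\tau}$ simultaneously, and strong commutativity is preserved under simultaneous unitary conjugation. With this replacement your argument closes.
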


\begin{proof}
We first prove the special case where there exist $\lambda\in\lambda_0+\Upsilon$ and $\mu\in\mu_0+\Upsilon$ such that $w^{[\lambda_0]}\in W_\lambda=L_\hk(1,\lambda),w^{[\mu_0]}\in W_\mu= L_\hk(1,\mu)$. Write $w^{(\lambda)}=w^{[\lambda_0]},w^{(\mu)}=w^{[\mu_0]}$. Identify $\mc H_{[\lambda_0][\nu_0]},\mc H_{[\nu_0][\mu_0]},\mc H_{[\lambda_0][\nu_0][\mu_0]}$ with $\mc H_{[\lambda_0+\nu_0]},\mc H_{[\nu_0+\mu_0]},\mc H_{[\lambda_0+\nu_0+\mu_0]}$ respectively. (There is no need to choose canonical identifications.) Set $\mc H=\mc H_{[\nu_0]}\oplus\mc H_{[\lambda_0+\nu_0]}\oplus\mc H_{[\nu_0+\mu_0]}\oplus\mc H_{[\lambda_0+\nu_0+\mu_0]}$ and extend $\mc L_{[\lambda_0]}(w^{[\lambda_0]},\wtd f)$ (resp. $\mc R_{[\mu_0]}(w^{[\mu_0]},\wtd g)$) to a preclosed operator $A$ (resp. $B$) on $\mc H$ with domain $\mc H^\infty=\mc H^\infty_{[\nu_0]}\oplus\mc H^\infty_{[\lambda_0+\nu_0]}\oplus\mc H^\infty_{[\nu_0+\mu_0]}\oplus\mc H^\infty_{[\lambda_0+\nu_0+\mu_0]}$ as in definition \ref{lb37}. 

Notice that, for example, $\mc H_{[\lambda_0+\nu_0]}=\bigoplus_{\nu\in\nu_0+\Upsilon}\mc H_{\lambda+\nu}$, where we recall that $\mc H_{\lambda+\nu}$ is the $\mc A_U$-module integrated from the $U$-module $W_{\lambda+\nu}=L_\hk(1,\lambda+\nu)$. Therefore, for each $\nu\in\nu_0+\Upsilon$, we have a projection $p_\nu$ of $\mc H$ onto the subspace $\mc K_\nu=\mc H_{\nu}\oplus\mc H_{\lambda+\nu}\oplus\mc H_{\nu+\mu}\oplus\mc H_{\lambda+\nu+\mu}$ of $\mc H$. Then its smooth subspace $\mc K_\nu^\infty$ satisfies $\mc K_\nu^\infty=p_\nu\mc H^\infty$. Moreover, it is easy to see that $p_\nu$ commutes strongly with $A$ and $B$. Thus, by lemma \ref{lb46}, it suffices to verify the strong commutativity of $A$ and $B$ when restricted to each $\mc K_\nu$. But by our knowledge of the fusion rules of $U$, it is clear that the strong commutativity of the preclosed operators $A|_{\mc K_\nu}$ and $B|_{\mc K_\nu}$ (with common invariant domain  $\mc K^\infty_\nu$) is equivalent to that of diagram \eqref{eq80}, which is already proved by theorem \ref{lb47}. Thus this special case is proved.

Now, in the  general case, a homogeneous vector $w^{[\lambda_0]}\in W_{[\lambda_0]}$ (resp. $w^{[\mu_0]}\in W_{[\mu_0]}$) can be written as a finite sum of homogeneous vectors of the form $w^{(\lambda)}\in W_\lambda$ (where $\lambda\in\lambda_0+\Upsilon$) (resp. $w^{(\mu)}\in W_\mu$ (where $\mu\in\mu_0+\Upsilon$) ). Thus the strong braiding property follows from rotation covariance \eqref{eq78} and lemma \ref{lb44}.
\end{proof}

We note that when one of $\lambda_0,\mu_0$ is $0$, the above theorem says nothing but the strong intertwining property for the intertwining operators of $V$. When both $\lambda_0,\mu_0$ are $0$, this theorem says that $V$ is strongly local. If we combine  this theorem with the results in section \ref{lb48}, we immediately have the following theorem:

\begin{thm}
Let $V$ be a (unitary) even lattice VOA. Then $V$ is strongly local, and any unitary $V$-module is strongly integrable. The sesquilinear form $\Lambda$ defined on each vector space of intertwining operators of $V$ is positive(-definite). Hence $\Rep^\uni(V)$ is a unitary modular tensor category. Let $\fk F(\Rep^\uni(V))$ be the category of all $\mc A_V$-modules integrated from objects in $\Rep^\uni(V)$. Then $\fk F(\Rep^\uni(V))$ is closed under Connes fusions, and  $(\fk F(\Rep^\uni(V)),\boxtimes,\mathbb B)$ and $(\Rep^\uni(V),\boxtimes,\ss)$ are equivalent as unitary modular tensor categories under the $*$-functor $\fk F$.
\end{thm}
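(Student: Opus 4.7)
The plan is to extract the theorem as a corollary of the preceding strong braiding statement, using the machinery developed in Chapter~4. Let me outline the sequence of deductions.

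First, I would observe that the strong braiding theorem just proved covers all the analytic properties needed as input. Specializing $\lambda_0=\mu_0=0$ in that theorem gives the strong commutativity of $Y(u,\wtd f)$ and $Y(v,\wtd g)$ for disjoint supports, i.e.\ the strong locality of $V$ (condition \ref{CondI}). Specializing only one of $\lambda_0,\mu_0$ to $0$ gives the strong intertwining property for every homogeneous $w^{[\lambda_0]}\in W_{[\lambda_0]}$ acting on $\Rep^\uni(V)$, in the sense of definition \ref{lb51}. Choosing $\mc F=\{W_{[\lambda_0]}:[\lambda_0]\in\Upsilon^\circ/\Upsilon\}$ (a finite generating set since $\Upsilon^\circ/\Upsilon$ is finite and every irreducible is of this form) and a non-zero homogeneous vector in each, theorem \ref{lb31} then yields strong integrability of every unitary $V$-module, so the $*$-functor $\fk F:\Rep^\uni(V)\rightarrow \Rep(\mc A_V)$ is defined on all of $\Rep^\uni(V)$.

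Next I would invoke theorem \ref{lb54} (see also remark \ref{lb45}): the strong intertwining property just verified is exactly the hypothesis needed to conclude that the sesquilinear form $\Lambda$ on each $\mc V{t\choose i~j}^*$ is positive. Since $V$ is regular (lattice VOAs are regular by \cite{DLM95}), conditions \ref{CondA}--\ref{CondG} hold, so $\Rep^\uni(V)$ is a rigid braided tensor category; together with the positivity of $\Lambda$ this upgrades it to a unitary ribbon fusion category, and Huang's non-degeneracy of the $S$-matrix (invoked in the opening paragraph of Chapter~5) makes it a unitary modular tensor category.

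Finally I would apply theorem \ref{lb36} (or equivalently the strong braiding version already verified): with $\mc C=\Rep^\uni(V)$ and $\mc F$ as above, the hypotheses of that theorem are in place, hence there is a vertex categorical extension $\ovl{\scr E}=(\mc A_V,\fk F(\Rep^\uni(V)),\boxdot,\mc H)$ of $\mc A_V$. By theorem \ref{lb12}, $\fk F(\Rep^\uni(V))$ is closed under Connes fusion $\boxtimes$, and the unitary functor $\Phi$ of that theorem implements an equivalence of braided $C^*$-tensor categories $(\Rep^\uni(V),\boxtimes,\ss)\simeq(\fk F(\Rep^\uni(V)),\boxtimes,\mathbb B)$. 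The ribbon (twist) structures match since both come from $e^{2i\pi L_0}$, which is preserved by $\fk F$; combined with modularity of $\Rep^\uni(V)$, this yields the equivalence of unitary modular tensor categories.

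No single step is particularly hard at this stage: all the serious analytic work has been absorbed into the strong braiding theorem for lattice VOAs. The only mildly delicate point is checking that nothing in the chain requires one to assume condition \ref{CondH} in advance rather than deducing it from strong intertwining; this is precisely the content of remark \ref{lb45} and theorem \ref{lb54}, which handle the bootstrap cleanly for the finite generating set $\mc F=\{W_{[\lambda_0]}\}_{[\lambda_0]\in\Upsilon^\circ/\Upsilon}$.
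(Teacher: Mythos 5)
Your proposal is correct and follows essentially the same route as the paper, which derives this theorem in one step by specializing the preceding strong braiding theorem ($\lambda_0=\mu_0=0$ for strong locality, one of them $0$ for the strong intertwining property) and then combining with the machinery of section \ref{lb48} (theorems \ref{lb31}, \ref{lb54}, \ref{lb36}, and \ref{lb12}). Your explicit handling of the bootstrap around condition \ref{CondH} via remark \ref{lb45} matches the paper's intent exactly.
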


Hence, once we know that all semisimple $\mc A_V$-modules arise from integrating unitary $V$-modules, we have the equivalence of unitary modular tensor categories $(\Rep^\ssp(\mc A_V),\boxtimes,\mathbb B)\simeq(\Rep^\uni(V),\boxtimes,\ss)$.

\section{Relation to DHR superselection theory}

In this chapter, we show that the representation category $\Rep(\mc A)$ of a conformal net $\mc A$ is equivalent to the braided $C^*$-tensor category $\DHR(\mc A)$ of DHR (Doplicher-Haag-Roberts) endomorphisms of $\mc A$ localized in an arbitrary open interval $I_0\in\mc J$. We first review the DHR theory for conformal nets developed in \cite{DHR71,DHR74,FRS89,FRS92}.

First, we define a universal $C^*$-algebra $C^*(\mc A)$  following \cite{Fre90}. Let $C_0(\mc A)$ be the free $*$-algebra generated by all $\mc A(I)$ ($I\in\mc J$). Then any $\mc A$-module $(H_i,\pi_i)\in\Rep(\mc A)$ can be naturally extended to a $C_0(\mc A)$-module, also denoted by $\pi_i$. Define a $C^*$-seminorm $\lVert\cdot\lVert$ on $C_0(\mc A)$ satisfying $\lVert A\lVert=\sup_{\mc H_i\in\Rep(\mc A)}\lVert\pi_i(A)\lVert$ for any $A\in C_0(\mc A)$, and let $C^*(\mc A)$ be the completion of $C_0(\mc A)$ under this norm. Then any representation $\mc H_i$ of $\mc A$ can be extended uniquely to a representation of $C^*(\mc A)$ on $\mc H_i$.

\subsubsection*{DHR endomorphisms}

By an  endomorphism $\rho$ of $C^*(\mc A)$, we always mean that $\rho$ is a continuous unital $*$-endomorphism.  In the following, we fix an open interval $I_0\in\mc J$. We say that an endomorphism $\rho$ is \textbf{localized} in $I_0$, if the restriction of $\rho$ to $\mc A(I^c_0)$ is the identity embedding $\id:\mc A(I^c_0)\hookrightarrow C^*(\mc A)$. If, moreover, for any $I_1,I\in\mc J$ satisfying $I_0\cup I_1\subset I$, there exists a unitary $U\in \mc A(I)$ such that $\mathrm{Ad}(U)\circ\rho$ is localized in $I_1$, we say that $\rho$ is \textbf{transportable}. The category of transportable endomorphisms localized in $I_0$ is denoted by $\DHR(\mc A)$.  Each $\rho\in\DHR(\mc A)$ is associated with a canonical (locally normal) representation $(\mc H_\rho,\pi_\rho)$ of $\mc A$, which satisfies $\mc H_\rho=\mc H_0$ (as Hilbert spaces) and  $\pi_\rho(x)=\pi_0(\rho(x))$ for any $I\in\mc J,x\in\mc A(I)$. 

For any $\rho_1,\rho_2\in\DHR(\mc A)$, we define the Hom space
\begin{align*}
\Hom(\rho_1,\rho_2)=\{T\in \mc A(I_0):T\rho_1(A)=\rho_2(A)T \quad(\forall A\in C^*(\mc A)) \}.
\end{align*} 
Then $\pi_0(T)\in\Hom_{\mc A}(\mc H_{\rho_1},\mc H_{\rho_2})$. Conversely, by Haag duality and the fact that $\rho_1,\rho_2$ are localized in $I_0$, any element in $\Hom_{\mc A}(\mc H_{\rho_1},\mc H_{\rho_2})$ arises in this way. We therefore have a natural identification $\Hom(\rho_1,\rho_2)\simeq \Hom_{\mc A}(\mc H_{\rho_1},\mc H_{\rho_2})$. 

The tensor (fusion) product $\boxtimes$ of any $\rho_1,\rho_2\in\DHR(\mc A)$ is defined to be the composition of the two endomorphisms $\rho_1\boxtimes\rho_2=\rho_2\circ\rho_1=\rho_2\rho_1$. If $R\in\Hom(\rho_1,\rho_3),S\in\Hom(\rho_2,\rho_4)$, then one can easily verify that $S\rho_2(R)\in\Hom(\rho_2\rho_1,\rho_4\rho_3)$. We therefore set the tensor product of $R$ and $S$ to be $R\otimes S=\rho_4(R)S=S\rho_2(R)$. We set the identity object of $\End(C^*(\mc A))$ to be the identity endomorphism of $C^*(\mc A)$. Associativity isomorphisms are defined in the natural way. Then $\DHR(\mc A)$ becomes a $C^*$-tensor category. The braid operator $\varepsilon(\rho_1,\rho_2)\in\Hom(\rho_2\rho_1,\rho_1\rho_2)$ is defined by choosing disjoint open intervals $I_1,I_2\subset I_0$ such that $I_2$ is anticlockwise to $I_1$ in $I_0$, choosing $U_1,U_2\in\mc A(I_0)$ such that $\Ad(U_1)\circ \rho_1$ and $\Ad(U_2)\circ\rho_2$ are localized in $I_1$ and $I_2$ respectively, and defining the \textbf{statistic operator}
\begin{align}
\varepsilon(\rho_1,\rho_2)=\rho_1(U_2^*)U_1^*U_2\rho_2(U_1).
\end{align}
This operator is independent of the particular choice of $U_1,U_2,I_1,I_2$. Using $\varepsilon$ to define braiding, one has a $C^*$-braided tensor category $(\DHR(\mc A),\boxtimes,\varepsilon)$.

\subsubsection*{The $*$-functor $\fk G:\Repi(\mc A)\rightarrow\DHR(\mc A)$}

To show the equivalence of $\Rep(\mc A)$ and $\DHR(\mc A)$, it will be more convenient to consider a slightly different tensor category $\Repi(\mc A)$ equivalent to $\Rep(\mc A)$. Let $L$ and $R$ denote the left and the actions in the Connes categorical extension of $\mc A$. For any $\wtd I\in\Jtd$, we say that a vector $\xi\in\mc H_i(I)$ is \textbf{unitary}, if the map $L(\xi,\wtd I)=Z(\xi,I):\mc H_0\rightarrow\mc H_i$ is unitary.   Existence of a unitary vector in $\mc H_i(I)$ follows from the fact that $\mc A(I^c)$ is a type III factor. We let $\mc U_i(I)$ denote the set of all unitary vectors in $\mc H_i(I)$.

\begin{lm}\label{lb49}
For any $\xi\in\mc U_i(I),\mc H_j\in \Rep(\mc A)$, the map $L(\xi,\wtd I):\mc H_j\rightarrow\mc H_i\boxtimes\mc H_j$ is unitary.
\end{lm}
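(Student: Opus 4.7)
The plan is to exploit the factorization $L(\xi,\wtd I) = \alpha_{\wtd I}^\bullet \circ Z(\xi,I)$, where $Z(\xi,I):\mc H_j\to\mc H_i(I)\boxtimes\mc H_j$ is the map $\chi\mapsto \xi\otimes\chi$ and $\alpha_{\wtd I}^\bullet:\mc H_i(I)\boxtimes\mc H_j\to\mc H_i\boxtimes\mc H_j$ is the path continuation, which is already known to be unitary. Thus it suffices to show that $Z(\xi,I):\mc H_j\to\mc H_i(I)\boxtimes\mc H_j$ is unitary.

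First I would verify that $Z(\xi,I)$ is an isometry. For any $\chi_1,\chi_2\in\mc H_j$, the definition of the inner product on $\mc H_i(I)\boxtimes\mc H_j$ (single-interval version) gives
\begin{align*}
\bk{Z(\xi,I)\chi_1|Z(\xi,I)\chi_2}=\bk{\xi\otimes\chi_1|\xi\otimes\chi_2}=\bk{\pi_j(Z(\xi,I)^*Z(\xi,I))\chi_1|\chi_2}.
\end{align*}
By hypothesis $Z(\xi,I):\mc H_0\to\mc H_i$ is unitary, so $Z(\xi,I)^*Z(\xi,I)=\id_{\mc H_0}$, which, regarded as an element of $\mc A(I)$, is the identity. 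Hence $\pi_j(Z(\xi,I)^*Z(\xi,I))=\id_{\mc H_j}$, and $Z(\xi,I)$ is an isometry. In particular its range is closed.

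Next I would establish surjectivity. Since vectors of the form $\eta\otimes\chi$ (with $\eta\in\mc H_i(I),\chi\in\mc H_j$) span a dense subspace, it suffices to show each $\eta\otimes\chi$ lies in the range of $Z(\xi,I)$. Given $\eta\in\mc H_i(I)$, set $a_\eta=Z(\xi,I)^*Z(\eta,I)$; this lies in $\Hom_{\mc A(I^c)}(\mc H_0,\mc H_0)=\mc A(I^c)'=\mc A(I)$ by Haag duality. The unitarity of $Z(\xi,I)$ on $\mc H_0$ gives $Z(\xi,I)a_\eta=Z(\eta,I)$ as maps $\mc H_0\to\mc H_i$. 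Then a direct inner-product computation, using the formula above twice, yields
\begin{align*}
\bk{\eta\otimes\chi|\xi'\otimes\chi'}=\bk{\pi_j(Z(\xi',I)^*Z(\xi,I)a_\eta)\chi|\chi'}=\bk{\xi\otimes\pi_j(a_\eta)\chi|\xi'\otimes\chi'}
\end{align*}
for every $\xi'\in\mc H_i(I),\chi'\in\mc H_j$, so that $\eta\otimes\chi=\xi\otimes\pi_j(a_\eta)\chi=Z(\xi,I)\bigl(\pi_j(a_\eta)\chi\bigr)$. Combined with the fact that the range of $Z(\xi,I)$ is closed, this shows $Z(\xi,I)$ is surjective, hence unitary.

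There is no substantial obstacle here; the only subtlety is producing the operator $a_\eta\in\mc A(I)$, which requires Haag duality together with the unitarity of $Z(\xi,I)$ on $\mc H_0$. Everything else is an immediate application of the defining formula for the Connes-fusion inner product.
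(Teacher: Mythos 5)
Your proof is correct, but the surjectivity half runs along a genuinely different track from the paper's. The isometry step is essentially identical in both arguments: one reduces $L(\xi,\wtd I)^*L(\xi,\wtd I)$ acting on $\mc H_j$ to $\pi_j$ of its restriction to the vacuum module, which is the identity because $\xi$ is a unitary vector. For surjectivity, the paper instead invokes the locality axiom of the Connes categorical extension: it writes $L(\xi,\wtd I)R(\eta,\wtd J)\chi^{(0)}=R(\eta,\wtd J)L(\xi,\wtd I)\chi^{(0)}$ for $\wtd J$ clockwise to $\wtd I$, notes that $L(\xi,\wtd I)\chi^{(0)}$ sweeps out all of $\mc H_i$ as $\chi^{(0)}$ ranges over $\mc H_0$ (unitarity of $Z(\xi,I)$ on the vacuum), and concludes from the density of fusion products that the range of the isometry is dense. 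You instead exhibit an explicit preimage of each spanning vector $\eta\otimes\chi$: the operator $a_\eta=Z(\xi,I)^*Z(\eta,I)$ lands in $\mc A(I)$ by Haag duality, $Z(\xi,I)a_\eta=Z(\eta,I)$ by unitarity on $\mc H_0$, and the inner-product computation then gives $\eta\otimes\chi=\xi\otimes\pi_j(a_\eta)\chi$. Your route is more elementary and self-contained — it uses only the defining sesquilinear form and Haag duality, not the previously established adjoint commutativity of $L$ and $R$ — and it has the mild bonus of producing the preimage explicitly. The paper's route is shorter given that the categorical-extension machinery is already in place by the point this lemma appears. Both arguments are complete; there is no gap in yours.
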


\begin{proof}
It is easy to see that the action of $L(\xi,\wtd I)^*L(\xi,\wtd I)$ on $\mc H_j$ equals $\pi_j(L(\xi,\wtd I)^*L(\xi,\wtd I)|_{\mc H_0})=\id_j$. Therefore $L(\xi,\wtd I)^*L(\xi,\wtd I)|_{\mc H_j}$ is an isometry. Now choose any $\wtd J\in\Jtd$ clockwise to $\wtd I$. Then vectors of the form  $L(\xi,\wtd I)R(\eta,\wtd J)\chi^{(0)}=R(\eta,\wtd J)L(\xi,\wtd J)\chi^{(0)}$ (where $\eta\in\mc H_j(J),\chi^{(0)}\in\mc H_0$) span a dense subspace of $\mc H_i\boxtimes \mc H_j$. Thus $L(\xi,\wtd I)$ is unitary when acting on $\mc H_j$.
\end{proof}

Now we fix an arg function $\arg_{I_0}$ of $I_0$, and let $\wtd I_0=(I_0,\arg_{I_0})$. Define a new category $\Repi(\mc A)$ whose objects are $(\mc H_i,\xi)$ where $\mc H_i\in\Rep(\mc A),\xi\in\mc U_i(I_0)$. If $(\mc H_i,\xi),(\mc H_j,\eta)\in\Repi(\mc A)$, we let the Hom space be $\Hom((\mc H_i,\xi),(\mc H_j,\eta))=\Hom_{\mc A}(\mc H_i,\mc H_j)$. We define a tensor (fusion) bifunctor $\boxtimes$, such that
\begin{align*}
(\mc H_i,\xi)\boxtimes(\mc H_j,\eta)=(\mc H_i\boxtimes\mc H_j,L(\xi,\wtd I_0)\eta),
\end{align*}
where we notice that $L(\xi,\wtd I_0)\eta\in\mc U_{i\boxtimes j}(I_0)$ by lemma \ref{lb49}. Tensor products of morphisms, and all the structural isomorphisms (associativity, braiding, etc.) are defined using those of $\Rep(\mc A)$, disregarding all the unitary vectors. The identity object is chosen to be $(\mc H_0,\Omega)$. Then $\Repi(\mc A)$ is clearly a braided $C^*$-tensor category equivalent to $\Rep(\mc A)$.

We now define a $*$-functor $\fk G:\Repi(\mc A)\rightarrow \DHR(\mc A)$. Choose any $(\mc H_i,\xi)\in\Repi(\mc A)$. An endomorphism $\rho_i=\fk G(\mc H_i,\xi)$ can be defined as follows (cf.\cite{Fre90}). Choose any $I\in\mc J$, and choose $I_1\subset I^c$ such that $I_1\cup I_0$ can be covered by an open interval $J$. We choose arg functions of $I_1$ and $J$ such that $\wtd I_1,\wtd I_0\subset\wtd J$. (In fact the arg functions are irrelavent here since we will only deal with left actions on the vacuum module.) Choose an arbitrary $\xi_1\in\mc U_i(I_1)$. Then the action of $L(\xi_1,\wtd I_1)^*L(\xi,\wtd I_0)$ on $\mc H_0$ lies inside $\End_{\mc A(J^c)}(\mc H_0)=\mc A(J^c)'=\mc A(J)$. Regard $L(\xi_1,\wtd I_1)^*L(\xi,\wtd I_0)$ as an element in $\mc A(J)$ and write it as $U(\xi_1,\xi)$, we thus define
\begin{gather*}
\rho_i:\mc A(I)\rightarrow C^*(\mc A),\\
x\mapsto U(\xi_1,\xi)^*\cdot x\cdot U(\xi_1,\xi).
\end{gather*}
Such $\rho_i$ is independent of the particular choice of $I_1$ and $\xi_1$, and can be extended to a transportable endomorphism of $C^*(\mc A)$ localized in $I_0$. 

In the case that $I\cup I_0$ is not dense in $S^1$, we can choose an open interval $K\in\mc J$ covering $I\cup I_0$, and it is not hard to show that for any $x\in\mc A(I)$,
\begin{align}
\rho_i(x)=L(\xi,\wtd I_0)^*\pi_i(x)L(\xi,\wtd I_0),\label{eq81}
\end{align}
where $L(\xi,\wtd I_0)$ is acting on $\mc H_0$. This formula and the Haag duality $\End_{\mc A(K^c)}(\mc H_0)=\mc A(K^c)'=\mc A(K)$ implies $\rho_i(\mc A(I))\subset\mc A(K)$. In particular, $\rho_i(\mc A(I_0))\subset\mc A(I_0)$. We also notice that $\rho_i$ is determined by its values on  $\mc A(I)$ for all small $I$, since this is true for $(\mc H_{\rho_i},\pi_{\rho_i})$. Thus we can always use relation \eqref{eq81} to characterize $\rho_i$.

Now if $(\mc H_i,\xi),(\mc H_{i'},\xi')\in\Repi(\mc A)$ and $F\in\Hom_{\mc A}(\mc H_i,\mc H_{i'})=\Hom((\mc H_i,\xi),(\mc H_{i'},\xi'))$, we define
\begin{align*}
\fk G(F)=L(\xi',\wtd I_0)^*\cdot F\cdot L(\xi,\wtd I_0)
\end{align*}
with $L(\xi',\wtd I_0)$ and $L(\xi,\wtd I_0)$ acting on $\mc H_0$. That $\fk G(F)\in\mc A(I_0)$ follows from Haag duality. Write $\rho_i=\fk G(\mc H_i,\xi),\rho_{i'}=\fk G(\mc H_{i'},\xi')$. Then using \eqref{eq81}, one can easily verify $\fk G(F)\rho_i(x)=\rho_{i'}(x)\fk G(F)$ for any $x\in\mc A(I)$ where $I\in \mc J$ is small enough such that $I\cup I_0$ is not dense. Therefore $\fk G(F)\in\Hom(\rho_i,\rho_{i'})$. Thus we've defined the functor $\fk G$. It is obvious that $\fk G$ is fully faithful and $*$-preserving.

\subsubsection*{Equivalence of the braided $C^*$-tensor categories}

We now show that $\fk G:\Repi(\mc A)\rightarrow\DHR(\mc A)$ is an equivalence  of  braided $C^*$-tensor categories. That $\fk G$ preserves the monoidal structures is verified by the following propositions.

\begin{pp}
Choose any $(\mc H_i,\xi),(\mc H_j,\eta)\in\Repi(\mc A)$. Then $\fk G((\mc H_i,\xi)\boxtimes(\mc H_j,\eta))=\fk G(\mc H_i,\xi)\boxtimes\fk G(\mc H_j,\eta)$.
\end{pp}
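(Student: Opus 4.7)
Write $\rho_i=\fk G(\mc H_i,\xi)$, $\rho_j=\fk G(\mc H_j,\eta)$, and $\zeta=L(\xi,\wtd I_0)\eta\in\mc U_{i\boxtimes j}(I_0)$, so that the left-hand side is $\rho_{i\boxtimes j}:=\fk G(\mc H_i\boxtimes\mc H_j,\zeta)$ while the right-hand side is $\rho_i\boxtimes\rho_j=\rho_j\circ\rho_i$. Both sides are transportable endomorphisms of $C^*(\mc A)$ localized in $I_0$, and each is determined by its values on $\mc A(I)$ for all $I\in\mc J$ with $I\cup I_0$ non-dense in $S^1$; so it suffices to prove the identity on such local algebras. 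The first move is to invoke Proposition~\ref{lb13} to obtain $L(\zeta,\wtd I_0)=L(\xi,\wtd I_0)L(\eta,\wtd I_0)$ as unitaries $\mc H_0\to\mc H_i\boxtimes\mc H_j$, where the outer $L(\xi,\wtd I_0)$ is the action on $\mc H_j$ (unitary by Lemma~\ref{lb49}). Substituting this into the definition of $\rho_{i\boxtimes j}$ via \eqref{eq81} yields
\begin{align*}
\rho_{i\boxtimes j}(x)=L(\eta,\wtd I_0)^*\bigl[L(\xi,\wtd I_0)^*\,\pi_{i\boxtimes j}(x)\,L(\xi,\wtd I_0)\bigr]L(\eta,\wtd I_0)
\end{align*}
for $x\in\mc A(I)$ with $I\cup I_0\subset K$ proper.

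The technical core is the following claim: for any $\mc A$-module $\mc H_k$, any $x\in\mc A(I)$ with $I\cup I_0$ non-dense, the identity
\begin{align*}
\pi_{i\boxtimes k}(x)\,L(\xi,\wtd I_0)=L(\xi,\wtd I_0)\,\pi_k(\rho_i(x))
\end{align*}
holds as bounded operators $\mc H_k\to\mc H_i\boxtimes\mc H_k$. To verify this, choose $\wtd J\in\Jtd$ clockwise to $\wtd I_0$ with $J\subset K^c$ (possible since $K\neq S^1$ and $K\supset I\cup I_0$); then for any $\eta_0\in\mc H_k(J)$ and $\chi\in\mc H_0$, the locality axiom of the Connes categorical extension gives $L(\xi,\wtd I_0)R(\eta_0,\wtd J)=R(\eta_0,\wtd J)L(\xi,\wtd I_0)$ (the right copy acting on $\mc H_0$, the left copy on $\mc H_i$). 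Since $\rho_i(x)\in\mc A(K)$ and $J\subset K^c$, the operator $R(\eta_0,\wtd J)$ intertwines $\pi_k(\rho_i(x))$ with $\pi_0(\rho_i(x))=L(\xi,\wtd I_0)^*\pi_i(x)L(\xi,\wtd I_0)$, and likewise $R(\eta_0,\wtd J)$ intertwines $\pi_i(x)$ with $\pi_{i\boxtimes k}(x)$ because $J\subset I^c$. Combining these three intertwining relations and using unitarity of $L(\xi,\wtd I_0)$ on $\mc H_0$, a short chain of substitutions shows the claimed identity on all vectors of the form $R(\eta_0,\wtd J)\chi=\pi_k(y)\eta_0$ with $y\in\mc A(J^c)$, $\eta_0\in\mc H_k(J)$. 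By the Reeh--Schlieder property these vectors are dense in $\mc H_k$, and both sides are bounded, so the identity holds on all of $\mc H_k$.

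Applying the claim with $k=j$ and the given $x$, the bracketed middle operator in the displayed formula for $\rho_{i\boxtimes j}(x)$ becomes $\pi_j(\rho_i(x))$. Choosing $K$ so that also $K\cup I_0$ is non-dense (which we may, since $K$ can be taken inside any prescribed proper interval containing $I\cup I_0$), we apply \eqref{eq81} to $\rho_j$ at the element $\rho_i(x)\in\mc A(K)$ to conclude
\begin{align*}
\rho_{i\boxtimes j}(x)=L(\eta,\wtd I_0)^*\,\pi_j(\rho_i(x))\,L(\eta,\wtd I_0)=\rho_j(\rho_i(x)).
\end{align*}
This proves equality on all local algebras $\mc A(I)$ of the required form, hence on $C^*(\mc A)$.

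The main obstacle, and the place the argument is least mechanical, is the key intertwining claim $\pi_{i\boxtimes k}(x)L(\xi,\wtd I_0)=L(\xi,\wtd I_0)\pi_k(\rho_i(x))$: one has to bootstrap from the defining identity of $\rho_i$, which only lives on the vacuum module, to an identity on an arbitrary module $\mc H_k$. The bridge is precisely the adjoint-commuting locality square of the Connes categorical extension combined with Reeh--Schlieder density, so everything comes back to the categorical-extension machinery developed in Chapter~3.
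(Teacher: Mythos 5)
Your proof is correct and follows essentially the same route as the paper's: both reduce to $x\in\mc A(I)$ with $I\cup I_0$ non-dense via \eqref{eq81}, use Proposition \ref{lb13} to factor $L(L(\xi,\wtd I_0)\eta,\wtd I_0)=L(\xi,\wtd I_0)L(\eta,\wtd I_0)$, and then exploit the locality axiom of the Connes categorical extension against right actions supported in an interval clockwise to $\wtd I_0$. The only difference is organizational: the paper conjugates by unitary right actions $R(\xi_1,\wtd J)$, $R(\eta_1,\wtd J)$ to rewrite $\pi_i,\pi_j,\pi_{i\boxtimes j}$ directly, whereas you package the same commutation relations into the intertwining identity $\pi_{i\boxtimes k}(x)L(\xi,\wtd I_0)=L(\xi,\wtd I_0)\pi_k(\rho_i(x))$ checked on the dense subspace $R(\mc H_k(J),\wtd J)\mc H_0$.
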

\begin{proof}
Choose any $I\in\mc J$ such that $I\cup I_0$ is non-dense. Then we choose $\wtd J\in\Jtd$ clockwise to $\wtd I_0$ and disjoint from $I$. Write $\rho_i=\fk G(\mc H_i,\xi),\rho_j=\fk G(\mc H_j,\eta)$. Then $\rho_i\boxtimes\rho_j=\rho_j\rho_i$. On the other hand, $(\mc H_i,\xi)\boxtimes(\mc H_j,\eta)=(\mc H_i\boxtimes\mc H_j, L(\xi,\wtd I_0)\eta)$. We let $\rho=\fk G(\mc H_i\boxtimes\mc H_j, L(\xi,\wtd I_0)\eta)$. We want to show $\rho=\rho_j\rho_i$.

Choose any $x\in\mc A(I)$. Then by proposition \ref{lb13},
\begin{align}
&\rho(x)=L(L(\xi,\wtd I_0)\eta,\wtd I_0)^*\pi_{i\boxtimes j}(x)L(L(\xi,\wtd I_0)\eta,\wtd I_0)\nonumber\\
=&L(\eta,\wtd I_0)^*L(\xi,\wtd I_0)^*\pi_{i\boxtimes j}(x)L(\xi,\wtd I_0)L(\eta,\wtd I_0).\label{eq82}
\end{align}
Now choose $\xi_1\in\mc U_i(J),\eta_1\in\mc U_j(J)$. Then we have
\begin{gather*}
\pi_i(x)=R(\xi_1,\wtd J)xR(\xi_1,\wtd J)^*,\\
\pi_j(x)=R(\eta_1,\wtd J)x R(\eta_1,\wtd J)^*,\\
\pi_{i\boxtimes j}(x)=R(\eta_1,\wtd J)R(\xi_1,\wtd J)x R(\xi_1,\wtd J)^*R(\eta_1,\wtd J)^*.
\end{gather*}
Using these relations and \eqref{eq82}, and apply locality (condition (f) of definition \ref{lb9}), we have
\begin{align*}
&\rho(x)=L(\eta,\wtd I_0)^*L(\xi,\wtd I_0)^*R(\eta_1,\wtd J)R(\xi_1,\wtd J)\cdot x\cdot R(\xi_1,\wtd J)^*R(\eta_1,\wtd J)^*L(\xi,\wtd I_0)L(\eta,\wtd I_0)\\
=&L(\eta,\wtd I_0)^*R(\eta_1,\wtd J)L(\xi,\wtd I_0)^*R(\xi_1,\wtd J)\cdot x\cdot R(\xi_1,\wtd J)^*L(\xi,\wtd I_0)R(\eta_1,\wtd J)^*L(\eta,\wtd I_0)\\
=&L(\eta,\wtd I_0)^*R(\eta_1,\wtd J)L(\xi,\wtd I_0)^*\pi_i(x)L(\xi,\wtd I_0)R(\eta_1,\wtd J)^*L(\eta,\wtd I_0)\\
=&L(\eta,\wtd I_0)^*R(\eta_1,\wtd J)\rho_i(x)R(\eta_1,\wtd J)^*L(\eta,\wtd I_0)=L(\eta,\wtd I_0)^*\pi_j(\rho_i(x))L(\eta,\wtd I_0)=\rho_j(\rho_i(x)).
\end{align*}
\end{proof}

\begin{pp}\label{lb61}
If $F\in\Hom((\mc H_i,\xi),(\mc H_{i'},\xi')),G\in\Hom((\mc H_j,\eta),(\mc H_{j'},\eta'))$, then $\fk G(F\otimes G)=\fk G(F)\otimes\fk G(G)$.
\end{pp}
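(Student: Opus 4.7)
The plan is to mirror the structure of proposition \ref{lb61}, reducing the claim to routine manipulations of left and right actions in the Connes categorical extension. First I would unfold the left-hand side. By definition of $\fk G$ on morphisms and the formula $(\mc H_i,\xi)\boxtimes(\mc H_j,\eta)=(\mc H_i\boxtimes\mc H_j,L(\xi,\wtd I_0)\eta)$,
\begin{align*}
\fk G(F\otimes G)=L(L(\xi',\wtd I_0)\eta',\wtd I_0)^*(F\otimes G)L(L(\xi,\wtd I_0)\eta,\wtd I_0)
\end{align*}
as an element of $\mc A(I_0)$ acting on $\mc H_0$. Proposition \ref{lb13} gives the creation identity $L(L(\xi,\wtd I_0)\eta,\wtd I_0)=L(\xi,\wtd I_0)L(\eta,\wtd I_0)$ (and similarly with primes), while the functoriality identity \eqref{eq49} rewrites $(F\otimes G)L(\xi,\wtd I_0)$ as $L(F\xi,\wtd I_0)G$. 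Combining these,
\begin{align*}
\fk G(F\otimes G)=L(\eta',\wtd I_0)^*\,L(\xi',\wtd I_0)^*L(F\xi,\wtd I_0)\,G\,L(\eta,\wtd I_0).
\end{align*}

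The key step is to identify $L(\xi',\wtd I_0)^*L(F\xi,\wtd I_0)$, viewed as a bounded operator on any $\mc A$-module $\mc H_k$, with $\pi_k(\fk G(F))$. Both sides intertwine the $\mc A(I_0^c)$-action on $\mc H_k$, so it suffices to check agreement on the dense subspace $\mc H_k(I_0^c)$. For $\chi=R(\chi,\wtd I_0^c)\Omega$ with $\wtd I_0^c$ clockwise to $\wtd I_0$, the locality axiom of the Connes categorical extension passes $R(\chi,\wtd I_0^c)$ through both $L$-operators, reducing to $L(\xi',\wtd I_0)^*L(F\xi,\wtd I_0)\Omega=L(\xi',\wtd I_0)^*F\xi=\fk G(F)\Omega$. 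Since $R(\chi,\wtd I_0^c)\in\Hom_{\mc A(I_0)}(\mc H_0,\mc H_k)$, reassembling the composition yields $\pi_k(\fk G(F))\chi$ as required.

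Applying this identity with $k=j'$ and using $\pi_{j'}(\fk G(F))G=G\pi_j(\fk G(F))$ (because $G$ is an $\mc A$-homomorphism) transforms the left-hand side into $L(\eta',\wtd I_0)^*G\pi_j(\fk G(F))L(\eta,\wtd I_0)$. On the other hand, the DHR tensor-product convention gives $\fk G(F)\otimes\fk G(G)=\fk G(G)\rho_j(\fk G(F))$, and formula \eqref{eq81} applied with $I=I_0$ (which together with $I_0$ is non-dense) yields $\rho_j(\fk G(F))=L(\eta,\wtd I_0)^*\pi_j(\fk G(F))L(\eta,\wtd I_0)$. Substituting $\fk G(G)=L(\eta',\wtd I_0)^*GL(\eta,\wtd I_0)$ and using that $L(\eta,\wtd I_0):\mc H_0\to\mc H_j$ is unitary by lemma \ref{lb49}, so $L(\eta,\wtd I_0)L(\eta,\wtd I_0)^*=\id_{\mc H_j}$, the inner projection collapses and the right-hand side matches the expression just obtained. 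The only genuine obstacle is the key identity in the second paragraph; everything else is bookkeeping with already-established creation, functoriality, and unitarity.
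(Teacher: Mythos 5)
Your proof is correct and follows essentially the same route as the paper's: both reduce the two sides to the common expression $L(\eta',\wtd I_0)^*\,G\,\pi_j(\fk G(F))\,L(\eta,\wtd I_0)$ using proposition \ref{lb13}, the functoriality identity \eqref{eq49}, formula \eqref{eq81}, the unitarity of $L(\eta,\wtd I_0)$, and the locality of the Connes categorical extension. The only cosmetic difference is that you verify the key identity $\pi_k(\fk G(F))=L(\xi',\wtd I_0)^*L(F\xi,\wtd I_0)|_{\mc H_k}$ by checking it on the dense subspace $\mc H_k(I_0^c)$, whereas the paper conjugates by a unitary right action $R(\eta_1,\wtd J)$; both arguments rest on the same locality axiom.
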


\begin{proof}
Write
\begin{gather*}
R=\fk G(F)=L(\xi',\wtd I_0)^*\cdot F\cdot L(\xi,\wtd I_0),\\
S=\fk G(G)=L(\eta',\wtd I_0)^*\cdot G\cdot L(\eta,\wtd I_0).
\end{gather*}
Then
\begin{align}
&R\otimes S=S\rho_j(R)=L(\eta',\wtd I_0)^*\cdot G\cdot L(\eta,\wtd I_0)\cdot\rho_j(L(\xi',\wtd I_0)^*\cdot F\cdot L(\xi,\wtd I_0))\nonumber\\
=&L(\eta',\wtd I_0)^*\cdot G\cdot L(\eta,\wtd I_0)\cdot L(\eta,\wtd I_0)^*\pi_j(L(\xi',\wtd I_0)^*\cdot F\cdot L(\xi,\wtd I_0))L(\eta,\wtd I_0)\nonumber\\
=&L(\eta',\wtd I_0)^*\cdot G\cdot \pi_j(L(\xi',\wtd I_0)^*\cdot F\cdot L(\xi,\wtd I_0))L(\eta,\wtd I_0)\label{eq83}
\end{align}
Now choose any $\wtd J\in\Jtd$ clockwise to $\wtd I_0$, and any $\eta_1\in\mc U_j(J)$. Then using locality and the functoriality of the right  actions (condition (b) of definition \ref{lb9}),
\begin{align}
&\pi_j(L(\xi',\wtd I_0)^*\cdot F\cdot L(\xi,\wtd I_0))=R(\eta_1,\wtd J)L(\xi',\wtd I_0)^*\cdot F\cdot L(\xi,\wtd I_0)R(\eta_1,\wtd J)^*\nonumber\\
=&L(\xi',\wtd I_0)^*R(\eta_1,\wtd J)\cdot F\cdot R(\eta_1,\wtd J)^*L(\xi,\wtd I_0)\nonumber\\
=&L(\xi',\wtd I_0)^*(F\otimes\id_j)R(\eta_1,\wtd J)R(\eta_1,\wtd J)^*L(\xi,\wtd I_0)\nonumber\\
=&L(\xi',\wtd I_0)^*(F\otimes\id_j)L(\xi,\wtd I_0)\label{eq84}
\end{align}
when acting on $\mc H_j$. Substitute this result into the right hand side of \eqref{eq83}, and apply proposition \ref{lb13} and the functoriality of the left actions, we get
\begin{align*}
&R\otimes S=L(\eta',\wtd I_0)^*\cdot G\cdot L(\xi',\wtd I_0)^*(F\otimes\id_j)L(\xi,\wtd I_0)L(\eta,\wtd I_0)\nonumber\\
=&L(\eta',\wtd I_0)^* L(\xi',\wtd I_0)^*(\id_i\otimes G)(F\otimes\id_j)L(\xi,\wtd I_0)L(\eta,\wtd I_0)\\
=&L(L(\xi',\wtd I_0)\eta',\wtd I_0)^*(F\otimes G)L(L(\xi,\wtd I_0)\eta,\wtd I_0),
\end{align*}
which clearly equals $\fk G(F\otimes G)$.
\end{proof}

\begin{pp}
For any $(\mc H_i,\xi)\in\Repi(\mc A)$, the isomorphisms $\sharp_i:(\mc H_i,\xi)\boxtimes(\mc H_0,\Omega)\rightarrow(\mc H_i,\xi)$ and $\flat_i:(\mc H_0,\Omega)\boxtimes(\mc H_i,\xi)\rightarrow(\mc H_i,\xi)$ satisfy $\fk G(\sharp_i)=1=\fk G(\flat_i)$.
\end{pp}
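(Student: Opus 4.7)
My strategy is to unfold $\fk G(\sharp_i)$ and $\fk G(\flat_i)$ directly via the defining formula $\fk G(F) = L(\xi', \wtd I_0)^* \cdot F \cdot L(\xi, \wtd I_0)$, and then apply the vector-labeled (creation) property of the Connes categorical extension together with Reeh--Schlieder uniqueness to collapse the middle expression to $L(\xi, \wtd I_0)$, at which point unitarity of $\xi$ finishes the job.

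For $\sharp_i: (\mc H_i, \xi) \boxtimes (\mc H_0, \Omega) \to (\mc H_i, \xi)$, the unitary vector attached to the source is $\xi'' := L(\xi, \wtd I_0)\Omega \in \mc U_{i \boxtimes 0}(I_0)$, so $\fk G(\sharp_i) = L(\xi, \wtd I_0)^* \cdot \sharp_i \cdot L(\xi'', \wtd I_0)$. I would verify that $\sharp_i \cdot L(\xi'', \wtd I_0) = L(\xi, \wtd I_0)$ as operators $\mc H_0 \to \mc H_i$: both sides lie in $\Hom_{\mc A(I_0^c)}(\mc H_0, \mc H_i)$, and by the defining property of $Z(\cdot, I_0)$ together with Reeh--Schlieder, any two such morphisms agreeing on $\Omega$ coincide. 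The right side sends $\Omega$ to $\xi$ by the creation property, while the left side sends $\Omega$ to $\sharp_i \xi'' = \sharp_i L(\xi, \wtd I_0)\Omega$; invoking theorem \ref{lb5}, $\sharp_i = \natural_i$ commutes with path continuations and sends $\xi \otimes \Omega \in \mc H_i(I_0) \boxtimes \mc H_0$ to $\xi$, so this also equals $\xi$. Hence $\fk G(\sharp_i) = L(\xi, \wtd I_0)^* L(\xi, \wtd I_0) = 1$ by lemma \ref{lb49}.

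The argument for $\flat_i$ is strictly parallel. Setting $\xi''' := L(\Omega, \wtd I_0)\xi \in \mc U_{0 \boxtimes i}(I_0)$, the key observation is that $L(\Omega, \wtd I_0) = \flat_i^{-1}$: by the consequence of the creation property recorded just after definition \ref{lb14}, $L(x\Omega, \wtd I_0) = \pi_i(x)$ under the identification $\mc H_0 \boxtimes \mc H_i \simeq \mc H_i$ given by $\flat_i$, so $x = 1$ yields $\flat_i \circ L(\Omega, \wtd I_0) = \id_i$ and $\flat_i \xi''' = \xi$. The same Reeh--Schlieder uniqueness then forces $\flat_i L(\xi''', \wtd I_0) = L(\xi, \wtd I_0)$, giving $\fk G(\flat_i) = 1$. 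The only delicate point throughout is notational: $L(\cdot, \wtd I_0)$ plays two roles in the formula for $\fk G$ --- as the bounded operator $Z(\cdot, I_0) : \mc H_0 \to \mc H_i$ in the contraction, and as the categorical left action landing in $\mc H_i \boxtimes \mc H_0$ --- and keeping the implicit identifications $\sharp_i, \flat_i$ between these two viewpoints explicit is what makes the Reeh--Schlieder collapse unambiguous.
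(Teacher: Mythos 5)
Your proposal is correct and follows essentially the same route as the paper's (much terser) proof: the paper simply observes that under the identifications $\mc H_i\boxtimes\mc H_0=\mc H_i=\mc H_0\boxtimes\mc H_i$ the attached unitary vectors $L(\xi,\wtd I_0)\Omega$ and $L(\Omega,\wtd I_0)\xi$ both become $\xi$, so $\sharp_i$ and $\flat_i$ become $\id_i$ and $\fk G(\id_i)=L(\xi,\wtd I_0)^*L(\xi,\wtd I_0)=1$. You are just making explicit (via the creation property, theorem \ref{lb5}, and the Reeh--Schlieder uniqueness of intertwiners agreeing on $\Omega$) the identifications the paper suppresses.
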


\begin{proof}
Under the identifications $(\mc H_i,\xi)\boxtimes(\mc H_0,\Omega)=(\mc H_i\boxtimes\mc H_0,L(\xi,\wtd I_0)\Omega)=(\mc H_i,\xi)$ and $(\mc H_0,\Omega)\boxtimes(\mc H_i,\xi)=(\mc H_0\boxtimes\mc H_i,L(\Omega,\wtd I_0)\xi)=(\mc H_i,\xi)$, both $\sharp_i$ and $\flat_i$ are  $\id_i$. Thus their images under $\fk G$ are $1$.
\end{proof}

Finally, we check that $\fk G$ preserves the braid structures.

\begin{pp}\label{lb62}
We have $\fk G(\mathbb B)=\varepsilon$. More precisely, for any $(\mc H_i,\xi),(\mc H_j,\eta)\in\Repi(\mc A)$, if we let $\rho_i=\fk G(\mc H_i,\xi),\rho_j=\fk G(\mc H_j,\eta)$, then $\fk G(\mathbb B_{i,j})=\varepsilon(\rho_i,\rho_j)$.
\end{pp}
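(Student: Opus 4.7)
The plan is to reduce both sides to the basic braid relation of the Connes categorical extension by introducing transport unitaries adapted to subintervals of $\wtd I_0$. Fix disjoint arg-valued intervals $\wtd I_1,\wtd I_2\sjs\wtd I_0$ with $\wtd I_2$ anticlockwise to $\wtd I_1$, and choose unitary vectors $\xi_1\in\mc U_i(I_1)$, $\eta_2\in\mc U_j(I_2)$. Define
\[
U_1=L(\xi_1,\wtd I_1)^*L(\xi,\wtd I_0)\big|_{\mc H_0},\qquad U_2=L(\eta_2,\wtd I_2)^*L(\eta,\wtd I_0)\big|_{\mc H_0},
\]
which by lemma \ref{lb49} and Haag duality are unitaries in $\mc A(I_0)$. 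Using $L(\xi,\wtd I_0)L(\xi,\wtd I_0)^*=\id_{\mc H_i}$ one checks directly that $\Ad(U_1)\rho_i=\fk G(\mc H_i,\xi_1)$, which is localized in $I_1$, and likewise $\Ad(U_2)\rho_j$ is localized in $I_2$. The defining formula of the statistic operator then gives $\varepsilon(\rho_i,\rho_j)=\rho_i(U_2^*)\,U_1^*\,U_2\,\rho_j(U_1)$.

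On the other side, proposition \ref{lb13} yields $L(L(\xi,\wtd I_0)\eta,\wtd I_0)=L(\xi,\wtd I_0)L(\eta,\wtd I_0)$ and the analogous swapped identity, so that
\[
\fk G(\mathbb B_{i,j})=L(\xi,\wtd I_0)^*L(\eta,\wtd I_0)^*\cdot\mathbb B_{i,j}\cdot L(\xi,\wtd I_0)L(\eta,\wtd I_0)\big|_{\mc H_0}.
\]
The crucial intermediate identity I would establish on $\mc H_0$ is
\[
L(\xi,\wtd I_0)L(\eta,\wtd I_0)=L(\xi_1,\wtd I_1)L(\eta_2,\wtd I_2)\cdot\pi_0(U_2\rho_j(U_1)),
\]
together with its $(i,j)$-swapped analogue featuring $\pi_0(U_1\rho_i(U_2))$. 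The derivation splits in two: first show $L(\xi,\wtd I_0)\big|_{\mc H_k}=L(\xi_1,\wtd I_1)\big|_{\mc H_k}\cdot\pi_k(U_1)$ on every $\mc H_k$; then use that $L(\eta_2,\wtd I_2)$ implements $\rho_j'=\Ad(U_2)\rho_j$ on $\mc A(I_0)$ via formula \eqref{eq81}, so that $\pi_j(U_1)L(\eta_2,\wtd I_2)=L(\eta_2,\wtd I_2)\pi_0(U_2\rho_j(U_1)U_2^*)$, and compose. Combined with the braid identity $\mathbb B_{i,j}L(\xi_1,\wtd I_1)L(\eta_2,\wtd I_2)=L(\eta_2,\wtd I_2)L(\xi_1,\wtd I_1)$ on $\mc H_0$ (which follows from $\mathbb B_{i,j}L(\xi_1,\wtd I_1)\eta_2=L(\eta_2,\wtd I_2)\xi_1$ stated just before proposition \ref{lb13}, extended by Reeh--Schlieder since both sides lie in $\Hom_{\mc A((I_1\cup I_2)^c)}(\mc H_0,\mc H_j\boxtimes\mc H_i)$), together with the isometry identities $L(\eta_2,\wtd I_2)^*L(\eta_2,\wtd I_2)=\id_{\mc H_i}$ and $L(\xi_1,\wtd I_1)^*L(\xi_1,\wtd I_1)=\id_{\mc H_0}$, the whole computation collapses to
\[
\pi_0\bigl(\fk G(\mathbb B_{i,j})\bigr)\Omega=\pi_0\bigl(\rho_i(U_2^*)\,U_1^*\,U_2\,\rho_j(U_1)\bigr)\Omega=\pi_0\bigl(\varepsilon(\rho_i,\rho_j)\bigr)\Omega.
\]
Since $\Omega$ is separating for $\mc A(I_0)$ (Haag duality identifies $\mc A(I_0)'=\mc A(I_0^c)$, for which $\Omega$ is cyclic by Reeh--Schlieder), this forces $\fk G(\mathbb B_{i,j})=\varepsilon(\rho_i,\rho_j)$ inside $\mc A(I_0)$, as desired.

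The main obstacle is the identification $L(\xi_1,\wtd I_1)^*L(\xi,\wtd I_0)\big|_{\mc H_k}=\pi_k(U_1)$ for $k\neq 0$: the element $U_1\in\mc A(I_0)$ is extracted purely from the vacuum representation via Haag duality, and the operator on the left is a priori only known to lie in $\End_{\mc A(I_0^c)}(\mc H_k)$, which is generally strictly larger than $\pi_k(\mc A(I_0))$. My plan to overcome this is to test both operators against the dense subspace $R(\mc H_k(J),\wtd J)\Omega$ where $\wtd J$ is clockwise to $\wtd I_0$: since $U_1\in\mc A(I_0)\subset\mc A(J^c)$, both $\pi_k(U_1)$ and $L(\xi_1,\wtd I_1)^*L(\xi,\wtd I_0)$ intertwine $R(\chi,\wtd J)$ from $\mc H_0$ to $\mc H_k$ (by locality of the Connes categorical extension, theorem \ref{lb16}), so the identity on $\mc H_k$ reduces to its already known vacuum counterpart. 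With this identification in hand, the rest of the argument is essentially bookkeeping of how the transport unitaries interlace through the braid.
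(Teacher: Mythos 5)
Your proof is correct, but it takes a genuinely different route from the paper's. The paper exploits naturality: since both $\mathbb B$ and $\varepsilon$ intertwine tensor products of morphisms and $\fk G$ is monoidal on morphisms (proposition \ref{lb61}), one may replace $(\mc H_i,\xi),(\mc H_j,\eta)$ by unitarily equivalent objects with $\xi\in\mc U_i(I_1)$, $\eta\in\mc U_j(I_2)$ for disjoint $\wtd I_1,\wtd I_2\subset\wtd I_0$ with $\wtd I_2$ anticlockwise to $\wtd I_1$; then $\rho_i,\rho_j$ are localized in disjoint intervals, so $\varepsilon(\rho_i,\rho_j)=\id$, and $\fk G(\mathbb B_{i,j})=L(\xi,\wtd I_1)^*L(\eta,\wtd I_2)^*\mathbb B_{i,j}L(\xi,\wtd I_1)L(\eta,\wtd I_2)=\id$ follows in two lines from propositions \ref{lb13} and \ref{lb17}. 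You instead keep $\xi,\eta$ general and build the DHR charge transporters $U_1,U_2$ explicitly out of the left actions, then unwind the braid through them until the cocycle formula $\rho_i(U_2^*)U_1^*U_2\rho_j(U_1)$ appears. This is essentially the naturality reduction made explicit by hand. What your version buys is that it does not presuppose the naturality of the statistics operator (only its well-definedness for one choice of transporters), and it makes transparent how the DHR transport unitaries are encoded in the Connes-fusion data; what it costs is the extra lemma identifying $L(\xi_1,\wtd I_1)^*L(\xi,\wtd I_0)\big|_{\mc H_k}$ with $\pi_k(U_1)$ on every module $\mc H_k$ — you correctly flag this as the main obstacle, and your resolution (test both sides on the dense subspace $R(\mc H_k(J),\wtd J)\Omega$ with $\wtd J$ clockwise to $\wtd I_0$, using adjoint commutativity from the locality axiom of $\scr E_C$ and the intertwining property of $R(\chi,\wtd J)$ over $\mc A(J^c)\supset\mc A(I_0)$) is sound, and is in fact the same device the paper uses elsewhere (e.g.\ in the proofs around \eqref{eq81} and \eqref{eq84}). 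Two minor remarks: the final appeal to $\Omega$ being separating for $\mc A(I_0)$ is unnecessary, since your computation already yields the operator identity on all of $\mc H_0$ and $\pi_0$ is faithful; and you should note that $\eta_2\in\mc U_j(I_2)\subset\mc U_j(I_0)$ by isotony, so that $\fk G(\mc H_j,\eta_2)$ is indeed defined and formula \eqref{eq81} applies to it with $x=U_1\in\mc A(I_0)$.
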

\begin{proof}

By the fact that $\mbb B_{i,j}$ and $\varepsilon(\rho_i,\rho_j)$ intertwine the tensor products of morphisms, and by proposition \ref{lb61}, to verify	$\fk G(\mathbb B_{i,j})=\varepsilon(\rho_i,\rho_j)$, it suffices to replace $(\mc H_i,\xi),(\mc H_j,\eta)$ with some unitarily equivalent objects. Therefore, we may assume that $\xi\in\mc U_i(I_1),\eta\in\mc U_j(I_2)$ where $\wtd I_1,\wtd I_2\subset\wtd I_0$ and $\wtd I_2$ is anticlockwise to $\wtd I_1$. Then $\rho_i$ and $\rho_j$ are localized in $I_1,I_2$ respectively. It follows that $\rho_i\circ\rho_j=\rho_j\circ\rho_i$ and $\varepsilon(\rho_i,\rho_j)=\id$.

On the other hand, recall that $(\mc H_i,\xi)\boxtimes(\mc H_j,\eta)=(\mc H_i\boxtimes\mc H_j,L(\xi,\wtd I_1)\eta)$ and $(\mc H_j,\eta)\boxtimes(\mc H_i,\xi)=(\mc H_j\boxtimes\mc H_i,L(\eta,\wtd I_2)\xi)$. Then, by proposition \ref{lb13},
\begin{align*}
\fk G(\mbb B_{i,j})=L(L(\eta,\wtd I_2)\xi,\wtd I_0)^*\mbb B_{i,j}L(L(\xi,\wtd I_1)\eta,\wtd I_0)=L(\xi,\wtd I_1)^*L(\eta,\wtd I_2)^*\mbb B_{i,j}L(\xi,\wtd I_1)L(\eta,\wtd I_2),
\end{align*}
which, by proposition \ref{lb17}, equals
\begin{align*}
L(\xi,\wtd I_1)^*L(\eta,\wtd I_2)^*L(\eta,\wtd I_2)L(\xi,\wtd I_1)=\id.
\end{align*}	
This proves $\fk G(\mathbb B_{i,j})=\varepsilon(\rho_i,\rho_j)$.
\end{proof}

Combine all these propositions together, we arrive at the following conclusion.
\begin{thm}
The braided $C^*$-tensor categories $(\Repi(\mc A),\boxtimes,\mathbb B)$ and $(\DHR(\mc A),\boxtimes,\varepsilon)$ are equivalent under the $*$-functor $\fk G$. Moreover, the functorial (i.e. natural) unitary isomorphism $\fk G(\mc H_i,\xi)\boxtimes\fk G(\mc H_j,\eta)\rightarrow\fk G((\mc H_i,\xi)\boxtimes(\mc H_j,\eta))$ realizing this equivalence is the identity operator.
\end{thm}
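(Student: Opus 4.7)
The plan is to assemble the theorem from the four preceding propositions, noting that all compatibility with tensor products, units, and braidings has already been verified; only essential surjectivity and the strict-monoidality claim remain.

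First I would dispatch essential surjectivity. Given $\rho\in\DHR(\mc A)$, I would consider its canonical representation $(\mc H_\rho,\pi_\rho)$, where $\mc H_\rho=\mc H_0$ as Hilbert spaces and $\pi_\rho(x)=\pi_0(\rho(x))$. Because $\rho$ is localized in $I_0$, it fixes $\mc A(I_0^c)$, so $\pi_\rho$ agrees with $\pi_0$ on $\mc A(I_0^c)$, and the vacuum $\Omega$ therefore satisfies $Z(\Omega,I_0)=\id_{\mc H_0}$ (viewed as a map $\mc H_0\to\mc H_\rho$). In particular $\Omega\in\mc U_\rho(I_0)$, so $(\mc H_\rho,\Omega)\in\Repi(\mc A)$. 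Applying the characterization \eqref{eq81} I would then compute, for any $x\in\mc A(I)$ with $I\cup I_0$ nondense,
\begin{align*}
\fk G(\mc H_\rho,\Omega)(x)=L(\Omega,\wtd I_0)^*\pi_\rho(x)L(\Omega,\wtd I_0)=\pi_\rho(x)=\rho(x),
\end{align*}
so $\fk G(\mc H_\rho,\Omega)=\rho$. Combined with the already-established full faithfulness of $\fk G$, this will show $\fk G$ is an equivalence of $C^*$-categories.

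Next I would check that the tensor, unit, associativity, and braid structures on both sides match strictly (or via identity coherences). For tensor products of objects, the first proposition of this chapter gives $\fk G((\mc H_i,\xi)\boxtimes(\mc H_j,\eta))=\fk G(\mc H_i,\xi)\boxtimes\fk G(\mc H_j,\eta)$ as endomorphisms, so the comparison unitary can be taken to be the identity. Proposition \ref{lb61} handles tensor products of morphisms, and Proposition \ref{lb62} handles braidings. The unit isomorphisms $\sharp_i,\flat_i$ map to $1$ by the third proposition, matching the strict unitality of composition of endomorphisms. For the associator, I would note that in $\DHR(\mc A)$ composition is strictly associative, while on the $\Repi$-side the convention from section 2.5 identifies iterated Connes fusions as a single Hilbert space; the distinguished unitary vectors attached to both bracketings agree by the fusion identity of Proposition \ref{lb13}, namely $L(L(\xi,\wtd I_0)\eta,\wtd I_0)=L(\xi,\wtd I_0)L(\eta,\wtd I_0)$, so the $\Repi$-associator is the identity.

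Putting these pieces together will establish that $\fk G$ is a braided $C^*$-tensor equivalence whose monoidal comparison is the identity. I do not expect any serious obstacle: the heavy lifting—the formulas identifying $\fk G$ with $\varepsilon$ on braidings and with $\circ$ on tensor products—has already been carried out in the previous section, so the remaining argument is a bookkeeping collation together with the one-line essential surjectivity computation above.
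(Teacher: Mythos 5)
Your proposal is correct and follows essentially the same route as the paper, which simply collates the four preceding propositions (tensor product of objects, of morphisms, units, and braiding) with the full faithfulness of $\fk G$ established when the functor was defined. Your explicit essential-surjectivity computation via $(\mc H_\rho,\Omega)$ and your remark that Proposition \ref{lb13} makes the $\Repi$-associator the identity are both valid and in fact supply details the paper leaves tacit.
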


\begin{rem}
We sketch another way of proving  the equivalence of $\DHR(\mc A)$ and $\Rep(\mc A)$ as follows. One can define  a fully-faithful essentially-surjective $*$-functor $\fk E:\DHR(\mc A)\rightarrow \Rep(\mc A)$ such that for each object $\rho$ of $\DHR(\mc A)$, $\fk E(\rho)$ is the representation $(\mc H_\rho,\pi_\rho)$ mentioned at the beginning of this chapter: $\mc H_\rho=\mc H_0$ and $\pi_\rho=\pi_0\circ\rho$. A morphism $F\in\Hom(\rho_1,\rho_2)$ can be regarded as a morphism between representations. We let $\fk E(F)=F$. Note that $\fk E(\rho_1)\boxtimes\fk E(\rho_2)=\mc H_{\rho_1}\boxtimes\mc H_{\rho_2}$ is not identical to $\fk E(\rho_1\boxtimes\rho_2)=\mc H_{\rho_2\circ\rho_1}$. However, there is a well-known unitary isomorphism between these two $\mc A$-modules (cf. \cite{Con94} section 5.B), which, in our context, is defined by
\begin{gather*}
\Psi_{\rho_1,\rho_2}:\mc H_{\rho_1}\boxtimes\mc H_{\rho_2}\rightarrow\mc H_{\rho_2\circ\rho_1},\\
L\big(\pi_0(x)\Omega,\wtd I_0\big)\pi_0(y)\Omega\mapsto \pi_0\big(\rho_2(x)y\big)\Omega\qquad(\forall x,y\in\mc A(I_0)).
\end{gather*}
It is not hard to check that the $\Psi$ defined for each $\rho_1,\rho_2$ is a functorial map preserving the monoidal and braid structures of the two categories as in theorem \ref{lb12}. (See the end of \cite{HPT16} section 2.1 for the precise definition of the equivalence of two braided ($C^*$-)tensor categories.) In particular, as in the proof of proposition \ref{lb62}, to check that $\Psi$ preserves the braidings, it suffices to consider the case that $\rho_1$ and $\rho_2$ are localized in $I_1,I_2\subset I_0$ respectively where $I_2$ is anticlockwise to $I_1$.
\end{rem}

\newpage

\noindent {\small \sc Department of Mathematics, Rutgers University, USA.}

\noindent {\em E-mail}: bin.gui@rutgers.edu\qquad binguimath@gmail.com

\end{document}